\documentclass[11pt]{article} %

\usepackage[utf8]{inputenc} %
\usepackage{geometry} %
\geometry{letterpaper} %
\geometry{left=1in,right=1in,top=1in,bottom=1in} %
\usepackage{fancyhdr} 
\usepackage{amsthm,amsfonts}
\usepackage{verbatim}
\usepackage{booktabs}
\usepackage{dcolumn,multirow,tabularx}
\usepackage{bm}
\pagestyle{plain}
\usepackage{bbm}

\usepackage[colorlinks]{hyperref} %

\usepackage[utf8]{inputenc} 
\usepackage[T1]{fontenc}
\usepackage{url}
\usepackage{ifthen}
\usepackage{cite}
\usepackage{amsmath} %
\usepackage{graphicx}
\usepackage{amssymb}
\usepackage{mathtools} %
\usepackage{enumitem} %
\usepackage{xcolor}
\usepackage{caption}

\usepackage{pgfplots}
\pgfplotsset{compat=newest}
\usepgfplotslibrary{groupplots}
\usepgfplotslibrary{dateplot}

\usepackage{algpseudocode,algorithm,algorithmicx}  
\usepackage{swag} %

\theoremstyle{definition}
\newtheorem{estimator}{Estimator}
\newtheorem{remark*}{Remark}

\def \E {\mathop{{}\mathbb{E}}}

\def \real {\mathbb{R}}
\def \mprob {\mathbb{P}}
\def \scriptJ {\mathcal{J}}
\def \muhat {\widehat{\mu}}
\def \mustar {\mu^*}
\def \rtil {\widetilde{r}}
\def \mutil {\widetilde{\mu}}
\def \scriptP {\mathcal{P}}

\def \betahat {\widehat{\beta}}
\def \betastar {\beta^*}
\def \Diam {\operatorname{Diam}}

\bibliographystyle{plain}

\begin{document}

\begin{center}

	{\bf{\LARGE{Estimating location parameters in entangled single-sample distributions}}}

	\vspace*{.25in}

 	\begin{tabular}{ccccc}
 		{\large{Ankit Pensia}$^*$} & \hspace*{.5in} & {\large{Varun Jog$^\dagger$}} & \hspace*{.5in} & {\large{Po-Ling Loh$^{\ddagger\mathsection}$}}\\
 		{\large{\texttt{ankitp@cs.wisc.edu}}} & \hspace*{.5in} & {\large{\texttt{vjog@ece.wisc.edu}}} & \hspace*{.5in} & {\large{\texttt{ploh@stat.wisc.edu}}}
 			\end{tabular}
 \begin{center}
 Departments of Computer Science$^*$, Electrical \& Computer Engineering$^\dagger$, and Statistics$^\ddagger$\\
 University of Wisconsin-Madison\\
Department of Statistics, Columbia University$^\mathsection$
 \end{center}

	\vspace*{.2in}

July 2019

	\vspace*{.2in}

\end{center}

\begin{abstract}
We consider the problem of estimating the common mean of independently sampled data, where samples are drawn in a possibly non-identical manner from symmetric, unimodal distributions with a common mean. This generalizes the setting of Gaussian mixture modeling, since the number of distinct mixture components may diverge with the number of observations. We propose an estimator that adapts to the level of heterogeneity in the data, achieving near-optimality in both the i.i.d.\ setting and some heterogeneous settings, where the fraction of ``low-noise'' points is as small as $\frac{\log n}{n}$. Our estimator is a hybrid of the modal interval, shorth, and median estimators from classical statistics; however, the key technical contributions rely on novel empirical process theory results that we derive for independent but non-i.i.d.\ data.
In the multivariate setting, we generalize our theory to mean estimation for mixtures of radially symmetric distributions, and derive minimax lower bounds on the expected error of any estimator that is agnostic to the scales of individual data points. Finally, we describe an extension of our estimators applicable to linear regression. In the multivariate mean estimation and regression settings, we present computationally feasible versions of our estimators that run in time polynomial in the number of data points.
\end{abstract}

\section{Introduction}

Many modern data sets involve various forms of heterogeneity that lead to new challenges in estimation and prediction. Whereas much of classical statistics focuses on convergence guarantees for i.i.d.\ observations, both the independence and identical distribution assumptions may be called into question in specific scientific applications~\cite{Liu88, DunEtal07, SteChr09, ZhuEtal15, FlaEtal16}.

We focus on the problem of estimating a common mean when data are generated independently, but from non-identical distributions. The special case where each sample is drawn from a normal distribution with a potentially different variance was studied by Chierichetti et al.~\cite{ChiEtAl14}, who showed the existence of a gap between estimation error rates of the maximum likelihood estimator when both the set of variances and their assignments are known and the best possible estimator in the case where the variances of the distributions are completely unknown. Furthermore, Chierichetti et al.~\cite{ChiEtAl14} present a mean estimator in the unknown variance case based on calculating the ``shortest gap'' between samples, and derive upper bounds on the estimation error of their algorithm. The work of Chierichetti et al.~\cite{ChiEtAl14} is motivated by aggregating user ratings in crowdsourcing, where the rating reported by each user will be drawn from a distribution centered around the true quality of the item, but with a different variance corresponding to the expertise of the user. Importantly, only one observation is available from each distribution, although the aggregate data are drawn from a Gaussian mixture.

A natural question is whether the estimators proposed by Chierichetti et al.~\cite{ChiEtAl14} might also be seen to perform well in non-Gaussian settings. For instance, one might ask whether concentration inequalities for sub-Gaussian random variables might be plugged into the analysis in natural ways to obtain good upper bounds. Furthermore, although Chierichetti et al.~\cite{ChiEtAl14} derive lower bounds for the behavior of the best possible estimator in the unknown variance setting, their work leaves open the question of whether their proposed estimator actually performs optimally, and for which collections of variances.

Parameter estimation of mixture models is well-studied in statistics and computer science~\cite{Lin95, Das99, AroKan01, KanEtal05, AchMcS05}. However, our setting is somewhat different from the canonical setting, since the number of observations is not large relative to the number of component distributions; rather, the number of components in the mixture could be as large as the number of observations. On the other hand, the parameters of the component mixtures are fundamentally entangled via a common mean parameter, which is the quantity we wish to estimate. Consequently, although much of the literature in statistical estimation for mixture models requires the underlying component distributions to possess certain tail characteristics such as Gaussianity or log-concavity, such assumptions are not necessary to obtain small estimation error in our setting. Some interesting recent work~\cite{PraEtal19} has considered the problem of mean estimation in heavy-tailed distributions by treating the data as a mixture of i.i.d.\ data from a lighter-tailed component and an outlier component, where the outlier samples are first removed using a screening step. At a philosophical level, our work is somewhat related in the sense that we consider regimes where a fraction of points are drawn from ``nicer'' distributions and the remaining points are arbitrarily heavy-tailed. On the other hand, a fundamental difference is that we do not assume that the number of points drawn from each mixture component is tending to infinity. Furthermore, we are chiefly interested in settings where the fraction of points drawn from low-variance distributions tends to zero as the number of samples increases.

In this paper, we revisit the problem of common mean estimation and substantially generalize the setting beyond Gaussian mixtures. In particular, the only assumption we impose on each of the component distributions is symmetry and unimodality about a common mean. Although our proposed estimator is similar to the estimator studied by Chierichetti et al.~\cite{ChiEtAl14}, we use a rather different approach for the analysis, which allows us to obtain bounds without assuming Gaussianity, sub-Gaussianity, or even finite variances of individual distributions. Our analysis is inspired by ideas in empirical process theory, and the upper bounds involve percentiles of the overall mixture distribution, making them useful even in the case of Cauchy-type distributions with heavy tails. Furthermore, in the multivariate mean setting, we sharpen the estimation error rates of Chierichetti et al.~\cite{ChiEtAl14} for isotropic Gaussian data, and show that the results hold more generally for mixtures of radially symmetric, unimodal distributions.

Our proposed estimators are connected to classical estimators appearing in the statistics literature, notably the modal interval estimator~\cite{Cher64} and the shorth estimator~\cite{AndEtAl72}. However, existing analysis of these estimators has generally been asymptotic and limited to i.i.d.\ data. In fact, it is well-known that in the i.i.d.\ setting, both the modal interval and shorth estimators have an ${n}^{-\frac{1}{3}}$ convergence rate~\cite{KimPol90}, compared to the faster $n^{-\frac{1}{2}}$ convergence rate of the sample mean---on the other hand, our analysis shows that these estimators enjoy superior performance to the sample mean when a substantial fraction of the component distributions have variances that are extremely large or even infinite. This underscores the fundamental fact that estimators which are suboptimal in a ``clean'' data setting may be preferable from the point of view of robustness.

The main contributions of our paper may be summarized as follows:
\begin{itemize}
\item Provide a rigorous analysis of the modal interval (Theorems~\ref{cor:modal},~\ref{ThmLepski}, and~\ref{LemModalD}), shorth (Theorems~\ref{cor:shorth} and~\ref{ThmShorthD}), and hybrid (Theorems~\ref{thm:hybrid} and~\ref{thm:HybridScreening}) estimators for multivariate, radially symmetric distributions. We also show how to relax the symmetry conditions further (Theorem~\ref{ThmRelax}). These estimation error guarantees hold with high probability.
\item Derive upper bounds on the expected error of the estimators (Theorem~\ref{ThmExpBound}). Along the way, we demonstrate the need for additional conditions on the tails of the mixture components in order to derive expected error bounds of the same order as the high-probability results.
\item Derive minimax lower bounds on the error rate of any estimator (Theorem~\ref{ThmLowerBound}), and prove that the hybrid estimator is nearly optimal in various regimes of interest (Theorem~\ref{ThmUpperBound}).
\item Extend the methodology for multivariate mean estimation to linear regression (Theorem~\ref{ThmRegression}).
\item Provide computationally efficient versions of the multivariate mean estimator (Theorem~\ref{ThmModalCompute}) and linear regression estimator (Theorem~\ref{ThmCompReg}) in high dimensions.
\end{itemize}
In order to establish this theory, we derive novel uniform concentration results for independent, non-i.i.d.\ data (Lemmas~\ref{thm:highProb},~\ref{thm:highProbD}, and~\ref{ThmUniformProb}), the proofs of which are interesting in their own right and may be more broadly applicable to other problems involving non-i.i.d.\ data. We also note that while our work vastly generalizes the results of Chierichetti et al.~\cite{ChiEtAl14} for mean estimation in Gaussian mixtures, our derivations also bypass some critical technical gaps in their proofs using a very different approach via empirical process theory. Finally, we comment that preliminary work on this topic appeared in an earlier conference paper~\cite{PenEtal19ISIT}, but was limited to the univariate case (Theorems~\ref{cor:modal},~\ref{ThmLepski},~\ref{cor:shorth}, and~\ref{thm:hybrid}) and did not discuss optimality, regression, or any computational aspects. Furthermore, all examples and counterexamples illustrating various phenomena, including the detailed theoretical derivations (Propositions~\ref{prop:exmple_r_k}--~\ref{prop:HighExp2}), are new to this paper.

The roadmap of the paper is as follows: In Section~\ref{sec:prelim}, we define notation and the basic estimators we will consider in the univariate case, which are subsequently analyzed  in Section~\ref{sec:individual}. In Section~\ref{sec:multivariate}, we present results for the multivariate analog of these estimators. In Section~\ref{sec:hybrid}, we combine the ideas from the previous sections to motivate and analyze a hybrid estimator. In Section~\ref{SecExpect}, we derive expected error bounds on the performance of our estimators, and also present minimax lower bounds on the estimation error of any estimator, thus providing settings in which our proposed estimators are provably optimal. In Section~\ref{SecComputation}, we present computationally feasible variants of our estimators in higher dimensions, and prove that the error rates of these estimators are of the same order as those derived earlier. In Section~\ref{SecAltCond}, we discuss various relaxations of the symmetry assumptions on the mixture components. In Section~\ref{SecRegression}, we describe our results for linear regression. Simulation results reporting the relative performance of different estimators are contained in Section~\ref{SecSims}. All proofs are contained in the supplementary appendix. 

\textbf{Notation:} For two real-valued functions $f(n)$ and $g(n)$, we write $f(n) = \omega(g(n))$ if for every real constant $c > 0$, there exists $n_0 \ge 1$ such that $f(n) > c \cdot g(n)$ for every integer $n \ge n_0$. 
We use $\tilde{\O}\(\cdot\)$, $\tilde{\Omega}\(\cdot\)$, and $\tilde{\omega}\(\cdot\)$ to hide polylogarithmic factors.   
We write w.h.p., or ``with high probability,'' to mean with probability tending to 1 as the sample size increases.
We use $C$, $c$, $C'$, and $c'$ to represent absolute positive constants which may vary from place to place, and their exact values can be found in the proofs. 
Similarly, we use $C_t$ to represent positive numbers that depend only on $t$. For a real-valued random variable $X$, we use $\Var X$ to denote its variance.

We will use $\|\cdot\|_2$ to denote the Euclidean norm. We use $B(x,r)$ to denote the Euclidean ball of radius $r$ centered around $x$, and we also write $B_r$ in place of $B(0,r)$. We denote the $d \times d$ identity matrix by $I_d$.  We use $P(X,\epsilon)$ to denote the $\epsilon$-packing number of a set $X$ with respect to Euclidean distance, and we use $N(X,\epsilon)$ to denote the $\epsilon$-covering number. We write $\Diam(X)$ to denote the diameter of the set with respect to Euclidean distance.

\section{Problem setup}
\label{sec:prelim}

We begin by introducing the univariate mean estimation problem. Suppose we have $n$ independent samples $X_i \sim P_i$, where each $P_i$ is a univariate distribution with a density. Each $P_i$ is assumed to have a density $p_i$ which is symmetric and unimodal around a common mean (and median) $\mustar$\footnote{Or equivalently, the $p_i$'s are symmetric and decreasing around $\mustar$.}. Our goal is to estimate the location parameter $\mustar$ from the $n$ samples, where the $P_i$'s are unknown a priori and may even come from different classes of (non)parametric distributions. For simplicity of presentation, we will assume that $\mustar = 0$, so the error of an estimator $\muhat$ is measured by $|\muhat|$. There is no loss of generality, since the estimators that we consider are translation-invariant.

A natural estimator to use is the empirical mean, which is certainly an unbiased estimator of $\mustar$. However, it is a well-known fact that the mean is not ``robust,'' in the sense that one outlying observation can have a massive impact on the estimation error of the mean. In our setting, one $P_i$ with a very large variance can dramatically inflate the error of the mean, even if the remaining $n-1$ distributions are well-behaved. Due to the symmetry assumption on the $P_i$'s, we could consider the median as a more robust alternative. Our theory in Section~\ref{sec:individual} below shows that using a median estimator can somewhat improve the estimation error so that it depends only on the spread of the $\sqrt{n} \log n$ distributions with the smallest quantiles; however, other more cleverly constructed estimators can reduce this dependence to $\O(\log n)$ distributions, meaning that $n - \O(\log n)$ mixture components may have arbitrarily large (or even infinite) variances, yet have a bounded effect on the behavior of the estimator.

Another potential estimator when the mixing components come from a sufficiently nice parametric family (e.g., Gaussians) is the maximum likelihood estimator. However, since we do not assume knowledge of which observations are drawn from which mixture components, the MLE calculation becomes considerably more complicated. Nonetheless, it is sometimes informative to compare the error rate of the MLE---assuming side information of which observations correspond to which mixture components---to the error rates obtained using various agnostic estimators. In particular, if the former error rate diverges with $n$, we know that a diverging error rate for a proposed estimator is reasonable.

Since we are not given access to the individual $P_i$'s, we will argue about the problem through the lens of the mixture distribution $\overline{P} \coloneqq \frac{1}{n} \sum_{i=1}^n P_i$, which is again unimodal and symmetric. We will write $\overline{P}_n$ to denote the empirical distribution of $X_1,\hdots,X_n$.

\subsection{Estimators}
\label{SecEst}

We now proceed to define the estimators that will serve as building blocks for our algorithms. All of these estimators can be implemented efficiently after sorting the data points.

For $x \in \real$ and $r > 0$, let $f_{x,r}$ denote the indicator function of the interval $[x-r,x+r]$.  Let
\begin{align}
\cH & := \{f_{x,r}: x \in \R, r \in \R, r \ge 0 \}, \\
\cH_{r} & = \{f_{x,r'}: x\in \R , r' \in \R, 0 \le r' \leq r \}.
\label{eq:Hr}
\end{align}
Both $\cH$ and $\cH_r$ have VC dimension $2$~\cite{Ver18}.

For a function $f$, we use $R_n(f) \coloneqq \frac{1}{n} \sum_{i=1}^n f(X_i)$ to denote the expectation of $f$ with respect to the empirical distribution of $X_1,\hdots,X_n$. Let 
\begin{align*}
R(f) \coloneqq  \frac{1}{n} \sum_{i=1}^n\E f(X_i).
\end{align*}
Thus, $R(f)$ is the expectation of $f$ with respect to $\overline{P}$. Let
\begin{align*}
R^*_{r} \coloneqq \sup_{f \in \cH_{r}} R(f)  = R(f_{0,r}),
\end{align*}
where the second equality follows by symmetry and unimodality. It is also equal to the probability of the interval $[-r,r]$ under $\overline{P}$.

\begin{estimator}[$r$-modal interval]
The $r$-modal interval estimator, introduced for the classical i.i.d.\ setting by Chernoff~\cite{Cher64}, outputs the center of the most populated interval of length $r$, with ties broken arbitrarily:
\begin{align}
\label{EqnModalEst}
\widehat{\mu}_{M,r} \in \arg\max_{x} R_n(f_{x,r}).
\end{align}
\end{estimator}

\begin{estimator}[$k$-shortest gap / shorth estimator]
For $k \geq 2$, the $k$-shortest gap ($k$-shorth) estimator, $\widehat{\mu}_{S,k}$, outputs the center of the shortest interval containing at least $k$ points. More precisely, we define
\begin{align}
\label{EqnShorthEst}
\widehat{r}_{k} &\coloneqq \inf\left\{r: \sup_{x}R_n(f_{x,r}) \geq \frac{k}{n}\right\}, \qquad \widehat{\mu}_{S,k} \coloneqq \widehat{\mu}_{M, \widehat{r}_{k}}.
\end{align}
The traditional shorth estimator~\cite{AndEtAl72, KimPol90} corresponds to $k = \frac{n}{2}$, whereas choosing $k=2$ outputs the midpoint of the shortest interval between any two points. As we will see, the choice of $k = C\log n$ will be convenient for our setting, and is more suitable than $k = \frac{n}{2}$ if data are not i.i.d.
\end{estimator}

Note that a type of ``shortest interval'' estimator has also been employed in the work on mean estimation for contaminated i.i.d.\ data~\cite{LaiEtAl16}, but was used as an outlier screening step in that context, rather than a mean estimator. Incidentally, our hybrid estimator to be introduced later will employ a different screening approach based on the median, and then use the shorth estimator to return a more accurate mean estimate.

The $k$-median outputs an element from the centermost $k$ points of the data. Note that the output is a set rather than a point estimator; however, the $k$-median will be useful as a preprocessing step before applying the modal interval or shorth estimators, to obtain better estimation error rates.

\begin{estimator}[$k$-median] 
The $k$-median estimator output an arbitrary element $\widehat{\mu}_{\text{med},k}$ from the subset $S_k$, defined as: $X_i \in S_k$ if and only if $\widehat{\theta}_{\text{med},-k} \leq X_i \leq  \widehat{\theta}_{\text{med},k}$, where%
\begin{align*}
\widehat{\theta}_{\text{med},k} & := \inf\left\{\theta: \psi_n(\theta) \geq \frac{k}{n}\right\},\\
 \widehat{\theta}_{\text{med},-k} & := \sup\left\{\theta: \psi_n(\theta) \leq \frac{-k}{n}\right\},
\end{align*}
and  $\psi_n(\theta) = \frac{1}{n} \sum_{i=1}^n \text{sign}(\theta - X_i)$. The sample median corresponds to taking $k=0$. 
\end{estimator}

\subsection{Population-level properties}

We also define the population-level quantities
\begin{align*}
r_{k} \coloneqq \inf\left\{r: \sup_{x} R(f_{x,r}) \geq \frac{k}{n}\right\} =  \inf \left\{r:  R(f_{0,r}) \geq \frac{k}{n}\right\},
\end{align*}
where the last equality follows from unimodality and symmetry. Note that $r_k$ measures the spread of $\overline{P}$ and $r_{n/2}$ is the interquartile range of $\overline{P}$. Furthermore, since $\overline{P}$ has a density, we have $R^*_{r_k} = \frac{k}{n}$.

Let $q_i$ and $\sigma_i$ denote the interquartile range and standard deviation of $P_i$, respectively. Recall that the interquartile range satisfies $\P(|X_i - \mustar| \le q_i) = \frac{1}{2}$. We use $q_{(i)}$ and $\sigma_{(i)}$ to denote the $i^\text{th}$ smallest interquartile range and standard deviation, respectively. By Lemma~\ref{lemma:properties}\ref{prop:5} below, we have $r_k \le q_{(2k)}$ and $r_k \le 2\sigma_{(2k)}$, although these bounds may be loose (for instance, $r_k$ could be finite even if $\sigma_{(1)}$ is infinite). However, we are guaranteed that $r_k$ will be small if $2k$ points come from ``nice'' (low-variance) distributions.

The following lemma is proved in Appendix~\ref{app:lemmaProp}:

\begin{lemma}
We have the following properties:
\begin{enumerate}[label=(\roman*)]
  \item For any $r>0$ and $x, x' \in \real$, if $|x| < |x'|$, then $R(f_{x,r}) \geq R(f_{x',r})$.
\label{prop:1}
  \item For any $x \in \real$, if $r < r'$, then $R(f_{x,r}) \leq R({f_{x,r'}})$. 
  \label{prop:2}
  \item If $0 < r < r'$, then $ \frac{R^*_r}{r} > \frac{R^*_{r'}}{r'}$. \label{prop:3}
  \item If $0 < r < r'$, then $R(f_{r',r}) < \frac{r}{r'} R^*_{r'}  $. \label{prop:4}
  \item If $1 \leq k \leq n $,  then $\frac{k}{n} < R^*_{q_{(2k)}} $ and $ \frac{k}{n} < R^*_{2 \sigma_{(2k)}}$. \label{prop:5} 
\end{enumerate}
\label{lemma:properties}
\end{lemma}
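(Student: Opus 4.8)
The plan is to handle \ref{prop:1}--\ref{prop:4} purely as statements about the single symmetric, unimodal density $\overline{p}$ of $\overline{P}$, and to treat \ref{prop:5} by a componentwise counting argument. The two structural facts I would set up at the outset are: (a) since each $p_i$ is even and non-increasing on $[0,\infty)$, so is $\overline{p}$, and moreover $\overline{p}$ is strictly decreasing on the support of $\overline{P}$ under the ``decreasing'' reading of the unimodality hypothesis in the footnote; and (b) writing $G(a) \coloneqq \int_0^a \overline{p}(t)\,dt$, we have $G(0)=0$ and $G' = \overline{p}$ non-increasing, hence $G$ is concave (strictly concave on the support), while $R(f_{x,r}) = \overline{P}([x-r,x+r])$ and $R^*_r = 2G(r)$. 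With this in place, \ref{prop:1}--\ref{prop:4} reduce to elementary one-variable calculus.

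For \ref{prop:2} there is nothing to prove: $r<r'$ forces $[x-r,x+r]\subseteq[x-r',x+r']$, so $R(f_{x,r})\le R(f_{x,r'})$ by monotonicity of measure. For \ref{prop:1} I would first use evenness of $\overline{p}$ to reduce to $0\le x<x'$ (reflecting $[x-r,x+r]$ through the origin leaves its $\overline{P}$-mass unchanged), and then show that $h(y)\coloneqq\overline{P}([y-r,y+r])$ is non-increasing on $[0,\infty)$: differentiating, $h'(y)=\overline{p}(y+r)-\overline{p}(y-r)=\overline{p}(y+r)-\overline{p}(|y-r|)$, and since $y+r\ge|y-r|$ for every $y\ge 0$, monotonicity of $\overline{p}$ gives $h'(y)\le 0$.

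Properties \ref{prop:3} and \ref{prop:4} I would both read off the concavity of $G$ together with $G(0)=0$. For \ref{prop:3}: $\tfrac12 R^*_r/r = G(r)/r$ is the slope of the chord of $G$ from the origin to $(r,G(r))$, and for a concave function vanishing at $0$ this chord-slope is decreasing in $r$ (strictly, by strict concavity). For \ref{prop:4}: the endpoints $r'-r$ and $r'+r$ straddle $r'$, so comparing $G$ with the line through the origin of slope $G(r')/r'$ -- which lies strictly below $G$ on $(0,r')$ and strictly above $G$ for arguments exceeding $r'$ -- gives $G(r'-r) > \tfrac{r'-r}{r'}G(r')$ and $G(r'+r) < \tfrac{r'+r}{r'}G(r')$; subtracting these yields
\[
R(f_{r',r}) = G(r'+r)-G(r'-r) < \frac{2r}{r'}\,G(r') = \frac{r}{r'}\,R^*_{r'}.
\]

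Finally, for \ref{prop:5} I would expand $R^*_{q_{(2k)}} = \tfrac1n\sum_{i=1}^n P_i([-q_{(2k)},q_{(2k)}])$ and note that at least $2k$ indices satisfy $q_i\le q_{(2k)}$, for which $P_i([-q_{(2k)},q_{(2k)}]) \ge P_i([-q_i,q_i]) = \tfrac12$; summing over just those indices gives $R^*_{q_{(2k)}}\ge k/n$, and strictness comes from the fact that a unimodal density with $P_i([-q_i,q_i])=\tfrac12$ must leak extra mass into any strictly larger symmetric interval (with a short separate check for boundary ties). The $\sigma$ bound is the same argument with more slack: for each of the $2k$ indices with $\sigma_i\le\sigma_{(2k)}$, Chebyshev's inequality (or the sharper symmetric--unimodal Gauss inequality) gives $P_i([-2\sigma_i,2\sigma_i])\ge\tfrac34$, so $R^*_{2\sigma_{(2k)}} \ge \tfrac1n\cdot 2k\cdot\tfrac34 = \tfrac{3k}{2n} > \tfrac{k}{n}$. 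I expect the only genuinely fussy point to be the strict inequalities in \ref{prop:3}, \ref{prop:4}, and the $q$-half of \ref{prop:5}: each is an equality for, e.g., a uniform mixture component, so strictness has to be extracted from the strictly-decreasing hypothesis (and, in \ref{prop:5}, from the behaviour at the boundary radius) -- everything else is routine.
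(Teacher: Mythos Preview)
Your proposal is correct. Parts \ref{prop:1}, \ref{prop:2}, \ref{prop:3}, and \ref{prop:5} match the paper's approach essentially line for line (the paper differentiates $R^*_r/r$ directly for \ref{prop:3}, which is your chord-slope argument in disguise; for \ref{prop:5} it uses exactly the $2k$-term counting plus Chebyshev you describe).

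The one genuine divergence is \ref{prop:4}. The paper does \emph{not} use concavity of $G$; instead it writes $r' = (K+\alpha)r$ with $K\in\mathbb{N}$, partitions $[0,r']$ into $K$ blocks of width $r$ plus a remainder of width $\alpha r$, and uses unimodality to bound the mass of each block below by (half of) $R(f_{r',r})$, invoking \ref{prop:3} to handle the fractional piece. Your route---compare $G(r'\pm r)$ to the secant line through $(0,0)$ and $(r',G(r'))$ and subtract---is shorter and avoids the case split on $\alpha$. It also makes transparent why the argument is really a one-line consequence of the concavity already established in \ref{prop:3}, whereas the paper's packing argument obscures this connection. The paper's approach, on the other hand, generalizes more directly to the multivariate setting (Lemma~\ref{lemma:prop2}\ref{prop2:4}), where the analogous bound is proved by a ball-packing count rather than a one-dimensional concavity inequality. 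Your remark that strictness in \ref{prop:3}--\ref{prop:4} hinges on the ``decreasing'' reading of the unimodality footnote is well taken; the paper glosses over this.
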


Lemma~\ref{lemma:properties} shows that we can use $\overline{P}$ as a measure of distance between two intervals. In particular, if two intervals with the same center/radius are close under $R$, the respective radii/centers must also be close.

\subsection{Examples}
\label{sec:behavior_r_k}

Note that the quantity $r_k$ is problem-dependent, since its magnitude depends on the relative dispersion of the mixing components. In particular, we are interested in $r_{ \Theta(\log n)}$.
As the fraction of ``nice'' points increase, $r_k$ gets smaller. However, it doesn't depend too strongly on the high-variance distributions. 
We illustrate this below in several cases for $r_{\log n}$, assuming Gaussian distributions for simplicity:
\begin{example} (i.i.d.\ observations). $P_i = \cN(0, \sigma^2)$, so $\overline{P}$ is again $\cN(0, \sigma^2)$.
\label{exm:IID}
\end{example}
\begin{example} (quadratic variance). $P_i = \cN(0,c^2i^2)$, for some small $c>0$.
\label{exm:QuadVar}
\end{example}
\begin{example} ($\alpha$-mixture distributions).
\begin{align*}
P_i = \begin{cases}\cN(0,1), &\text{if } i \leq c\lceil\log n \rceil, \\
					\cN(0,n^{2\alpha}), & \text{otherwise},  \end{cases}
\end{align*}
for some $\alpha > 0$.
\label{exm:alpha-mix}
\end{example}

These examples will reappear throughout the paper to illustrate the error of our proposed estimators in various regimes of interest. The following proposition, proved in Appendix~\ref{app:prop_r_k}, will be useful in our development.

\begin{proposition}
We have the following bounds for $r_{\log n}$:
\begin{enumerate}
	\item For Example~\ref{exm:IID} (i.i.d.\ observations), we have $r_{\log n} = \Theta\(\frac{\sigma \log n}{n}\)$.
	\item For Example~\ref{exm:QuadVar} (quadratic variance) and sufficiently small $c > 0$,
	we have $r_{\log n} = \Theta(1)$.
	\item For Example~\ref{exm:alpha-mix} ($\alpha$-mixture distributions) and sufficiently large $c > 0$, we have
\begin{align*}
r_{\log n} = \begin{cases}
				\Theta\(\frac{\log n}{n^{1 - \alpha}}\), & \text{if } \alpha < 1, \\
				\Theta(1), & \text{if } \alpha \geq 1.
			\end{cases}
\end{align*}
\end{enumerate}
\label{prop:exmple_r_k}
\end{proposition}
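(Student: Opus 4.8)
The plan is to reduce, in each of the three examples, the computation of $r_{\log n}$ to solving a single scalar equation and then to estimate its root. Since the $i$th component puts mass $2\Phi(r/\sigma_i)-1$ on $[-r,r]$ (here $\Phi$ and $\phi$ denote the standard normal CDF and density), we have $R(f_{0,r})=\tfrac1n g(r)$ with $g(r):=\sum_{i=1}^n\bigl(2\Phi(r/\sigma_i)-1\bigr)$, a continuous, strictly increasing function with $g(0)=0$ and $\lim_{r\to\infty}g(r)=n$. Because $\overline P$ has a density, $R^*_{r_{\log n}}=\tfrac{\log n}{n}$, i.e.\ $r_{\log n}$ is the unique root of $g(r)=\log n$. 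I would then only use the elementary sandwich
\[
  a_1\min(t,1)\ \le\ 2\Phi(t)-1\ \le\ \min(b_1 t,\,1),\qquad t\ge0,
\]
valid with $b_1=\sqrt{2/\pi}$ and $a_1=\sqrt{2/\pi}\,e^{-1/2}$ (upper: $2\Phi(t)-1\le2\phi(0)t$ and $\le1$; lower: $\phi$ decreases on $[0,1]$): to prove $r_{\log n}=\Theta(h(n))$ it suffices to plug $r=Ch(n)$ and $r=c'h(n)$ into $g$, bound $g$ above and below via the sandwich, and invoke monotonicity of $g$.

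Example~\ref{exm:IID} is immediate: $g(r)=n(2\Phi(r/\sigma)-1)$, so the root obeys $2\Phi(r/\sigma)-1=\tfrac{\log n}{n}\to0$, hence $r/\sigma\to0$; on that range $2\Phi(r/\sigma)-1=\Theta(r/\sigma)$, giving $r_{\log n}=\Theta(\sigma\log n/n)$. For Example~\ref{exm:QuadVar}, $\sigma_i=ci$, so $g(r)=\sum_{i=1}^n(2\Phi(r/(ci))-1)$; splitting the sum at $i_0\asymp r/c$, where the argument crosses $1$, and applying the sandwich yields $g(r)\asymp i_0+\tfrac rc\sum_{i_0<i\le n}\tfrac1i$, which is $\asymp\tfrac rc\log n$ whenever $r/c$ is bounded (harmonic-sum estimate, $i_0=O(1)$). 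Plugging $r=\tfrac{2c}{a_1}$ forces $g(r)\ge\log n$ and plugging $r=\tfrac{c}{4b_1}$ forces $g(r)<\log n$ for all large $n$; hence $r_{\log n}=\Theta(1)$, the hidden constants being of order $c$ (so smallness of $c$ in the example is not actually needed here).

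For Example~\ref{exm:alpha-mix}, write $m:=c\lceil\log n\rceil$ and decompose
\[
  g(r)\;=\;\underbrace{m\bigl(2\Phi(r)-1\bigr)}_{g_{\mathrm{good}}(r)}\;+\;\underbrace{(n-m)\bigl(2\Phi(r/n^\alpha)-1\bigr)}_{g_{\mathrm{bad}}(r)}.
\]
If $\alpha<1$, I would test $r$ of order $\tfrac{\log n}{n^{1-\alpha}}$, which tends to $0$ because $n^{1-\alpha}\gg\log n$; then both arguments of $\Phi$ are below $1$ for large $n$, the sandwich gives $g_{\mathrm{bad}}(r)\asymp(n-m)\tfrac{r}{n^\alpha}\asymp r\,n^{1-\alpha}$ while $g_{\mathrm{good}}(r)\lesssim mr=O(r\log n)=o(r\,n^{1-\alpha})$, so $g(r)\asymp r\,n^{1-\alpha}$, and matching to $\log n$ gives $r_{\log n}=\Theta\!\bigl(\tfrac{\log n}{n^{1-\alpha}}\bigr)$. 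If $\alpha\ge1$, then for any constant $r$ we have $g_{\mathrm{bad}}(r)\le(n-m)b_1 r/n^\alpha\le b_1 r=O(1)=o(\log n)$, so the root is controlled by $g_{\mathrm{good}}$ alone: a fixed large $r=C_0$ gives $g(C_0)\ge m(2\Phi(C_0)-1)\ge\log n$ once $c(2\Phi(C_0)-1)\ge1$, and a fixed small $r=c_0$ gives $g(c_0)\le m b_1 c_0+b_1 c_0\le4cb_1 c_0\log n<\log n$ for $c_0$ small and $n$ large; hence $r_{\log n}=\Theta(1)$. The condition ``$c$ sufficiently large'' enters precisely here: the target value $\Theta(\log n/m)=\Theta(1/c)$ for $2\Phi(r)-1$ must be $<1$ for a constant-order root to exist.

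The hard part will be the $\alpha$-mixture with $\alpha<1$: one must recognize that even though only $\Theta(\log n)$ components are ``nice,'' the aggregate mass the $n-\Theta(\log n)$ heavy components place on $[-r,r]$ is the larger of the two contributions — by the factor $n^{1-\alpha}/\log n\to\infty$ — and then verify that $g_{\mathrm{good}}$ really is of strictly smaller order than $g_{\mathrm{bad}}$ at the root, or the claimed exponent would be off. Everything else is routine once the reduction to $g(r)=\log n$ and the bound $2\Phi(t)-1\asymp\min(t,1)$ are in place; the only recurring subtlety elsewhere is checking a posteriori that $r_{\log n}\to0$ in Example~\ref{exm:IID} and in Example~\ref{exm:alpha-mix} with $\alpha<1$, so that the linearization of $\Phi$ near the origin is legitimate.
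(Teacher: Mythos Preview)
Your proposal is correct and follows essentially the same approach as the paper. Both arguments rest on the elementary fact $2\Phi(t)-1\asymp\min(t,1)$: the paper phrases it by evaluating the mixture density $\overline p$ at $0$ and at one nearby point (e.g., $x=\sigma$ in Example~\ref{exm:IID}, $x=1$ in Examples~\ref{exm:QuadVar} and~\ref{exm:alpha-mix}) and then integrating, while you package the same content into the explicit CDF sandwich and solve $g(r)=\log n$ directly; the case analysis and the identification of which component group dominates in Example~\ref{exm:alpha-mix} are otherwise identical.
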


Note that these bounds are tighter than the ones provided by Lemma~\ref{lemma:properties}\ref{prop:5}; the latter states that $r_{k} \leq \sigma_{(2k)}$. This is because Lemma~\ref{lemma:properties}\ref{prop:5} is a worst-case bound which does not account for the contributions of high-variance points.

\section{Performance analysis of individual estimators } %
\label{sec:individual}

We now analyze the behavior of the modal interval, shorth, and median estimators. We begin by deriving a uniform concentration result that is critical to the technical proofs.

\subsection{Concentration inequalities} %
\label{sec:concentration_inequalities}

We use the following concentration bound to prove tight concentration results for the modal and the shorth estimators. 
\begin{lemma}
Let $\cH_r$ be as defined in equation~\eqref{eq:Hr} with $R^*_r = \sup_{f \in \cH_r} R(f)$. For any fixed $t \in (0,1]$ and $n > 1$, we have
\begin{align*}
\P\left\{\sup_{f \in \cH_{r}} |R_n(f) - R(f)| \geq tR^*_{r}\right\}  \leq 2\exp\(- c nR^*_{r}t^2\),
\end{align*}
provided $r$ is large enough so that $R^*_{r} \geq \frac{C_t \log n}{n}$, where $C_t = \left(\frac{144}{t}\right)^2$ and $c = \frac{1}{200}$.
\label{thm:highProb}
\end{lemma}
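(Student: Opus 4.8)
The plan is to bound the supremum of the empirical process over the VC class $\cH_r$ by symmetrization and a bounded-differences (McDiarmid) concentration step, carefully tracking the fact that the relevant scale is $R^*_r$ rather than $1$. First I would introduce $Z := \sup_{f \in \cH_r} |R_n(f) - R(f)|$ and observe that each function $f \in \cH_r$ is a $\{0,1\}$-indicator, so changing one sample $X_i$ changes $R_n(f)$ by at most $\frac{1}{n}$; hence $Z$ satisfies the bounded-differences property with constants $\frac{1}{n}$, and McDiarmid's inequality gives $\P\{Z \ge \E Z + u\} \le \exp(-2nu^2)$. The main work is therefore to show $\E Z \lesssim t R^*_r / 2$ (or some constant fraction of $tR^*_r$), so that taking $u$ to be the remaining constant fraction of $tR^*_r$ yields the stated bound $2\exp(-cnR^*_r t^2)$ after also handling the lower-tail direction symmetrically (or simply noting McDiarmid applies to $Z$ directly since $Z$ is already the absolute-value supremum).

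To bound $\E Z$, I would use standard symmetrization, $\E Z \le 2 \E \sup_{f\in\cH_r} |\frac{1}{n}\sum_i \varepsilon_i f(X_i)|$ with Rademacher signs $\varepsilon_i$, and then invoke the VC bound: since $\cH_r$ has VC dimension $2$, the expected Rademacher complexity conditional on the $X_i$'s is at most $C\sqrt{\frac{\log n}{n}}\cdot\sqrt{\frac{1}{n}\sum_i f(X_i)}$ via a Dudley/chaining argument (or the standard bound $\E_\varepsilon \sup_f |\frac1n\sum \varepsilon_i f(X_i)| \le C \sqrt{\frac{V \log n}{n}}\sup_f \sqrt{R_n(f)}$ for a VC class of dimension $V$). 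The key refinement over the naive bound is the ``local'' factor $\sup_f \sqrt{R_n(f)}$: on the event that $Z$ is small, $\sup_f R_n(f) \le R^*_r + Z \le 2R^*_r$, so the complexity is of order $\sqrt{\frac{R^*_r \log n}{n}}$. Plugging in the hypothesis $R^*_r \ge \frac{C_t \log n}{n}$ with $C_t = (144/t)^2$ converts $\sqrt{\frac{R^*_r \log n}{n}}$ into something of order $\frac{t}{144}\sqrt{R^*_r}\cdot\sqrt{R^*_r} = \frac{t}{144}R^*_r$, up to absolute constants, which is the desired fraction of $tR^*_r$; the explicit constant $144$ is chosen to absorb the symmetrization factor of $2$, the chaining constant, and the slack in the self-bounding step.

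The main obstacle — and the place requiring the most care — is making the ``self-bounding''/localization step rigorous: the bound on the Rademacher complexity involves $\sup_f \sqrt{R_n(f)}$, which is itself a random quantity controlled by $Z$, so one has a fixed-point/circular dependence. I would resolve this either by (i) a peeling argument over dyadic scales of $R_n(f)$, or more cleanly (ii) by first proving the one-sided statement $\sup_f (R_n(f) - R(f)) \le \tfrac12 t R^*_r$ on a high-probability event using Talagrand's inequality for empirical processes (Bousquet's version), which directly delivers a bound of the form $\E Z + \sqrt{2nu(\,\mathrm{variance\ proxy}\,)} + u$ where the variance proxy is $\sup_f R(f)(1-R(f)) \le R^*_r$; the variance-dependent term is precisely what produces the $R^*_r t^2$ in the exponent rather than $t^2$. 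Either route lets the condition $nR^*_r \ge C_t \log n$ do exactly the job of dominating the $\log n$-type fluctuations by the $R^*_r$-scale signal. Once the expectation and variance terms are controlled at scale $R^*_r$, choosing constants to match $c = \frac{1}{200}$ and $C_t = (144/t)^2$ is a routine bookkeeping exercise.
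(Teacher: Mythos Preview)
Your primary plan has a genuine gap at the concentration step. McDiarmid gives $\P\{Z \ge \E Z + u\} \le \exp(-2nu^2)$, so with $u$ of order $tR^*_r$ the exponent is $cn(tR^*_r)^2 = cnt^2(R^*_r)^2$, \emph{not} $cnt^2 R^*_r$. Since the interesting regime is $R^*_r \asymp \frac{\log n}{n}$, this is off by a full factor of $R^*_r$ and would yield only a constant-probability bound, not the $1 - 2n^{-c'}$ needed downstream. Bounded-differences concentration is variance-oblivious and cannot produce the $R^*_r$ (rather than $(R^*_r)^2$) in the exponent; you need a Bernstein- or Talagrand-type inequality whose deviation term scales with the variance proxy $\sup_f R(f)(1-R(f)) \le R^*_r$. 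You do gesture at this in your option (ii), but you frame Talagrand/Bousquet as a fix for the \emph{localization} of the expectation bound, when in fact it is the replacement for McDiarmid that the concentration step absolutely requires.

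With that correction, your proposal is essentially the paper's argument. The paper bounds $\E Z$ by the localized VC bound of Theorem~13.7 in Boucheron--Lugosi--Massart (generalized to independent but non-identical samples), which gives $\E Z \le 72\sqrt{V R^*_r \log n /(2n)}$; the condition $nR^*_r \ge (144/t)^2\tfrac{V}{2}\log n$ then forces $\E Z \le \tfrac{t}{2}R^*_r$. For the tail, the paper uses Theorem~12.9 of Boucheron--Lugosi--Massart (a Talagrand-type bound for suprema of empirical processes) with variance proxy $\rho^2 \le nR^*_r$ and $v = 2n\E Z + \rho^2 \le 6nR^*_r$, yielding exactly $\exp(-\tfrac{1}{200}nR^*_r t^2)$. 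Your symmetrization-plus-chaining route to $\E Z$ is equivalent in spirit (indeed that is how Theorem~13.7 is proved, and the self-bounding/fixed-point issue you flag is handled inside that theorem), but the concentration must go through a variance-adaptive inequality, not McDiarmid.
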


The proof is given in Appendix~\ref{app:proof_of_thm_high_prob} and modifies the VC-type bounds derived for i.i.d.\ settings. This theorem is useful because the bounds rely on $R_r^*$; i.e., they are adaptive to the problem, compared to the traditional $\O\(\frac{1}{\sqrt{n}}\)$ distribution-independent bound. However, note that Lemma~\ref{thm:highProb} requires the mass $R^*_r$ lying around the true mode to be sufficiently large.

In our later analysis of multidimensional mean estimators and linear regression estimators, we will derive variants of Lemma~\ref{thm:highProb} for different function classes.

\subsection{Modal interval estimator}
\label{SecModal}

The following result provides a high-probability bound on the error of the modal interval estimator by using Lemma~\ref{thm:highProb} on intervals:
\begin{lemma}
Let $r$ be such that $R^*_{r} \geq C_{0.5t}\(\frac{\log n}{n}\)$. Then with probability at least $1 - 2\exp(-c'nR^*_r t^2)$, we have $R(f_{\muhat_{M,r},r}) \geq (1 - t)R^*_{r}$.
\label{thm:modalIntervalMain}
\end{lemma}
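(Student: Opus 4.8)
The plan is to combine the definition of $\muhat_{M,r}$ as an empirical maximizer with the uniform concentration bound of Lemma~\ref{thm:highProb}, applied with the parameter $\frac{t}{2}$ in place of $t$. First I would invoke Lemma~\ref{thm:highProb} at level $\frac{t}{2}$: since $R^*_r \ge C_{t/2}\frac{\log n}{n}$ by hypothesis (here I am reading $C_{0.5t}$ as $C_{t/2}$), the event
\[
\cE := \left\{ \sup_{f \in \cH_r} |R_n(f) - R(f)| < \frac{t}{2} R^*_r \right\}
\]
holds with probability at least $1 - 2\exp(-c n R^*_r (t/2)^2) = 1 - 2\exp(-c' n R^*_r t^2)$ with $c' = c/4$. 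All the remaining work is deterministic on $\cE$.

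On $\cE$, the argument is a short chain of inequalities. Let $f^* = f_{0,r}$, which attains $R(f^*) = R^*_r$ by symmetry and unimodality, and let $\hat f = f_{\muhat_{M,r}, r}$. Because $\muhat_{M,r}$ maximizes $R_n$ over centers, $R_n(\hat f) \ge R_n(f^*)$. Then
\[
R(\hat f) \;>\; R_n(\hat f) - \frac{t}{2} R^*_r \;\ge\; R_n(f^*) - \frac{t}{2} R^*_r \;>\; R(f^*) - t R^*_r \;=\; (1-t) R^*_r,
\]
where the first and third steps use the defining inequality of $\cE$ applied to $\hat f \in \cH_r$ and $f^* \in \cH_r$ respectively, and the middle step is the empirical optimality of $\muhat_{M,r}$. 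This is exactly the claimed bound $R(f_{\muhat_{M,r},r}) \ge (1-t) R^*_r$.

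I expect no serious obstacle here — the lemma is essentially a one-line consequence of uniform convergence plus the variational definition of the estimator, which is the standard empirical-risk-minimization argument. The only points requiring any care are bookkeeping ones: matching the constant $C_{0.5t}$ in the hypothesis to the constant $C_{t/2}$ produced by Lemma~\ref{thm:highProb} when run at level $\frac{t}{2}$, tracking how the exponent constant $c$ degrades to $c' = c/4$ under the rescaling $t \mapsto t/2$, and confirming that $f^* = f_{0,r} \in \cH_r$ and $\hat f = f_{\muhat_{M,r}, r} \in \cH_r$ so that the uniform bound applies to both. One should also note that $\muhat_{M,r}$ is well-defined (the supremum of $R_n(f_{x,r})$ over $x$ is attained, since $R_n(f_{x,r})$ takes finitely many values as $x$ varies), which is implicit in the estimator's definition via $\arg\max$.
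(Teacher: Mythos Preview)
Your proposal is correct and follows essentially the same approach as the paper: both invoke Lemma~\ref{thm:highProb} with parameter $t/2$ to get uniform concentration $|R_n(f)-R(f)|\le \frac{t}{2}R^*_r$ over $\cH_r$, then chain the empirical optimality of $\muhat_{M,r}$ with two applications of the concentration bound (once for $f_{\muhat_{M,r},r}$, once for $f_{0,r}$) to obtain $R(f_{\muhat_{M,r},r})\ge (1-t)R^*_r$. Your bookkeeping of the constants ($C_{0.5t}=C_{t/2}$ and $c'=c/4$) matches the paper exactly.
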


If $r$ is small, then $R(f_{x,r})$ behaves like a (scaled) density of the mixture distribution $\overline{P}$. Lemma~\ref{thm:modalIntervalMain} states that the density of $\overline{P}$ at the empirical mode, $\widehat{\mu}_{M,r}$, is within a constant factor of the density at $\mustar$.

We can then derive the following error bound for the modal interval estimator: 
\begin{theorem}
\label{cor:modal}
Let $r$ be such that $R^*_r = \Omega\left(\frac{\log n}{n}\right)$. Suppose $r'$ is such that $R(f_{r', r}) < \frac{R^*_r}{2}$. Then with probability at least $1 - 2\exp(-c'nR^*_r)$, we have
\begin{equation}
\label{EqnMuhatBd1}
|\widehat{\mu}_{M,r}| \le r'.
\end{equation}
In particular, we can always choose $r' = \frac{2r}{R^*_r}$ to obtain the bound
\begin{align}
\label{EqnMuhatBd}
|\widehat{\mu}_{M,r}| \leq  \frac{2r}{R^*_r}.
\end{align}
\end{theorem}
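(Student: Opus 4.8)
The plan is to combine Lemma~\ref{thm:modalIntervalMain} with the monotonicity properties of $R(f_{x,r})$ collected in Lemma~\ref{lemma:properties}. First, apply Lemma~\ref{thm:modalIntervalMain} with $t = \tfrac12$ (so that the hypothesis $R^*_r \geq C_{0.5t}(\log n /n) = C_{0.25}(\log n/n)$ matches the assumed $R^*_r = \Omega(\log n/n)$, absorbing the constant). This gives an event of probability at least $1 - 2\exp(-c'nR^*_r)$ on which
\begin{equation*}
R(f_{\widehat{\mu}_{M,r},\,r}) \ge \tfrac12 R^*_r.
\end{equation*}
Work on this event for the rest of the argument.

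Next I would argue by contradiction against~\eqref{EqnMuhatBd1}. Suppose $|\widehat{\mu}_{M,r}| > r'$. By Lemma~\ref{lemma:properties}\ref{prop:1}, $R(f_{x,r})$ is nonincreasing in $|x|$ for fixed $r$, so $|\widehat{\mu}_{M,r}| > r'$ forces $R(f_{\widehat{\mu}_{M,r},r}) \le R(f_{r',r})$. Combining this with the displayed lower bound yields $R(f_{r',r}) \ge \tfrac12 R^*_r$, contradicting the hypothesis $R(f_{r',r}) < \tfrac12 R^*_r$. Hence $|\widehat{\mu}_{M,r}| \le r'$, which is~\eqref{EqnMuhatBd1}. (A small point to be careful about: whether the inequality is strict or not at $|x| = r'$; since the hypothesis on $r'$ is a strict inequality, the contradiction goes through regardless, and one concludes $|\widehat{\mu}_{M,r}| \le r'$.)

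Finally, to get the explicit bound~\eqref{EqnMuhatBd}, I need to exhibit an admissible choice of $r'$, namely $r' = 2r/R^*_r$. The claim is that this $r'$ satisfies $R(f_{r',r}) < \tfrac12 R^*_r$. Observe that $r' = 2r/R^*_r \ge 2r > r$ since $R^*_r \le 1$, so $r < r'$ and Lemma~\ref{lemma:properties}\ref{prop:4} applies with the roles ``$r' \leftarrow r$, $r \leftarrow r'$'' appropriately matched; it gives $R(f_{r',r}) < \frac{r}{r'} R^*_{r'}$. Now bound $R^*_{r'} \le 1$ (it is a probability) and substitute $r/r' = R^*_r/2$ to get $R(f_{r',r}) < R^*_r / 2$, as required. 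Then~\eqref{EqnMuhatBd1} with this $r'$ gives $|\widehat{\mu}_{M,r}| \le 2r/R^*_r$.

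The only mild obstacle is making sure the constants line up when invoking Lemma~\ref{thm:modalIntervalMain} — in particular that $R^*_r = \Omega(\log n/n)$ is strong enough to imply $R^*_r \ge C_{0.25}(\log n/n)$ for $n$ large (which it is, by definition of $\Omega$, up to adjusting the implied constant), and tracking that the failure probability $2\exp(-c'nR^*_r t^2)$ with $t=\tfrac12$ becomes $2\exp(-c'nR^*_r/4)$, which is of the claimed form after renaming $c'$. Everything else is a direct application of the monotonicity and sub-density estimates already established in Lemma~\ref{lemma:properties}.
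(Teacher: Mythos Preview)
Your proposal is correct and follows essentially the same approach as the paper: apply Lemma~\ref{thm:modalIntervalMain} with $t=\tfrac12$ to get $R(f_{\widehat{\mu}_{M,r},r}) \ge \tfrac12 R^*_r$, use the monotonicity in Lemma~\ref{lemma:properties}\ref{prop:1} to deduce $|\widehat{\mu}_{M,r}| \le r'$, and then verify the choice $r' = 2r/R^*_r$ via Lemma~\ref{lemma:properties}\ref{prop:4} together with $R^*_{r'} \le 1$. Your treatment of the constants and the strict-versus-nonstrict inequality is also in line with the paper's handling.
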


The proofs of Lemma~\ref{thm:modalIntervalMain} and Theorem~\ref{cor:modal} are contained in Appendix~\ref{app:proof_of_theorem_modal}, and proceed by using Lemma~\ref{thm:highProb} to bound the ratio between $R(f_{\widehat{\mu}_{M,r}, r})$ and $R^*_r$, and then using Lemma~\ref{lemma:properties} to turn this into a deviation bound on $|\widehat{\mu}_{M,r}|$. Although the bound~\eqref{EqnMuhatBd} in Theorem~\ref{cor:modal} is simple to state, it may be looser than the bound~\eqref{EqnMuhatBd1}.

\begin{remark*}
\label{RemModal}
Importantly, by Lemma~\ref{lemma:properties}\ref{prop:5}, we know that the choice $r = \sigma_{(C\log n)}$ always guarantees the condition $R^*_r = \Omega\left(\frac{\log n}{n}\right)$. Hence, inequality~\eqref{EqnMuhatBd} implies that
\begin{equation}
\label{EqnRemModal}
|\muhat_{M,r}| \le \frac{2\sigma_{(C\log n)}}{R_r^*} \le \frac{2n\sigma_{(C\log n)}}{\log n},
\end{equation}
with a similar inequality involving $q_{(C'\log n)}$. Note that this bound holds regardless of the magnitude of the standard deviations of the latter $n - C\log n$ mixture components.

At the same time, one might be wary of the fact that the bound in inequality~\eqref{EqnRemModal} could \emph{increase} with $n$ if we fix $q_{(C\log n)}$; for i.i.d.\ data, $R^*_r = \Theta(1)$, so even the first expression in the bound is of constant order. This is rather alarming, compared to the $\O(n^{-1/2})$ error rate of the median. However, it should be noted that if the variances of the mixture components increase sufficiently rapidly with $n$, even the error rate of the MLE in the Gaussian case (which knows the distribution of each sample) will have a diverging error rate. Thus, although the error bounds of the modal interval estimator in Theorem~\ref{cor:modal} may be rather unsatisfactory in the case of i.i.d.\ data, they can lead to more meaningful error bounds when the mixture distribution involves a sizable portion of high-variance points. We will explore the question of optimality in more detail in Section~\ref{SecOptimality} below.
\end{remark*}

We now revisit the examples from Section~\ref{sec:behavior_r_k} and calculate the bounds that follow from Lemma~\ref{thm:modalIntervalMain} by choosing $r = r_{C\log n}$ for a large constant $C > 0$. We also mention the cases where the bound~\eqref{EqnMuhatBd} is weaker than the bound~\eqref{EqnMuhatBd1}. The proof of the following proposition is contained in Appendix~\ref{app:exmPropModal}.

\begin{proposition}
Suppose $r = r_{C \log n}$. We have the following bounds for $|\widehat{\mu}_{M,r}|$:
\begin{enumerate}
	\item For Example~\ref{exm:IID} (i.i.d.\ observations), we have $|\widehat{\mu}_{M,r}| \leq \Theta(\sigma)$, w.h.p.
	\item For Example~\ref{exm:QuadVar} (quadratic variance), we have $|\widehat{\mu}_{M,r}| \leq \O(n^ \epsilon)$, w.h.p., for any $\epsilon > 0$. Inequality~\eqref{EqnMuhatBd}  results in a weaker bound of the form $\O(n)$, w.h.p.
	\item For Example~\ref{exm:alpha-mix} ($\alpha$-mixture distributions), we have
	\begin{align*}
			    |\widehat{\mu}_{M,r}| = \begin{cases}
			    				\O(n^\alpha), & \text{if } \alpha < 1 \\
			    				\O(1), & \text{if } \alpha \ge 1,
 			    				\end{cases}
    \end{align*} w.h.p.
	 For $\alpha \geq  1$, inequality~\eqref{EqnMuhatBd} results in a weaker bound of the form $\O(n)$.
\end{enumerate}
\label{prop:exmModal}	
\end{proposition}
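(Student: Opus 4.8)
The plan is, for each example, first to pin down the window $r=r_{C\log n}$ and the mass $R^*_r$ around the mode, and then to feed these into Theorem~\ref{cor:modal} (or, where that bound is too lossy, into Lemma~\ref{thm:modalIntervalMain} directly) together with the monotonicity property of Lemma~\ref{lemma:properties}\ref{prop:1}. Since $\overline P$ has a density we have $R^*_{r_{C\log n}}=\tfrac{C\log n}{n}$ exactly, while the order of $r_{C\log n}$ itself is the $C\log n$-analogue of Proposition~\ref{prop:exmple_r_k}, obtained by the same short computation as in Appendix~\ref{app:prop_r_k}: it is $\Theta(\sigma\log n/n)$ for Example~\ref{exm:IID}, $\Theta(1)$ for Example~\ref{exm:QuadVar}, and (provided $c$ is large relative to $C$) $\Theta(\log n/n^{1-\alpha})$ or $\Theta(1)$ for Example~\ref{exm:alpha-mix} according as $\alpha<1$ or $\alpha\ge1$.

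For Example~\ref{exm:IID} and for Example~\ref{exm:alpha-mix} with $\alpha<1$, the crude bound~\eqref{EqnMuhatBd} already suffices: $|\widehat\mu_{M,r}|\le\tfrac{2r}{R^*_r}=\tfrac{2n\,r_{C\log n}}{C\log n}$, which after substituting $r_{C\log n}$ equals $\Theta(\sigma)$ and $\Theta(n^\alpha)$, respectively. For Example~\ref{exm:QuadVar} and for Example~\ref{exm:alpha-mix} with $\alpha\ge1$ the same substitution gives only $\tfrac{2r}{R^*_r}=\Theta(n/\log n)=\O(n)$, the weaker bound mentioned in the statement, so there one must instead exhibit a small $r'$ with $R(f_{r',r})$ below the relevant fraction of $R^*_r$ and invoke~\eqref{EqnMuhatBd1}. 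For Example~\ref{exm:alpha-mix} with $\alpha\ge1$ this is short: decomposing $R(f_{r',r})$ into the $\tfrac1n$-weighted contributions of the $c\log n$ unit-variance components, which decay like $e^{-\Theta((r')^2)}$ in $r'$, and of the remaining variance-$n^{2\alpha}$ components, whose total contribution is $\O(r_{C\log n}/n^\alpha)=\O(1/n)=o(R^*_r)$ uniformly in $r'$, one sees $R(f_{r',r})<\tfrac12 R^*_r$ once $r'$ exceeds a suitable constant, so~\eqref{EqnMuhatBd1} gives $|\widehat\mu_{M,r}|=\O(1)$.

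The delicate case is Example~\ref{exm:QuadVar}, where $r=r_{C\log n}=\Theta(1)$. Fix $\epsilon>0$, put $r'=n^\epsilon$, and consider the deficit $R^*_r-R(f_{r',r})=\tfrac1n\sum_{i=1}^n\big[\mprob(X_i\in[-r,r])-\mprob(X_i\in[r'-r,r'+r])\big]$, each bracket being nonnegative by Lemma~\ref{lemma:properties}\ref{prop:1} applied to $P_i=\cN(0,c^2i^2)$. Keeping only the components with $Kr\le ci\le r'/K$ for a large absolute constant $K$, the second probability is exponentially small while the first is at least $(1-\eta_K)\tfrac{2r}{\sqrt{2\pi}ci}$ with $\eta_K\to0$ as $K\to\infty$ (elementary bounds on the Gaussian density); summing $\sum 1/i$ over this window gives $n\big(R^*_r-R(f_{r',r})\big)\ge(1-2\eta_K)\tfrac{2r}{\sqrt{2\pi}c}\big(\ln(r'/r)-\O(1)\big)$. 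The same density bounds give $nR^*_r=\tfrac{2r}{\sqrt{2\pi}c}(\ln n+\O(1))$, and since $nR^*_r=C\log n$ this forces $\tfrac{2r}{\sqrt{2\pi}c}=C(1+o(1))$; plugging back, $R(f_{r',r})\le\big(1-(1-2\eta_K)\epsilon+o(1)\big)R^*_r$. Choosing $K$ large, then a slack $t<(1-2\eta_K)\epsilon$, then a constant $C\ge(288/t)^2$ so that the hypothesis of Lemma~\ref{thm:highProb}, hence of Lemma~\ref{thm:modalIntervalMain}, holds for this $t$, Lemma~\ref{thm:modalIntervalMain} yields $R(f_{\widehat\mu_{M,r},r})\ge(1-t)R^*_r$ with probability $1-2n^{-c'Ct^2}=1-o(1)$; and Lemma~\ref{lemma:properties}\ref{prop:1} then finishes, since for $n$ large $R(f_{r',r})<(1-t)R^*_r\le R(f_{\widehat\mu_{M,r},r})$ forces $|\widehat\mu_{M,r}|\le r'=\O(n^\epsilon)$.

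I expect the main obstacle to be exactly this constant-tracking in Example~\ref{exm:QuadVar}: one needs the expansions of $R^*_r$ and of $R(f_{r',r})$ sharp enough that their common prefactor $\tfrac{2r}{\sqrt{2\pi}c}$ cancels and the ratio is seen to be $1-\epsilon+o(1)$, not merely something below $\tfrac12$. This is also why one cannot just quote the $t=\tfrac12$ form packaged in Theorem~\ref{cor:modal}---which here would only give $\O(\sqrt n)$---but must return to Lemma~\ref{thm:modalIntervalMain} with a small, $\epsilon$-dependent slack and a correspondingly large $C$; the other examples reduce to routine substitutions.
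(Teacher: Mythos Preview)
Your proposal is correct and follows essentially the same approach as the paper: crude bound~\eqref{EqnMuhatBd} for Example~\ref{exm:IID} and Example~\ref{exm:alpha-mix} with $\alpha<1$; for Example~\ref{exm:alpha-mix} with $\alpha\ge1$, a decomposition into low- and high-variance components to verify $R(f_{r',r})<\tfrac12 R^*_r$ at a constant $r'$; and for Example~\ref{exm:QuadVar}, a harmonic-sum estimate over the components with $ci\lesssim r'$ showing the deficit $R^*_r-R(f_{r',r})$ is of order $\epsilon\cdot R^*_r$. You are in fact more explicit than the paper about one point: the paper simply asserts ``it suffices to show $R^*_r-R(f_{r',r})\ge C'R^*_r$ for some $C'>0$ depending on $\epsilon$,'' without saying why a $C'$ possibly below $\tfrac12$ is enough, whereas you correctly trace this back to Lemma~\ref{thm:modalIntervalMain} with a small $\epsilon$-dependent slack $t$ (and a correspondingly large $C$). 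The remark that the packaged $t=\tfrac12$ form of Theorem~\ref{cor:modal} would only give a fixed power of $n$---your $\O(\sqrt n)$ is a heuristic figure, the exact exponent depends on the constants---rather than $\O(n^\epsilon)$ for arbitrary $\epsilon$, is the right diagnosis.
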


\begin{remark*}
As discussed in Remark~\ref{RemModal} above, the guarantees for the modal interval estimator are somewhat unsatisfactory for i.i.d.\ data, since Proposition~\ref{prop:exmModal}(i) gives an error rate of $\Theta(\sigma)$, rather than the optimal rate $\Theta\left(\frac{\sigma}{\sqrt{n}}\right)$ achievable by the sample mean. On the other hand, Proposition~\ref{prop:exmModal} shows that for other problem settings with more widely varying variances---such as the $\alpha$-mixture with $\alpha \ge 1$---the modal interval estimator results in constant error, whereas the sample mean would have $\Theta(n^{\alpha - 0.5})$ error. These differences are summarized in more detail in Table~\ref{table: ex123} below.
\end{remark*}

The modal interval estimator is a ``local'' estimator that only considers the value of $\overline{P}_n$ in small windows.
As we increase the variance of noisy points, the distribution $\overline{P}$ approaches $0$ around $\mustar$.
The modal interval estimator makes mistakes when $\overline{P}$ is flat after normalization, meaning that the density at $x + \mustar$ is within a $(1 - \epsilon)$-factor of its density at $\mustar$, for $\epsilon = o(1)$. 
If this is the case, $\overline{P}_n$ might assign higher mass at $x+\mustar$ than $\mustar$ due to stochasticity introduced by sampling, so a local method would mistakenly choose $x+\mustar$ over $\mustar$.

More concretely, consider the setting of Example~\ref{exm:alpha-mix}. If an adversary tried to alter the estimator by making the variance of the points very high $(\alpha \gg 1)$, then although $\overline{P}$ would approach $0$, the normalized density would not be flat.
An extreme example of this can be seen when variance of noisy points is ``$\infty$'': Near $\mustar$, the distribution $\overline{P}$ would behave like $\cN(\mustar,1)$ scaled by $\O\(\frac{\log n}{n}\)$, which is \textit{not} flat after normalization although $\overline{P}$ approaches $0$ very rapidly, so that the mean or median would behave poorly. As Proposition~\ref{prop:exmModal} shows, the modal interval estimator would only suffer $\O(1)$ error in this case.

\begin{remark*}
\label{RemPhase}
Examining the bound in Proposition~\ref{prop:exmModal} for Example~\ref{exm:alpha-mix}, we see the possible emergence of a ``phase transition'' phenomenon: For $\alpha < 1$, the modal interval estimator has error growing with $n$, whereas for $\alpha \ge 1$, the modal interval estimator only incurs constant error. This suggests that for $\alpha < 1$, high-variance points are more effectively hidden within the mixture distribution, so the accuracy of the modal interval estimator is more severely compromised than in the case when $\alpha \ge 1$, where the modal interval estimator can distinguish between low-variance and high-variance points. This phase transition phenomenon is established rigorously in Section~\ref{SecPhase} below, where we prove a lower bound of $\Omega(n^\alpha)$ in the case when $\alpha < 1$.
\end{remark*}

Finally, note that the modal interval estimator $\widehat{\mu}_{M,r}$ is an $M$-estimator~\cite{Hub64} of the form $\widehat{\mu} \in \arg\min_\mu \left\{\frac{1}{n} \sum_{i=1}^n g(x_i - \mu)\right\}$, with loss function $g(x) = -f_{0,r}(x)$. Clearly, the loss is nonconvex; however, the modal interval estimator is nonetheless computable, since the overall objective function is piecewise constant, with transitions lying only at the $n$ data points. Whereas a more straightforward analysis of convex $M$-estimators (e.g., Huber) would yield somewhat similar bounds on estimation error, such arguments would generally require tail assumptions on the component distributions $\{P_i\}$, which we avoid altogether in our analysis of the modal interval estimator.

\subsubsection{Adaptive method for choosing $r$}

Note that by Lemma~\ref{lemma:properties}\ref{prop:3}, the bound in Theorem~\ref{cor:modal} is tighter for smaller values of $r$. Thus, the choice of $r$ which optimizes the bound satisfies $R^*_r = C\left(\frac{\log n}{n}\right)$, yielding the bound $|\widehat{\mu}_{M,r}| = \O\( \frac{nr_{C\log n}}{\log n}\)$. However, choosing a suitable $r$ is challenging because we do not know $\overline{P}$:
If $r$ is too small, then $R^*_r$ might not be large enough and the bounds might not hold, whereas if $r$ is too large, then the resulting bound is loose.

Fortunately, an estimator with near-optimal performance may be obtained via Lepski's method~\cite{Lep91}. The basic steps are as follows: 
 Define $r^* = r_{C_{0.25} \log n}$ to be the interval width satisfying $R^*_{r^*} = C_{0.25}\left(\frac{\log n}{n}\right)$, and suppose we have rough initial estimates $r_{\min}$ and $r_{\max}$ such that $r_{\min} \le r^* \le r_{\max}$. Define $r_j := r_{\min} 2^j$, and define
\begin{equation*}
\scriptJ := \left\{j \ge 1: r_{\min} \le r_j < 2 r_{\max}\right\}.
\end{equation*}
We then define the index $j_*$ to be
\begin{equation*}
\min\left\{j \in \scriptJ: \forall i > j \text{ s.t. } i \in \scriptJ, |\muhat_{M, r_i} - \muhat_{M, r_j}| \le \frac{4nr_i}{C_{0.25} \log n}\right\},
\end{equation*}
which may be calculated using pairwise comparisons of the modal interval estimator computed over the gridding of $[r_{\min}, r_{\max}]$. We define $j_* = \infty$ if the set is empty; as proved in the theorem below, we have $j_* < \infty$, w.h.p. We then have the following result, proved in Appendix~\ref{app:prfLepski}:
\begin{theorem}
\label{ThmLepski}
With probability at least $1 - 2\left(1+\log_2\left(\frac{2r_{\max}}{r_{\min}}\right)\right) \exp(-c' \log n)$, we have
\begin{equation}
\label{EqnModalAdapt}
|\muhat_{M, r_{j_*}}| \le \frac{12nr^*}{C_{0.25} \log n}.
\end{equation}
\end{theorem}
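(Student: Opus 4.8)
\emph{Proof sketch (Lepski's method).} This is a standard two-sided Lepski argument. The plan is to (i) pin down the ``oracle'' scale $r_{j^\dagger}$, namely the smallest dyadic grid point that is at least $r^*$; (ii) use Theorem~\ref{cor:modal} to show that at every grid scale $\ge r^*$ the modal interval estimator already satisfies $|\muhat_{M,r_j}| \le \frac{2nr_j}{C_{0.25}\log n}$, which forces $j^\dagger$ to belong to the admissible set defining $j_*$ and hence $j_* \le j^\dagger < \infty$; and (iii) chain the error bound at scale $r_{j^\dagger}$ with the admissibility comparison $|\muhat_{M,r_{j_*}} - \muhat_{M,r_{j^\dagger}}| \le \frac{4nr_{j^\dagger}}{C_{0.25}\log n}$ and the fact that $r_{j^\dagger} \le 2r^*$ to conclude.

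In more detail: let $j^\dagger$ be the smallest index of $\scriptJ$ with $r_{j^\dagger} \ge r^*$; since $r_{\min} \le r^* \le r_{\max}$ and the grid doubles, one checks that $j^\dagger \in \scriptJ$ and $r^* \le r_{j^\dagger} \le 2r^*$. For every $j \in \scriptJ$ with $j \ge j^\dagger$ we have $r_j \ge r^*$, so by monotonicity of $R^*_r$ in $r$ (Lemma~\ref{lemma:properties}\ref{prop:2}), $R^*_{r_j} \ge R^*_{r^*} = C_{0.25}(\log n/n)$, which meets the hypothesis $R^*_{r_j} = \Omega(\log n/n)$ of Theorem~\ref{cor:modal}. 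Applying that theorem (in the form~\eqref{EqnMuhatBd}) at each such scale and union-bounding over the at most $1 + \log_2(2r_{\max}/r_{\min})$ elements of $\scriptJ$ — each with failure probability at most $2\exp(-c'nR^*_{r_j}) \le 2\exp(-c'C_{0.25}\log n)$, which we absorb into $2\exp(-c'\log n)$ by renaming constants — we obtain an event $\mathcal{E}$ of the stated probability on which
\begin{equation*}
|\muhat_{M,r_j}| \ \le\ \frac{2r_j}{R^*_{r_j}} \ \le\ \frac{2r_j}{R^*_{r^*}} \ =\ \frac{2nr_j}{C_{0.25}\log n} \qquad \text{for all } j \in \scriptJ, \ j \ge j^\dagger.
\end{equation*}

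Now work on $\mathcal{E}$. The index $j^\dagger$ is admissible for $j_*$: for any $i \in \scriptJ$ with $i > j^\dagger$, we have $r_i \ge r_{j^\dagger} \ge r^*$, so both $|\muhat_{M,r_i}|$ and $|\muhat_{M,r_{j^\dagger}}|$ are at most $\frac{2nr_i}{C_{0.25}\log n}$, whence $|\muhat_{M,r_i} - \muhat_{M,r_{j^\dagger}}| \le \frac{4nr_i}{C_{0.25}\log n}$. Hence the admissible set is nonempty and $j_* \le j^\dagger$ (in particular $j_* < \infty$). If $j_* = j^\dagger$, then $|\muhat_{M,r_{j_*}}| \le \frac{2nr_{j^\dagger}}{C_{0.25}\log n} \le \frac{4nr^*}{C_{0.25}\log n}$. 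If $j_* < j^\dagger$, then taking $i = j^\dagger$ in the definition of $j_*$ gives $|\muhat_{M,r_{j_*}} - \muhat_{M,r_{j^\dagger}}| \le \frac{4nr_{j^\dagger}}{C_{0.25}\log n}$, so by the triangle inequality and the error bound at scale $r_{j^\dagger}$,
\begin{equation*}
|\muhat_{M,r_{j_*}}| \ \le\ \frac{4nr_{j^\dagger}}{C_{0.25}\log n} + \frac{2nr_{j^\dagger}}{C_{0.25}\log n} \ =\ \frac{6nr_{j^\dagger}}{C_{0.25}\log n} \ \le\ \frac{12nr^*}{C_{0.25}\log n},
\end{equation*}
using $r_{j^\dagger} \le 2r^*$; this is exactly~\eqref{EqnModalAdapt}.

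Beyond bookkeeping there is little to prove here: the ``bias''-type control (accuracy at every scale $\ge r^*$) is supplied directly by Theorem~\ref{cor:modal}, and the ``variance''-type control is just the union bound, which we only need over the scales $j \ge j^\dagger$ — the scales below $r^*$ never enter, since every comparison is routed through $r_{j^\dagger}$. I expect the only delicate points to be (a) verifying $j^\dagger \in \scriptJ$ and $r_{j^\dagger} \le 2r^*$ in all boundary configurations (which may require a harmless enlargement of $r_{\max}$), and (b) matching the numerical constant $12$ and the logarithmic factor $1 + \log_2(2r_{\max}/r_{\min})$ exactly with the grid cardinality.
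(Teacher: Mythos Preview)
Your proof is correct and follows essentially the same approach as the paper's own argument: both define the oracle index $j^\dagger$ (the paper calls it $j'$) as the smallest grid index with $r_{j^\dagger}\ge r^*$, invoke Theorem~\ref{cor:modal} at every scale $j\ge j^\dagger$ via a union bound over $\scriptJ$, deduce that $j^\dagger$ is admissible so $j_*\le j^\dagger$, and then chain the comparison inequality with the error bound at scale $r_{j^\dagger}$ and $r_{j^\dagger}\le 2r^*$ to obtain the factor $12$. Your write-up is in fact slightly more explicit than the paper's in spelling out the intermediate inequality $\frac{2r_j}{R^*_{r_j}}\le \frac{2r_j}{R^*_{r^*}}=\frac{2nr_j}{C_{0.25}\log n}$.
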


Note that the cost of using Lepski's method is a factor of 6 in the estimation error. Of course, the validity of the method requires the availability of the rough bounds $r_{\min}$ and $r_{\max}$. A natural way to obtain rough bounds on $r^*$ from the data is to use the shortest gap estimator, which returns the shortest interval containing at least $C_{0.25} \log n$ points. We can again use Lemma~\ref{thm:highProb} to show that $\widehat{r}_k$ does not fluctuate too wildly from its empirical counterpart:

\begin{lemma}
For $k \geq 2C_{0.5}\log n$, with probability at least $1 - 2\exp(-c k)$, we have $r_{k/2} \leq \widehat{r}_{k} \leq r_{ 2k}$.
\label{lemma:shorthlength}
\end{lemma}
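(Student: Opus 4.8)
The plan is to invoke the uniform concentration bound of Lemma~\ref{thm:highProb} with $t = \tfrac12$ at \emph{two} radii — the small one $r_{k/2}$ and the large one $r_{2k}$ — and then read off the two inequalities by comparing the empirical quantity $\sup_x R_n(f_{x,r})$ with the threshold $k/n$ at these radii. Using two radii is essential: the upper bound $\widehat r_k \le r_{2k}$ only needs control at $r_{2k}$, whereas the lower bound $\widehat r_k \ge r_{k/2}$ needs a deviation bound sharper than $\tfrac{k}{4n}$, which is available only at the smaller radius $r_{k/2}$; applying Lemma~\ref{thm:highProb} once at $r_{2k}$ would yield only the coarser bound $\tfrac{k}{n}$ and would not close the lower bound.

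First I would verify the hypothesis of Lemma~\ref{thm:highProb}. Since $\overline P$ has a density, $R^*_{r_{k/2}} = \tfrac{k}{2n}$ and $R^*_{r_{2k}} = \tfrac{2k}{n}$ (we may assume $2k \le n$, the regime of interest). The requirement $R^*_{r_{k/2}} \ge \tfrac{C_{0.5}\log n}{n}$ is then exactly $k \ge 2C_{0.5}\log n$, i.e.\ the hypothesis of the lemma; the corresponding requirement at $r_{2k}$ is weaker still. Hence Lemma~\ref{thm:highProb} with $t = \tfrac12$, applied at each of $r_{k/2}$ and $r_{2k}$, together with a union bound, shows that with probability at least $1 - 2\exp(-ck)$ — after absorbing the two $\exp(-c(\cdot)k)$ terms and the union-bound factor into a single constant, which is legitimate because $k \ge 2C_{0.5}\log n$ is bounded below by a large absolute constant — the event $\mathcal E$ holds, on which $\sup_{f \in \cH_{r_{2k}}} |R_n(f) - R(f)| < \tfrac12 R^*_{r_{2k}} = \tfrac{k}{n}$ and $\sup_{f \in \cH_{r_{k/2}}} |R_n(f) - R(f)| < \tfrac12 R^*_{r_{k/2}} = \tfrac{k}{4n}$.

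On $\mathcal E$, the upper bound follows by testing the single function $f_{0,r_{2k}} \in \cH_{r_{2k}}$: we get $R_n(f_{0,r_{2k}}) > R(f_{0,r_{2k}}) - \tfrac{k}{n} = \tfrac{2k}{n} - \tfrac{k}{n} = \tfrac{k}{n}$, so $\sup_x R_n(f_{x,r_{2k}}) \ge \tfrac{k}{n}$, and thus $r_{2k}$ belongs to the set whose infimum defines $\widehat r_k$, giving $\widehat r_k \le r_{2k}$. For the lower bound, fix any $r' < r_{k/2}$ and any $x \in \R$. Then $f_{x,r'} \in \cH_{r_{k/2}}$, so on $\mathcal E$ we have $R_n(f_{x,r'}) < R(f_{x,r'}) + \tfrac{k}{4n} \le R^*_{r'} + \tfrac{k}{4n}$; since $r' < r_{k/2}$, the definition of $r_{k/2}$ forces $R^*_{r'} < \tfrac{k}{2n}$, whence $R_n(f_{x,r'}) < \tfrac{3k}{4n} < \tfrac{k}{n}$. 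As this holds for all $x$ and all $r' < r_{k/2}$, the set $\{ r : \sup_x R_n(f_{x,r}) \ge \tfrac{k}{n}\}$ is contained in $[r_{k/2}, \infty)$, so $\widehat r_k \ge r_{k/2}$, which completes the argument.

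The only genuinely delicate point is the choice of radii together with the arithmetic of the error terms: one must apply Lemma~\ref{thm:highProb} at the smaller radius $r_{k/2}$ to make the deviation bound $\tfrac{k}{4n}$ small enough that $R^*_{r'} + \tfrac{k}{4n} < \tfrac{k}{n}$ for every $r' < r_{k/2}$. Everything else is routine: bookkeeping with infima and with strict versus non-strict inequalities, and collapsing the constants from the two applications of Lemma~\ref{thm:highProb} and the union bound into the single failure probability $2\exp(-ck)$ claimed in the statement.
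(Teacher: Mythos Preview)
Your argument is correct and follows essentially the same route as the paper's proof: apply Lemma~\ref{thm:highProb} with $t=\tfrac12$ at radius $r_{k/2}$ to get $\sup_{f\in\cH_{r_{k/2}}} R_n(f) \le \tfrac{3k}{4n} < \tfrac{k}{n}$ for the lower bound, and show the interval $f_{0,r_{2k}}$ has empirical mass exceeding $\tfrac{k}{n}$ for the upper bound. The one difference is that for the upper bound the paper does not invoke Lemma~\ref{thm:highProb} at all; since only a single fixed interval is needed there, it applies the multiplicative Chernoff bound directly to $R_n(f_{0,r_{2k}})$, giving failure probability $\exp(-k/8)$ without any uniformity machinery. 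Your use of Lemma~\ref{thm:highProb} at $r_{2k}$ is valid but unnecessary overhead for that half of the argument.
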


The proof of Lemma~\ref{lemma:shorthlength} is in Appendix~\ref{app:length_of_shortest_gap}. Accordingly, we may use $r_{\min} = \hat{r}_{C_{0.25} \log n/2}$ and $r_{\max} = \hat{r}_{2C_{0.25} \log n}$.

\subsubsection{Phase transition behavior}
\label{SecPhase}

In this subsection, we focus on verifying the statement in Remark~\ref{RemPhase} above, namely the existence of a phase transition for the modal interval estimator depending on whether $\alpha < 1$ or $\alpha \ge 1$. This phenomenon is illustrated via simulations in the plots of Figure~\ref{FigPhase}.

\begin{figure}[!ht]
    \centering
     \begin{minipage}{0.45\textwidth}
        \centering
		\scalebox{0.85}{
\begin{tikzpicture}

\definecolor{color0}{rgb}{0,0.75,0.75}

\begin{axis}[
legend style={cells={align=left}},
log basis x={10},
log basis y={10},
tick align=outside,
tick pos=left,
x grid style={white!69.01960784313725!black},
xlabel={n},
xmin=23987.5800423731, xmax=22918352.4522639,
xmode=log,
xtick style={color=black},
xtick={1000,10000,100000,1000000,10000000,100000000,1000000000},
xticklabels={$\displaystyle {10^{3}}$,$\displaystyle {10^{4}}$,$\displaystyle {10^{5}}$,$\displaystyle {10^{6}}$,$\displaystyle {10^{7}}$,$\displaystyle {10^{8}}$,$\displaystyle {10^{9}}$},
y grid style={white!69.01960784313725!black},
ylabel={Average error},
ymin=24.3682078354879, ymax=1239873.79779726,
ymode=log,
ytick style={color=black},
ytick={1,10,100,1000,10000,100000,1000000,10000000,100000000},
yticklabels={$\displaystyle {10^{0}}$,$\displaystyle {10^{1}}$,$\displaystyle {10^{2}}$,$\displaystyle {10^{3}}$,$\displaystyle {10^{4}}$,$\displaystyle {10^{5}}$,$\displaystyle {10^{6}}$,$\displaystyle {10^{7}}$,$\displaystyle {10^{8}}$}
]
\addplot [thick, color0, opacity=0.55, mark=square*, mark size=4, mark options={solid}]
table {%
32768 51.2980982709956
65536 67.216274420189
131072 84.6022785993172
262144 122.305332313158
524288 175.355832884497
1048576 209.722970395916
2097152 261.598016871146
4194304 377.493804281258
8388608 420.299399706156
16777216 616.773759258315
};
\addplot [thick, black, opacity=0.55, dash pattern=on 1pt off 3pt on 3pt off 3pt, mark=asterisk, mark size=4, mark options={solid}]
table {%
32768 39.8802486933979
65536 57.3435542841997
131072 71.0605855970876
262144 118.944507848551
524288 171.473092644633
1048576 220.165850491761
2097152 311.906335863327
4194304 420.054386978455
8388608 513.651705726343
16777216 808.097778530003
};
\addplot [thick, green!50.0!black, opacity=0.55, dotted, mark=diamond*, mark size=4, mark options={solid}]
table {%
32768 2499.78130755391
65536 5480.25850517543
131072 9443.11581242747
262144 17715.8019524889
524288 33879.924192391
1048576 67693.6455044011
2097152 127073.378496186
4194304 215409.616342662
8388608 432880.37231722
16777216 757605.666574019
};
\end{axis}

\end{tikzpicture}}
        \caption*{(a) $\alpha = 0.9$}
        \label{fig:prob1_6_1_3}
    \end{minipage}%
    \begin{minipage}{.55\textwidth}
        \centering
		\scalebox{0.85}{
\begin{tikzpicture}

\definecolor{color0}{rgb}{0,0.75,0.75}

\begin{axis}[
legend style={cells={align=left}},
legend cell align={left},
legend style={at={(1.04,1)}, anchor=north west, draw=white!80.0!black},
log basis x={10},
log basis y={10},
tick align=outside,
tick pos=left,
x grid style={white!69.01960784313725!black},
xlabel={n},
xmin=23987.5800423731, xmax=22918352.4522639,
xmode=log,
xtick style={color=black},
xtick={1000,10000,100000,1000000,10000000,100000000,1000000000},
xticklabels={$\displaystyle {10^{3}}$,$\displaystyle {10^{4}}$,$\displaystyle {10^{5}}$,$\displaystyle {10^{6}}$,$\displaystyle {10^{7}}$,$\displaystyle {10^{8}}$,$\displaystyle {10^{9}}$},
y grid style={white!69.01960784313725!black},
ylabel={Average error},
ymin=11.7215517054651, ymax=975779.729514267,
ymode=log,
ytick style={color=black},
ytick={1,10,100,1000,10000,100000,1000000,10000000},
yticklabels={$\displaystyle {10^{0}}$,$\displaystyle {10^{1}}$,$\displaystyle {10^{2}}$,$\displaystyle {10^{3}}$,$\displaystyle {10^{4}}$,$\displaystyle {10^{5}}$,$\displaystyle {10^{6}}$,$\displaystyle {10^{7}}$}
]
\addplot [thick, color0, opacity=0.55, mark=square*, mark size=4, mark options={solid}]
table {%
32768 3117.88526486187
65536 5858.55305807642
131072 9757.89909820569
262144 18355.1928127732
524288 27146.5219157093
1048576 50582.7536737193
2097152 80468.9743262579
4194304 155594.194263234
8388608 242510.136089129
16777216 467077.341691056
};
\addlegendentry{mean           }
\addplot [thick, black, opacity=0.55, dash pattern=on 1pt off 3pt on 3pt off 3pt, mark=asterisk, mark size=4, mark options={solid}]
table {%
32768 1899.62477958624
65536 3837.83715862428
131072 7040.30643104309
262144 17861.3193141773
524288 29288.0397077553
1048576 52996.7244295321
2097152 97362.3295180963
4194304 173385.381945333
8388608 306903.077152158
16777216 583040.550335228
};
\addlegendentry{median         }
\addplot [thick, green!50.0!black, opacity=0.55, dotted, mark=diamond*, mark size=4, mark options={solid}]
table {%
32768 886.745601249071
65536 5937.05447786775
131072 20.7883023368208
262144 20.7084387635399
524288 20.7696500393682
1048576 22.0661670294508
2097152 19.9185432914994
4194304 20.9814336260215
8388608 19.6172505429853
16777216 21.3627599366926
};
\addlegendentry{modal\\interval}
\end{axis}

\end{tikzpicture}}
        \caption*{(b) $\alpha = 1.3$}
        \label{fig:prob1_6_2_3}
    \end{minipage}%
        \caption{Plots comparing average error of the mean, median, and modal interval estimators on Example~\ref{exm:alpha-mix} ($\alpha$-mixture distributions) for different values of $\alpha$. As shown in Proposition~\ref{prop:exmModal}, the modal interval estimator undergoes a phase transition at $\alpha = 1$, where the error of modal interval estimator drops from the increasing function $\Omega(n^\alpha)$ to the constant function $\Theta(1)$. Moreover, as shown in Proposition~\ref{prop:exmMedian}, the median has better performance than the modal interval estimator for $\alpha < 1$, motivating our hybrid estimator in Section~\ref{sec:hybrid}. The average error, $\frac{1}{T}\sum_{i=1}^T |\muhat - \mustar| $, is calculated using $T = 200$ runs for each $n$. Both of the axes are on the $\log$ scale. More details can be found in Section~\ref{SecSims}.
    }
    \label{FigPhase}
\end{figure}

For ease of analysis, we tweak the setting of Example~\ref{exm:alpha-mix} slightly: Instead of having different distributions for high variance and low variance points, we assume that the points are sampled \iid from a mixture distribution, with weights resembling their original fraction in Example~\ref{exm:alpha-mix}. Moreover, we assume that individual distributions are uniform rather than Gaussian.

\begin{example} (Modified $\alpha$-mixture distributions).
\begin{align*}
Q_n = \frac{c \log n}{n} U[-1,1] + \frac{n - c\log n}{n} U[-n^\alpha,n^{\alpha}]
\end{align*}
where $U[-a,a]$ is the uniform distribution on $[-a,a]$.
\label{exm:alpha-mixModified}
\end{example}

Note that if we sample $X_1 , \hdots, X_n \iidm Q_n$, the number of points with variance $\Theta(1)$ is $\Theta(\log n)$, w.h.p. It is easy to see that the upper bounds for Example~\ref{exm:alpha-mixModified} are the same as that of Example~\ref{exm:alpha-mix} in Proposition~\ref{prop:exmModal}, i.e.,
\begin{align*}
|\widehat{\mu}_{M,r}| = \begin{cases}
\O(n^\alpha), & \text{if } \alpha < 1 \\
\O(1), & \text{if } \alpha \ge 1,
\end{cases}
\end{align*}
w.h.p. The following proposition, proved in Appendix~\ref{app:modal_lower_bound}, establishes a lower bound of $\Omega(n^\alpha)$ on the error:

\begin{proposition}
For $ \frac{1}{3} \leq  \alpha < 1$ in Example~\ref{exm:alpha-mixModified}, the modal interval estimator incurs $\Omega(n^{\alpha})$ error, with constant probability. 
\label{lem:modal_lower_bound}
\end{proposition}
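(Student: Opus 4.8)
The plan is to show that, with probability bounded below by an absolute constant, the empirical mode lies at distance $\Omega(n^\alpha)$ from the origin. Throughout fix $r=r_{C\log n}$, the radius used for the matching $O(n^\alpha)$ upper bound for this example (Proposition~\ref{prop:exmModal}). A direct computation with the density of $\overline P=Q_n$, of the type carried out for Proposition~\ref{prop:exmple_r_k}, gives $R^*_r=\tfrac{C\log n}{n}$ and $r=\Theta\!\left(\tfrac{\log n}{n^{1-\alpha}}\right)$, so (since $\alpha<1$) both $r=o(1)$ and $r=o(n^\alpha)$. The structural point driving everything is that the density of $\overline P$ is nearly \emph{flat} on its support $[-n^\alpha,n^\alpha]$: the bump from the $U[-1,1]$ component has height $\tfrac{c\log n}{2n}$, which is only a $(1+o(1))$ multiplicative perturbation of the baseline $\tfrac1{2n^\alpha}$, and it is supported on the $o(1)$ fraction $[-1,1]$ of the support. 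Hence \emph{every} width-$2r$ window inside $[-n^\alpha,n^\alpha]$ — whether or not it straddles the origin — contains $\Theta(\log n)$ samples in expectation, and no window near $0$ is meaningfully favored.

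First I would reduce to an occupancy statement: the piecewise-constant map $x\mapsto R_n(f_{x,r})$ attains its maximum on a window whose left endpoint is one of the samples $X_i$, so we may take $\widehat\mu_{M,r}$ to be the center of the \emph{densest width-$2r$ window}. I would then work on two events, each holding w.h.p.: (i) no width-$2r$ window contains two or more samples from the $U[-1,1]$ (``good'') component — because the expected number of pairs of good samples at distance $\le 2r$ is $O((\log n)^2)\cdot\Theta(r)=O\!\left(\tfrac{(\log n)^3}{n^{1-\alpha}}\right)=o(1)$ — so every window's total count exceeds its ``noise-only'' count (from the $U[-n^\alpha,n^\alpha]$ component) by at most one; and (ii) the number of noise samples lies in $[n/2,n]$.

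Next I would establish two facts. (a)~\emph{The densest window avoids $[-1,1]$, w.h.p.} Let $A_{\mathrm{in}}$ be the largest noise-only count among the $\Theta(n^{1-\alpha})$ sample-anchored windows meeting $[-1,1]$, and $A_{\mathrm{out}}$ the largest noise-only count among the $\Theta(n)$ windows contained in, say, $[2,n^\alpha/2]$. The per-window counts are binomial with the \emph{same} mean $\Theta(\log n)$ (the noise density is constant on the support), but the peripheral family is larger by a factor $\Theta(n^\alpha)$; standard estimates for the upper tail of the maximum of (negatively associated) binomials then give $A_{\mathrm{out}}-A_{\mathrm{in}}=\Theta(\log n)=\omega(1)$, hence $A_{\mathrm{out}}\ge A_{\mathrm{in}}+2$ w.h.p. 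Together with (i), every window meeting $[-1,1]$ has total count $\le A_{\mathrm{in}}+1<A_{\mathrm{out}}$, while some window inside $[2,n^\alpha/2]$ has total count $\ge A_{\mathrm{out}}$, so the densest window must avoid $[-1,1]$. (b)~\emph{The densest window is nearly uniformly located.} On the event of (a) the densest window equals the densest among windows disjoint from $[-1,1]$, whose counts depend only on the noise samples, which are i.i.d.\ uniform on $[-n^\alpha,n^\alpha]$. Passing to the circle of circumference $2n^\alpha$ and using rotational symmetry, the densest window on the circle has uniform location and wraps around the seam with probability $O(r/n^\alpha)=o(1)$; hence the center of the densest window is, up to $o(1)$ in total variation, uniform on essentially all of $[-n^\alpha,n^\alpha]$, so $|\widehat\mu_{M,r}|\ge\tfrac12 n^\alpha$ with probability at least $\tfrac12-o(1)$. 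Intersecting with the events above yields $|\widehat\mu_{M,r}|\ge\tfrac14 n^\alpha=\Omega(n^\alpha)$ with probability bounded below by an absolute constant, matching (up to constants) the $O(n^\alpha)$ upper bound of Proposition~\ref{prop:exmModal}.

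I expect the crux to be facts (a) and (b). Unlike the usual modal-interval analysis, one cannot separate central from peripheral windows by a crude union bound, precisely because the flatness of $\overline P$ makes their count distributions nearly identical, and the good samples — spread over $[-1,1]$, which \emph{properly contains} a single width-$2r$ window — are far too sparse to cluster enough to tip the balance ($\le 1$ per window). The whole $\Omega(n^\alpha)$-size deviation must be extracted from the lower-order fact that the peripheral region simply offers $\Theta(n^\alpha)$ times more candidate windows, which forces a careful comparison of the upper tails of maxima of weakly dependent binomial (Poisson) counts — most cleanly after Poissonizing the sample so that counts of disjoint windows become exactly independent — together with the symmetry argument in (b) to pin down where the winner sits. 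The hypotheses on $\alpha$ (notably $\alpha<1$ for the flatness and $r=o(n^\alpha)$, and $\tfrac13\le\alpha$ to keep the remaining error terms $o(\log n)$) are what make all of this go through.
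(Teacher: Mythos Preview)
Your plan takes a genuinely different route from the paper's. The paper works with $r=1$ (not $r_{C\log n}$), so window counts have mean $\Theta(n^{1-\alpha})$ and sit in the Gaussian fluctuation regime. It sets $A=[-2,2]$, splits into $\cK=\{f_{x,1}:x\in A\}$ and $\cJ=\{f_{x,1}:x\notin A\}$, and conditions on the index set $S=\{i:X_i\in A\}$; given $S$, the suprema $Z_1=\sup_{\cK}R_n$ and $Z_2=\sup_{\cJ}R_n$ are conditionally independent, $Z_1$ is bounded above by a VC expectation bound, and $Z_2$ is pushed above the same threshold with constant probability by applying Berry--Esseen to a \emph{single fixed} window in $\cJ$. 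The hypothesis $\alpha\ge\tfrac13$ enters exactly there, via $\P(A)\sqrt{nR^*_{\cJ}}=\Theta(n^{(1-3\alpha)/2})=O(1)$. A circle-symmetry lemma then locates the $\cJ$-maximizer at distance $\Omega(n^\alpha)$ --- essentially your step~(b).

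Your approach instead keeps counts in the Poisson $\Theta(\log n)$ regime and trades the fluctuation comparison for an extreme-value comparison between two scan statistics. The circle-symmetry endgame coincides with the paper's. Notably, nothing in your outline seems to genuinely need $\alpha\ge\tfrac13$; your stated justification for that hypothesis is vague, and your argument, if carried through, would appear to cover all $\alpha\in(0,1)$. What the paper's route buys is that it never has to compare two maxima: since $Z_2$ only needs to exceed a fixed threshold that one window attains, Berry--Esseen on a single binomial suffices and everything stays within the paper's existing toolkit. What your route costs is the scan-statistic \emph{upper} bound on $A_{\mathrm{in}}$ over overlapping windows in step~(a): the crude ``any window sits inside two adjacent disjoint bins'' trick only gives $A_{\mathrm{in}}\le 2k^*_2$, which need not be below $k^*_1$ for all constants, so you will need a sharper union bound over sample-anchored windows (yielding $A_{\mathrm{in}}\le k^*_2+O(\log\log n)$, harmless against the $\Theta(\log n)$ gap) or the Poissonization you allude to. That is where the real work lies; the rest of your outline is sound.
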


Proposition~\ref{lem:modal_lower_bound} proves rigorously that the apparent phase transition of the modal interval estimator is not simply an artifact of the argument used to prove Proposition~\ref{prop:exmModal}. Indeed, the modal interval estimator experiences a sharp phase transition depending on the relative variance of the mixture component with the higher variance, which is governed by the parameter $\alpha$.
Moreover, this phase transition is not specific to just modal interval estimator. As stated in Theorem~\ref{ThmLowerBound}, all agnostic estimators must have error $\Omega(n^{\alpha-0.5})$ for $\alpha < 1$. Thus Example~\ref{exm:alpha-mix} is indeed a difficult problem for $\alpha < 1$, but a surprisingly easy one for $\alpha > 1$.

As a final remark, note that in Examples~\ref{exm:alpha-mix} and~\ref{exm:alpha-mixModified}, the sample median and even the mean would have an error of $\tilde{\O}(n^{\alpha-0.5})$. When $\alpha < 1$, this rate is much better than the $\O(n^\alpha)$ guarantee of the modal interval estimator. This motivates the hybrid estimator we will propose in Section~\ref{sec:hybrid}, which is able to combine the ``best of both worlds'' for the modal interval and median estimators.

\subsection{Shorth estimator}

Guarantees for the shorth estimator are similar to the modal interval estimator, but computing the shorth does not require an extra step for adapting to the width of the optimal interval. In addition, as the proofs of the results in this section reveal, the technical machinery we have developed to derive guarantees for the error of the modal interval estimator may also be used to derive estimation error bounds for the shorth estimator.

We have the following theorem, proved in Appendix~\ref{app:shorthMassProof}:

\begin{theorem}
\label{cor:shorth}
Suppose $2k \geq C_{0.25}\log n$. With probability at least $1 - 2\exp(-c'k)$, we have
\begin{align*}
|\widehat{\mu}_{S,k}| \leq \frac{2nr_{2k}}{k} < \frac{2n \min\(q_{(4k)}, 2\sigma_{(4k)}\)}{k}.
\end{align*}
\end{theorem}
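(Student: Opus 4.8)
The plan is to reduce the shorth estimator to the modal interval estimator at a random radius $\widehat r_k$, and then apply Theorem~\ref{cor:modal} with $r = r_{2k}$. First I would invoke Lemma~\ref{lemma:shorthlength}: since $2k \ge C_{0.25}\log n$, we may assume $k$ is large enough (relative to $C_{0.5}\log n$) that with probability at least $1 - 2\exp(-ck)$ we have the sandwich $r_{k/2} \le \widehat r_k \le r_{2k}$. On this event, $\widehat\mu_{S,k} = \widehat\mu_{M,\widehat r_k}$ is the center of the most-populated interval of width $\widehat r_k \le r_{2k}$. The key observation is that the modal interval estimator is monotone in a useful sense: the empirical mass of the best width-$\widehat r_k$ interval is at least $k/n$ by definition of $\widehat r_k$, and since $\widehat r_k \le r_{2k}$, that same interval (or the best width-$r_{2k}$ interval) witnesses $R_n(f_{\widehat\mu_{S,k}, r_{2k}}) \ge R_n(f_{\widehat\mu_{S,k}, \widehat r_k}) \ge k/n$.

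Next I would condition on the concentration event of Lemma~\ref{thm:highProb} applied to $\cH_{r_{2k}}$ with, say, $t = \tfrac12$: this requires $R^*_{r_{2k}} \ge C_{0.5}(\log n)/n$, which holds because $R^*_{r_{2k}} = 2k/n \ge C_{0.25}(\log n)/n$ after adjusting the constant $C_{0.25}$ so that $C_{0.25} \ge C_{0.5}$ — I would make sure the constants in the statement are chosen to absorb this. On that event, $\sup_{f \in \cH_{r_{2k}}} |R_n(f) - R(f)| \le \tfrac12 R^*_{r_{2k}} = k/n$, so $R(f_{\widehat\mu_{S,k}, r_{2k}}) \ge R_n(f_{\widehat\mu_{S,k}, r_{2k}}) - k/n \ge k/n$. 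Now I am in position to run essentially the proof of Theorem~\ref{cor:modal}: I have a point $\widehat\mu_{S,k}$ whose width-$r_{2k}$ interval carries mass at least $k/n = \tfrac12 R^*_{r_{2k}}$ under $\overline P$. By Lemma~\ref{lemma:properties}\ref{prop:4}, for any $r' > r_{2k}$ we have $R(f_{r', r_{2k}}) < \frac{r_{2k}}{r'} R^*_{r_{2k}}$, so choosing $r' = \frac{2 r_{2k}}{R^*_{r_{2k}}} \cdot R^*_{r_{2k}} / (2k/n) \cdot (k/n)$... more cleanly: I want the smallest $r'$ with $R(f_{r',r_{2k}}) < k/n = \tfrac12 R^*_{r_{2k}}$, and Lemma~\ref{lemma:properties}\ref{prop:4} shows $r' = 2 r_{2k} / (R^*_{r_{2k}}) \cdot \tfrac12$ suffices; using $R^*_{r_{2k}} = 2k/n$ this gives $r' = \frac{n r_{2k}}{2k}$. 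Combining with Lemma~\ref{lemma:properties}\ref{prop:1} (centers further from $0$ have no larger mass) forces $|\widehat\mu_{S,k}| \le r' = O\!\left(\frac{n r_{2k}}{k}\right)$; tracking the constant carefully yields the stated $\frac{2 n r_{2k}}{k}$.

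Finally, the second inequality $r_{2k} < \min(q_{(4k)}, 2\sigma_{(4k)})$ is immediate from Lemma~\ref{lemma:properties}\ref{prop:5}: that part gives $\frac{2k}{n} < R^*_{q_{(4k)}}$ and $\frac{2k}{n} < R^*_{2\sigma_{(4k)}}$, which by definition of $r_{2k}$ as the infimal radius achieving mass $2k/n$ means $r_{2k} < q_{(4k)}$ and $r_{2k} < 2\sigma_{(4k)}$ (strictly, since $\overline P$ has a density so $R^*_{r_{2k}} = 2k/n$ exactly and the map $r \mapsto R^*_r$ is strictly increasing where positive). The union bound over the two bad events (Lemma~\ref{lemma:shorthlength} and Lemma~\ref{thm:highProb}) gives total failure probability at most $2\exp(-ck) + 2\exp(-c' n R^*_{r_{2k}} /4) = 2\exp(-ck) + 2\exp(-c' k/2)$, which is at most $2\exp(-c'' k)$ after renaming constants, matching the claimed $1 - 2\exp(-c'k)$.

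The main obstacle I anticipate is bookkeeping the constants so that a single condition ``$2k \ge C_{0.25}\log n$'' simultaneously (a) satisfies the hypothesis of Lemma~\ref{lemma:shorthlength} (which wants $k \ge 2C_{0.5}\log n$), (b) satisfies the hypothesis $R^*_{r_{2k}} \ge C_t (\log n)/n$ of Lemma~\ref{thm:highProb} at the chosen $t$, and (c) produces the clean factor $2$ in $\frac{2nr_{2k}}{k}$ rather than some larger absolute constant. This is the kind of thing where one either strengthens the constant in the theorem statement or chooses $t$ adaptively (e.g. $t$ slightly less than $1$) to make the leading constant come out to $2$; I would handle it by working with a generic $t$ through the argument and optimizing at the end, exactly as the modal-interval proofs in Appendix~\ref{app:proof_of_theorem_modal} do.
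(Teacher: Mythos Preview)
Your approach is essentially the paper's: invoke Lemma~\ref{lemma:shorthlength} to get $\widehat r_k \le r_{2k}$, pass to the population mass $R(f_{\widehat\mu_{S,k},r_{2k}})$ via Lemma~\ref{thm:highProb}, and then finish with Lemma~\ref{lemma:properties}\ref{prop:4} and~\ref{prop:1}. The paper packages the first two steps as a separate Lemma~\ref{thm:shorthMass}, but the logic is identical.

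There is one concrete arithmetic slip, however. With $t=\tfrac12$ in Lemma~\ref{thm:highProb} applied to $\cH_{r_{2k}}$, the deviation bound is $\tfrac12 R^*_{r_{2k}} = k/n$, while you only know $R_n(f_{\widehat\mu_{S,k},r_{2k}}) \ge k/n$. Subtracting gives $R(f_{\widehat\mu_{S,k},r_{2k}}) \ge 0$, which is vacuous; the claimed ``$\ge k/n$'' does not follow. The fix is exactly what the constant $C_{0.25}$ in the hypothesis is signalling: apply Lemma~\ref{thm:highProb} with $t=\tfrac14$ (equivalently, the paper runs Lemma~\ref{thm:highProb} at parameter $0.5t$ and then sets $t=\tfrac12$). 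That yields deviation $\le k/(2n)$, hence $R(f_{\widehat\mu_{S,k},r_{2k}}) \ge k/(2n)$. You then need $R(f_{r',r_{2k}}) < k/(2n)$, and Lemma~\ref{lemma:properties}\ref{prop:4} with $R^*_{r'}\le 1$ gives $R(f_{r',r_{2k}}) < r_{2k}/r'$, so $r' = 2nr_{2k}/k$ works on the nose. Your closing remark about ``working with a generic $t$ and optimizing at the end'' is exactly the right instinct; just carry it out with $t=\tfrac14$ rather than $\tfrac12$, and the constants (including the factor $2$ and the hypothesis $2k \ge C_{0.25}\log n$) line up.
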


The performance of the $\Theta(\log n)$-shorth estimator is similar to the modal interval estimator with $r = r_{\Theta\(\log n\)}$ (cf.\ inequality~\eqref{EqnRemModal} in Remark~\ref{RemModal}). Consequently, the error guarantees derived for the running examples in Proposition~\ref{prop:exmModal} also hold for the $\Theta(\log n)$-shorth.

\begin{remark*}
Lemma~\ref{lemma:properties}\ref{prop:3} shows that the upper bound is actually tighter for small $k$:  for $k' > k$, we have $ kr_{2k'} > k'r_{2k} $. The smallest value permissible from our theory would be $k = \Theta(\log n)$. Also note that the upper bound in Theorem~\ref{cor:shorth} for the shorth estimator resembles the bound in Theorem~\ref{ThmLepski}, except for the fact that the bound for the modal interval estimator involves the quantity $r_{C_{0.25} \log n}$ rather than $r_{2C_{0.25} \log n}$, and the latter could be larger depending on the spread of $\overline{P}$. Furthermore, both upper bounds in Theorem~\ref{cor:shorth} may sometimes be loose: In particular, if the $X_i$'s were i.i.d., $r_{2k}$ would be of order $\Theta\left(\frac{k}{n}\right)$ for small $k$, so the bound $\frac{nr_{2k}}{k}$ would be of constant order, whereas it is known~\cite{KimPol90} that the shorth estimator is consistent for $k = 0.5n$.
\label{remark:shorth}
\end{remark*}

\subsection{$k$-median}

Median-based estimators generally have worse estimation error guarantees than the modal interval or shorth. However, we will use the $k$-median estimator as a screening step in the hybrid estimator described in Section~\ref{sec:hybrid} for obtaining better error rates overall. Thus, we develop some preliminary theory concerning the $k$-median estimator that will be invoked later in the paper.

For the median estimator, we have the following result, proved in Appendix~\ref{app:k_median}:
\begin{lemma}
We have that $\widehat{\theta}_{\text{med}, k} \leq r_{k + \delta}$ and $\widehat{\theta}_{\text{med}, -k} \ge -r_{k + \delta}$, with probability at least $1 - 2\exp(-\delta^2/n)$.
\label{lemma:median}
\end{lemma}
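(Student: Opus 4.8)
The crux is the elementary identity that $\psi_n$ is an empirical average whose expectation is exactly the mass function $r \mapsto R(f_{0,r})$. Fix $r>0$. Since each $P_i$ has a density (so $\P(X_i=r)=0$) and is symmetric about $\mustar=0$, we have $\P(X_i<r)=\tfrac12+\tfrac12\P(|X_i|<r)$, hence
\begin{align*}
\E[\text{sign}(r-X_i)] = \P(X_i<r)-\P(X_i>r) = \P(|X_i|<r),
\end{align*}
and therefore $\E[\psi_n(r)] = \frac{1}{n}\sum_{i=1}^n \P(|X_i|<r) = R(f_{0,r}) = R^*_r$; symmetrically $\E[\psi_n(-r)] = -R^*_r$. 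I would first record this identity, together with the fact (noted in the paper right after the definition of $r_k$, applied with $k$ replaced by $k+\delta$) that because $\overline{P}$ has a density, $R^*_{r_{k+\delta}} = \frac{k+\delta}{n}$ \emph{exactly}. Combining, $\E[\psi_n(r_{k+\delta})] = \frac{k+\delta}{n}$ and $\E[\psi_n(-r_{k+\delta})] = -\frac{k+\delta}{n}$.

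Next I would translate the infimum/supremum definitions of $\widehat{\theta}_{\text{med},k}$ and $\widehat{\theta}_{\text{med},-k}$ into pointwise tail events. Since $\theta\mapsto\psi_n(\theta)$ is non-decreasing, the set $\{\theta:\psi_n(\theta)\ge k/n\}$ is an up-set, so the event $\{\psi_n(r_{k+\delta})\ge k/n\}$ forces $\widehat{\theta}_{\text{med},k}\le r_{k+\delta}$; likewise $\{\theta:\psi_n(\theta)\le -k/n\}$ is a down-set, so $\{\psi_n(-r_{k+\delta})\le -k/n\}$ forces $\widehat{\theta}_{\text{med},-k}\ge -r_{k+\delta}$. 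It therefore suffices to lower bound the probabilities of these two events. Now $\{\psi_n(r_{k+\delta})< k/n\}$ is exactly the event that the average $\psi_n(r_{k+\delta})$ of the $n$ independent $[-1,1]$-valued random variables $\text{sign}(r_{k+\delta}-X_i)$ falls below its mean by $\delta/n$; applying Hoeffding's inequality (equivalently, writing $\psi_n(r) = \tfrac{2}{n}\#\{i:X_i<r\}-1$ and using a Chernoff/Bernstein bound on the sum of Bernoullis) gives a bound of the claimed exponential form, and the same argument applied to $-\psi_n(-r_{k+\delta})$ controls the other side. A union bound over the two events yields probability at least $1-2\exp(-\delta^2/n)$.

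I do not expect a serious obstacle here: the whole argument reduces to a one-line concentration bound once the expectation identity $\E[\psi_n(r)] = R^*_r$ and the exact value $R^*_{r_{k+\delta}} = (k+\delta)/n$ are in place. The only points needing (minor) care are (i) the expectation computation, which uses both the symmetry of each $P_i$ and the absence of atoms, and (ii) the passage from the infimum/supremum definitions of the $k$-median thresholds to the pointwise deviation events, which is a short monotonicity argument (the density assumption removes any tie-breaking/one-sided-continuity subtleties).
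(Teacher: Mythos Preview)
Your proposal is correct and follows essentially the same approach as the paper: compute the expectation identity $\E[\psi_n(r)]=R^*_r$ via symmetry and absolute continuity, use $R^*_{r_{k+\delta}}=(k+\delta)/n$, convert the threshold event $\{\widehat{\theta}_{\text{med},k}>r_{k+\delta}\}$ into the deviation event $\{\psi_n(r_{k+\delta})<k/n\}$ by monotonicity of $\psi_n$, apply Hoeffding to the average of $[-1,1]$-valued independent variables, and union-bound the two sides. If anything, your write-up is slightly more explicit about the monotonicity step than the paper's own proof.
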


We will be interested in the case when both $k = \delta = \Theta(\sqrt{ n }\log n)$.
Lemma~\ref{lemma:median} states that for $k \geq \sqrt{n \log n}$, the error of $k$-median is less than $r_{2k}$ w.h.p. 
Compared to the shorth and modal interval estimators, the $k$-median is a more ``global'' estimator, because it looks at a much bigger interval $f_{0, r_{2k}}$:
When the latter $n-\O(k)$ points have very large variances, the density $\overline{P}$ is much flatter, so the value of $r_{2k}$ is large. On the other hand, recall that this is not an issue for the modal interval and shorth estimators, which only have error governed by the smallest $\O(\log n)$ variances. This behavior can be seen explicitly in comparing the guarantees for Example~\ref{exm:alpha-mix} ($\alpha$-mixture disributions) for $\alpha\geq 1$ in Table~\ref{table: ex123}.

For completeness, we calculate the bounds of the $(\sqrt{n}\log n)$-median estimator on the recurring examples, proved in Appendix~\ref{app:PropExmMedian}:
\begin{proposition}
We have the following bounds on the $(\sqrt{n}\log n)$-median estimator:
\begin{enumerate}
	\item For Example~\ref{exm:IID} (i.i.d.\ observations), $|\widehat{\mu}_{\text{med},\sqrt{n}\log n}| = \O\left(\frac{\sigma \log n}{\sqrt{n}}\right)$, w.h.p.
	\item For Example~\ref{exm:QuadVar} (quadratic variance), $|\widehat{\mu}_{\text{med},\sqrt{n}\log n}| = \O(n^{0.5}\log n)$, w.h.p.
	\item For Example~\ref{exm:alpha-mix} ($\alpha$-mixture distributions),
	$|\widehat{\mu}_{\text{med},\sqrt{n}\log n}| = \O(n^{\alpha - 0.5}\log n)$, w.h.p.
\end{enumerate}
\label{prop:exmMedian}
\end{proposition}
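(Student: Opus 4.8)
The plan is to reduce each of the three bounds to an estimate on the single population quantity $r_{2\sqrt{n}\log n}$ attached to the mixture $\overline{P}$. The reduction is immediate from Lemma~\ref{lemma:median}: choosing $k=\delta=\sqrt{n}\log n$, the $k$-median output $\widehat{\mu}_{\text{med},k}$ lies in the interval $[\widehat{\theta}_{\text{med},-k},\widehat{\theta}_{\text{med},k}]$ by definition of $S_k$, and Lemma~\ref{lemma:median} places this interval inside $[-r_{k+\delta},r_{k+\delta}]=[-r_{2\sqrt{n}\log n},r_{2\sqrt{n}\log n}]$ with failure probability $2\exp(-\delta^2/n)=2\exp(-\log^2 n)=o(1)$. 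Hence $|\widehat{\mu}_{\text{med},\sqrt{n}\log n}|\le r_{2\sqrt{n}\log n}$ w.h.p., and it remains to bound $r_{2\sqrt{n}\log n}$ in each example, i.e., to find the smallest $r$ with $R^*_r$ equal to the level $\frac{2\sqrt{n}\log n}{n}=\frac{2\log n}{\sqrt{n}}$.

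For Example~\ref{exm:IID}, $\overline{P}=\cN(0,\sigma^2)$, so $R^*_r=2\Phi(r/\sigma)-1$; since $\frac{\log n}{\sqrt{n}}\to 0$ we sit in the linear regime of $\Phi$ near $\tfrac12$, where $\Phi(z)-\tfrac12=\Theta(z)$, and solving $\Theta(r/\sigma)=\frac{\log n}{\sqrt{n}}$ gives $r_{2\sqrt{n}\log n}=\Theta\!\left(\frac{\sigma\log n}{\sqrt{n}}\right)$, which yields the claimed $\O(\sigma\log n/\sqrt{n})$. For Example~\ref{exm:QuadVar}, the quickest route is Lemma~\ref{lemma:properties}\ref{prop:5}: here the sorted standard deviations are $\sigma_{(i)}=ci$, so $r_{2\sqrt{n}\log n}\le 2\sigma_{(4\sqrt{n}\log n)}=8c\sqrt{n}\log n=\O(\sqrt{n}\log n)$. (This can be sharpened via the direct estimate $R^*_r\approx \frac{2r}{cn\sqrt{2\pi}}\sum_{i=1}^n\frac1i=\Theta\!\left(\frac{r\log n}{cn}\right)$, which gives $r_{2\sqrt{n}\log n}=\Theta(\sqrt{n})$, but the sharper form is not needed.)

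The substantive case is Example~\ref{exm:alpha-mix}, where Lemma~\ref{lemma:properties}\ref{prop:5} only gives the loose bound $r_{2\sqrt{n}\log n}\le 2\sigma_{(4\sqrt{n}\log n)}=2n^{\alpha}$, so a direct computation of $R^*_r$ is required. Writing $R^*_r=\frac1n\left[(\text{number of }\cN(0,1)\text{ components})\,\P(|\cN(0,1)|\le r)+(n-c\lceil\log n\rceil)\,\P(|\cN(0,n^{2\alpha})|\le r)\right]$, the point is that the low-variance block contributes at most $\frac{c\lceil\log n\rceil}{n}$, which is $o\!\left(\frac{\log n}{\sqrt{n}}\right)$ and hence negligible against the target level $\frac{2\log n}{\sqrt{n}}$; therefore $r_{2\sqrt{n}\log n}$ is essentially pinned down by $\P(|\cN(0,n^{2\alpha})|\le r)=\frac{2\log n}{\sqrt{n}}(1+o(1))$. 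Using the linearization $\P(|\cN(0,n^{2\alpha})|\le r)\approx \frac{2r}{n^{\alpha}\sqrt{2\pi}}$, valid whenever $r=o(n^{\alpha})$, and solving $\frac{2r}{n^{\alpha}\sqrt{2\pi}}=\frac{2\log n}{\sqrt{n}}$, we get $r_{2\sqrt{n}\log n}=\Theta\!\left(n^{\alpha-1/2}\log n\right)$; since $n^{\alpha-1/2}\log n=o(n^{\alpha})$, the linearization was indeed legitimate, closing the loop.

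The only real obstacle is a bookkeeping one, concentrated in Example~\ref{exm:alpha-mix}: one must rigorously justify that (i) the low-variance block never supplies enough mass to matter — true because it is bounded by $c\lceil\log n\rceil/n$ regardless of $r$, even in the regime $\alpha<1/2$ where $r\to 0$ — and (ii) the Gaussian-CDF linearization is invoked only in its valid range $r=o(n^{\alpha})$, which is verified a posteriori from the computed answer. These are precisely the estimates underlying the proof of Proposition~\ref{prop:exmple_r_k} in Appendix~\ref{app:prop_r_k}, with the level $\frac{\log n}{n}$ used there replaced by $\frac{2\log n}{\sqrt{n}}$ here, so the relevant lemmas can be reused with only trivial modification.
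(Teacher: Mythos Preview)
Your proposal is correct and follows the same overall strategy as the paper: reduce via Lemma~\ref{lemma:median} with $k=\delta=\sqrt{n}\log n$ to bounding $r_{2\sqrt{n}\log n}$, then handle each example. Examples~\ref{exm:IID} and~\ref{exm:QuadVar} are treated identically to the paper (Gaussian linearization and Lemma~\ref{lemma:properties}\ref{prop:5}, respectively).

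For Example~\ref{exm:alpha-mix} you and the paper diverge slightly. You compute $R^*_r$ directly, arguing that the low-variance block contributes at most $c\lceil\log n\rceil/n=o(\log n/\sqrt n)$ and then linearizing the high-variance Gaussian. The paper instead uses a one-line comparison: since each component of the $\alpha$-mixture has variance at most $n^{2\alpha}$, the mixture places at least as much mass on any centered interval as $\cN(0,n^{2\alpha})$ does, so $r_k$ for the mixture is bounded above by $r_k$ for i.i.d.\ $\cN(0,n^{2\alpha})$; then the Example~\ref{exm:IID} bound with $\sigma=n^{\alpha}$ immediately gives $r_{2\sqrt{n}\log n}=\O(n^{\alpha-0.5}\log n)$. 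The paper's monotonicity argument is shorter and avoids the a~posteriori consistency check on the linearization, while your direct decomposition has the advantage of yielding the matching lower bound $r_{2\sqrt{n}\log n}=\Theta(n^{\alpha-0.5}\log n)$ rather than just $\O(\cdot)$.
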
 

\begin{table}[H]
\centering
\renewcommand{\arraystretch}{1.5}

\begin{tabular}{l|l|l|l|l|}
\cline{2-5}
                     &Mean  &Median &Modal/Shorth  &Hybrid  \\ \hline
\multicolumn{1}{|l|}{Example 1 (i.i.d.\ samples)} &$\bm{n^{-0.5}}$ &$\bm{n^{-0.5}}$ &$n^{-1/3}$ &$\bm{n^{-0.5}}$  \\ \hline
\multicolumn{1}{|l|}{Example 2 (quadratic variances)} &$  \sqrt{n}$  &$\sqrt n$ &$\bm{n^\epsilon}$  &$\bm{n^\epsilon}$ \\ \hline
\multicolumn{1}{|l|}{Example 3 ($\alpha < 1$-mixture distributions)} &$\bm{n^{\alpha - 0.5}}$ &$\bm{n^{\alpha-0.5}}$  &$n^\alpha$ &$\bm{n^{\alpha - 0.5}}$  \\ \hline
\multicolumn{1}{|l|}{Example 3 ($\alpha \geq 1$-mixture distributions)} &$n^{\alpha - 0.5}$ &$n^{\alpha - 0.5}$ &$\bm{c}$ &$\bm{c}$  \\ \hline
\end{tabular}
\caption{The table above summarizes the performance of various estimators on our three running examples. We have ignored poly-logarithmic factors for simplicity, and we use $n^\epsilon$ to denote $\O(n^\epsilon)$ error for any $\epsilon > 0$, and $c$ to denote an error bounded by a constant. The radius for the modal estimator and the $k$ for the shorth estimator are adjusted to be optimal for each particular example; i.e., the estimators are assumed to know which example data are coming from. Observe that mean and median estimators outperform the modal and shorth estimators when the outliers have relatively small variances. On the other hand, the modal and shorth estimators are better when the outliers have large variances. Simulations in Section~\ref{SecSims} show that the rates provided above are indeed observed in practice. Our hybrid estimator from Section~\ref{sec:hybrid} achieves the best performance in all cases \emph{without knowing which example is under consideration.}}
\label{table: ex123}
\end{table}

\section{Multivariate case} %
\label{sec:multivariate}

In the following sections, we explore generalizations of our theory to $d$ dimensions. The results of this section focus on the simple setting where the overall mixture distribution is radially symmetric, e.g., we have multivariate Gaussian observations $X_i \sim \cN( 0_d, \sigma_i^2 I_d )$. Throughout this section, we focus on the setting where $d$ is $\O(\log n)$. As shown in Chierichetti et al.~\cite{ChiEtAl14}, when $d$ is $\Omega(\log n)$, the problem reduces to the case of known variances, since these can be estimated accurately. We shall discuss how to replace the spherical symmetry assumption by log-concavity in Section~\ref{SecAltCond}.

As the covariance matrix of a radially symmetric distribution is of the form $\sigma^2 I_d$, we denote the covariance matrix of $X_i$ by $\sigma_i^2 I_d$. We use $\sigma_{(i)}$ to denote the $i^\text{th}$ smallest marginal standard deviation. We also use $s_i$ to denote the interquartile range of $X_i$, so that $\mprob(\|X_i - \mustar\|_2 \le s_i) = \frac{1}{2}$, and we define $s_{(i)}$ as the corresponding order statistic.

Let $f_{x,r}(z) = \1_{\|x-z\|_2 \leq r}$ denote the indicator function of the $\ell_2$-ball of radius $r$. For $s \in \real$, we will also use $f_{s,r}(z)$ to denote the indicator of the ball of radius $r$ centered at the vector with first coordinate $s$ and all other coordinates equal to 0. The spherical symmetry assumption readily gives $R(f_{x,r}) = R(f_{s,r})$ for all $x$ such that $\|x\|_2 = s$. With these definitions, the multidimensional modal interval and shorth estimators are defined exactly as in equations~\eqref{EqnModalEst} and~\eqref{EqnShorthEst}. We will first state the following generalization of Lemma~\ref{lemma:properties}, containing some useful properties for radially symmetric distributions. The proof is contained in Appendix~\ref{AppLemProp2}.

\begin{lemma}
Suppose $R$ is radially symmetric and unimodal. We have the following properties:
\begin{enumerate}[label=(\roman*)]
  \item For any $r>0$ and $x, x' \in \R^d$, if $\|x\|_2 < \|x'\|_2$, then $R(f_{x,r}) \geq R(f_{x',r})$.
\label{prop2:1}
  \item For any $x \in \real^d$, if $r < r'$, then $R(f_{x,r}) \leq R({f_{x,r'}})$. 
  \label{prop2:2}
  \item If $0 < r_1 < r_2$, then $ \frac{R^*_{r_1}}{r_1^d} > \frac{R^*_{r_2}}{r_2^d}$. \label{prop2:3}
  \item If $0 < r_1 < r_2$, then
  \begin{equation*}
  R(f_{r_2,r_1}) < \frac{1}{P(B_{r_2 - r_1}, r_1 )} R^*_{r_2}  \le \left(\frac{2r_1}{r_2 - r_1}\right)^d R^*_{r_2},
  \end{equation*}
  where $P(B_{r_2 - r_1}, r_1)$ denotes the packing number of $B_{r_2 - r_1}$ with respect to $B_{r_1}$. In particular, if $r_1 \le \frac{r_2}{2}$, then $R(f_{r_2, r_1}) \le \left(\frac{4r_1}{r_2}\right)^d R^*_{r_2}$. \label{prop2:4}
  \item If $1 \leq k \leq n $, then $\frac{k}{n} < R^*_{s_{(2k)}}$ and $\frac{k}{n} < R^*_{ 2 \sqrt{d}\sigma_{(2k)}}$. \label{prop2:5}
  \end{enumerate}
\label{lemma:prop2}
\end{lemma}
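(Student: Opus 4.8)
\textbf{Proof proposal for Lemma~\ref{lemma:prop2}.}

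The plan is to mirror the structure of the one-dimensional proof of Lemma~\ref{lemma:properties}, replacing one-dimensional monotonicity arguments by their radially symmetric analogues and, crucially, using packing/covering numbers of Euclidean balls in place of simple length comparisons. For part~\ref{prop2:1}, I would argue that radial symmetry and unimodality of the density of $R$ (say $\overline{p}$, which depends only on $\|z\|_2$ and is nonincreasing in $\|z\|_2$) imply that for a fixed radius $r$ the function $x \mapsto R(f_{x,r}) = \int_{B(x,r)} \overline{p}(z)\,dz$ is maximized at $x = 0$ and is nonincreasing in $\|x\|_2$; the cleanest way is a coupling/rearrangement argument, reflecting the ball $B(x',r)$ toward the origin along the ray through $x'$ and observing that mass can only increase. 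Part~\ref{prop2:2} is immediate from $f_{x,r} \le f_{x,r'}$ pointwise when $r < r'$.

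Part~\ref{prop2:3} is the scale-monotonicity of the normalized ball mass. Here I would write $R^*_{r} = \int_{B_r}\overline{p}$ and differentiate, or argue directly: since $\overline{p}$ is radially nonincreasing, the average density inside $B_{r}$ is larger than the average density inside $B_{r_2}$ when $r_1 < r_2$, so $R^*_{r_1}/\mathrm{vol}(B_{r_1}) \ge R^*_{r_2}/\mathrm{vol}(B_{r_2})$, with strict inequality because the density has full support (it is a density, hence positive on a neighborhood, and strictly decreasing somewhere by unimodality); then $\mathrm{vol}(B_r) \propto r^d$ gives the claim. Part~\ref{prop2:5} follows exactly as in the univariate case: the ball $B_{s_{(2k)}}$ contains the $\mustar$-centered half-mass region of each of the $2k$ smallest-interquartile-range components, so its $\overline{P}$-mass is at least $\frac{1}{2}\cdot\frac{2k}{n} = \frac{k}{n}$, and strictness comes from the density being positive beyond $s_{(2k)}$; the $\sigma$-version uses the multivariate Chebyshev-type bound $\mprob(\|X_i - \mustar\|_2 \le 2\sqrt{d}\sigma_i) \ge \frac{3}{4} \ge \frac12$ (since $\E\|X_i-\mustar\|_2^2 = d\sigma_i^2$), so $s_i \le 2\sqrt d \sigma_i$, and then invoke part~\ref{prop2:3}/\ref{prop2:1} as in the one-dimensional proof.

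The main obstacle, and the genuinely new ingredient, is part~\ref{prop2:4}: bounding $R(f_{r_2, r_1})$, the $\overline{P}$-mass of a radius-$r_1$ ball whose center lies at Euclidean distance $r_2$ from the origin. The idea is a covering argument. The ball $B(r_2 e_1, r_1)$ is contained in the annular shell $B_{r_2+r_1}\setminus B_{r_2 - r_1}$ — actually it suffices to note it lies outside $B_{r_2 - r_1}$ — and more usefully, one can pack $P(B_{r_2-r_1}, r_1)$ disjoint translates of $B_{r_1}$ inside $B_{r_2}$, each of which, by part~\ref{prop2:1} (monotonicity in the center's norm) compared against the ``worst'' translate, has $\overline{P}$-mass at least $R(f_{r_2, r_1})$ — wait, I need the inequality in the useful direction: I would instead place $N := P(B_{r_2 - r_1}, r_1)$ points $y_1,\dots,y_N \in B_{r_2-r_1}$ that are $2r_1$-separated, so the balls $B(y_j, r_1)$ are pairwise disjoint and all contained in $B_{r_2}$; each such ball has center-norm at most $r_2 - r_1 < r_2$, hence by part~\ref{prop2:1}, $R(f_{y_j,r_1}) \ge R(f_{r_2,r_1})$; summing disjointly, $R^*_{r_2} \ge \sum_{j=1}^N R(f_{y_j,r_1}) \ge N\cdot R(f_{r_2,r_1})$, giving $R(f_{r_2,r_1}) \le R^*_{r_2}/P(B_{r_2-r_1},r_1)$. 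The remaining volumetric step is the standard packing lower bound $P(B_{\rho}, r_1) \ge \mathrm{vol}(B_\rho)/\mathrm{vol}(B_{2r_1}) = (\rho/(2r_1))^d$ with $\rho = r_2 - r_1$, yielding $R(f_{r_2,r_1}) \le (2r_1/(r_2-r_1))^d R^*_{r_2}$, and the special case $r_1 \le r_2/2$ gives $r_2 - r_1 \ge r_2/2$, hence the factor $(4r_1/r_2)^d$. I expect the bookkeeping on which packing bound to cite (and ensuring the separated points actually sit in the smaller ball so their $r_1$-balls stay inside $B_{r_2}$) to be the only delicate part; everything else is routine.
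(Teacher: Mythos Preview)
Your proposal is correct and follows essentially the same route as the paper, especially for the key new ingredient, part~\ref{prop2:4}: the paper does exactly your packing argument (place a maximal $r_1$-packing of $B_{r_2-r_1}$, note the associated radius-$r_1$ balls are disjoint and contained in $B_{r_2}$, use part~\ref{prop2:1} to lower-bound each of their masses by $R(f_{r_2,r_1})$, sum, and then invoke the standard volumetric lower bound on packing numbers). The only cosmetic differences are that for part~\ref{prop2:1} the paper cites a convolution-of-unimodal-radially-symmetric-densities result (Li et al.) rather than giving your reflection argument, and for part~\ref{prop2:3} the paper differentiates $g(r)=R^*_r/r^d$ rather than arguing via average density; parts~\ref{prop2:2} and~\ref{prop2:5} match the paper verbatim.
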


\subsection{Concentration inequalities}

We consider the hypothesis class $\cH_r = \{f_{x,r'}: x \in \R^d, 0 \leq  r' \leq r\}$. Note that the VC dimension of $\cH_r$ is $d+1$~\cite{WenDud81}.

The proof of the following result is in Appendix~\ref{app:proof_of_thm_high_probD}.

\begin{lemma} 
For any fixed $t \in (0,1]$ and $n > 1$, we have
\begin{align*}
\P\left\{\sup_{f \in \cH_{r}} |R_n(f) - R(f)| \geq tR^*_{r}\right\}  \leq 2\exp\(- c nR^*_{r}t^2\),
\end{align*}
provided $r$ is large enough so that $nR^*_{r} \geq C_t \frac{d+1}{2} \log n$, where $C_t = \left(\frac{144}{t}\right)^2$ and $c = \frac{1}{200}$.
\label{thm:highProbD}
\end{lemma}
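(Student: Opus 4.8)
The plan is to mimic the proof of the univariate bound in Lemma~\ref{thm:highProb}, replacing the VC-dimension-$2$ class $\cH_r$ from equation~\eqref{eq:Hr} with its $d$-dimensional analogue $\cH_r = \{f_{x,r'} : x \in \R^d,\, 0 \le r' \le r\}$, which has VC dimension $d+1$ by~\cite{WenDud81}. The overall strategy is a standard chaining/symmetrization argument adapted to independent but non-i.i.d.\ data: symmetrize with Rademacher variables, bound the Rademacher complexity of the (bounded, $0/1$-valued) function class $\cH_r$ via its VC dimension, and then convert the resulting bound into a tail bound using a bounded-differences / Bernstein-type concentration step. The key point is that every $f \in \cH_r$ satisfies $0 \le R(f) \le R^*_r$, so the variance proxy $\frac1n \sum_i \operatorname{Var} f(X_i)$ is controlled by $R^*_r$, which is exactly what produces the ``adaptive'' factor $nR^*_r t^2$ in the exponent rather than the crude $nt^2$.

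First I would reduce to showing the expectation bound $\E \sup_{f \in \cH_r} |R_n(f) - R(f)| \le \tfrac{t}{2} R^*_r$ under the stated condition $nR^*_r \ge C_t \tfrac{d+1}{2}\log n$; the passage from the expectation bound to the claimed exponential tail is handled by a bounded-differences argument (each $X_i$ changes the supremum by at most $1/n$) together with the variance bound, yielding a subexponential/subgaussian tail of the right form, and the constants $C_t = (144/t)^2$ and $c = 1/200$ should emerge from carefully tracking the union of these two steps exactly as in the univariate proof. Second, to bound the expected supremum, I would symmetrize and then invoke the standard VC bound on the expected Rademacher complexity of a $0/1$ class of VC dimension $D = d+1$, which gives something of order $\sqrt{D \log(1/\delta)/n}$ when the empirical $L^2$-radius is at most $\sqrt{\delta}$; here $\delta$ plays the role of $R^*_r$, so the complexity is of order $\sqrt{(d+1) R^*_r \log(\cdot)/n}$. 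Setting this $\le \tfrac t2 R^*_r$ and solving for the threshold on $R^*_r$ is precisely where the hypothesis $nR^*_r \ge C_t \tfrac{d+1}{2}\log n$ comes from --- the only change from the univariate case being that the VC dimension $2$ is replaced by $d+1$, which is why $\log n$ picks up the factor $\tfrac{d+1}{2}$.

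The main obstacle --- and the place where the argument genuinely differs from textbook i.i.d.\ VC theory --- is that the data are \emph{not} identically distributed, so I cannot directly quote off-the-shelf uniform-convergence theorems phrased in terms of a single distribution. Instead the symmetrization and the Rademacher-complexity bound must be run with respect to the (random) empirical measure on $X_1,\dots,X_n$, and the variance control must be done through $R(f) = \tfrac1n \sum_i \E f(X_i) \le R^*_r$ rather than through any per-sample quantity; the crucial structural input is that $\cH_r$ has a bounded VC dimension regardless of the heterogeneity, so the growth-function / Sauer--Shelah bound still applies to the empirical process on the fixed sample. I expect this to be exactly the content already established in the proof of Lemma~\ref{thm:highProb} (the excerpt says that proof ``modifies the VC-type bounds derived for i.i.d.\ settings''), so the $d$-dimensional version should follow by repeating that argument verbatim with the single substitution ``VC dimension $2 \rightsquigarrow d+1$,'' and I would simply point to Appendix~\ref{app:proof_of_thm_high_prob} for the shared technical core and record only the modified constants and the modified threshold condition.
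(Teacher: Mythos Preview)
Your high-level plan matches the paper's proof almost exactly: first an expectation bound on $\sup_{f\in\cH_r}|R_n(f)-R(f)|$ via symmetrization and a VC/chaining argument adapted to independent non-i.i.d.\ data (this is the paper's Theorem~\ref{thm:vcConvExpTech}, a non-i.i.d.\ version of Boucheron--Lugosi--Massart Theorem~13.7), then a concentration step to pass from the expectation to the tail, with the VC dimension $2$ replaced by $V=d+1$ throughout.

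The one place your description is loose---and would fail if implemented literally---is the concentration step. You write ``a bounded-differences argument (each $X_i$ changes the supremum by at most $1/n$) together with the variance bound.'' A plain McDiarmid/bounded-differences inequality uses only the $1/n$ Lipschitz increments and would give an exponent of order $n\,(tR^*_r)^2 = n t^2 (R^*_r)^2$, which is weaker by a factor of $R^*_r$ than the claimed $n t^2 R^*_r$. To get the variance-adapted exponent you genuinely need a Talagrand-type inequality for suprema of empirical processes; the paper applies Boucheron--Lugosi--Massart Theorem~12.9 (stated as Lemma~\ref{BouEtal12.9}), bounding the variance proxy by $\rho^2 = \sup_f \sum_i \operatorname{Var} f(X_i) \le n R^*_r$ and then $v = 2n\E Z + \rho^2 \le 6nR^*_r$. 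Your earlier phrase ``Bernstein-type'' is closer to the truth than ``bounded differences,'' but make sure you invoke the right tool: the self-bounding/Talagrand machinery, not McDiarmid.
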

Note that the theorem requires $R^*_r$ to increase with $d$. However, $d$ is small as $d = \O(\log n)$.

\subsection{Modal interval estimator}

We will now convert the concentration inequalities into an error bound for the modal interval estimator. The proof is in Appendix~\ref{AppThmModalD}.

\begin{theorem}
\label{LemModalD}
Suppose $R^*_r \ge C_{0.5} \left(\frac{(d+1) \log n}{n}\right)$. The multidimensional modal interval estimator satisfies the error bounds
\begin{align}
\label{EqnModalD1}
\| \widehat{\mu}_{M,r} \|_2 & \leq 4r \left(\frac{2}{R^*_r}\right)^{\frac{1}{d}}, \\
\label{EqnModalD}
\| \widehat{\mu}_{M,r} \|_2   &\leq  8\sqrt{d} \sigma_{ (2Cd \log n)}\left(\frac{2}{R^*_r}\right)^{\frac{1}{d}} 
              \leq 8\sqrt{d} \left(\frac{n}{C'd \log n}\right)^{ \frac{1}{d}} \sigma_{ (2Cd \log n)},
\end{align}
with probability at least $1-2\exp(-c' d\log n)$.
\end{theorem}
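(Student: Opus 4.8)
The plan is to mimic the one-dimensional argument used for Theorem~\ref{cor:modal}, but replacing the linear scaling properties of Lemma~\ref{lemma:properties} with their $d$-dimensional analogues from Lemma~\ref{lemma:prop2}. First I would apply Lemma~\ref{thm:highProbD} with $t = \tfrac12$: since $nR^*_r \ge C_{0.5}\tfrac{(d+1)}{2}\log n$ (which is exactly the hypothesis with $C_{0.5} = (288)^2$, absorbing the factor $2$), we get that with probability at least $1 - 2\exp(-c' d \log n)$ (taking $c' = c/4 \cdot (\text{const})$, using $R^*_r \ge \Omega((d+1)\log n / n)$ and $t^2 = 1/4$), the event $\sup_{f \in \cH_r}|R_n(f) - R(f)| \le \tfrac12 R^*_r$ holds. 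On this event, $R_n(f_{0,r}) \ge R^*_r - \tfrac12 R^*_r = \tfrac12 R^*_r$, and because $\widehat\mu_{M,r}$ maximizes $R_n(f_{x,r})$, we have $R_n(f_{\widehat\mu_{M,r},r}) \ge R_n(f_{0,r}) \ge \tfrac12 R^*_r$, hence $R(f_{\widehat\mu_{M,r},r}) \ge R_n(f_{\widehat\mu_{M,r},r}) - \tfrac12 R^*_r \ge 0$... so I need the slightly sharper bookkeeping: apply the concentration at level $t$ chosen so that $R(f_{\widehat\mu_{M,r},r}) \ge (1-2t)R^*_r$ is strictly positive; taking $t=1/4$ gives $R(f_{\widehat\mu_{M,r},r}) \ge \tfrac12 R^*_r$, which requires the hypothesis $R^*_r \ge C_{0.25}\tfrac{(d+1)\log n}{n}$ — consistent with the statement once we track constants. (This is the $d$-dimensional version of Lemma~\ref{thm:modalIntervalMain}.)

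Next I would convert this ``mass at the empirical mode is a constant fraction of the optimal mass'' statement into a norm bound via the contrapositive of Lemma~\ref{lemma:prop2}\ref{prop2:4}. Write $s = \|\widehat\mu_{M,r}\|_2$. By radial symmetry, $R(f_{\widehat\mu_{M,r},r}) = R(f_{s,r})$. Suppose for contradiction that $s > 4r(2/R^*_r)^{1/d}$; in particular $s > 4r$, so $r < s/4 \le s/2$, and Lemma~\ref{lemma:prop2}\ref{prop2:4} (with $r_1 = r$, $r_2 = s$) gives $R(f_{s,r}) < (4r/s)^d R^*_s \le (4r/s)^d R^*_r$ (using monotonicity, Lemma~\ref{lemma:prop2}\ref{prop2:2}, to replace $R^*_s$ by $R^*_r$ since $s > r$ — wait, $R^*$ is increasing, so $R^*_s \ge R^*_r$; I instead just bound $R^*_s \le 1$, or better keep $R^*_s$ and note $(4r/s)^d R^*_s < (4r/s)^d$). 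Then $s > 4r(2/R^*_r)^{1/d}$ gives $(4r/s)^d < R^*_r/2$, so $R(f_{s,r}) < R^*_r/2$, contradicting the lower bound from the previous step. This establishes \eqref{EqnModalD1}.

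For \eqref{EqnModalD}, the idea is to instantiate \eqref{EqnModalD1} at the specific radius $r = 2\sqrt{d}\,\sigma_{(2Cd\log n)}$. By Lemma~\ref{lemma:prop2}\ref{prop2:5} with $k = Cd\log n$, this radius satisfies $R^*_r > \tfrac{Cd\log n}{n}$, which meets the hypothesis $R^*_r \ge C_{0.5}\tfrac{(d+1)\log n}{n}$ provided $C$ is chosen large enough. Plugging into \eqref{EqnModalD1}: $\|\widehat\mu_{M,r}\|_2 \le 4 \cdot 2\sqrt d\,\sigma_{(2Cd\log n)} (2/R^*_r)^{1/d} = 8\sqrt d\,\sigma_{(2Cd\log n)}(2/R^*_r)^{1/d}$, which is the first inequality in \eqref{EqnModalD}. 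The second follows from $R^*_r > Cd\log n/n$, so $(2/R^*_r)^{1/d} < (2n/(Cd\log n))^{1/d} = (n/(C'd\log n))^{1/d}$ with $C' = C/2$.

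I do not expect any single step to be a serious obstacle — this is a routine lift of the 1D proof — but the part requiring the most care is the constant-tracking between the hypothesis $R^*_r \ge C_{0.5}((d+1)\log n/n)$ as stated and the slightly different constants needed to make Lemma~\ref{thm:highProbD} apply at level $t = 1/4$ while still yielding the probability $1 - 2\exp(-c' d\log n)$; one has to verify that $C_{0.5}$ is large enough that $C_{0.25} = (576)^2$'s requirement is subsumed (or, equivalently, restate the lemma's hypothesis with whatever constant the proof actually needs), and that $nR^*_r t^2 = \Omega((d+1)\log n)$ so the exponent is genuinely $-\Theta(d\log n)$. The other mild subtlety is making sure the use of Lemma~\ref{lemma:prop2}\ref{prop2:4} is legitimate, i.e. that the contradiction hypothesis $s > 4r(2/R^*_r)^{1/d} \ge 4r$ indeed forces $r \le r_2/2$ so that the clean bound $R(f_{r_2,r_1}) \le (4r_1/r_2)^d R^*_{r_2}$ applies rather than the messier packing-number form.
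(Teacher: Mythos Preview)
Your proposal is correct and follows essentially the same route as the paper: establish the $d$-dimensional analogue of Lemma~\ref{thm:modalIntervalMain} via Lemma~\ref{thm:highProbD} to get $R(f_{\widehat\mu_{M,r},r})\ge R^*_r/2$, then invoke Lemma~\ref{lemma:prop2}\ref{prop2:4} (with $R^*_{r_2}\le 1$) and Lemma~\ref{lemma:prop2}\ref{prop2:1} to convert this into the norm bound \eqref{EqnModalD1}, and finally plug in $r=2\sqrt d\,\sigma_{(2Cd\log n)}$ via Lemma~\ref{lemma:prop2}\ref{prop2:5} for \eqref{EqnModalD}. Your caution about the $C_{0.5}$ versus $C_{0.25}$ constant is well placed---the paper's own statement and proof are slightly loose on exactly this point---but it does not affect the argument beyond the value of the absolute constant in the hypothesis.
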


By Lemma~\ref{lemma:prop2}\ref{prop2:3}, the bound~\eqref{EqnModalD1} is tighter for smaller values of $r$. Furthermore, the radius $r$ could be calibrated in practice using Lepski's method, analogous to the 1-dimensional case. Also note that inequality~\eqref{EqnModalD} could also be stated using $s_{(2k)}$ in place of $2\sqrt{d} \sigma_{(2k)}$, since it is obtained from inequality~\eqref{EqnModalD1} simply by substituting the bounds of Lemma~\ref{lemma:prop2}\ref{prop2:5}.

\begin{remark*}
Our bound~\eqref{EqnModalD} may be compared with Theorem 5.1 in Chierichetti et al.~\cite{ChiEtAl14}: note that we have removed a factor of polylog$(n)$, although their bound depends on $\sigma_{(\log n)}$ rather than $\sigma_{(d \log n)}$. Nonetheless, we emphasize the fact that our results hold for general radially symmetric distributions, whereas the proofs in Chierichetti et al.~\cite{ChiEtAl14} are Gaussian-specific.
\end{remark*}

\subsection{Shorth estimator}

We now derive error bounds for the multidimensional shorth estimator. The proof is contained in Appendix~\ref{AppThmShorthD}.

\begin{theorem}
\label{ThmShorthD}
Suppose $k \ge C(d+1) \log n$. The multidimensional shorth estimator satisfies the error bound
\begin{equation*}
\|\muhat_{S,k}\|_2 \le 4r_{2k} \left(\frac{2n}{k}\right)^{1/d},
\end{equation*}
with probability at least $1-2\exp(-c'd\log n)$.
\end{theorem}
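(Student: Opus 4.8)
The plan is to mirror the one-dimensional argument in Theorem~\ref{cor:shorth}, now fueled by the multidimensional concentration bound of Lemma~\ref{thm:highProbD} and the multidimensional geometric facts in Lemma~\ref{lemma:prop2}. The first step is to control the random radius $\widehat r_k$ in terms of the population radius $r_{2k}$. Applying Lemma~\ref{thm:highProbD} to the class $\cH_{r_{2k}}$ with a fixed constant $t$ (say $t = \tfrac12$) — valid because $k \ge C(d+1)\log n$ guarantees $nR^*_{r_{2k}} = 2k \ge C_t\tfrac{d+1}{2}\log n$ for $C$ chosen appropriately — we get that, on an event of probability at least $1 - 2\exp(-c'd\log n)$, every ball of radius at most $r_{2k}$ has empirical mass within $tR^*_{r_{2k}}$ of its population mass. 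Since $R^*_{r_{2k}} = \tfrac{2k}{n}$, this forces $\sup_x R_n(f_{x,r_{2k}}) \ge (1-t)\tfrac{2k}{n} \ge \tfrac{k}{n}$, hence $\widehat r_k \le r_{2k}$ on this event. (This is the natural multidimensional analogue of the one-sided bound in Lemma~\ref{lemma:shorthlength}; we only need the upper bound here.)

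The second step converts the bound on $\widehat r_k$ into a deviation bound on $\widehat\mu_{S,k} = \widehat\mu_{M,\widehat r_k}$. On the same event, the empirical ball chosen by the shorth estimator, namely $f_{\widehat\mu_{S,k},\widehat r_k}$, has empirical mass at least $\tfrac{k}{n}$, so by the concentration bound its population mass satisfies $R(f_{\widehat\mu_{S,k},\widehat r_k}) \ge \tfrac{k}{n} - tR^*_{r_{2k}} = (1-t)\tfrac{2k}{n}$, which with $t=\tfrac12$ is $\tfrac{k}{n}$. Now suppose for contradiction that $\|\widehat\mu_{S,k}\|_2 > 4r_{2k}(2n/k)^{1/d} =: \rho$. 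Since $\widehat r_k \le r_{2k} \le \tfrac{\rho}{2}$ (here I would check that $r_{2k} \le \rho/2$, i.e. $2(2n/k)^{1/d} \ge 1$, which holds trivially as $2n/k \ge 1$), Lemma~\ref{lemma:prop2}\ref{prop2:1} followed by Lemma~\ref{lemma:prop2}\ref{prop2:4} with $r_2 = \|\widehat\mu_{S,k}\|_2$ and $r_1 = \widehat r_k$ gives
\begin{align*}
R(f_{\widehat\mu_{S,k},\widehat r_k}) \le R(f_{\rho,\widehat r_k}) \le \left(\frac{4\widehat r_k}{\rho}\right)^d R^*_{\rho} \le \left(\frac{4r_{2k}}{\rho}\right)^d R^*_{r_{2k}} = \left(\frac{1}{(2n/k)^{1/d}}\right)^d \cdot \frac{2k}{n} = \frac{1}{n},
\end{align*}
using monotonicity of $R^*$ and $\rho \ge r_{2k}$ for the middle inequality. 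This contradicts $R(f_{\widehat\mu_{S,k},\widehat r_k}) \ge \tfrac{k}{n} > \tfrac1n$ (as $k \ge 2$), so $\|\widehat\mu_{S,k}\|_2 \le \rho$, which is the claimed bound.

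I expect the only real subtlety to be bookkeeping: making sure the constant $C$ in the hypothesis $k \ge C(d+1)\log n$ is large enough that Lemma~\ref{thm:highProbD} applies with a fixed $t$ bounded away from $0$ and $1$, and that the slack in the inequalities $(1-t)\tfrac{2k}{n} \ge \tfrac{k}{n}$ and $k/n > 1/n$ is genuinely available (the latter needs only $k \ge 2$, which is implied). There is no new geometric difficulty — Lemma~\ref{lemma:prop2}\ref{prop2:4} already packages the packing-number estimate that replaces the trivial $r_1/r_2$ factor from the one-dimensional Lemma~\ref{lemma:properties}\ref{prop:4}, which is exactly why the exponent $1/d$ appears. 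A minor point to handle cleanly is that $\widehat r_k$ is defined as an infimum, so $\sup_x R_n(f_{x,\widehat r_k})$ attains $\tfrac{k}{n}$ only in the limit; since there are finitely many distinct empirical masses (transitions occur only at data points), the infimum is attained and the argument goes through, but I would state this explicitly.
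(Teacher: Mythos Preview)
Your overall strategy is exactly the paper's: control $\widehat r_k$ via the concentration Lemma~\ref{thm:highProbD}, get a population-mass lower bound for the shorth ball, then use the packing estimate in Lemma~\ref{lemma:prop2}\ref{prop2:4} to force $\|\widehat\mu_{S,k}\|_2$ small. But two arithmetic slips break the argument as written.

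First, in the second step you claim $\tfrac{k}{n} - tR^*_{r_{2k}} = (1-t)\tfrac{2k}{n}$. Since $R^*_{r_{2k}} = \tfrac{2k}{n}$, the left side is $\tfrac{k}{n}(1-2t)$, which with $t=\tfrac12$ is $0$, not $\tfrac{k}{n}$. The fix is what the paper does (via the $d$-dimensional version of Lemma~\ref{thm:shorthMass}): apply the concentration bound with $t=\tfrac14$. This still gives $R_n(f_{0,r_{2k}}) \ge \tfrac{3k}{2n} \ge \tfrac{k}{n}$, hence $\widehat r_k \le r_{2k}$, and now $R(f_{\widehat\mu_{S,k},\widehat r_k}) \ge \tfrac{k}{n} - \tfrac14\cdot\tfrac{2k}{n} = \tfrac{k}{2n}$.

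Second, in the upper-bound chain you replace $R^*_\rho$ by $R^*_{r_{2k}}$ ``using monotonicity of $R^*$ and $\rho \ge r_{2k}$,'' but $R^*$ is increasing in the radius, so the inequality goes the wrong way (and the resulting product $\tfrac{k}{2n}\cdot\tfrac{2k}{n}$ is $\tfrac{k^2}{n^2}$, not $\tfrac{1}{n}$). The correct move---and the one the paper makes---is simply $R^*_\rho \le 1$, which gives $R(f_{\rho,r_{2k}}) < (4r_{2k}/\rho)^d = \tfrac{k}{2n}$ (strict, from Lemma~\ref{lemma:prop2}\ref{prop2:4}). This matches the lower bound $\tfrac{k}{2n}$ and the contradiction closes. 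Minor point: the paper compares at the deterministic radius $r_{2k}$ (using $R(f_{\widehat\mu_{S,k},\widehat r_k}) \le R(f_{\widehat\mu_{S,k},r_{2k}})$ from Lemma~\ref{lemma:prop2}\ref{prop2:2}) rather than carrying the random $\widehat r_k$ through, which slightly streamlines the bookkeeping but is not essential.
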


As in the univariate case, the estimation error guarantees for the multidimensional modal interval and shorth estimators are similar. In particular, for the ``optimal'' choice of $r$ such that $R^*_r = \frac{cd\log n}{n}$, inequality~\eqref{EqnModalD1} in Theorem~\ref{LemModalD} gives the bound $\|\muhat_{M,r}\|_2 = \O\left(r_{c'd\log n}\left(\frac{n}{Cd\log n}\right)^{1/d}\right)$, which is of the same form as the guarantee from Theorem~\ref{ThmShorthD} when $k = C(d+1) \log n$.

\subsection{$k$-median estimator} \label{section: DMedian}

We will also utilize a multivariate version of the $k$-median estimator described in Section~\ref{SecEst}. Various multivariate extensions of the median exist, with different robustness properties and computational complexity; for our purposes, it will suffice to consider the simplest version of the multivariate median, which simply operates componentwise on the data points.

Note that since the overall mixture distribution is radially symmetric, all the marginal distributions are identical and symmetric about $0$. Accordingly, we denote the common marginal distribution by $\overline{P}_1$, and define $r_{k,1}$ to be the smallest interval (centered at $0$) that contains $\frac{k}{n}$ mass under $\overline{P}_1$. 

For each dimension $i$, we look at the $k$ median points in that dimension. We denote this set by $S_{k,i}$:
\begin{align*}
    S_{k,i} \coloneqq \{ X_j(i) : X_j(i) \text{ belongs to } k \text{-median of  } (X_j(i))_{j=1}^n \},
\end{align*}
where $X_j(i)$ denotes the $i^{th}$ coordinate of the vector $X_j$.

We now define $S_k^{\infty}$ to be the cuboid based on $S_{k,i}$, for each dimension $i$:
\begin{align*}
    S_k^\infty = \prod_{i=1}^d [ \min(S_{k,i}), \max(S_{k,i}) ].
\end{align*}
Note that the cuboid $S_k^\infty$ might not contain any data points. However, the following lemma, proved in Appendix~\ref{AppLemCubOrigin}, shows that it contains the origin w.h.p.:
\begin{lemma}
\label{LemCubOrigin}
With probability at least $1 - 4d \exp(-ck^2/n)$:
\begin{itemize}
\item[(i)] The cuboid $S_k^\infty$ contains the origin.
\item[(ii)] We have the bound $\Diam(S_{k}^\infty) \le \sqrt{d} r_{2k,1}$.
\end{itemize}
\end{lemma}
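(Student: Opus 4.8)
The plan is to prove the two claims by reducing them to one-dimensional statements about each coordinate, since both $S_k^\infty$ and the relevant probabilistic events factor across dimensions. The key one-dimensional tool is Lemma~\ref{lemma:median}, which controls the $k$-median thresholds $\widehat{\theta}_{\text{med},\pm k}$ for a single coordinate of the data. Note that because the mixture is radially symmetric, each marginal coordinate process $(X_j(i))_{j=1}^n$ is an independent non-i.i.d.\ sample from symmetric unimodal distributions about $0$, whose mixture is $\overline{P}_1$; so Lemma~\ref{lemma:median} applies verbatim to each coordinate with $\delta = k$, giving $\widehat{\theta}_{\text{med},k} \le r_{2k,1}$ and $\widehat{\theta}_{\text{med},-k} \ge -r_{2k,1}$, each failing with probability at most $2\exp(-k^2/n)$.

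First I would establish part (i). Fix coordinate $i$. The set $S_{k,i}$ is by construction the set of $X_j(i)$ lying in the interval $[\widehat{\theta}_{\text{med},-k}, \widehat{\theta}_{\text{med},k}]$, so $[\min(S_{k,i}), \max(S_{k,i})] \subseteq [\widehat{\theta}_{\text{med},-k}, \widehat{\theta}_{\text{med},k}]$. Actually for containment of the origin I want the reverse-flavored statement: I claim $\min(S_{k,i}) \le 0 \le \max(S_{k,i})$ holds whenever $\widehat{\theta}_{\text{med},-k} \le 0 \le \widehat{\theta}_{\text{med},k}$ and the central $k$ order statistics straddle $0$; more carefully, since $k$-median picks the centermost points and the sample median of a symmetric-about-zero mixture is near $0$, on the good event of Lemma~\ref{lemma:median} there are at least $k/2$-ish points on each side, forcing $\min(S_{k,i}) \le 0 \le \max(S_{k,i})$. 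The cleanest route: on the event $\{\widehat{\theta}_{\text{med},-k}\le 0\} \cap \{\widehat{\theta}_{\text{med},k}\ge 0\}$, which holds with probability at least $1 - 4\exp(-k^2/n)$ by a union bound over the two one-sided statements of Lemma~\ref{lemma:median} (using $r_{2k,1} \ge 0$), one checks from the definitions of $\widehat{\theta}_{\text{med},\pm k}$ that $S_{k,i}$ contains points $\le 0$ and points $\ge 0$, hence $0 \in [\min(S_{k,i}), \max(S_{k,i})]$. Taking a union bound over the $d$ coordinates yields that $S_k^\infty = \prod_i [\min(S_{k,i}), \max(S_{k,i})]$ contains $0_d$ with probability at least $1 - 4d\exp(-ck^2/n)$.

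Next, for part (ii), on the same good event Lemma~\ref{lemma:median} gives $[\min(S_{k,i}), \max(S_{k,i})] \subseteq [-r_{2k,1}, r_{2k,1}]$ for every $i$, so each side length of the cuboid $S_k^\infty$ is at most $2r_{2k,1}$. Wait --- I should double check the constant: the Euclidean diameter of a box with all side lengths $\le 2r_{2k,1}$ is $\sqrt{d}\cdot 2 r_{2k,1}$, which is larger than the claimed $\sqrt{d}\, r_{2k,1}$ by a factor of $2$; so I would instead use the sharper one-sided fact that $\max(S_{k,i}) \le \widehat{\theta}_{\text{med},k} \le r_{2k,1}$ \emph{and} $\min(S_{k,i}) \ge \widehat{\theta}_{\text{med},-k} \ge -r_{2k,1}$ together with the observation that the extreme selected points $\min(S_{k,i})$, $\max(S_{k,i})$ are \emph{data points} and hence, again by the centermost-$k$ structure and symmetry, the interval $[\min(S_{k,i}),\max(S_{k,i})]$ actually has length at most $r_{2k,1}$ (e.g., because $S_{k,i}$ lies in a half-width-$r_{2k,1}$ window centered near the sample median, using a one-sided version of Lemma~\ref{lemma:median} applied at level $k$ --- the bound $\widehat\theta_{\text{med},k}\le r_{k+\delta,1}$ with a slightly adjusted $k$). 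Then $\Diam(S_k^\infty) \le \sqrt{\sum_i (\text{side}_i)^2} \le \sqrt{d}\, r_{2k,1}$, and absorbing the various $2$'s into the constant $c$ (or into a harmless redefinition $r_{2k,1}$ vs $r_{4k,1}$, which only changes constants) gives the stated bound.

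The main obstacle is getting the one-dimensional geometry exactly right: translating ``$X_j(i)$ belongs to the $k$-median'' into a clean sandwich between $\widehat{\theta}_{\text{med},-k}$ and $\widehat{\theta}_{\text{med},k}$ and simultaneously showing the origin is caught, while tracking constants so that the $\sqrt d$-factor and the coefficient in Lemma~\ref{LemCubOrigin}(ii) come out as claimed rather than off by a factor of $2$. Everything else --- the independence/radial-symmetry reduction to marginals, and the union bound over $d$ coordinates to pass from the per-coordinate failure probability $\le 2\exp(-k^2/n)$ of Lemma~\ref{lemma:median} to the $4d\exp(-ck^2/n)$ in the statement --- is routine bookkeeping.
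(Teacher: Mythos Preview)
Your handling of part (ii) matches the paper: apply Lemma~\ref{lemma:median} with $\delta=k$ in each coordinate to get $[\min(S_{k,i}),\max(S_{k,i})]\subseteq[\widehat\theta_{\text{med},-k},\widehat\theta_{\text{med},k}]\subseteq[-r_{2k,1},r_{2k,1}]$, then take a union bound over $i$. Your worry about the factor of $2$ in the diameter is legitimate (the cuboid sits in $[-r_{2k,1},r_{2k,1}]^d$, whose Euclidean diameter is $2\sqrt{d}\,r_{2k,1}$), but this is a cosmetic discrepancy in the stated constant, not a flaw in your argument; in the downstream use (Theorem~\ref{thm:HybridScreening}) what is actually needed is that any point of the cuboid lies within $\sqrt{d}\,r_{2k,1}$ of the origin, and that does follow directly.

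Your argument for part (i), however, has a genuine gap. Lemma~\ref{lemma:median} only gives \emph{outer} bounds: $\widehat\theta_{\text{med},k}\le r_{k+\delta,1}$ and $\widehat\theta_{\text{med},-k}\ge -r_{k+\delta,1}$. It does not show that $\widehat\theta_{\text{med},-k}\le 0\le \widehat\theta_{\text{med},k}$, and the parenthetical ``using $r_{2k,1}\ge 0$'' does not bridge this. To place the origin inside $[\min(S_{k,i}),\max(S_{k,i})]$ you need the opposite direction: that not too many coordinates fall on one side of $0$. The paper does this via Lemma~\ref{lemma:kmedianContains0}, which applies Hoeffding directly to $\psi_n(0)=\frac{1}{n}\sum_j\mathrm{sign}(-X_j(i))$: since $\E\psi_n(0)=0$ by symmetry, $\P(\psi_n(0)\ge k/n)\le\exp(-ck^2/n)$, which rules out $\max(S_{k,i})<0$; symmetrically for $\min(S_{k,i})>0$. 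That is the missing ingredient. Once you have it, the union bound over $d$ coordinates combines with the $2d$ events from Lemma~\ref{lemma:median} to give the $4d\exp(-ck^2/n)$ in the statement.
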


Lemma~\ref{LemCubOrigin} will be critical in our analysis of the hybrid estimator proposed below. In particular, the estimator will consist of projecting the modal interval/shorth estimator onto the cuboid $S_k^\infty$, and Lemma~\ref{LemCubOrigin}(i) guarantees that the estimation error of the projected estimator will be no larger than the estimation error of the initial estimator without projection. On the other hand, Lemma~\ref{LemCubOrigin}(ii) bounds the error of an estimator based on the $k$-median alone.

\section{Hybrid estimators}
\label{sec:hybrid}

We now present an algorithm that combines the shorth and $k$-median in order to obtain superior performance for both fast and slow decay of $\overline{P}$. 
Recall from Table~\ref{table: ex123} that the median has superior performance when there is less heterogeneity in the data and $\overline{P}$ decays fast enough. However, the superior performance of the modal interval estimator is apparent in the presence of large number of high variance points. It is then desirable to have an estimator that adapts to the problem and enjoys the best of both worlds without any prior information. Indeed, as outlined in Proposition~\ref{prop:exmHybrid}, the hybrid estimator achieves this rate. The key point is that if the true mean lies inside a convex set (defined with respect to the $k$-median), then projecting any other point (e.g., the shorth) onto the set will only move the point closer to the mean, so the hybrid estimator can leverage the better of the two rates enjoyed by the median and shorth.

\subsection{Hybrid estimator for $d=1$}

Algorithm~\ref{alg:hybrid2} defines the hybrid estimator in the univariate setting. As described earlier, it proceeds by separately computing the $k_1$-shorth estimator and $k_2$-median. If the shorth estimator lies within the median interval, the algorithm outputs the shorth; otherwise, it outputs the closest endpoint of the median interval. Note that this estimator resembles the estimator proposed by Chierichetti et al.~\cite{ChiEtAl14} since it employs the median as a screening step for points with very large variance. However, the shorth estimator is computed separately and then projected onto an interval around the median. In contrast, the estimator proposed by Chierichetti et al.~\cite{ChiEtAl14} first computes the $k_2$-median and then computes the shorth on the remaining points, leading to a delicate conditioning argument in the analysis and creating some technical gaps in the proofs.

\begin{algorithm}[h]  
  \caption{Hybrid mean estimator 
    \label{alg:hybrid2}}  
  \begin{algorithmic}[1]  
    \Statex  
    \Function{hybridMeanEstimator}{$X_{1:n}, k_1 , k_2 $}  
            \State $S_{k_1} \gets $ kMedian$(X_{1:n},k_1)$.      
        \State $\widehat{\mu}_{S,k_2} \gets $ Shorth$(X_{1:n},k_2)$.
    \If {$\widehat{\mu}_{S,k_2} \in  [\min(S_{k_1}), \max(S_{k_1})]$}
      \State $\widehat{\mu}_{k_1,k_2} \gets \widehat{\mu}_{S,k_2}$
    \Else
       \State $\widehat{\mu}_{k_1,k_2} \gets $ closestPoint$(S_{k_1}, \widehat{\mu}_{S,k_2})$ 
    \EndIf
    \State \Return $\widehat{\mu}_{k_1,k_2}$
    \EndFunction  
  \end{algorithmic}  
\end{algorithm}

\begin{theorem}
If $k_1 = \sqrt{n}\log n $ and $k_2 \ge C\log n$, the error of the hybrid estimator in Algorithm~\ref{alg:hybrid2} is bounded by
\label{thm:hybrid}
\begin{align*}
|\widehat{\mu}_{k_1,k_2}|  \leq \min\(\Diam(S_{k_1}), |\widehat{\mu}_{S,k_2}|\) \leq \frac{4\sqrt{n}\log n}{k_2} r_{2k_2},
\end{align*}
with probability at least $1 - 2\exp(-c'k_2) - 2\exp(-c\log^2n)$.
\end{theorem}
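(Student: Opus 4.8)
The plan is to combine three facts: (i) the $k_2$-shorth bound from Theorem~\ref{cor:shorth}; (ii) the $k_1$-median bound from Lemma~\ref{lemma:median} (equivalently Lemma~\ref{LemCubOrigin} in $d=1$); and (iii) a simple geometric observation about projecting onto an interval that contains the true mean $\mustar = 0$. The key point is that the interval $I := [\min(S_{k_1}), \max(S_{k_1})]$ is a one-dimensional convex set, and if $0 \in I$, then for any point $y$ the projection $\Pi_I(y)$ of $y$ onto $I$ satisfies $|\Pi_I(y)| \le |y|$ and also $|\Pi_I(y)| \le \Diam(I)$. The hybrid estimator output $\widehat{\mu}_{k_1,k_2}$ is exactly $\Pi_I(\widehat{\mu}_{S,k_2})$: if the shorth already lies in $I$, the projection is the identity; otherwise it is the nearer endpoint of $I$. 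Hence on the event that $0 \in I$, we get $|\widehat{\mu}_{k_1,k_2}| \le \min(\Diam(I), |\widehat{\mu}_{S,k_2}|)$ deterministically.

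First I would set up the two good events. Apply Lemma~\ref{lemma:median} with $k = \delta = k_1 = \sqrt{n}\log n$: with probability at least $1 - 2\exp(-\delta^2/n) = 1 - 2\exp(-\log^2 n)$, we have $\widehat{\theta}_{\text{med},k_1} \le r_{k_1 + \delta} = r_{2k_1}$ and $\widehat{\theta}_{\text{med},-k_1} \ge -r_{2k_1}$. Since $S_{k_1} = \{X_i : \widehat{\theta}_{\text{med},-k_1} \le X_i \le \widehat{\theta}_{\text{med},k_1}\}$, this shows that on this event the endpoints of $I$ are of opposite sign (so $0 \in I$, using that $\widehat{\theta}_{\text{med},-k_1} \le 0 \le \widehat{\theta}_{\text{med},k_1}$, which follows from the definitions of $\widehat{\theta}_{\text{med},\pm k_1}$ and the fact that $\psi_n$ is a nondecreasing step function) and moreover $\Diam(I) \le 2 r_{2k_1}$. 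Next, apply Theorem~\ref{cor:shorth} with $k_2 \ge C\log n \ge C_{0.25}(\log n)/2$: with probability at least $1 - 2\exp(-c'k_2)$, we have $|\widehat{\mu}_{S,k_2}| \le \frac{2n r_{2k_2}}{k_2}$. Union-bounding, both events hold simultaneously with probability at least $1 - 2\exp(-c'k_2) - 2\exp(-c\log^2 n)$.

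Then I would assemble the bound. On the intersection of the two events, the geometric observation gives
\begin{align*}
|\widehat{\mu}_{k_1,k_2}| \le \min\bigl(\Diam(S_{k_1}), |\widehat{\mu}_{S,k_2}|\bigr) \le \min\left(2 r_{2k_1}, \frac{2n r_{2k_2}}{k_2}\right).
\end{align*}
To match the stated form $\frac{4\sqrt{n}\log n}{k_2} r_{2k_2}$, note that it suffices to show each of the two terms in the minimum is at most this quantity; the $k_2$-shorth term $\frac{2n r_{2k_2}}{k_2}$ is already within a factor $2$ of the claimed bound since $\sqrt{n}\log n = k_1 \ge 1$ (indeed $\frac{2n r_{2k_2}}{k_2} \le \frac{4\sqrt{n}\log n \cdot n r_{2k_2}}{k_2 \cdot 2\sqrt{n}\log n}$... ) — more cleanly, since $k_1 = \sqrt{n}\log n$, one has $2n/k_2 = (2n/(k_1 k_2)) \cdot k_1 = (2\sqrt n/(k_2 \log n)) \cdot \sqrt n \log n$, and using $r_{2k_1} \le r_{2k_2}$ when $k_1 \ge k_2$ (monotonicity, Lemma~\ref{lemma:properties}(ii)) together with Lemma~\ref{lemma:properties}(iii) to compare the coefficients, the $\Diam$ term is also dominated; I would carry out this elementary comparison of the two terms against $\frac{4\sqrt n \log n}{k_2} r_{2k_2}$ using only the monotonicity properties in Lemma~\ref{lemma:properties}.

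\textbf{Main obstacle.} The geometry and the probabilistic union bound are routine; the only real care needed is the bookkeeping in the last step, i.e.\ verifying that $\min(2r_{2k_1}, \frac{2n r_{2k_2}}{k_2}) \le \frac{4\sqrt n \log n}{k_2} r_{2k_2}$ in all parameter regimes (in particular whether $k_2 \le k_1$ or $k_2 > k_1$, which changes the direction of the $r_{2k}$ comparison via Lemma~\ref{lemma:properties}(ii)–(iii)). I would handle this by splitting on the sign of $k_1 - k_2$ and invoking the relevant monotonicity property of $r_k$ (and of $k r_{2k}/k'$-type ratios from part (iii)) in each case. A secondary subtlety is confirming $0 \in I$ cleanly; this is immediate from the definitions of $\widehat{\theta}_{\text{med},k}$ and $\widehat{\theta}_{\text{med},-k}$ as a sup and inf over threshold sets for the nondecreasing function $\psi_n$, so no extra probability cost is incurred beyond Lemma~\ref{lemma:median}.
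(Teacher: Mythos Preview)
Your projection argument for the first inequality is fine (modulo the minor point that $0\in I$ is not automatic from Lemma~\ref{lemma:median}; it needs the separate Hoeffding bound of Lemma~\ref{lemma:kmedianContains0}, though at the same $\exp(-c\log^2 n)$ cost). The genuine gap is the second inequality. The claim
\[
\min\Bigl(2r_{2k_1},\ \tfrac{2n\,r_{2k_2}}{k_2}\Bigr)\ \le\ \tfrac{4\sqrt{n}\log n}{k_2}\,r_{2k_2}
\]
is false in general, and no amount of case-splitting on $k_1$ versus $k_2$ will repair it. The shorth term satisfies $\tfrac{2n\,r_{2k_2}}{k_2}=\tfrac{\sqrt{n}}{2\log n}\cdot\tfrac{4\sqrt{n}\log n}{k_2}r_{2k_2}$, so it is always a factor $\tfrac{\sqrt{n}}{2\log n}$ too large; and Lemma~\ref{lemma:properties}\ref{prop:3} only yields the \emph{lower} bound $r_{2k_1}>\tfrac{k_1}{k_2}r_{2k_2}$, with no matching upper bound (your monotonicity is also reversed: $k_1\ge k_2$ gives $r_{2k_1}\ge r_{2k_2}$). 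Concretely, a density with $r_{2k_2}=1$ and $r_{2k_1}$ of order $n$ makes both terms in your minimum $\Omega(n/\log n)$ while the target is $O(\sqrt{n})$.

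The paper does not take the minimum of the two black-box bounds. Instead it sets $r'=\tfrac{4\sqrt{n}\log n}{k_2}r_{2k_2}$ and splits on whether $R^*_{r'}\ge \tfrac{2\log n}{\sqrt{n}}$. If so, then $r'\ge r_{2k_1}$ and Lemma~\ref{lemma:median} places $I\subseteq[-r',r']$ directly. If not, one \emph{re-enters} the shorth analysis: from $R(f_{\widehat\mu_{S,k_2},r_{2k_2}})\ge \tfrac{k_2}{2n}$ (Lemma~\ref{thm:shorthMass}) and Lemma~\ref{lemma:properties}\ref{prop:4}, one gets $R(f_{r',r_{2k_2}})<\tfrac{r_{2k_2}}{r'}R^*_{r'}<\tfrac{k_2}{2n}$, using the case hypothesis $R^*_{r'}<\tfrac{2\log n}{\sqrt{n}}$ in place of the crude $R^*_{r'}\le 1$ used in Theorem~\ref{cor:shorth}. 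That sharpening is precisely what turns the $\tfrac{2n}{k_2}$ prefactor into $\tfrac{4\sqrt{n}\log n}{k_2}$, and it is the step your proposal is missing.
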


The proof of Theorem~\ref{thm:hybrid} is provided in Appendix~\ref{app:proof_hybrid}.
Importantly, the bound in Theorem~\ref{thm:hybrid} is finite even for heavy-tailed distributions with infinite variance.
Finally, note that in Algorithm~\ref{alg:hybrid2}, we could replace the shorth estimator by the modal interval estimator with adaptively chosen interval width (cf.\ Section~\ref{SecModal}) to obtain similar error guarantees.

The following proposition translates the error guarantees of Theorem~\ref{thm:hybrid} into our running examples. These bounds are a direct result of Theorem~\ref{thm:hybrid} and Propositions~\ref{prop:exmModal} and~\ref{prop:exmMedian}.

\begin{proposition} When $k_1$ and $k_2$ are chosen as in Theorem 
~\ref{thm:hybrid}, we have the following bounds:
\begin{enumerate}
	\item For Example~\ref{exm:IID} (i.i.d.\ observations), $|\widehat{\mu}_{k_1, k_2}| = \O\left( \frac{\sigma \log n}{\sqrt{n}} \right)$, w.h.p.
	\item For Example~\ref{exm:QuadVar} (quadratic variance), $|\widehat{\mu}_{k_1, k_2}| = \O(n^{\epsilon})$, w.h.p., for any $\epsilon > 0$.
	\item For Example~\ref{exm:alpha-mix} ($\alpha$-mixture distributions), with high probability,
	\begin{align*}
			    |\widehat{\mu}_{k_1, k_2}| = \begin{cases}
			    				\O(n^{\alpha-0.5}), & \text{if } \alpha < 1, \\
			    				\O(1), & \text{if } \alpha \ge 1.
 			    				\end{cases}
    \end{align*}
\end{enumerate}
\label{prop:exmHybrid}
\end{proposition}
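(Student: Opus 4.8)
The plan is to read off the claim as a corollary of Theorem~\ref{thm:hybrid}. For the stated choices $k_1 = \sqrt{n}\log n$ and $k_2 = C\log n$, Theorem~\ref{thm:hybrid} guarantees, with probability $1-o(1)$,
\[
|\widehat{\mu}_{k_1,k_2}| \;\le\; \min\bigl(\Diam(S_{k_1}),\, |\widehat{\mu}_{S,k_2}|\bigr),
\]
so I would bound the two arguments of the minimum separately in each of the three examples and keep the smaller one. For the shorth term $|\widehat{\mu}_{S,k_2}|$, I would invoke the observation following Theorem~\ref{cor:shorth} that the $\Theta(\log n)$-shorth inherits the modal-interval guarantees of Proposition~\ref{prop:exmModal} --- the point being that $\widehat{\mu}_{S,k_2} = \widehat{\mu}_{M,\widehat{r}_{k_2}}$ and Lemma~\ref{lemma:shorthlength} confines $\widehat{r}_{k_2}$ to $[r_{k_2/2},\, r_{2k_2}]$, a range over which the modal-interval bounds hold --- giving $|\widehat{\mu}_{S,k_2}|$ equal to $\Theta(\sigma)$ for Example~\ref{exm:IID}, $\O(n^\epsilon)$ for Example~\ref{exm:QuadVar}, and $\O(n^\alpha)$ (resp.\ $\O(1)$) for Example~\ref{exm:alpha-mix} when $\alpha<1$ (resp.\ $\alpha\ge1$). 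For the diameter term, I would use Lemma~\ref{lemma:median} with $\delta=k_1$, which gives $\Diam(S_{k_1}) \le 2 r_{2k_1} = 2 r_{2\sqrt{n}\log n}$ w.h.p.; the per-example value of $r_{2\sqrt{n}\log n}$ is exactly the quantity computed inside the proof of Proposition~\ref{prop:exmMedian}, namely $\O(\sigma\log n/\sqrt{n})$, $\O(\sqrt{n}\log n)$, and $\O(n^{\alpha-0.5}\log n)$ for the three examples.

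Taking the minimum in each case then yields precisely the stated rates, with the ``winner'' alternating between the two ingredients: the median term dominates for Example~\ref{exm:IID} (giving $\O(\sigma\log n/\sqrt{n})$, since $\log n/\sqrt{n}<1$ for large $n$) and for Example~\ref{exm:alpha-mix} with $\alpha<1$ (giving $\O(n^{\alpha-0.5}\log n)$, which beats $\O(n^\alpha)$), whereas the shorth term dominates for Example~\ref{exm:QuadVar} (giving $\O(n^\epsilon)$, which beats $\O(\sqrt{n}\log n)$) and for Example~\ref{exm:alpha-mix} with $\alpha\ge1$ (giving $\O(1)$). I would finish with a union bound over the high-probability events of Theorem~\ref{thm:hybrid}, Proposition~\ref{prop:exmModal}, and Proposition~\ref{prop:exmMedian}, so that everything holds simultaneously w.h.p.

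Since all ingredients are already in place, there is no genuine obstacle, but two points require care. First, for Examples~\ref{exm:QuadVar} and~\ref{exm:alpha-mix} with $\alpha\ge1$ the shorth term must be controlled through the \emph{refined} modal-interval bound~\eqref{EqnMuhatBd1} behind Proposition~\ref{prop:exmModal}, not the crude estimate $\tfrac{2n r_{2k_2}}{k_2}$ of Theorem~\ref{cor:shorth} --- equivalently, not the explicit right-hand side $\tfrac{4\sqrt{n}\log n}{k_2} r_{2k_2}$ stated in Theorem~\ref{thm:hybrid} --- both of which give only $\O(\sqrt{n})$ in these cases and are too weak; it is therefore essential to work from the $\min(\Diam(S_{k_1}),|\widehat{\mu}_{S,k_2}|)$ form of the bound together with the sharper shorth estimate. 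Second, the Example~\ref{exm:alpha-mix} bound for $\alpha<1$ carries a residual $\log n$ factor and is really $\tilde{\O}(n^{\alpha-0.5})$, consistent with the poly-logarithmic convention of Table~\ref{table: ex123}.
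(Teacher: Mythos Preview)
Your proposal is correct and matches the paper's own approach exactly: the paper states that ``these bounds are a direct result of Theorem~\ref{thm:hybrid} and Propositions~\ref{prop:exmModal} and~\ref{prop:exmMedian},'' which is precisely what you do by taking the minimum of the median-diameter and shorth bounds in each example. Your two cautionary notes --- that one must use the sharper $\min(\Diam(S_{k_1}),|\widehat{\mu}_{S,k_2}|)$ form with the Proposition~\ref{prop:exmModal}-level shorth estimate rather than the cruder explicit right-hand side of Theorem~\ref{thm:hybrid}, and that Example~\ref{exm:alpha-mix} with $\alpha<1$ really gives $\tilde{\O}(n^{\alpha-0.5})$ --- are accurate and go slightly beyond what the paper makes explicit.
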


\subsection{Hybrid estimators for $d>1$}

We now describe how to obtain rates of $O(\sqrt{n}^{1/d})$, rather than $O(n^{1/d})$ obtained in Theorems~\ref{LemModalD} and~\ref{ThmShorthD}. This is a multidimensional analog of the hybrid estimator. The challenge is that for $d > 1$, we need to define an appropriate notion of a $k$-median, and we also need the screening method to be efficiently computable.

The $d$-dimensional hybrid algorithm consists of the following steps, summarized in Algorithm~\ref{alg:hybridD}:
\begin{itemize}
\item[(i)] Compute the cuboid $S_{k_1}^\infty$ with $k_1 = \sqrt{n}\log n$.
\item[(ii)] Compute the $k_2$-shorth estimator $\muhat_{S, k_2}$ with $k_2 = Cd \log n$.
\item[(iii)] If $\muhat_{S, k_2} \notin S_{k_1}^\infty $, return the projection of $\muhat_{S,k_2}$ on  $S_{k_1}^\infty $. Otherwise, return $\muhat_{S, k_2}$.
\end{itemize}

\begin{algorithm}[h]  
  \caption{Hybrid mean estimator ($d$-dimensional)
    \label{alg:hybridD}}  
  \begin{algorithmic}[1]  
    \Statex  
    \Function{hybridMultidimensional}{$X_{1:n}, k_1, k_2, d$}  
            \State $S_{k_1}^\infty \gets $ kCuboid$(X_{1:n},k_1)$.      
        \State $\widehat{\mu}_{S,k_2} \gets $ Shorth$(X_{1:n},k_2)$.
    \If {$\widehat{\mu}_{S,k_2} \in  S_{k_1}^\infty$}
      \State $\widehat{\mu}_{k_1,k_2} \gets \widehat{\mu}_{S,k_2}$
    \Else
       \State $\widehat{\mu}_{k_1,k_2} \gets \arg\min_{x \in S_{k_1}^\infty} \|x - \widehat{\mu}_{S,k_2}\|_2$
    \EndIf
    \State \Return $\widehat{\mu}_{k_1,k_2}$
    \EndFunction  
  \end{algorithmic}  
\end{algorithm}

Note that the projection in step (iii) is easy to accomplish, since $\ell_2$-projection onto the cuboid may be done componentwise, hence computed in $O(d)$ time. As in the case of the 1-dimensional hybrid estimator, the modal interval estimator could also be used in place of the shorth.

We then have the following result, proved in Appendix~\ref{AppThmHybridMult}:

\begin{theorem}
Suppose $k_1 = \sqrt{n} \log n$ and $k_2 \geq Cd\log n$. Then the error of the hybrid algorithm is bounded by
\begin{equation*}
\|\muhat_{k_1, k_2}\|_2 \le \min\left\{\Diam(S_{k_1}^\infty), \|\muhat_{S,k_2}\|_2 \right\}  \le \min\left\{\sqrt{d} r_{2k_1,1}, C'\sqrt{n}^{1/d} r_{k_2}\right\},
\end{equation*}
with probability at least $1 - 2\exp(-c'k_2) - 4d \exp(-c\log^2 n)$.
\label{thm:HybridScreening}
\end{theorem}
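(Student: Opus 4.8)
The plan is to mirror the structure of the one-dimensional proof of Theorem~\ref{thm:hybrid}, replacing the median interval with the cuboid $S_{k_1}^\infty$ and using the fact that $\ell_2$-projection onto a convex set is $1$-Lipschitz and does not increase distance to any point of the set. First I would invoke Lemma~\ref{LemCubOrigin} with $k = k_1 = \sqrt{n}\log n$: on an event of probability at least $1 - 4d\exp(-ck_1^2/n) = 1 - 4d\exp(-c\log^2 n)$, the cuboid $S_{k_1}^\infty$ is a (nonempty, axis-aligned, hence convex and closed) set containing the origin $\mustar = 0_d$, and moreover $\Diam(S_{k_1}^\infty) \le \sqrt{d}\, r_{2k_1,1}$. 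Separately, I would apply Theorem~\ref{ThmShorthD} with $k = k_2 \ge Cd\log n$: on an event of probability at least $1 - 2\exp(-c'd\log n) \ge 1 - 2\exp(-c'k_2)$ (using $k_2 \ge Cd\log n$ so that $d\log n \ge k_2/C$, adjusting constants), the shorth estimator satisfies $\|\muhat_{S,k_2}\|_2 \le 4 r_{2k_2}(2n/k_2)^{1/d} =: C'\sqrt{n}^{1/d} r_{k_2}$ up to absorbing constants and the slight inflation $r_{2k_2}$ versus $r_{k_2}$ (one may instead simply keep $r_{2k_2}$ and relabel). Intersecting the two events gives failure probability at most $2\exp(-c'k_2) + 4d\exp(-c\log^2 n)$, as claimed.

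On this good event I would argue the deterministic geometric bound. Write $\Pi$ for $\ell_2$-projection onto the closed convex set $S_{k_1}^\infty$, so $\muhat_{k_1,k_2} = \Pi(\muhat_{S,k_2})$ (this equals $\muhat_{S,k_2}$ when the latter is already in the cuboid, consistent with Algorithm~\ref{alg:hybridD}). Since $0_d \in S_{k_1}^\infty$, the obtuse-angle characterization of the projection gives $\langle \muhat_{S,k_2} - \muhat_{k_1,k_2},\, 0_d - \muhat_{k_1,k_2}\rangle \le 0$, and expanding $\|\muhat_{S,k_2}\|_2^2 = \|\muhat_{S,k_2} - \muhat_{k_1,k_2}\|_2^2 + \|\muhat_{k_1,k_2}\|_2^2 + 2\langle \muhat_{S,k_2}-\muhat_{k_1,k_2},\, \muhat_{k_1,k_2}\rangle$ shows $\|\muhat_{k_1,k_2}\|_2 \le \|\muhat_{S,k_2}\|_2$; equivalently, $\Pi$ is nonexpansive and fixes $0_d$. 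On the other hand $\muhat_{k_1,k_2} \in S_{k_1}^\infty$ and $0_d \in S_{k_1}^\infty$, so trivially $\|\muhat_{k_1,k_2}\|_2 \le \Diam(S_{k_1}^\infty) \le \sqrt{d}\, r_{2k_1,1}$. Combining the two gives $\|\muhat_{k_1,k_2}\|_2 \le \min\{\Diam(S_{k_1}^\infty),\, \|\muhat_{S,k_2}\|_2\} \le \min\{\sqrt{d}\, r_{2k_1,1},\, C'\sqrt{n}^{1/d} r_{k_2}\}$, which is exactly the stated bound.

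There is no genuinely hard analytic obstacle here — the concentration work has already been done in Theorem~\ref{ThmShorthD} and Lemma~\ref{LemCubOrigin}. The one point requiring a little care is the reconciliation of constants and radii: Theorem~\ref{ThmShorthD} produces $4r_{2k_2}(2n/k_2)^{1/d}$ whereas the target is written $C'\sqrt{n}^{1/d} r_{k_2}$, so I would either (a) simply carry $r_{2k_2}$ through and note $r_{2k_2} \le $ const $\cdot r_{k_2}$ is \emph{not} needed because the statement is up to the unspecified $C'$ and one could equally state it with $r_{2k_2}$, or (b) observe $(2n/k_2)^{1/d} = 2^{1/d}(n/k_2)^{1/d} \le 2 \sqrt{n}^{1/d}$ when $k_2 \ge \sqrt{n}$, but since $k_2 = \Theta(d\log n)$ can be smaller than $\sqrt n$, it is cleaner to keep the bound as $4r_{2k_2}(2n/k_2)^{1/d} \le C' r_{k_2}\sqrt{n}^{1/d}$ by absorbing the harmless $(2n/k_2)^{1/d} \le \mathrm{const}\cdot n^{1/(2d)}\cdot(n/k_2^2)^{1/(2d)}$-type manipulations into $C'$ only when $k_2 \gtrsim \sqrt n$ — so in the write-up I would phrase the final inequality with $r_{2k_2}$ and a generic constant and remark that under the stated regime this is $O(\sqrt{n}^{1/d} r_{k_2})$. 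The second minor bookkeeping item is matching the probability: $2\exp(-c'd\log n)$ from Theorem~\ref{ThmShorthD} must be rewritten as $2\exp(-c'k_2)$ using $k_2 \le C'' d\log n$ is the wrong direction, so instead use $d\log n \ge k_2/(C)$ from $k_2 \ge Cd\log n$, giving $\exp(-c'd\log n) \le \exp(-(c'/C)k_2)$, and relabel $c'/C$ as $c'$. Everything else is immediate from the cited results.
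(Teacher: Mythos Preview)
Your projection argument for the \emph{first} inequality
\[
\|\muhat_{k_1,k_2}\|_2 \le \min\bigl\{\Diam(S_{k_1}^\infty),\,\|\muhat_{S,k_2}\|_2\bigr\}
\]
is correct and matches the paper. The gap is in the \emph{second} inequality, specifically the term $C'\sqrt{n}^{1/d} r_{k_2}$. Theorem~\ref{ThmShorthD} only gives $\|\muhat_{S,k_2}\|_2 \le 4 r_{2k_2}(2n/k_2)^{1/d}$, which for $k_2 = \Theta(d\log n)$ is of order $n^{1/d}$, not $\sqrt{n}^{1/d}$. You notice this and try to ``absorb'' the discrepancy into $C'$, but $(2n/k_2)^{1/d}/n^{1/(2d)} = (2n/k_2^2)^{1/(2d)}\cdot 2^{1/(2d)}$ is \emph{not} bounded unless $k_2 \gtrsim \sqrt{n}$, which is exactly the regime the theorem is \emph{not} about. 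The improvement from $n^{1/d}$ to $\sqrt{n}^{1/d}$ is the entire content of the hybrid construction, and it does not follow from simply taking the minimum of the two individual bounds.

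What you are missing is the case analysis that you claim to be mirroring from Theorem~\ref{thm:hybrid}. The paper splits on whether $r_{4\sqrt{n}\log n} \le \sqrt{n}^{1/d} r_{k_2}$. In Case~1 this holds, and Lemma~\ref{LemRkMarginal} ($r_{k/2,1} \le C r_k/\sqrt{d}$) converts it into $\sqrt{d}\,r_{2k_1,1} \le C\sqrt{n}^{1/d} r_{k_2}$, so the cuboid diameter already delivers the bound. In Case~2 the inequality fails, which means $R^*_{r'} < \tfrac{2\sqrt{n}\log n}{n}$ for $r' = \sqrt{n}^{1/d} r_{k_2}$; plugging this into the packing inequality $R(f_{r',r_{k_2}}) \le (r_{k_2}/r')^d R^*_{r'} = \tfrac{1}{\sqrt{n}} R^*_{r'}$ gives $R(f_{r',r_{k_2}}) < \tfrac{k_2}{2n}$, which is what forces $\|\muhat_{S,k_2}\|_2 \le r'$. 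The extra $\sqrt{n}$ in the denominator of $R^*_{r'}$---available only in Case~2---is precisely what buys the factor-of-$\sqrt{n}^{1/d}$ improvement over the raw shorth bound. Your proposal needs this dichotomy (and Lemma~\ref{LemRkMarginal}) to close.

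A smaller bookkeeping slip: from $k_2 \ge Cd\log n$ you get $d\log n \le k_2/C$, not $\ge$, so your conversion $\exp(-c'd\log n) \le \exp(-(c'/C)k_2)$ is backwards. This is not fatal---the underlying shorth concentration (Lemma~\ref{thm:shorthMass}) actually yields $\exp(-c'k)$ directly---but the argument as written is incorrect.
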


\begin{remark*}
\label{RemHybrid}
Similar to the univariate case, the multivariate hybrid estimator achieves good error guarantees for both slow and fast decay of $\overline{P}$. In particular, when data are \iid Gaussian with distribution $\cN(0, \sigma^2 I_d)$, as in Example~\ref{exm:IID}, the error of the hybrid estimator is of the order $\O\(\frac{\sigma\sqrt{d}\log n}{\sqrt n}\)$. This is within $\log $ factors of the optimal $\frac{\sqrt{d} \sigma }{\sqrt{n}}$ error rate. At the same time, the worst-case error guarantee is of the form $\O\(\sqrt{d}\sqrt{n}^{1/d} \sigma_{(Cd \log n)} \)$.

We also briefly comment on the error guarantees of the hybrid estimator on the multivariate analog of Example~\ref{exm:alpha-mix}. We can show that $r_{2k_1, 1} = \tilde{\O}\( n^{\alpha - 0.5}\)$ and $r_{k_2}  =  \tilde{\O}\(\sqrt{d}n^{\alpha - \frac{1}{d}}\)$, so Lemma~\ref{LemCubOrigin} implies a bound of $\tilde{\O}(\sqrt{d} n^{\alpha - 0.5})$ for the median estimator. On the other hand, Theorem~\ref{ThmShorthD} leads to a bound of $\tilde{\O}(\sqrt{d} n^\alpha)$ for the shorth estimator. This bound can be improved for $\alpha \geq \frac{1}{d}$: If $\alpha \geq \frac{1}{d}$, we have $\|\muhat_{S, k_2}\|_2 = \O(\sqrt{d})$ (cf. Theorem~\ref{ThmUpperBound}). The second expression in Theorem~\ref{thm:HybridScreening} then implies that the error of the hybrid estimator is $\tilde{\O}\( \sqrt{d}\min( n^{\alpha - 0.5},1)\)$ for $\alpha \geq \frac{1}{d}$ and $\tilde{\O}(\sqrt{d}n^{\alpha - 0.5})$ for $ \alpha \leq  \frac{1}{d}$.
This improves upon the error rates of both the median and shorth estimators.
\label{RemHybridD}
\end{remark*}

\section{Bounds in expectation}
\label{SecExpect}

Thus far, we have focused on high-probability bounds. We now briefly discuss how to convert the upper bounds into bounds on the expected error of the estimator. We then derive lower bounds on the estimation error of any estimator, thus addressing the question of optimality in certain regimes.

\subsection{Imposing additional assumptions}

We first show that unlike high-probability bounds, expected error bounds of a similar order \emph{cannot} be derived for modal interval estimator without any assumptions on the high-variance mixture components. To illustrate this point, we provide an example in which it is possible to derive high-probability bounds of $\O(1)$ for the modal interval estimator without further assumptions, whereas bounds in expectation of a similar order provably require additional tail assumptions, since $\E|\widehat{\mu}_{M,1}| \to \infty$ as $q_n \to 
\infty$.

\begin{example}
\label{exm:HighExp2}
For any $n$, let the densities of the $P_i$'s be defined as follows:
For $ i \leq C\log n$, let
\begin{align*}
p_i(x) = \begin{cases} \frac{1}{6i} , & |x| \leq 3i, \\
					0, & \text{otherwise}.
		\end{cases}
\end{align*}
For  $i > C\log n$ and $\alpha \in (0,1)$, let
\begin{align*}
 p_i(x) = \begin{cases} n^{- \alpha} , & |x| \leq 1, \\
					h_n, & 1 < |x| \leq q_n, \\
					0, & \text{otherwise},
		\end{cases}
\end{align*}
where the $\{h_n\}$ and $\{q_n\}$ are constrained such that the total area is $1$, i.e., $2n^{- \alpha} + 2(q_n - 1)h_n = 1$ and $h_n \leq \frac{n^{-\alpha}}{2}$. In particular, for an $\alpha > 0$, we can still choose $q_n$ arbitrarily large; we will take $q_n = \Omega(n)$.
\end{example}

The proof of the following statement is contained in Appendix~\ref{app:HighExp2}:
\begin{proposition}
For Example~\ref{exm:HighExp2}, we have $\E|\widehat{\mu}_{M,1}| \to \infty$ as $q_n \to \infty$. Moreover, $|\widehat{\mu}_{M,1}| = \O(1)$, w.h.p.
\label{prop:HighExp2}
\end{proposition}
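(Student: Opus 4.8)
The claim has two parts; the easy one first. The high-probability bound $|\widehat\mu_{M,1}| = \O(1)$ follows from the existing theory: by Lemma~\ref{lemma:properties}\ref{prop:5}, the $C\log n$ low-variance components (each uniform on $[-3i,3i]$ with $i \le C\log n$) contribute enough mass near the origin to guarantee $R^*_1 = \Omega\!\left(\frac{\log n}{n}\right)$, so Lemma~\ref{thm:modalIntervalMain} and Theorem~\ref{cor:modal} apply with $r = 1$. The point is that near the origin the density $\overline P$ is \emph{not} flat after normalization: the low-variance densities $\frac{1}{6i}$ on $[-3i,3i]$ decay as $|x|$ grows past each threshold $3i$, while the high-variance components have density exactly $n^{-\alpha}$ on $[-1,1]$ and then drop to $h_n \le \tfrac12 n^{-\alpha}$ — so $R(f_{x,1})$ is strictly maximized near $x=0$ with a nontrivial gap. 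First I would make this precise by computing $R^*_1 = R(f_{0,1})$ and showing that for $|x|$ bounded away from $0$ by a constant, $R(f_{x,1}) \le (1-\epsilon_0) R^*_1$ for an absolute constant $\epsilon_0$, which combined with Theorem~\ref{cor:modal} (choosing $r' = \O(1)$) gives $|\widehat\mu_{M,1}| = \O(1)$ w.h.p.

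For the expectation lower bound, the idea is that with small but non-vanishing probability, the modal interval estimator lands far out on the ``plateau'' of the high-variance distributions — somewhere in $[1, q_n]$ where $\overline P$ has the constant density contribution $h_n$ from the $n - C\log n$ wide components. The key observation is that on the interval $(1, q_n)$, the empirical counts $R_n(f_{x,1})$ are essentially a running sum of i.i.d.-like Bernoulli contributions with tiny mean $\approx 2h_n$, so by fluctuations of order $\sqrt{\cdot}$ one of the windows centered far out will occasionally beat the window at $0$. More carefully: the window at the origin has $R(f_{0,1}) = R^*_1 = \Theta\!\left(\frac{\log n}{n}\right)$ and hence $n R_n(f_{0,1}) = \Theta(\log n)$ points in expectation, with fluctuations $\Theta(\sqrt{\log n})$; meanwhile, scanning over $\Theta(q_n)$ disjoint unit windows in the plateau, each containing $\Theta(n h_n)$ points in expectation, the \emph{maximum} over these windows exceeds its mean by roughly $\Theta(\sqrt{n h_n \log q_n})$ by extreme-value behavior of the binomial. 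The plan is to show that with probability bounded below by a constant (not depending on $q_n$), the maximal plateau window has more points than the origin window, forcing $\widehat\mu_{M,1}$ to be $\Omega(q_n)$ in magnitude; since the distributions are symmetric this event has probability $\ge c$ on each side, and on it $|\widehat\mu_{M,1}| \ge c' q_n$, so $\E|\widehat\mu_{M,1}| \ge c'' q_n \to \infty$.

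Concretely I would: (1) condition on the number $N$ of ``low'' points and ``high'' points, which concentrate at $C\log n$ and $n - C\log n$ respectively; (2) lower-bound $h_n$ — from $2n^{-\alpha} + 2(q_n-1)h_n = 1$ with $q_n = \Omega(n)$ we get $h_n = \Theta(1/q_n)$, so $n h_n = \Theta(n/q_n)$, which we can arrange to be, say, $\Theta(\log^2 n)$ by taking $q_n = \Theta(n/\log^2 n)$, or any growing sequence — the argument only needs $n h_n q_n \to \infty$ appropriately; (3) partition the plateau $[1, q_n]$ (and $[-q_n,-1]$) into $\Theta(q_n)$ disjoint unit windows, note the point counts in these windows are independent binomials, and apply a lower-tail anti-concentration / second-moment argument (or a direct Poisson-approximation computation) to show $\mathbb{P}\{\max_j (\text{count in window } j) \ge 2 C\log n\} \ge c_0 > 0$ uniformly in $q_n$, since the number of windows grows; (4) compare against an upper bound $n R_n(f_{0,1}) \le 2C\log n$ that holds w.h.p.\ (again via Lemma~\ref{thm:highProb}); (5) conclude that on the intersection of these events, some plateau window strictly dominates, so $\widehat\mu_{M,1}$ (a maximizer) is placed at distance $\Omega(q_n)$, giving $\E|\widehat\mu_{M,1}| \ge c\, q_n \cdot c_0 \to \infty$.

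\textbf{Main obstacle.} The delicate step is (3): proving that $\mathbb{P}\{\max_j \text{count}_j \ge 2C\log n\}$ stays bounded away from $0$ as $q_n \to \infty$, uniformly, while the per-window mean $n h_n \cdot (\text{window width})$ may itself be going to $0$ or to a constant depending on how we scale $q_n$. This is a large-deviation statement for the maximum of many independent rare-event binomials: each window individually has only an exponentially small chance $\approx e^{-\Theta(\log n \log\log n)}$ of reaching $2C\log n$ points, so we need the number of windows $\Theta(q_n)$ to be at least comparable to $e^{\Theta(\log n \log\log n)} = n^{\Theta(\log\log n)}$, i.e.\ super-polynomial in $n$. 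Thus we must take $q_n$ genuinely huge (super-polynomial), which is permitted by the problem statement (``$q_n$ arbitrarily large''); the phrase ``$q_n = \Omega(n)$'' in Example~\ref{exm:HighExp2} should be read as a lower bound we are free to exceed. With $q_n$ chosen super-polynomially large the union-bound-complement / Poisson-maximum estimate goes through, and care must be taken to make all the constants independent of $n$ so that the constant-probability event survives. The rest — the concentration of $N_{\text{low}}, N_{\text{high}}$, the upper bound on the origin window, and the translation-invariance/symmetry bookkeeping — is routine.
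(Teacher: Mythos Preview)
Your high-probability argument is essentially correct and matches the paper's. One correction that matters later: $R^*_1$ is $\Theta(n^{-\alpha})$, not $\Theta(\log n/n)$ --- the dominant contribution comes from the $n - C\log n$ high-variance components, each placing density $n^{-\alpha}$ on $[-1,1]$, so $nR^*_1 = \Theta(n^{1-\alpha})$. The drop from $n^{-\alpha}$ to $h_n \le \tfrac12 n^{-\alpha}$ outside $[-1,1]$ still gives the gap $R(f_{x,1}) \le (1-\epsilon_0)R^*_1$ you need, so Theorem~\ref{cor:modal} applies.

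The expectation argument has a genuine gap. First, because of the miscomputation above, step~(4) is false: the origin window carries $\Theta(n^{1-\alpha})$ points, not $\le 2C\log n$. More fundamentally, step~(3) fails in the direction \emph{opposite} to the one you flag. The per-window mean on the plateau is $\Theta(nh_n)=\Theta(n/q_n)$, which \emph{shrinks} as $q_n$ grows; the probability a fixed plateau window reaches any level $k\ge 2$ scales like $(n/q_n)^k$, so the expected number of such windows is $\Theta(n^k q_n^{1-k})\to 0$. For large $q_n$ the plateau points are isolated and the maximum plateau count is $1$ with probability tending to $1$. Taking $q_n$ super-polynomial makes the plateau sparser, not more competitive --- your proposed fix goes the wrong way. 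Consequently there is no constant $c_0>0$, uniform in $q_n$, for which a plateau window beats the (typically $\Theta(n^{1-\alpha})$-populated) origin.

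The paper sidesteps this by refusing to compete with a populated origin. It conditions on the rare event $E$ that (a) the low-variance points are spread out so that no length-$2$ interval within $[-3C\log n,3C\log n]$ contains two of them, and (b) every high-variance point lands outside $[-4C\log n,4C\log n]$. The crucial feature is that $\P(E)\ge \exp(-cn^{1-\alpha}-O(\log^2 n))$ is positive and \emph{independent of $q_n$} (here $h_n\log n \ll n^{-\alpha}$ once $q_n=\Omega(n)$). On $E$, no central interval holds more than one point, while the high-variance points --- now i.i.d.\ uniform on the plateau conditionally --- determine the mode; a symmetry argument (the analogue of Lemma~\ref{LemUnifSym}) gives $\E[|\widehat\mu_{M,1}|\mid E]=\Theta(q_n)$. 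Hence $\E|\widehat\mu_{M,1}|\ge \P(E)\cdot\Theta(q_n)\to\infty$. The missing idea in your plan is precisely this: trade a constant-probability event for a rare one whose probability does not depend on $q_n$, so that the factor $q_n$ can be made to dominate.
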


As seen by the example above, additional assumptions need to be imposed to prove the bounds in expectation. Suppose the variances $\{\sigma_i\}$ are all finite. We will consider two types of assumptions: either (i) ``high-noise'' points do not have very large variances, or (ii) ``low-noise'' points have small support.

We state a result for the modal interval estimator in $d$ dimensions; similar proofs hold for the shorth, median, and hybrid estimators. The following result is proved in Appendix~\ref{AppThmExpBound}.

\begin{theorem}
\label{ThmExpBound}
Let $nR^*_r = \Omega\(d\log n\)$. The following upper bounds hold for the expected error of the modal interval estimator:
\begin{itemize}
\item[(i)] Suppose
\begin{equation}
    \label{EqnSigmaGrowth}
\log\(\frac{\sigma_{(n)}}{r}\)  = \O\(nR^*_r\).
\end{equation}
Then the modal interval estimator satisfies the expected error bound
\begin{equation*}
\E\|\muhat_{M,r}\|_2 = \O\(r\left(\frac{c}{R^*_r}\right)^{1/d}\).
\end{equation*}
\item[(ii)] In the case $d = 1$, suppose the support of $\Omega(nR^*_r)$ points lies in $[-r,r]$. Then
\begin{equation*}
\E|\muhat_{M,r}| = \O\(\frac{r}{R^*_r}\).
\end{equation*}
\end{itemize}
\end{theorem}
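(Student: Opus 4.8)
The plan is to pass from the high-probability bounds to expectation via the tail-sum identity $\E\|\muhat_{M,r}\|_2 = \int_0^\infty \mprob(\|\muhat_{M,r}\|_2 > T)\,dT$, splitting at the ``good'' concentration event. Let $\mathcal{G}$ be the event $\{\sup_{f\in\cH_r}|R_n(f)-R(f)| \le tR^*_r\}$ for a sufficiently small fixed $t$; this is the event used in the proof of Theorem~\ref{LemModalD} (and Theorem~\ref{cor:modal} when $d=1$) to produce the deterministic conclusion $\|\muhat_{M,r}\|_2 \le 4r(2/R^*_r)^{1/d}$, and by Lemma~\ref{thm:highProbD} (Lemma~\ref{thm:highProb} for $d=1$) we have $\mprob(\mathcal{G}^c) \le 2\exp(-c'nR^*_r)$. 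Then $\E[\|\muhat_{M,r}\|_2\1_{\mathcal{G}}]$ is already $O(r(c/R^*_r)^{1/d})$, so everything reduces to controlling the bad-event term $\E[\|\muhat_{M,r}\|_2\1_{\mathcal{G}^c}]$. In every scenario I would use the deterministic fact that the modal ball is nonempty — the ball of radius $r$ about any single sample contains that sample — so $\|\muhat_{M,r}\|_2 \le \max_i\|X_i\|_2 + r$.

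For part (i), finiteness of all the $\sigma_i$ gives $\E[\max_i\|X_i\|_2^2] \le \sum_i \E\|X_i\|_2^2 = d\sum_i\sigma_i^2 \le nd\,\sigma_{(n)}^2$, so Cauchy--Schwarz yields
\[
\E[\|\muhat_{M,r}\|_2\1_{\mathcal{G}^c}] \le \sqrt{\E[(\max_i\|X_i\|_2+r)^2]}\,\sqrt{\mprob(\mathcal{G}^c)} \le \sqrt{2nd\,\sigma_{(n)}^2 + 2r^2}\cdot\sqrt{2}\,e^{-c'nR^*_r/2}.
\]
Taking logarithms, this is $O(r(c/R^*_r)^{1/d})$ as soon as $\tfrac12\log(nd) + \log(\sigma_{(n)}/r) = O(nR^*_r)$; the first summand is $O(\log n) = O(nR^*_r)$ since $nR^*_r = \Omega(d\log n)$, and the second is exactly hypothesis~\eqref{EqnSigmaGrowth}. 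Combining with the $\mathcal{G}$-term finishes part (i).

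For part (ii) this step breaks, because without control of $\sigma_{(n)}$ the expectation $\E[\max_i|X_i|]$ may be infinite, so I would instead exploit the bounded-support hypothesis directly. If the supports of $m_0 = \Omega(nR^*_r)$ of the $P_i$ lie in $[-r,r]$, then $nR_n(f_{0,r}) \ge m_0$ \emph{deterministically}, hence the modal interval $[\muhat_{M,r}-r, \muhat_{M,r}+r]$ always contains at least $m_0$ samples. Consequently, for $T \ge 2r$, the event $\{|\muhat_{M,r}| > T\}$ forces at least $m_0$ samples into some interval of length $2r$ contained in $\{|x| > T-r\}$. Covering $\{x > T-r\}$ (and symmetrically $\{x < -(T-r)\}$) by the \emph{fixed} length-$4r$ windows $J_k := [\,T-r+2rk,\ T-r+2r(k+2)\,]$, $k\ge 0$, every length-$2r$ subinterval of $\{x>T-r\}$ lies in some $J_k$, so $\{|\muhat_{M,r}| > T\} \subseteq \bigcup_k \{\#\{i : X_i \in J_k\} \ge m_0\}$. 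Lemma~\ref{lemma:properties}\ref{prop:4} applied to $\overline{P}$ gives $\E[\#\{i:X_i\in J_k\}] = nR(f_{x_k,2r}) < 2nr/x_k$, where $x_k = T-r+2r(k+1)$ is the center of $J_k$; a Chernoff bound for sums of independent indicators plus a union bound over $k$ then shows that, for $T-r$ above a cutoff $T_0-r = \Theta(nr/m_0) = \Theta(r/R^*_r)$, the probability $\mprob(|\muhat_{M,r}| > T)$ decays like a fixed power $(T-r)^{-(m_0-1)}$, so $\int_{T_0}^\infty \mprob(\cdot)\,dT = o(r/R^*_r)$ (using $m_0 \ge 3$ and $nR^*_r = \Omega(d\log n)$ with a large enough constant), while $\int_0^{T_0}\mprob(\cdot)\,dT \le T_0 = O(r/R^*_r)$.

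The main obstacle is the argument for part (ii): one must (a) extract the deterministic lower bound $m_0$ on the sample count inside the modal interval from the bounded-support assumption, (b) reduce the tail event $\{|\muhat_{M,r}|>T\}$ to clustering of $m_0$ samples inside a \emph{fixed} family of windows so that the union bound is legitimate rather than over data-dependent intervals, and (c) calibrate the Chernoff estimate against Lemma~\ref{lemma:properties}\ref{prop:4} so the bound is simultaneously summable over the cover and integrable in $T$ down to the target rate $O(r/R^*_r)$. A lesser bookkeeping point, already visible in part (i), is that the absolute constant hidden in~\eqref{EqnSigmaGrowth} must be small relative to the concentration constant $c'$ — equivalently, the constants $c$ and the $\Omega(\cdot)$ in the statement are allowed to depend on it.
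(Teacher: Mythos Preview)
Your argument is correct. For part~(i) you follow the paper's route almost exactly: split on the concentration event from Lemma~\ref{thm:highProbD}, carry the deterministic bound of Theorem~\ref{LemModalD} on the good event, and on the bad event bound $\|\muhat_{M,r}\|_2$ by $\max_i\|X_i\|_2+r$ together with $\E[\max_i\|X_i\|_2^2]\le nd\,\sigma_{(n)}^2$; your use of Cauchy--Schwarz here is in fact the rigorous version of what the paper writes more informally.

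For part~(ii) you and the paper share the same skeleton---the deterministic anchor $R_n(f_{0,r})\ge m_0/n$ forces the modal interval to hold $m_0$ points, after which one covers, applies Chernoff, and integrates the tail---but the covers differ. You tile $\{|x|>T-r\}$ by overlapping equal-\emph{length} windows $J_k$ and bound each window's mass via Lemma~\ref{lemma:properties}\ref{prop:4}, then sum a graded family of Chernoff bounds over $k$. The paper instead isolates a standalone concentration lemma (Lemma~\ref{LemBadIntErr}) that partitions the whole line into finitely many equal-\emph{mass} pieces and yields $\mprob\{\sup_{f\in\cJ}R_n(f)\ge KR^*_\cJ\}\le (2/R^*_\cJ)\exp(-cnR^*_\cJ K\log K)$; it then applies this with $\cJ_s=\{f_{x,r}:|x|\ge s\}$ and integrates in $s$. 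The equal-mass cover is tidier---a finite union bound and a uniform Chernoff exponent---while your equal-length cover is more hands-on but avoids introducing a separate lemma; both routes land at the same $O(r/R^*_r)$ after integration.
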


\begin{remark*}
The condition~\eqref{EqnSigmaGrowth} in Theorem~\ref{ThmExpBound}(i) can be translated into the inequality $\sigma_{(n)} \le r \exp(CnR^*_r)$, and provides an upper bound on the variance of the worst mixture components. If we choose $r = \sigma_{(d \log n)}$, 
 we obtain the requirement that $\sigma_{(n)}$ is at most a factor of $\O(n^{Cd})$ larger than the variance $\sigma_{(d \log n)}$ of the ``good'' points. This can be compared to the assumption $\sigma_{(n)} = \sigma_{(1)}\text{poly}(n)$ imposed by Chierichetti et al.~\cite{ChiEtAl14} when proving upper bounds on expected error in the univariate case. As the proof of Theorem~\ref{ThmExpBound} reveals, we could also convert the tighter version of the estimation error guarantee (cf.\ Theorem~\ref{cor:modal} in the univariate setting) into an expected error bound in a similar manner: If condition~\eqref{EqnSigmaGrowth} holds in Theorem~\ref{ThmExpBound} and we additionally assume that $r' = \Omega(r)$, then $\E|\muhat_{M,r}| = \O(r')$.
\end{remark*}

Note that the condition in Theorem~\ref{ThmExpBound}(ii) imposes no constraints on the behavior of the large-variance mixture components. The proof proceeds by integrating the tail probability of the modal interval estimator, and showing that it must decay sufficiently quickly by considering the mass of intervals lying far from the true mean. An extension to the multivariate case is possible, but would require somewhat more refined technical analysis.

\subsection{Minimax bounds}
\label{SecOptimality}

We are now ready to discuss the optimality of our hybrid estimator, which we will consider in the context of expected error bounds. We state our results in the case of a general dimension $d \ge 1$. The goal of this section is to describe a general setting in which it is possible to show that the hybrid estimator is (nearly) minimax optimal.

We will consider the class of distributions $\scriptP(\sigma_1, \sigma_2, p)$, containing symmetric, unimodal distributions $\{P_i\}_{i=1}^n$ with common mean $\mu$, such that at least $np$ distributions have marginal variance bounded by $\sigma_2^2$ and the remaining distributions have marginal variance bounded by $\sigma_1^2$.  Note that $\sigma_1, \sigma_2$, and $p$ may all be functions of $n$, e.g., $p = \frac{\log n}{n}$.

We have the following minimax lower bound, proved in  Appendix~\ref{AppThmLowerBound}:

\begin{theorem}
\label{ThmLowerBound}
Suppose $p \le \frac{1}{3}$, $\sigma_2 \le \sigma_1$, and $p = \Omega\left(\frac{\log n}{n}\right)$.
\begin{itemize}
\item[(i)] The minimax error of any agnostic algorithm is
\begin{equation}
\label{EqnLB1}
\min_{\muhat} \max_{\{P_i\} \subseteq \scriptP(\sigma _1, \sigma _2, p)} \E\left[\|\muhat - \mu\|_2\right] \ge C_{\ell} \sqrt{d}\min\left\{\frac{\sigma_2}{\sqrt{np}},  \frac{\sigma_1}{\sqrt{n}}\right\}.
\end{equation}
\item[(ii)] In the case $d = 1$, suppose in addition we have
\begin{equation}
\label{EqnSratio}
\frac{\sigma_1}{\sigma_2} = O\left(\frac{1}{np^2}\right).
\end{equation}
Then
\begin{equation}
\label{EqnLB2}
\min_{\muhat} \max_{\{P_i\} \subseteq \scriptP(\sigma _1, \sigma _2, p)} \E\left[\|\muhat - \mu\|_2\right] \ge \frac{C_\ell'\sigma_1}{\sqrt{n}}.
\end{equation}
\end{itemize}
\end{theorem}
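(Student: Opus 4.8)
The plan is to establish both lower bounds via Le Cam's two-point method: construct two mixture configurations $\{P_i^{(0)}\}$ and $\{P_i^{(1)}\}$, each lying in $\scriptP(\sigma_1,\sigma_2,p)$, with means $\mu^{(0)} = 0$ and $\mu^{(1)} = 2\Delta e_1$ for a separation $\Delta$ to be chosen, such that the joint laws $\bigotimes_i P_i^{(0)}$ and $\bigotimes_i P_i^{(1)}$ have total variation (or Hellinger) distance bounded away from $1$. Then any estimator incurs expected error $\gtrsim \Delta$ on one of the two instances, and since we are maximizing over the class, the minimax error is $\Omega(\Delta)$. Optimizing $\Delta$ subject to the TV-constraint yields the stated rates.

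For part (i), I would take all $P_i$ to be isotropic Gaussians (or uniform on a ball, to stay within the radially-symmetric framework): the $np$ ``low-noise'' points have variance $\sigma_2^2$ and the rest have variance $\sigma_1^2$, and the two instances differ only by a mean shift of $2\Delta$ along $e_1$. The total KL divergence between the product measures is $\sum_i \frac{d\,(2\Delta)^2}{2\sigma_i^2} \asymp d\Delta^2\left(\frac{np}{\sigma_2^2} + \frac{n}{\sigma_1^2}\right)$ (keeping only the dominant denominator, this is $\asymp d\Delta^2\max\{np/\sigma_2^2, n/\sigma_1^2\}$). Setting this to a constant and applying Pinsker gives $\Delta \asymp \sqrt{d}/\sqrt{\max\{np/\sigma_2^2, n/\sigma_1^2\}} = \sqrt{d}\min\{\sigma_2/\sqrt{np},\, \sigma_1/\sqrt{n}\}$, which is exactly~\eqref{EqnLB1}. (A detail: Le Cam gives a bound on the error of estimating $\mu$ up to the full separation $2\Delta$, so the constant $C_\ell$ absorbs the factor loss; one must also check that shifting the mean keeps the variances unchanged, which is immediate.)

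For part (ii), the point is to beat the naive rate $\sigma_2/\sqrt{np}$ and get down to $\sigma_1/\sqrt{n}$ even though only a $p$-fraction of points are ``good.'' Here I would exploit the \emph{agnostic} nature of the estimator: the estimator does not know \emph{which} points are low-noise. The construction should make it impossible to tell whether a given coordinate's shift is ``signal'' coming through the low-noise points or ``noise'' consistent with a different assignment of which points are low-noise. Concretely, I would use the condition~\eqref{EqnSratio}, $\sigma_1/\sigma_2 = O(1/(np^2))$, to argue that one can ``hide'' the $np$ low-noise points: perturbing the $np$ good distributions by a mean shift of order $\sigma_1/\sqrt{n}$ changes each good distribution's law by only $O\!\left(\frac{\sigma_1/\sqrt n}{\sigma_2}\right)$ in some suitable metric, and summed over $np$ points this is $O\!\left(\frac{np\,\sigma_1}{\sqrt n\,\sigma_2}\right) = O\!\left(\sqrt{n}\,p\,\frac{\sigma_1}{\sigma_2}\right)$, which~\eqref{EqnSratio} forces to be small. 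Meanwhile the $n(1-p)$ high-noise points with variance $\sigma_1^2$ can absorb a genuine mean shift of $\sigma_1/\sqrt{n}$ with total KL $O(1)$. I would set this up as a two-point (or possibly a composite/Bayesian two-prior) argument where instance $0$ has mean $0$ with the good points being one particular subset $S_0$, and instance $1$ has mean $\sim\sigma_1/\sqrt n$ with the good points being a \emph{shifted} subset or the same subset but re-centered — the key being that the two product measures remain statistically close because the high-noise components dominate the likelihood and the low-noise-point discrepancy is controlled by~\eqref{EqnSratio}.

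The main obstacle is part (ii): the construction must simultaneously (a) keep both instances inside $\scriptP(\sigma_1,\sigma_2,p)$ — i.e., genuinely have $np$ points with variance $\le\sigma_2^2$ and satisfy symmetry/unimodality — and (b) have product-measure TV bounded below $1$ despite a mean gap of order $\sigma_1/\sqrt n \gg \sigma_2/\sqrt{np}$, which is only possible because the estimator cannot identify the low-noise subset. Getting the bookkeeping right — precisely how the low-noise points are arranged, and verifying that the $O(1/(np^2))$ slack in~\eqref{EqnSratio} is exactly what makes the low-noise contribution to the divergence negligible — is the delicate part; everything else (the Gaussian KL computations, Pinsker/Le Cam, reduction from estimation to testing) is routine.
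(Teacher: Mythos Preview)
Your proposal has a genuine gap in each part.

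\textbf{Part (i).} A two-point Le Cam argument cannot produce the $\sqrt{d}$ factor. The KL divergence between $N(\mu_1,\sigma^2 I_d)$ and $N(\mu_2,\sigma^2 I_d)$ is $\|\mu_1-\mu_2\|_2^2/(2\sigma^2)$, with no dependence on $d$; your formula $\sum_i \frac{d(2\Delta)^2}{2\sigma_i^2}$ has a spurious $d$. With the correct formula, setting the total KL to $O(1)$ gives only $\Delta\asymp\min\{\sigma_2/\sqrt{np},\sigma_1/\sqrt{n}\}$. The paper recovers the $\sqrt{d}$ via Fano: it takes a $2s$-packing of a ball of radius $4s$ in $\real^d$ (so $\log M\ge cd$ hypotheses), bounds each pairwise KL by $n\|\mu_j-\mu_k\|_2^2\bigl(\frac{1-p'}{2\sigma_1^2}+\frac{p'}{2\sigma_2^2}\bigr)\le Cd$, and applies Fano. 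Your construction is fine; the reduction-to-testing step needs to be multi-hypothesis.

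\textbf{Part (ii).} Your divergence bookkeeping does not close in the regime where~\eqref{EqnLB2} actually improves on~\eqref{EqnLB1}, namely $p=O(n^{-2/3})$. With a fixed assignment of the $np$ low-noise indices and a common shift $2\Delta=2\sigma_1/\sqrt{n}$, the low-noise KL contribution is $np\cdot 2\Delta^2/\sigma_2^2=2p\,\sigma_1^2/\sigma_2^2$; under~\eqref{EqnSratio} this is $O(1/(n^2p^3))$, which diverges for $p=\Theta(\log n/n)$. Your TV-subadditivity variant gives $\sqrt{n}\,p\,\sigma_1/\sigma_2=O(1/(\sqrt{n}\,p))$, which also diverges there. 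Any approach that bounds the product divergence coordinatewise (tensorized KL, Hellinger, or subadditive TV) hits the same wall: it cannot exploit the fact that the estimator does not know which indices are low-noise.

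The paper's device is to \emph{randomize} the assignment---each point is independently low-noise with probability $p'=1.5p$---so the observations become i.i.d.\ from the mixture $(1-p')N(\mu,\sigma_1^2)+p'N(\mu,\sigma_2^2)$, and then to compute the KL between two shifted mixtures \emph{directly}, not via convexity. Writing the log-ratio as $\frac{\|x-\mu_2\|^2-\|x-\mu_1\|^2}{2\sigma_1^2}+\log\frac{1+y}{1+z}$ and using $\log\frac{1+y}{1+z}\le y-z+z^2$, the cross terms $A_y-A_z$, $B_y$, $B_z$, $A_z'$, $B_z'$ are evaluated explicitly, and condition~\eqref{EqnSratio} (in the form $p'^2(\sigma_1/\sigma_2)=O(1/n)$) is exactly what forces each to be $O(1/n)$. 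The point is that a low-noise sample under one hypothesis has non-negligible likelihood under the \emph{high-noise} component of the other, so the mixture KL is governed by $\sigma_1$, not $\sigma_2$---something the convexity bound cannot see. Your intuition about ``hiding'' the low-noise points is right, but it has to be implemented through this mixture reduction, not through a fixed-assignment product bound.
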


\begin{remark*}
In the $d = 1$ case, the lower bound in Theorem~\ref{ThmLowerBound} when condition~\eqref{EqnSratio} is satisfied matches the lower bound derived by Chierichetti et al.~\cite{ChiEtAl14}. On the other hand, our proof technique is somewhat more direct and proceeds via a straightforward (albeit lengthy) calculation.
\end{remark*}

We now state our general upper bound, achieved by the hybrid estimator. Under the specific regimes, we impose mild regularity conditions on the distributions to obtain cleaner expressions:
\begin{itemize}
\item[(i)] Let $q_i(x)$ denote the marginal distribution of $P_i$, where $q_i : \R \to \R$ (since $P_i$ is radially symmetric, all marginals are equal). Let $\nu_i^2$ denote the marginal variance of $P_i$. Then
\begin{align}
\qquad q_i(\nu_i) \geq \frac{c}{\nu_i}.
\label{EqnRegularityMarginal}
\end{align}
\item[(ii)] Let each density be written as $p_i(x) = f_i(\|x\|_2)$, where $f_i: \real \rightarrow \real$ is a decreasing function on the positive reals. Then
\begin{align}
f_i(0) \leq  \(\frac{c'}{\nu_i}\)^d, \quad \text{and} \quad \int_{ B ( K \sqrt{d} \nu_i, 2\sqrt{d} \nu_i)} p_i(y)dy  \leq C_1 \exp\(- C_2K^2\), \quad \forall  K \geq C_3 > 1.  
\label{EqnRegularityJoint}
\end{align}
\end{itemize}

Condition~\eqref{EqnRegularityMarginal} assumes that the marginal densities do not decrease too rapidly around the mean, and implies the accuracy of the median filtering step. Condition~\eqref{EqnRegularityJoint} assumes that the joint densities do not have too much mass concentrated around any single point (e.g., the mean), from which we may derive tighter error bounds on the shorth estimator when we have sufficiently separated variances, i.e., $\frac{\sigma_1}{\sigma_2} = \Omega\left(n^{1/d}\right)$. Note that conditions (i) and (ii) hold for Gaussian distributions; furthermore, condition (ii) holds more broadly when the norm of $p_i(\cdot)$ has right $ c'\nu_i \sqrt{d}$-sub-Gaussian tails around $\sqrt{d} \nu_i$. Then this expression can be upper bounded by $\P\{ \|X\| - \sqrt{d} \nu_i \geq  cK \sqrt{d} \nu_i \} \leq \exp(-c'K^2)$ using the sub-Gaussian assumption.

We also define $\cQ(\sigma_1, \sigma_2, p)$ to be the class of symmetric, unimodal distributions with  $\{P_i\}_{i=1}^n$ with common mean $\mu$,
 such that at least $np$ distributions have marginal variances bounded by $\sigma_2^2$ and remaining distributions have marginal variance at least $\Omega(\sigma_1^2)$ and at most $\sigma_1^2$. Thus, $\cQ(\sigma_1, \sigma_2 , p)$ is the class of distributions with sufficient division between high-variance and low-variance points, and we clearly have $\cQ(\sigma_1, \sigma_2,p ) \subseteq \scriptP(\sigma_1, \sigma_2, p)$.
Finally, in order to derive bounds in expectation, we impose the additional growth condition~\eqref{EqnSigmaGrowth} on the variance of the mixture components.

The following result is proved in Appendix~\ref{AppThmUpperBound}. 
\begin{theorem}
\label{ThmUpperBound}
If $p = \Omega\left(\frac{d\log n}{n}\right)$ and condition~\eqref{EqnRegularityMarginal} holds, then the hybrid estimator satisfies the upper bound
\begin{equation}
\label{EqnUB1}
\max_{\{P_i\} \subseteq \scriptP(\sigma_1, \sigma_2, p)} \E\left[\|\muhat - \mu\|_2\right] \le C_u \sqrt{d} \min\left\{\sqrt{n}^{1/d}\sigma_2, \frac{\log n}{\sqrt{n}} \sigma_1  \right\}.
\end{equation}
We also have the following special cases if we impose additional assumptions:
\begin{itemize}
\item[(a)] If $p = \Omega\left(\frac{\sqrt{n} \log n}{n}\right)$, we have the tighter bound
\begin{equation}
\label{EqnUB2}
\max_{\{P_i\} \subseteq \scriptP(\sigma_1, \sigma_2, p)} \E\left[\|\muhat - \mu\|_2\right] \le C_u'\sqrt{d}  \min\left\{\frac{ \log n}{p\sqrt{n}}\sigma_2, \frac{\log n}{\sqrt{n}} \sigma_1 \right\}.
\end{equation}
\item[(b)] If $
\frac{\sigma_1}{\sigma_2} = \Omega\(n^{\frac{1}{d}}\)$ and condition~\eqref{EqnRegularityJoint} holds, then
\begin{align}
\label{EqnUB3}
\max_{\{P_i\} \subseteq \cQ(\sigma_1, \sigma_2, p)} \E\left[\|\muhat - \mu\|_2\right] \le
C_u'' \sqrt{d} \min\left\{\sigma_2 \sqrt{\log n}, \frac{\log n}{\sqrt{n}} \sigma_1\right\}.
\end{align}
\end{itemize}
\end{theorem}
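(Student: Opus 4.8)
The plan is to obtain~\eqref{EqnUB1} from the high-probability guarantee of Theorem~\ref{thm:HybridScreening} (Theorem~\ref{thm:hybrid} when $d=1$) by upper bounding the two problem-dependent quantities appearing there, and then to integrate the tail. Take $k_1 = \sqrt n \log n$ and $k_2 = Cd\log n$ with $C$ an absolute constant; the hypothesis $p = \Omega(d\log n/n)$ is understood to have implied constant large enough that $np \ge 2Cd\log n = 2k_2$. Theorem~\ref{thm:HybridScreening} then gives, on an event $\mathcal E$ with $\P(\mathcal E^c) \le 2\exp(-c'k_2) + 4d\exp(-c\log^2 n)$,
\[
\|\muhat - \mu\|_2 \;\le\; \min\Bigl\{\sqrt d\, r_{2k_1,1},\;\; C'\,\sqrt n^{1/d}\, r_{k_2}\Bigr\},
\]
so it remains to bound $r_{2k_1,1}$ and $r_{k_2}$ uniformly over $\scriptP(\sigma_1,\sigma_2,p)$ and to control $\E[\|\muhat-\mu\|_2\,\mathbbm 1_{\mathcal E^c}]$.

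For the shorth branch, $2k_2 \le np$ forces the $2k_2$ smallest marginal standard deviations to lie in the low-noise group, so Lemma~\ref{lemma:prop2}\ref{prop2:5} gives $r_{k_2} < 2\sqrt d\,\sigma_{(2k_2)} \le 2\sqrt d\,\sigma_2$, contributing at most $2C'\sqrt d\,\sqrt n^{1/d}\sigma_2$. For the median branch I work with the common marginal $\overline P_1$. Condition~\eqref{EqnRegularityMarginal} together with unimodality shows that a component with marginal variance $\nu^2$ puts mass at least $2ct/\nu$ on $[-t,t]$ whenever $t \le \nu$; since the mass on $[-t,t]$ only grows with $t$, each component of $\scriptP(\sigma_1,\sigma_2,p)$ (all having marginal variance $\le \sigma_1^2$) puts mass at least $2ct/\sigma_1$ on $[-t,t]$ for $t \le \sigma_1$, and each of the $\ge np$ low-noise components puts mass at least $2ct/\sigma_2$ there for $t \le \sigma_2$. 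Averaging,
\[
\overline P_1([-t,t]) \;\ge\; \max\Bigl\{\tfrac{2ct}{\sigma_1}\mathbbm 1\{t\le\sigma_1\},\;\; \tfrac{2cpt}{\sigma_2}\mathbbm 1\{t\le\sigma_2\}\Bigr\},
\]
and setting the right-hand side equal to $2k_1/n = 2\log n/\sqrt n$ yields
\[
r_{2k_1,1} \;\le\; \min\Bigl\{\tfrac{\sigma_1\log n}{c\sqrt n},\;\; \tfrac{\sigma_2\log n}{cp\sqrt n}\Bigr\},
\]
the second term being valid exactly when $p\sqrt n = \Omega(\log n)$. Substituting into the display and multiplying by $\sqrt d$ gives~\eqref{EqnUB1} on $\mathcal E$, and, once $p = \Omega(\sqrt n\log n/n)$, the sharper $\sigma_2$-term of~\eqref{EqnUB2}.

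Part (b) requires improving the shorth branch from $C'\sqrt n^{1/d}r_{k_2}$ to $O(\sqrt d\,\sigma_2\sqrt{\log n})$ when $\sigma_1/\sigma_2 = \Omega(n^{1/d})$ and~\eqref{EqnRegularityJoint} holds. I would argue: (i) by the sub-Gaussian-type tail in~\eqref{EqnRegularityJoint}, a union bound over the $\ge np \ge k_2$ low-noise samples shows at least $k_2$ of them fall in $B(\mu, O(\sqrt d\,\sigma_2\sqrt{\log n}))$, so $\widehat r_{k_2}$ is at most this radius; (ii) conversely, every high-noise component of the class $\cQ$ has marginal variance $\Omega(\sigma_1^2)$ and, by the bound $f_i(0) \le (c'/\nu_i)^d$ in~\eqref{EqnRegularityJoint}, pointwise density $O(\sigma_1^{-d})$, so the expected number of samples in any radius-$O(\sqrt d\,\sigma_2\sqrt{\log n})$ ball far from $\mu$ is $O\bigl(n(\sqrt d\,\sigma_2\sqrt{\log n}/\sigma_1)^d\bigr)$; inserting $\sigma_1/\sigma_2 = \Omega(n^{1/d})$ and taking a polynomial-size union bound over candidate interval positions (a VC-type argument as in Lemma~\ref{thm:highProbD}) shows w.h.p.\ that no such interval holds $k_2$ points. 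Hence the shortest interval with $k_2$ points — and therefore $\muhat_{S,k_2}$ and its cuboid projection — lies within $O(\sqrt d\,\sigma_2\sqrt{\log n})$ of $\mu$; together with the median term $\sqrt d\,\sigma_1\log n/\sqrt n$ this is~\eqref{EqnUB3}.

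For the passage to expectation, the hybrid estimator always returns a point of the cuboid $S_{k_1}^\infty$, so every coordinate of $\muhat$ lies between two data coordinates and $\E\|\muhat-\mu\|_2^2 = O(d^2 n\sigma_1^2)$; by Cauchy--Schwarz, $\E[\|\muhat-\mu\|_2\,\mathbbm 1_{\mathcal E^c}] \le d\sqrt n\,\sigma_1\sqrt{\P(\mathcal E^c)}$, which is negligible against the bounds above, and this is where the growth condition~\eqref{EqnSigmaGrowth} enters, exactly as in the proof of Theorem~\ref{ThmExpBound}. I expect part (b) to be the main obstacle: it amounts to a sharpened version of Theorem~\ref{ThmShorthD} that must exploit the anti-concentration of the high-variance components to certify that a short, densely populated interval cannot lie far from the low-noise cluster, and the constants linking $\sigma_1/\sigma_2 = \Omega(n^{1/d})$ to the target radius $\sqrt d\,\sigma_2\sqrt{\log n}$ must be tracked with care — the extra $\log n$ from the union bound over interval positions is precisely why the conclusion carries $\sqrt{\log n}$ rather than a constant.
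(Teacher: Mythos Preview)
Your proposal is correct and, for the main bound~\eqref{EqnUB1} and part~(a), follows essentially the same route as the paper: bound the two terms in Theorem~\ref{thm:HybridScreening} via Lemma~\ref{lemma:prop2}\ref{prop2:5} for the shorth branch and condition~\eqref{EqnRegularityMarginal} for the median branch, then pass to expectation as in Theorem~\ref{ThmExpBound}.

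For part~(b), your empirical strategy (bound $\widehat r_{k_2}$, then union-bound over candidate ball positions) is in the right spirit but the paper takes a cleaner route that sidesteps the discretization issue you flag as the main obstacle. Rather than arguing about the random shortest interval directly, the paper stays at the population level: with $r' = C'\sqrt{d\log n}\,\sigma_2$ and $r_{2k_2} \le C\sqrt d\,\sigma_2$, it shows $R(f_{r',\,r_{2k_2}}) \le k_2/(2n)$ by bounding each component separately --- for high-variance components ($\nu_i = \Omega(\sigma_1)$) the pointwise density bound $f_i(0) \le (c'/\nu_i)^d$ together with $\sigma_1/\sigma_2 = \Omega(n^{1/d})$ gives $\P(X_i \in B(r',r_{2k_2})) \le 1/n$, while for low-variance components the sub-Gaussian tail in~\eqref{EqnRegularityJoint} gives the same bound once $C'$ is large enough. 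Having established this population inequality, the existing concentration machinery (exactly the argument of Theorem~\ref{ThmShorthD}) yields $\|\muhat_{S,k_2}\|_2 \le r'$ w.h.p., with no separate union bound over positions needed. This buys a shorter argument and avoids tracking the discretization constants you worried about; your approach would also go through, but would in effect be re-deriving Lemma~\ref{thm:highProbD} by hand.
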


It is instructive to compare the upper bounds for the hybrid estimator in Theorem~\ref{ThmUpperBound} with the lower bounds derived in Theorem~\ref{ThmLowerBound}. (Note that the same class of distributions used to obtain the minimax lower bounds over $\cP$ falls into the class $\cQ$, so the lower bounds in Theorem~\ref{ThmUpperBound} may be directly compared with the bound~\eqref{EqnUB3}, as well.) In particular, we can see that the hybrid estimator is nearly minimax optimal in three somewhat different regimes of interest, which can be derived directly from the bounds in the theorems. The results are summarized in Table~\ref{TabOpt}:
\begin{enumerate}
\item Large heterogeneity: When $\sigma_1$ is very large compared to $\sigma_2$ and $p$ is very small, a direct application of the median would lead to large error. However, the shorth estimator is able to focus on the low-variance points due to the sufficiently large separation in variances. As $p$ becomes smaller, the gap between the upper and lower bounds reduces, reaching within $\log n$ factors when $p = \Theta\left(\frac{d \log n}{n}\right)$.
\item Mild heterogeneity: Since $\sigma_1$ is relatively small, the median and even mean are minimax optimal. The hybrid estimator is able to achieve these rates. (This includes the i.i.d.\ case.)
\item Large $p$: As $p$ increases, the number of good points increase and we expect to obtain vanishing error for reasonable values of $\sigma_1$ (e.g., under condition~\eqref{EqnSigmaGrowth}). Indeed, the hybrid estimator achieves vanishing error for large $p = \Omega\left(\frac{\sqrt{n} \log n}{n}\right)$ irrespective of magnitude of $\sigma_1$. Also, the gap between the upper bound and the lower bound decreases as either $p \to 1$ or $\sigma_1 \to \sigma_2$.
\end{enumerate}

\begin{table}[H]
\begin{tabular}{@{}cccc@{}}
\toprule
\multirow{2}{*}{} & \multicolumn{1}{c}{\multirow{2}{*}{\begin{tabular}[c]{@{}c@{}}Large heterogeneity\vspace*{0.3em}\\ $\cQ\( \Omega\(\frac{1}{ \sqrt{p}} + n^{1/d}\),1, o(n^{-0.5})\)$\end{tabular}}} & \multicolumn{1}{c}{\multirow{2}{*}{\begin{tabular}[c]{@{}c@{}}Mild heterogeneity\vspace*{0.3em}\\ $\cP\(\O\( \frac{1}{\sqrt{p}}\), 1,p \)$\end{tabular}}} & \multicolumn{1}{c}{\multirow{2}{*}{\begin{tabular}[c]{@{}c@{}}Large p \vspace*{0.3em}\\ $\cP\(\sigma_1, 1 , \Omega\(n^{-0.5}\)\)$\end{tabular}}} \\
                  & \multicolumn{1}{c}{}                                                                                                                                                 & \multicolumn{1}{c}{}                                                                                                                                              & \multicolumn{1}{c}{}                                                                                                \vspace*{1.5em}\\ \midrule
\vspace*{0.5em}Hybrid estimator  &  $\sqrt{d}$      &  $\frac{\sigma_1\sqrt{d} }{\sqrt{n}}$   &  $\sqrt{d} \min\left\{\frac{1}{p\sqrt{n}}, \frac{\sigma_1}{\sqrt{n}}\right\} $  \vspace*{0.5em}\\
\vspace*{0.5em} Lower bound       &  $\frac{\sqrt{d}}{\sqrt{np}}$      &   $\frac{\sigma_1 \sqrt{d}}{\sqrt{n}}$ & $\sqrt{d} \min\left\{\frac{1}{\sqrt{pn}}, \frac{\sigma_1}{\sqrt{n}}\right\}$  \vspace*{0.1em}\\ \bottomrule
\end{tabular}
\caption{Comparison of upper and lower bounds for estimation error, given by Theorems~\ref{ThmLowerBound} and~\ref{ThmUpperBound}, in three regimes of interest. For simplicity, we set $\sigma_2 = 1$ and ignore multiplicative factors which are  logarithmic in $n$.}
\label{TabOpt}
\end{table}

\begin{remark*}
Although we have shown that the hybrid estimator is indeed optimal in several diverse regimes, the preceding discussion leaves open the question of optimality in other settings. In particular, although our general upper bounds (e.g., inequality~\eqref{EqnUB1}) suggests the presence of a $\sqrt{n}^{1/d}$ factor when using the hybrid estimator, our lower bound techniques do not show that such a factor is unavoidable for $d \geq 2$.
As argued by Chierichetti et al.~\cite{ChiEtAl14}, a factor of $\sqrt{n}$ is unavoidable in $d=1$ (cf. Theorem~\ref{ThmLowerBound}). 
\end{remark*}

\section{Computation in high dimensions}
\label{SecComputation}

We now discuss how to make our estimators computationally feasible when $d$ is large. The main idea is that both the modal interval and shorth estimators involve finding optimal balls in $\real^d$. To save on computation, we will show that restricting the search to balls centered at one of the $n$ data points leads to estimators with similar performance guarantees. This is an idea previously introduced in the literature on mode estimation in i.i.d.\ scenarios~\cite{AbrEtal04, DasKpo14, Jia17}.

Concretely, the modal interval and shorth estimators are replaced by:
\begin{estimator}
The computationally efficient modal interval estimator is defined by
\begin{align}
\label{EqnModalEstComp}
\mutil_{M,r} \coloneqq \arg\max_{x \in \{x_1, \dots, x_n\}} R_n(f_{x,r}).
\end{align}
\end{estimator}

\begin{estimator}
The computationally efficient shorth estimator is defined by
\begin{align}
\label{EqnShorthEstComp}
\rtil_{k} &\coloneqq \inf_{r}\sup_{x \in \{x_1, \dots, x_n\}}\left\{R_n(f_{x,r}) \geq \frac{k}{n}\right\}, \qquad \mutil_{S,k} \coloneqq \mutil_{M, \rtil_{k}}.
\end{align}
In other words, we select the data point such that the smallest ball centered around that point containing at least $k$ points has the minimum radius.
\end{estimator}

Note that both estimators~\eqref{EqnModalEstComp} and~\eqref{EqnShorthEstComp} may be computed in $O(n^2d)$ time. In contrast, computing the modal interval or shorth estimators directly would correspond to solving the circle placement problem or smallest enclosing ball problem, for which the best-known exact algorithms are $\Omega(n^d)$~\cite{LeePer84, EppEri94, AgaSha98}.

Using a peeling argument~\cite{Van00}, we can obtain a more refined concentration result that Theorem~\ref{thm:highProbD}. The proof of the following result is contained in Appendix~\ref{AppThmUniformProb}. Note that the proof critically leverages radial symmetry of $R$, whereas the concentration inequalities in Lemmas~\ref{thm:highProb} and~\ref{thm:highProbD} do not require $R$ to be radially symmetric.

\begin{lemma}
\label{ThmUniformProb}
For any $t \in (0,1]$, radii $\bar{r}, r > 0$, and $n > 1$, we have the following inequalities:
\begin{align}
\label{EqnUniformProb1}
\mprob\Big(|R_n(f_{x,r}) - R(f_{x,r})| \le 2tR(f_{x,r}), \quad \forall x \text{ s.t. } \|x\|_2 \le \bar{r} \Big) & \ge 1 - \frac{2\exp(-cnt^2 R(f_{\bar{r},r}))}{1-\exp(-cnt^2 R(f_{\bar{r}, r}))}, \\
\label{EqnLargeX}
\mprob\left(\sup_{\|x\|_2 \ge \bar{r}} |R_n(f_{x,r}) - R(f_{x,r})| \ge t R(f_{\bar{r}, r})\right) & \le 2\exp(-cnt^2R(f_{\bar{r}, r})),
\end{align}
provided $\bar{r}$ and $r$ are such that $R(f_{\bar{r}, r}) \geq  \frac{C_t d \log n}{n}$.
\end{lemma}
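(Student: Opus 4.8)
The plan is to prove Lemma~\ref{ThmUniformProb} by reducing both statements to the VC-type concentration bound of Lemma~\ref{thm:highProbD} applied to nested annular regions, and then combining the pieces via a peeling (chaining over dyadic scales) argument. The key observation that makes this work is radial symmetry: for any $x$ with $\|x\|_2 = \rho$, we have $R(f_{x,r}) = R(f_{\rho, r})$, and by Lemma~\ref{lemma:prop2}\ref{prop2:1}, $R(f_{\rho, r})$ is monotonically decreasing in $\rho$. So the ``mass of a ball of radius $r$ centered at distance $\rho$ from the origin'' is a clean one-dimensional decreasing function of $\rho$, and on any annulus $\{\rho_1 \le \|x\|_2 \le \rho_2\}$ the relative fluctuation $|R_n(f_{x,r}) - R(f_{x,r})|/R(f_{x,r})$ is controlled by the worst (outermost) case $R(f_{\rho_2, r})$.

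First I would prove~\eqref{EqnLargeX}, the tail bound for $\|x\|_2 \ge \bar r$, since it is the simpler ingredient. Apply Lemma~\ref{thm:highProbD} with the function class $\cH_r$ (VC dimension $d+1$), but note that for the centers we need supremum over all $x$, not just a fixed radius; this is exactly $\sup_{f \in \cH_r}$ in the notation of the paper if we also allow the radius to vary, or we fix $r' = r$ and take sup over $x$. Using $R^*_r$ in Lemma~\ref{thm:highProbD} would give a bound scaled by $R^*_r = R(f_{0,r})$, which is too large — we want the bound scaled by $R(f_{\bar r, r})$. The fix: restrict to the sub-class of indicators of balls of radius $r$ centered in the region $\|x\|_2 \ge \bar r$; on this region $R(f_{x,r}) \le R(f_{\bar r, r})$, so a trivial modification of the proof of Lemma~\ref{thm:highProbD} (replacing the role of $R^*_r$ by $R(f_{\bar r,r})$ throughout, which is legitimate because $R(f_{\bar r,r})$ now upper-bounds every $R(f)$ in the sub-class) yields~\eqref{EqnLargeX}. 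The hypothesis $R(f_{\bar r, r}) \ge C_t d\log n / n$ is precisely what is needed to run that argument.

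Next, for~\eqref{EqnUniformProb1} I would partition the ball $\{\|x\|_2 \le \bar r\}$ into dyadic annuli. Fix a geometric sequence $\bar r = \rho_0 > \rho_1 > \rho_2 > \cdots$; on the annulus $A_j = \{\rho_{j+1} \le \|x\|_2 \le \rho_j\}$, every center $x$ has $R(f_{x,r}) \ge R(f_{\rho_j, r})$, so the $relative$ deviation on $A_j$ is at most $\sup_{x \in A_j} |R_n(f_{x,r}) - R(f_{x,r})| / R(f_{\rho_j, r})$. Applying the (modified, as above) Lemma~\ref{thm:highProbD} on each annulus with its own scale $R(f_{\rho_j,r})$, we get that the relative deviation on $A_j$ exceeds $2t$ with probability at most $2\exp(-c n t^2 R(f_{\rho_j, r}))$. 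Since $R(f_{\rho_j,r}) \ge R(f_{\bar r, r})$ for all $j$ (monotonicity), each term is at most $2\exp(-cnt^2 R(f_{\bar r, r}))$; choosing the dyadic ratio so that the innermost annulus still has $R(f_{\rho_j,r})$ bounded below appropriately, and using the geometric decay of the exponents (or simply bounding the number of relevant annuli by $O(\log(\text{something}))$ and absorbing into constants — but actually the cleanest route is a true peeling bound where $\sum_j \exp(-a_j)$ with $a_j$ increasing geometrically telescopes), a union bound over $j$ gives the geometric-series tail $\frac{2\exp(-cnt^2 R(f_{\bar r, r}))}{1 - \exp(-cnt^2 R(f_{\bar r, r}))}$, which is exactly the form stated.

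The main obstacle is getting the annular concentration bounds with the \emph{correct, spatially-varying scale} $R(f_{\rho_j, r})$ rather than the global $R^*_r$, while still being able to invoke Lemma~\ref{thm:highProbD} essentially as a black box. One must verify that the proof of Lemma~\ref{thm:highProbD} — which bounds $\sup_{f \in \cH_r}|R_n(f) - R(f)|$ in terms of $R^*_r = \sup_{f\in\cH_r} R(f)$ — goes through verbatim when $\cH_r$ is replaced by any sub-class $\cH'$ of ball-indicators with $\sup_{f \in \cH'} R(f) =: M$, yielding the bound $tM$ with probability $1 - 2\exp(-cnMt^2)$, provided $nM \ge C_t \frac{d+1}{2}\log n$. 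This is true because that proof only uses the VC dimension of the class and the uniform upper bound $M$ on the mean of its members (the symmetrization/chaining/Bernstein steps all respect passing to a sub-class), but this substitution is the one place requiring care. A secondary nuisance is handling the innermost annulus near the origin, where $R(f_{\rho_j,r})$ can be as large as $R^*_r$ and the condition $nM \ge C_t\frac{d+1}{2}\log n$ is automatically satisfied; there is also the degenerate case $\bar r$ very small, where $\{\|x\|_2 \le \bar r\}$ is itself a single small ball and one annulus suffices — both handled trivially. Everything else is bookkeeping with geometric series.
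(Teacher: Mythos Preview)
Your proposal is correct and follows essentially the same approach as the paper: inequality~\eqref{EqnLargeX} via the modified Lemma~\ref{thm:highProbD} on the sub-class of balls centered in $\{\|x\|_2 \ge \bar r\}$, and inequality~\eqref{EqnUniformProb1} via a peeling argument combined with the same sub-class trick. The only refinement is that the annuli should be chosen so that the \emph{values} $R(f_{\rho_j,r})$ are dyadic (e.g., $R(f_{\rho_j,r}) \in [2^{j}\mu, 2^{j+1}\mu]$ with $\mu = tR(f_{\bar r,r})$), not the radii $\rho_j$ themselves; a geometric sequence of radii does not in general produce geometrically increasing exponents, which is what you need for the series $\sum_j 2\exp(-cnt^2 R(f_{\rho_j,r}))$ to collapse to the stated form $\tfrac{2\exp(-cnt^2 R(f_{\bar r,r}))}{1-\exp(-cnt^2 R(f_{\bar r,r}))}$. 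The paper handles this by stating and proving a general peeling lemma (an adaptation of one from Raskutti et al.) that peels directly on level sets of $g(h(\|x\|_2)) = tR(f_{x,r})$, which is exactly the ``true peeling bound'' you allude to at the end of your sketch.
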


Using Lemma~\ref{ThmUniformProb}, we can derive the following results for the computationally efficient modal interval and shorth estimators. The proof is contained in Appendix~\ref{AppThmModalCompute}.

\begin{theorem}
\label{ThmModalCompute}
For the computationally efficient estimators, we have the following error guarantees:
\begin{itemize}
\item[(i)] Suppose $r \ge 2r_{6Cd\log n}$. Then the modal interval estimator satisfies the bound $\|\mutil_{M,r}\|_2 \le 4r\left(\frac{n}{Cd\log n}\right)^{1/d}$, with probability at least $1-6\exp(-c_3 d\log n)$.
\item[(ii)] Suppose $k \ge 2C_{0.5}(d+1)\log n$. Then the shorth estimator satisfies the bound $\|\mutil_{S,k}\|_2 \le 4r_{2k}\left(\frac{2n}{k}\right)^{1/d}$, with probability at least $1-2\exp(-c'k)$.
\end{itemize}
\end{theorem}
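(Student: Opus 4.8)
The plan is to mimic the proofs of Theorems~\ref{LemModalD} and~\ref{ThmShorthD}, but replace the uniform concentration bound of Lemma~\ref{thm:highProbD} by the sharper, radially-adapted bound of Lemma~\ref{ThmUniformProb}, which is what lets us restrict the search to data-centered balls. The two key obstacles are: (a) we are now maximizing $R_n(f_{x,r})$ only over $x \in \{x_1,\dots,x_n\}$, so we must show that some data point near the origin has large empirical mass; and (b) we must rule out data points far from the origin having spuriously large empirical mass. Obstacle (b) is handled directly by the ``large-$x$'' tail bound~\eqref{EqnLargeX}, while obstacle (a) is the part that genuinely uses the restriction to data points.

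For part (i): Set $\bar r := 4r(n/(Cd\log n))^{1/d}$, the claimed error bound, and note that since $r \ge 2 r_{6Cd\log n}$ we have $R^*_r \ge R^*_{2r_{6Cd\log n}} \ge $ a quantity of order $\tfrac{6Cd\log n}{n}$ up to the $2^{-d}$ loss from Lemma~\ref{lemma:prop2}\ref{prop2:3}; this, together with the analogous computation showing $R(f_{\bar r, r}) \gtrsim \tfrac{C_t d\log n}{n}$ (using Lemma~\ref{lemma:prop2}\ref{prop2:4} to control $R(f_{\bar r, r})$ relative to $R^*_r$), verifies the hypothesis $R(f_{\bar r, r}) \ge \tfrac{C_t d\log n}{n}$ of Lemma~\ref{ThmUniformProb}. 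First I would show that w.h.p.\ there is a data point $x_i$ with $\|x_i\|_2 \le r$ (indeed, w.h.p.\ at least a constant fraction of the order-$nR^*_r$ points landing in $B_r$ are within $B_r$, so in particular the ball $B(x_i, r)$ for such a point contains $B(0, 0)$... more carefully: for a point $x_i \in B_r$, the ball $B(x_i, r)$ contains the origin region, so $R(f_{x_i,r})$ is within a $2^{-d}$-type factor of $R^*_r$, hence $R_n(f_{x_i,r}) \ge (1-2t) 2^{-\Theta(d)} R^*_r$ by~\eqref{EqnUniformProb1}). This gives a lower bound on $\max_{x\in\{x_i\}} R_n(f_{x,r})$. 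Then~\eqref{EqnLargeX} shows that any data point $x_j$ with $\|x_j\|_2 \ge \bar r$ has $R_n(f_{x_j,r}) \le R(f_{\bar r,r}) + tR(f_{\bar r,r}) \le (1+t) R(f_{\bar r, r})$, and by Lemma~\ref{lemma:prop2}\ref{prop2:4}, $R(f_{\bar r, r}) \le (4r/\bar r)^d R^*_r = (Cd\log n/n)\cdot\text{const}$, which is strictly smaller than the lower bound just established on the empirical mass at the good near-origin point. Hence $\mutil_{M,r}$ cannot be such an $x_j$, giving $\|\mutil_{M,r}\|_2 < \bar r$. Collecting the failure probabilities from the two parts of Lemma~\ref{ThmUniformProb} and the event that a near-origin data point exists yields the stated $1 - 6\exp(-c_3 d\log n)$.

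For part (ii): The argument is the Lepski-free version, tracking the empirical radius $\rtil_k$. I would show $\rtil_k \le r_{2k}$ w.h.p.: by the concentration of $R_n$ on balls of radius $\le r_{2k}$ centered at points $x$ with $\|x\|_2$ bounded (using~\eqref{EqnUniformProb1} with $r = r_{2k}$, whose hypothesis holds since $R^*_{r_{2k}} = 2k/n \ge 2C_{0.5}(d+1)\log n / n$), some data point near the origin has $R_n(f_{x_i, r_{2k}}) \ge (1-2t)\cdot 2^{-\Theta(d)}\cdot \tfrac{2k}{n} \ge \tfrac{k}{n}$, so the infimum defining $\rtil_k$ is at most $r_{2k}$. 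Then $\mutil_{S,k} = \mutil_{M,\rtil_k}$, and since $\rtil_k \le r_{2k}$ we can apply part (i)'s reasoning with $r = \rtil_k$ (or directly re-run the modal-interval argument at this radius), using~\eqref{EqnLargeX} to exclude far-away centers: any data point $x_j$ with $\|x_j\|_2 \ge 4r_{2k}(2n/k)^{1/d} =: \bar r'$ satisfies $R_n(f_{x_j, \rtil_k}) \le (1+t)R(f_{\bar r', \rtil_k}) \le (1+t)(4\rtil_k/\bar r')^d R^*_{\rtil_k} < k/n$ by Lemma~\ref{lemma:prop2}\ref{prop2:4} and the definition of $\bar r'$, contradicting that $\mutil_{S,k}$ achieves at least $k/n$ empirical mass in the ball $B(\mutil_{S,k}, \rtil_k)$. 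Hence $\|\mutil_{S,k}\|_2 \le 4r_{2k}(2n/k)^{1/d}$.

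The main obstacle I anticipate is bookkeeping the $2^{-\Theta(d)}$ factors: the restriction to data-centered balls costs a constant per dimension (since a ball centered at a near-origin data point only contains a $2^{-\Theta(d)}$ fraction of $R^*_r$), and one has to check that the constant $4$ in the final bound and the constant in ``$r \ge 2r_{6Cd\log n}$'' are chosen so that the near-origin lower bound genuinely exceeds the far-away upper bound $R(f_{\bar r, r}) \le (4r/\bar r)^d R^*_r$. Getting these constants to line up — in particular ensuring the hypothesis $R(f_{\bar r, r}) \ge C_t d\log n/n$ of Lemma~\ref{ThmUniformProb} holds while simultaneously $R(f_{\bar r, r})$ is small enough to be beaten — is the delicate part, and is presumably why the theorem states $r \ge 2r_{6Cd\log n}$ rather than the $r_{Cd\log n}$ one might naively expect.
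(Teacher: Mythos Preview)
Your high-level plan is right and matches the paper's: lower-bound the empirical mass at some near-origin data point, upper-bound it far away via inequality~\eqref{EqnLargeX}, and compare. But the execution has a real gap, precisely at the place you flag as ``the main obstacle.''

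First, you apply inequality~\eqref{EqnUniformProb1} with $\bar r$ equal to the \emph{final} error radius $4r(n/(Cd\log n))^{1/d}$ and try to verify the hypothesis $R(f_{\bar r, r}) \ge C_t d\log n/n$ using Lemma~\ref{lemma:prop2}\ref{prop2:4}. That lemma gives an \emph{upper} bound on $R(f_{\bar r, r})$, not a lower bound, so the verification does not go through. Second, and more seriously, your lower bound ``$R(f_{x_i,r})$ is within a $2^{-\Theta(d)}$ factor of $R^*_r$ for $x_i \in B_r$'' is not justified: for a data point at distance exactly $r$ from the origin, the ball $B(x_i,r)$ only touches the origin, and there is no general inequality of the form $R(f_{r,r}) \ge 2^{-\Theta(d)} R^*_r$ for radially symmetric unimodal densities. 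The same issue makes your part~(ii) inequality $(1-2t)\cdot 2^{-\Theta(d)}\cdot \tfrac{2k}{n} \ge \tfrac{k}{n}$ fail for large $d$.

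The paper's resolution is exactly what the hypothesis ``$r \ge 2r_{6Cd\log n}$'' is engineered for: apply inequality~\eqref{EqnUniformProb1} at the \emph{small} inner radius $\bar r = s_1 := r/2$, not at the large error radius. Since $r/2 \ge r_{6Cd\log n}$, one has $R(f_{s_1,r}) \ge R^*_{r/2} \ge 6Cd\log n/n$, so the hypothesis of Lemma~\ref{ThmUniformProb} holds cleanly. Moreover, the concentration gives $R_n(f_{0,s_1}) > 0$, so some data point $x_i$ lies in $B(0,r/2)$, and for such $x_i$ the containment $B(x_i,r) \supseteq B(0,r/2)$ yields $R_n(f_{x_i,r}) \ge \tfrac{2}{3} R(f_{s_1,r})$ with \emph{no} $2^{-d}$ loss. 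The far-away bound uses~\eqref{EqnLargeX} at a second threshold $s_2$ defined by $R(f_{s_2,r}) = \tfrac{1}{3}R(f_{s_1,r})$, giving $\sup_{\|x\|>s_2} R_n(f_{x,r}) \le \tfrac{4}{9} R(f_{s_1,r})$; one then bounds $s_2$ a posteriori via Lemma~\ref{lemma:prop2}\ref{prop2:4}. For part~(ii) the analogous trick is to work at the fixed radius $2r_{2k}$ throughout (so $\rtil_k \le 2r_{2k}$ follows from $R_n(f_{x_i,2r_{2k}}) \ge R_n(f_{0,r_{2k}}) \ge k/n$ for any $x_i \in B(0,r_{2k})$), rather than invoking concentration at the random radius $\rtil_k$.
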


\begin{remark*}
Comparing Theorem~\ref{ThmModalCompute}(i) with Theorem~\ref{LemModalD}, we see that the the computationally efficient modal interval essentially incurs an additional factor of 2 in the error bound, since we require $r \ge 2r_{C'd \log n}$. If we take $k = Cd\log n$, the error bound in Theorem~\ref{ThmModalCompute}(ii) is very similar to the error guarantee for the modal interval estimator~\eqref{EqnModalD} derived in Theorem~\ref{ThmShorthD}, except for an extra factor of 2.
\end{remark*}

Of course, the quality of the guarantee in Theorem~\ref{ThmModalCompute}(i) worsens as $r$ increases. Just as in the 1-dimensional case, we can use Lepski's method to calibrate the modal interval radius. Note that we can again use the shorth estimator to obtain rough upper and lower bounds. Using a similar argument as in the proof of Lemma~\ref{lemma:shorthlength}, we are guaranteed that $\frac{1}{2} \rtil_{3C d \log n} \le r_{6Cd\log n} \le \rtil_{6Cd\log n}$, w.h.p. Essentially the same argument as in Theorem~\ref{ThmLepski} then shows that the error of the modal interval estimator with Lepski calibration is guaranteed to be upper-bounded by $12r_{6C d\log n} \left(\frac{n}{C d\log n}\right)^{1/d}$.

As discussed in Section~\ref{sec:hybrid}, the projection step for the hybrid screening procedure can be computed in $O(d)$ time. The construction of the cuboid $S_k^\infty$ itself can clearly be computed in $O(nd)$ time. Thus, one can also easily obtain the $O(\sqrt{n}^{1/d})$ rates using a computationally efficient hybrid estimator, as well.

\section{Relaxing radial symmetry}
\label{SecAltCond}

We now consider the case when the population-level distribution $\overline{P} = \frac{1}{n} \sum_{i=1}^n P_i$ is not symmetric. In the case $d = 1$, we can obtain the same estimation error rates only assuming that density $p_i$ is log-concave with a unique mode at 0. In the case $d > 1$, we can obtain weaker estimation error guarantees of the order $O(\sqrt{n})$ rather than $O(\sqrt{n}^{1/d})$ if we only assume that the mixture components are centrally symmetric. Furthermore, it is possible to obtain $O(n^{1/d})$ rates if we assume that a certain fraction of the components are radially symmetric.

Although radial symmetry is a strict assumption,  it provides us an $\O(\sqrt{n}^{\frac{1}{d}})$ error. 
Whereas if we just assume central asymmetry, a union bound argument gives $\O(\sqrt{dn})$ error. 
This factor of $\O(\sqrt{dn})$ can not be improved in general. To see this, note that there exists a problem instance in single dimension where the lower bound is a factor of $\tilde{\Omega}(\sqrt{n})$. Central symmetry allows for having the same ``hard'' problem on each dimension separately, forcing an $\tilde{\Omega}(\sqrt{n})$ error in each dimension.

We can relax the radial symmetry assumptions slightly. In particular, Theorem~\ref{thm:highProb} only relies on the fact that $R^*_r$, the mass of the interval centered around the true mode 0, is $\Omega\left(\frac{\log n}{n}\right)$ (with no additional symmetry assumptions). We do need $R(f_{x,r})$ to satisfy some additional monotonicity assumptions along rays as $x$ moves away from 0.

\subsection{General theory}

In place of radial symmetry, we impose the following condition (stated with respect to a fixed radius $r$):
\begin{itemize}
\item[(C1)] The population-level quantity $R(f_{x,r})$ is maximized at $x = 0$, and otherwise monotonically decreasing along rays from the origin.
\end{itemize}
Note that condition (C1) is satisfied if the same property holds for all components $p_i$ in the mixture. We now define the function
\begin{equation}
\label{EqnGfunc}
g(a,r) = \sup_{\|x\|_2 = a} R(f_{x,r}),
\end{equation}
for $a,r > 0$. By Lemma~\ref{lemma:prop2}, we can argue that under radial symmetry of $R$, we have $g(a,r) \le \frac{1}{N(B_a, r)} \le \left(\frac{r}{a}\right)^d$, which can then be plugged into the argument of Theorem~\ref{LemModalD}. The proof of the following statement is contained in Appendix~\ref{AppThmRelax}.

\begin{theorem}
\label{ThmRelax}
Suppose condition (C1) holds.
\begin{itemize}
\item[(i)] Suppose $r$ is such that $R^*_r = \Omega\left(\frac{d \log n}{n}\right)$, and $r'$ is chosen sufficiently large such that $g(r', r) < \frac{R^*_r}{2}$. Then the modal interval estimator satisfies $\|\muhat_{M, r}\|_2 \le r'$, w.h.p.
\item[(ii)] Suppose
$r'$ is chosen such that $g(r', r_{8d \log n}) \le \frac{8d\log n}{4n}$. With high probability, the error of the shorth estimator satisfies $\|\muhat_{S, k}\|_2 \le r'$, and the error of the
hybrid algorithm with $k_2 = r_{8d\log n}$ is bounded by $ \min(r',\sqrt{d}r_{4\sqrt{n\log n},1})$.
\end{itemize}
\end{theorem}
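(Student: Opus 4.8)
The plan is to mimic the proofs of Theorem~\ref{cor:modal} and Theorem~\ref{cor:shorth} (and of the $d$-dimensional hybrid estimator in Theorem~\ref{thm:HybridScreening}), isolating exactly the places where radial symmetry was used and replacing them with the weaker hypothesis (C1) together with the auxiliary function $g(a,r)$ from~\eqref{EqnGfunc}. The crucial observation is that Lemma~\ref{thm:highProbD}, the uniform concentration bound over $\cH_r$, never used symmetry of $R$ at all — it only used that $R^*_r = \sup_{f\in\cH_r} R(f)$ is large enough, namely $nR^*_r \ge C_t\frac{d+1}{2}\log n$. Under (C1) the supremum $R^*_r = R(f_{0,r})$ is still attained at the origin, so the concentration inequality applies verbatim. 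This is why the theorem's hypotheses only ask for $R^*_r = \Omega(d\log n/n)$ and for control of $g(r',r)$, rather than full symmetry.

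For part (i): apply Lemma~\ref{thm:highProbD} with $t = \tfrac12$ on the class $\cH_r$. On the high-probability event, $R_n(f_{x,r})$ is within a factor $(1\pm\tfrac12)$ of $R(f_{x,r})$ uniformly. Since $\mutil_{M,r}$ — I mean $\muhat_{M,r}$ — maximizes $R_n(f_{\cdot,r})$ and $R_n(f_{0,r}) \ge (1-\tfrac12)R^*_r$, we get $R_n(f_{\muhat_{M,r},r}) \ge \tfrac12 R^*_r$, hence $R(f_{\muhat_{M,r},r}) \ge \tfrac{1}{1+1/2}\cdot\tfrac12 R^*_r$; rescaling the constant in $t$ exactly as in the proof of Lemma~\ref{thm:modalIntervalMain} gives $R(f_{\muhat_{M,r},r}) \ge \tfrac14 R^*_r$ (or whatever clean constant the bookkeeping produces — the point is it is $\ge \tfrac12 R^*_r$ after choosing $t$ appropriately in Lemma~\ref{thm:highProbD}). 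Now invoke (C1): if $\|\muhat_{M,r}\|_2 > r'$ then, since $R(f_{x,r})$ is monotonically decreasing along the ray through $\muhat_{M,r}$, we would have $R(f_{\muhat_{M,r},r}) \le g(r',r) < \tfrac12 R^*_r$, a contradiction. Hence $\|\muhat_{M,r}\|_2 \le r'$. This replaces the role played by Lemma~\ref{lemma:prop2}\ref{prop2:4} in the radially symmetric proof — there, the explicit bound $g(a,r)\le (r/a)^d R^*_r$ was available; here $g$ is kept abstract and the user supplies the threshold $r'$.

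For part (ii): the shorth bound follows the template of Theorem~\ref{cor:shorth}/\ref{ThmShorthD}. By Lemma~\ref{lemma:shorthlength}-type reasoning (the univariate analysis, which again used only concentration over $\cH_r$, not symmetry), with $k = 8d\log n$ we have $\widehat r_k \le r_{2k}$ w.h.p., and $\muhat_{S,k} = \muhat_{M,\widehat r_k}$ is the center of a ball of radius $\widehat r_k \le r_{2k} = r_{16 d\log n}$ — here I should be careful to match the indexing the statement uses, $k_2 = r_{8d\log n}$ appears to be a typo for $k_2$ chosen so that $\widehat r_{k_2}$ concentrates near $r_{8d\log n}$; in any case the ball has population mass $R_n(f_{\muhat_{S,k},\widehat r_k}) \ge k/n = 8d\log n/n$, so $R(f_{\muhat_{S,k},\widehat r_k}) \ge \tfrac{1}{1+t}\cdot\tfrac{8d\log n}{n} \ge \tfrac{8d\log n}{4n}$ for suitable $t$. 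Then (C1) and the hypothesis $g(r', r_{8d\log n}) \le \tfrac{8d\log n}{4n}$ force $\|\muhat_{S,k}\|_2 \le r'$, exactly as in part (i). Finally the hybrid bound: combine this with Lemma~\ref{LemCubOrigin}, which is purely about the componentwise median cuboid and needs only that the marginals of $\overline P$ are symmetric about $0$ — a consequence of (C1) restricted to each coordinate axis, or more precisely one only needs the marginal medians to be $0$, which central symmetry gives. Lemma~\ref{LemCubOrigin}(i) says the origin lies in $S_{k_1}^\infty$ w.h.p. and (ii) bounds $\Diam(S_{k_1}^\infty) \le \sqrt d\, r_{2k_1,1}$; since projecting onto a convex set containing the true mean can only decrease distance to the mean (the key geometric fact stated at the start of Section~\ref{sec:hybrid}), the hybrid output satisfies $\|\muhat_{k_1,k_2}\|_2 \le \min\{\Diam(S_{k_1}^\infty), \|\muhat_{S,k_2}\|_2\} \le \min\{\sqrt d\, r_{2\sqrt{n\log n},1},\, r'\}$, giving the claimed bound.

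The main obstacle is bookkeeping rather than conceptual: one must verify carefully that every lemma imported from the radially symmetric development (Lemmas~\ref{thm:highProbD}, \ref{lemma:shorthlength}-analog, \ref{LemCubOrigin}, and the geometric projection fact) genuinely uses only the weakened hypotheses — in particular that $R^*_r$ is still attained at the origin (true under (C1)) and that the coordinate marginals are still symmetric about $0$ (which is what lets the cuboid screening go through). The one genuinely new ingredient is recognizing that all the symmetry-specific computations of $g(a,r)$ in the earlier proofs (via packing numbers of balls) can be black-boxed into the single abstract quantity $g(r',r)$, with the burden of bounding it shifted onto the theorem's hypotheses; so there is no hard estimate left to do, only the verification that the chain of implications closes.
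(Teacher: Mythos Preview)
Your proposal is correct and follows essentially the same route as the paper's proof: for part (i) you invoke Lemma~\ref{thm:highProbD} to obtain $R(f_{\muhat_{M,r},r}) \ge \tfrac12 R^*_r$ w.h.p.\ (exactly the content of Lemma~\ref{thm:modalIntervalMain}, which as you observe requires no symmetry), and then use the ray-monotonicity in (C1) to contrapose with the hypothesis $g(r',r)<\tfrac12 R^*_r$; for part (ii) you reuse the shorth-length lemma plus concentration to get $R(f_{\muhat_{S,k},r_{2k}})$ bounded below, apply the same (C1)-contradiction, and finish the hybrid bound via Lemma~\ref{LemCubOrigin} and the convex-projection fact. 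This is precisely the paper's argument, including the observation that Lemma~\ref{lemma:shorthlength} and Lemma~\ref{thm:highProbD} never used symmetry of $R$. You are also right to flag that the cuboid step tacitly requires the coordinate marginals to have median~$0$ (so something like central symmetry), a point the paper's proof passes over without comment.
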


\begin{remark*}
For radially symmetric distributions, note that $g(r', r) \le \left(\frac{r}{r'}\right)^d$, so we can take $r' = r \left(\frac{2}{R^*_r}\right)^{1/d}$ and $r' = r_{2k} \left(\frac{4}{R^*_{2k}}\right)^{1/d}$ to obtain the results of Theorems~\ref{LemModalD} and~\ref{ThmShorthD} for the modal interval and shorth estimators, respectively. Furthermore, by Lemma~\ref{lemma:prop2}\ref{prop2:3}, we have $r_{\sqrt{n}\log n} \leq \(\frac{\sqrt{n} }{8d}\)^{1/d}r_{8d\log n}$. Thus, we also recover the analog of Theorem~\ref{thm:HybridScreening} for the hybrid estimator.
\end{remark*}

Finally, note that an analog of Theorem~\ref{ThmRelax} holds when we use the computationally efficient modal interval and shorth estimators described in Section~\ref{SecComputation}, with minor proof modifications.

\subsection{Sufficient conditions}

Condition (C1) may be a bit difficult to interpret. We define two related conditions:
\begin{itemize}
\item[(C2)] Each component density $p_i$ is log-concave with a unique mode at 0.
\item[(C3)] For all $x \in \real^d$ and all $1 \le i \le n$, we have $p_i(x) = p_i(-x)$.
\end{itemize}
Note that condition (C3) only requires symmetry of the density around 0, rather than radial symmetry; in particular, it holds for Gaussian distributions that are not necessarily isotropic.

We have the following result, proved in Appendix~\ref{AppLemRmode}:

\begin{proposition}
\label{LemRmode}
Suppose conditions (C2) and (C3) hold. Then condition (C1) also holds. Furthermore, $g(a,r) \le \frac{1}{\lfloor a/2r \rfloor}$.
\end{proposition}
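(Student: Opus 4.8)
The plan is to deduce (C1) from the fact that convolving a log-concave density with the indicator of a symmetric ball again yields a (symmetric) log-concave function, which is therefore unimodal; and then to obtain the quantitative estimate $g(a,r)\le 1/\lfloor a/2r\rfloor$ by packing disjoint balls along a ray and using the monotonicity just established.

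First I would reduce (C1) to a single mixture component. Both assertions in (C1) --- that $R(f_{\cdot,r})$ is maximized at the origin, and that it is non-increasing along every ray emanating from the origin --- are preserved under nonnegative linear combinations of functions. Since $R(f_{x,r})=\frac1n\sum_{i=1}^n\phi_i(x)$ with $\phi_i(x):=\int_{B(x,r)}p_i(y)\,dy$, it suffices to prove both properties for a single $\phi=\phi_i$. Writing $\phi=p_i*\mathbbm{1}_{B_r}$ as a convolution, condition (C2) makes $p_i$ log-concave, while $\mathbbm{1}_{B_r}$ is the indicator of a symmetric convex body, hence log-concave (with the convention $\log 0=-\infty$); by the Pr\'ekopa--Leindler inequality the convolution $\phi$ is log-concave, and condition (C3) together with the symmetry of $B_r$ gives $\phi(-x)=\phi(x)$.

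Next I would invoke the elementary fact that an even log-concave function $\phi$ on $\real^d$ attains its maximum at $0$ and is non-increasing along each ray from $0$: restricting $\phi$ to the line $\{tx:t\in\real\}$ gives an even log-concave function $h$ on $\real$, and for $0\le s\le t$ one has $s=\lambda t+(1-\lambda)(-t)$ with $\lambda=(s+t)/(2t)\in[0,1]$, so $h(s)\ge h(t)^{\lambda}h(-t)^{1-\lambda}=h(t)$; setting $s=0$ shows the maximum is at $0$ (one gets strict monotonicity under the unique-mode hypothesis of (C2), though non-strict monotonicity is all that is needed). Summing over $i$ with positive weights establishes (C1) for $R(f_{\cdot,r})$. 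For the quantitative bound, fix $x$ with $\|x\|_2=a$ and put $m=\lfloor a/2r\rfloor$ (the claim is vacuous if $m=0$, so assume $a\ge 2r$). For $j=0,1,\dots,m$ let $c_j=(1-2jr/a)\,x$, so each $c_j$ is a nonnegative multiple of $x$ with $\|c_j\|_2=a-2jr\le a$, consecutive centers satisfy $\|c_j-c_{j+1}\|_2=2r$, and hence the balls $B(c_j,r)$ have pairwise disjoint interiors. By the monotonicity in (C1), $R(f_{c_j,r})\ge R(f_{x,r})$ for every $j$, so
\[
(m+1)\,R(f_{x,r})\;\le\;\sum_{j=0}^m R(f_{c_j,r})\;=\;\frac1n\sum_{i=1}^n\int_{\bigcup_{j=0}^m B(c_j,r)}p_i(y)\,dy\;\le\;1,
\]
using disjointness in the last step. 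Hence $R(f_{x,r})\le\frac1{m+1}\le\frac1{\lfloor a/2r\rfloor}$, and taking the supremum over $\|x\|_2=a$ gives $g(a,r)\le 1/\lfloor a/2r\rfloor$.

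I do not expect a genuine obstacle here: the proof is essentially routine once one recalls that log-concavity is stable under convolution and that even log-concave functions are unimodal. The only points that need a little care are the log-concavity of $\mathbbm{1}_{B_r}$ under the $\log 0=-\infty$ convention (so Pr\'ekopa--Leindler applies) and the verification that the chain of balls $B(c_j,r)$, $j=0,\dots,m$, is indeed interior-disjoint --- both of which are short bookkeeping arguments.
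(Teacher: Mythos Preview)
Your proposal is correct and follows essentially the same approach as the paper: both establish (C1) by observing that each $\phi_i=p_i*\mathbbm{1}_{B_r}$ is a convolution of log-concave functions (hence log-concave) and even (hence with unique mode at $0$), then obtain the quantitative bound by packing disjoint radius-$r$ balls along the ray from $0$ to $x$ and using monotonicity. Your version is slightly sharper in that you fit $m+1$ balls rather than the paper's $m=\lfloor a/2r\rfloor$, and you apply the packing directly to $R$ instead of componentwise to each $R_i$, but these are cosmetic differences.
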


In fact, we can even derive a result only assuming condition (C2) in the case $d=1$. As argued in the proof of Theorem~\ref{ThmRelax}, we may establish that $R(f_{\muhat_{M,r}}, r) \ge \frac{R_r^*}{2}$, w.h.p. Thus, there exists some $i$ such that $R_i(f_{\muhat_{M,r}}, r) \ge \frac{R_r^*}{2}$. By properties of log-concave convolutions (cf.\ proof of Proposition~\ref{LemRmode}), we know that $R_i(f_{x,r})$ is decreasing along rays originating from some point $x_i^*$, and also $\|\muhat_{M,r} - x_i^*\|_2 \le \frac{4r}{R_r^*}$, since we could otherwise pack too many intervals into the ray between $x_i^*$ and $\muhat_{M,r}$, thus contradicting the inequality $R_i(f_{\muhat_{M,r}}, r) \ge \frac{R_r^*}{2}$. Finally, note that due to the unimodality of $p_i$ at 0, we clearly have $\|x_i^*\|_2 \le r$. Altogether, we obtain the error bound
\begin{equation*}
\|\muhat_{M,r}\|_2 \le \frac{4r}{R_r^*} + r,
\end{equation*}
which is of the same order as the guarantees in Theorem~\ref{cor:modal}. A similar conclusion could be reached if we replaced condition (C2) by the condition that each $p_i$ has a unique median and mode at 0, since $R_i(f_{x,r})$ is decreasing along rays originating from $r$ ($-r$) in the positive (negative) direction.

\subsection{Examples}

We now describe two examples to illustrate concrete use cases of our more general theory.

\begin{example}[Elliptically symmetric distributions]
\label{ExaElliptical}
We now consider the case where the components of the mixture are not spherical, but have the same axes of symmetry. Concretely, suppose that for a fixed matrix $\Sigma \succ 0$, the density of each $X_i$ is of the form $f_i\left((x - \mu)^T \Sigma^{-1} (x-\mu)\right)$, where $f_i: \real \rightarrow \real$ is a decreasing function defined on the positive reals. The goal is to estimate the common parameter $\mu \in \real^d$. As a specific example, we might have a mixture of nonisotropic Gaussian distributions where the covariance matrices are all scalar multiples of $\Sigma$. This strictly generalizes the case of radially symmetric distributions, which corresponds to the case $\Sigma = I$.

Suppose we employ the modal interval, shorth, or hybrid estimators described above. Note that these estimators do not require knowledge of the matrix $\Sigma$. We wish to analyze the behavior of the quantity $g(a,r)$ defined in equation~\eqref{EqnGfunc}, which is relevant for Theorem~\ref{ThmRelax}. Indeed, we can derive an analog of Lemma~\ref{lemma:prop2} that applies in this setting. The main step is to understand bound the quantity $g(r_2, r_1)$ when $r_1 < r_2$. We have the following result, proved in Appendix~\ref{AppPropElliptical}:

\begin{proposition}
\label{PropElliptical}
Let $r_1 < r_2$. For an elliptically symmetric distribution, we have
\begin{equation*}
g(r_2, r_1) \le C \left(\frac{r_1 \lambda_{\max}(\Sigma)}{r_2 \lambda_{\min}(\Sigma)}\right)^d.
\end{equation*}
\end{proposition}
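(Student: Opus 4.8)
The plan is to reduce the elliptical case to the radially symmetric case by an affine change of variables, and then invoke the packing argument already used in Lemma~\ref{lemma:prop2}\ref{prop2:4}. First I would write $\Sigma = A^T A$ and substitute $y = A^{-T}(x - \mu)$ (equivalently, work with $\widetilde X_i = A^{-T}(X_i - \mu)$). Under this map each component density of the form $f_i\bigl((x-\mu)^T\Sigma^{-1}(x-\mu)\bigr)$ becomes $f_i(\|y\|_2^2)$ up to the Jacobian constant $\det(A)$, so the pushed-forward mixture $\widetilde R$ is radially symmetric and unimodal. The subtlety is that balls do not map to balls: a Euclidean ball $B(x, r_1)$ in the original coordinates maps to an ellipsoid in $y$-space, which is sandwiched between $B\bigl(A^{-T}x,\, r_1/\sqrt{\lambda_{\min}(\Sigma)}\bigr)$ and $B\bigl(A^{-T}x,\, r_1/\sqrt{\lambda_{\max}(\Sigma)}\bigr)$ since the singular values of $A^{-T}$ lie in $[\lambda_{\max}(\Sigma)^{-1/2}, \lambda_{\min}(\Sigma)^{-1/2}]$.

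Next I would bound $g(r_2, r_1)$ by chaining these inclusions. For $x$ with $\|x\|_2 = r_2$, the set $f_{x,r_1}$ corresponds in $y$-space to a region contained in a ball $B(y_0, \rho_1)$ with $\|y_0\|_2 \ge r_2/\sqrt{\lambda_{\max}(\Sigma)}$ and $\rho_1 \le r_1/\sqrt{\lambda_{\min}(\Sigma)}$, while the maximizing object $R^*_{r_2}$ corresponds in $y$-space to a region \emph{containing} a ball centered at the origin of radius $\rho_2 \ge r_2/\sqrt{\lambda_{\max}(\Sigma)}$. Then $R(f_{x,r_1}) \le \widetilde R(\text{ball of radius } \rho_1 \text{ at distance } \ge \rho_2 - \rho_1 \text{ from } 0)$, and I would apply the radially symmetric bound of Lemma~\ref{lemma:prop2}\ref{prop2:4} (or its packing form $R(f_{a, \rho_1}) \le \frac{1}{P(B_{a - \rho_1}, \rho_1)}R^*_a$) with the roles of $(r_1, r_2)$ played by $(\rho_1, \rho_2)$. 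Taking the supremum over $\|x\|_2 = r_2$ yields a bound of order $\bigl(\rho_1/\rho_2\bigr)^d$ up to a constant, and substituting $\rho_1 \asymp r_1/\sqrt{\lambda_{\min}(\Sigma)}$, $\rho_2 \asymp r_2/\sqrt{\lambda_{\max}(\Sigma)}$ gives exactly $C\bigl(\frac{r_1 \lambda_{\max}(\Sigma)}{r_2 \lambda_{\min}(\Sigma)}\bigr)^{d/?}$; I would need to be careful about whether the ratio of square roots gets squared — in fact $\sqrt{\lambda_{\max}/\lambda_{\min}}$ raised to the $d$th power appears naturally, so to land the stated $\bigl(\frac{r_1\lambda_{\max}}{r_2\lambda_{\min}}\bigr)^d$ I should use the looser sandwiching that already costs a factor $\lambda_{\max}/\lambda_{\min}$ per dimension rather than its square root, or absorb the discrepancy into the constant $C$ (which is acceptable since the claim is only an upper bound).

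The main obstacle I anticipate is handling the fact that the affine image of a ball is an ellipsoid rather than a ball, and hence that neither $f_{x,r_1}$ nor the maximizer of $R$ over $\mathcal H_{r_2}$ transforms cleanly; the packing/covering argument must be run against inscribed and circumscribed balls, and one must make sure the center-distance lower bound $\rho_2 - \rho_1 > 0$ actually holds, which requires $r_1 < r_2$ together with the eigenvalue ratio not being too wild — but since the conclusion is vacuous (the bound exceeds $1$) unless $r_1\lambda_{\max} \ll r_2\lambda_{\min}$, we may assume that regime and the geometry goes through. A secondary bookkeeping point is that the modal-interval and shorth estimators operate in the \emph{original} Euclidean metric, so the final error bound $\|\widehat\mu - \mu\|_2$ picks up a condition-number factor when pulled back; this is consistent with the statement of Theorem~\ref{ThmRelax}, whose hypothesis is precisely a bound on $g(r', r)$, so no further work is needed here beyond plugging Proposition~\ref{PropElliptical} into that theorem.
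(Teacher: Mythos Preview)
Your approach is essentially the paper's: whiten by $\Sigma^{-1/2}$ to obtain a radially symmetric law, enclose the image of $B(x_0,r_1)$ in a Euclidean ball of radius $\rho_1=r_1/\sqrt{\lambda_{\min}(\Sigma)}$, lower-bound $\|y_0\|_2$ by $\rho_2=r_2/\sqrt{\lambda_{\max}(\Sigma)}$, and run the packing argument. Your square-root worry is resolved by monotonicity, not by absorption into $C$: since the condition number $\kappa=\lambda_{\max}/\lambda_{\min}\ge 1$, one has $\sqrt{\kappa}\le\kappa$, so your (correct) bound $C\bigl(\tfrac{r_1}{r_2}\sqrt{\kappa}\bigr)^d$ is \emph{stronger} than the proposition as stated and implies it directly---nothing $d$-dependent is being hidden in $C$. (The paper's own write-up drops the square roots on the transformed radii; your computation is the accurate one.)
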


Clearly, taking $C = 1$ and $\Sigma = I$ in Proposition~\ref{PropElliptical} recovers the result for radially symmetric distributions.
\end{example}

\begin{remark*}
Similar arguments as in Example~\ref{ExaElliptical} could be applied in the case when the probability density functions of the distributions are proportional to $\exp(-\|x - \mu\|/\sigma)$, for a different norm $\|\cdot\|$ besides the squared $\ell_2$-norm or the Mahalanobis norm. Also note that if the matrix $\Sigma$ (accordingly, the norm $\|\cdot\|$) were known a priori, it might be possible to obtain better rates by using a modal interval/shorth estimator based on the level sets of the norm rather than spheres of varying radii.
\end{remark*}

\begin{example} [Mixture of radially and centrally symmetric distributions]
For another interesting special case, suppose we have $s$ points drawn from radially symmetric distributions, and $n-s$ points drawn from centrally symmetric distributions.
Suppose we have $f(n)$ points which are well-behaved in the sense that the interquartile range of the corresponding distributions is small. (These distributions need not coincide with the radially symmetric distributions.) We have the following result, proved in Appendix~\ref{AppPropMixture}:

\begin{proposition}
\label{PropMixture}
For $r = q_{(f(n))}$ and $r' = 2r n^{1/d}$, we have
\begin{equation*}
g(r',r) \le \frac{R^*_r}{2},
\end{equation*}
provided $s \ge n - 2n^{1/d} (f(n)-4)$.
\end{proposition}

Thus, as the proportion of well-behaved points increases, the required proportion of radially symmetric distributions required to obtain a specific error guarantee becomes smaller. In particular, if $f(n) = \Omega(n^{1-1/d})$, we do not need any radially symmetric distributions; recall, however, that the coordinatewise median already performs well when on a mixture of centrally symmetric distributions if $f(n) = \Omega(\sqrt{n} \log n)$.
\end{example}

\section{Linear regression}
\label{SecRegression}

We now shift our focus to the problem of linear regression, and demonstrate how the methodology developed thus far may be adapted to parameter estimation in multivariate regression. Suppose we have observations $\{(x_i, y_i)\}_{i=1}^n$ from the linear model
\begin{equation*}
y_i = x_i^T \betastar + \epsilon_i, \qquad \forall 1 \le i \le n,
\end{equation*}
where the pairs $\{(x_i, \epsilon_i)\}_{i=1}^n$ are independent but not necessarily identically distributed, and $x_i$ and $\epsilon_i$ are independent for each $i$.

Following the theme of our paper, we assume that the $\epsilon_i$'s are symmetric and unimodal. We want to study the behavior of the modal interval regression estimator
\begin{equation}
\label{EqnRegModal}
\betahat = \argmax_{\beta \in \real^d} \frac{1}{n} \sum_{i=1}^n 1\left\{|y_ i - x_i^T \beta| \le r\right\},
\end{equation}
for an appropriate choice of $r > 0$.

A natural question is whether the true parameter $\betastar$ is the unique population-level maximizer in the regression setting. As the following proposition shows, this is indeed the case when the densities of the $x_i$'s are absolutely continuous with respect to Lebesgue measure. The proof is contained in Appendix~\ref{AppPropRegMax}.

\begin{proposition}
\label{PropRegMax}
The population-level maximizer is given by
\begin{equation}
\label{EqnBetaMax}
\betastar = \arg\max_\beta \sum_{i=1}^n \E\left[1\left\{|y_i - x_i^T \beta| \le r\right\}\right], \qquad \forall r > 0.
\end{equation}
\end{proposition}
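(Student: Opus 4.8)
The plan is to decouple the sum over $i$ and reduce each summand to a one-dimensional statement about the noise variable $\epsilon_i$, at which point the conclusion follows from symmetry and unimodality exactly as in Lemma~\ref{lemma:properties}\ref{prop:1}. Fix an arbitrary $\beta \in \real^d$ and set $\Delta = \beta - \betastar$. Since $y_i = x_i^T \betastar + \epsilon_i$, we have $y_i - x_i^T \beta = \epsilon_i - x_i^T \Delta$; conditioning on $x_i$ and using that $\epsilon_i$ is independent of $x_i$,
\begin{align*}
\E\left[1\left\{|y_i - x_i^T \beta| \le r\right\}\right] = \E_{x_i}\left[\mprob\left(\epsilon_i \in \left[x_i^T\Delta - r,\; x_i^T\Delta + r\right] \mid x_i\right)\right] = \E_{x_i}\left[G_i\left(x_i^T\Delta\right)\right],
\end{align*}
where $G_i(s) := \mprob\left(\epsilon_i \in [s-r,s+r]\right)$ denotes the mass the law of $\epsilon_i$ assigns to the length-$2r$ window centered at $s$.

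Next I would invoke the one-dimensional fact that, because the density of $\epsilon_i$ is symmetric and non-increasing away from $0$, the function $G_i$ is symmetric about $0$ and non-increasing in $|s|$; in particular $G_i(s) \le G_i(0)$ for every $s$. This is precisely the statement of Lemma~\ref{lemma:properties}\ref{prop:1} applied to the single distribution of $\epsilon_i$ rather than to the mixture $\overline{P}$ (equivalently, a layer-cake argument: any length-$2r$ window intersects a symmetric super-level set of the density in the longest possible length when it is centered at $0$). Substituting into the display above gives, termwise, $\E[1\{|y_i - x_i^T\beta| \le r\}] \le G_i(0) = \E[1\{|y_i - x_i^T\betastar| \le r\}]$, and summing over $i = 1,\dots,n$ yields
\begin{align*}
\sum_{i=1}^n \E\left[1\left\{|y_i - x_i^T \beta| \le r\right\}\right] \;\le\; \sum_{i=1}^n \E\left[1\left\{|y_i - x_i^T \betastar| \le r\right\}\right],
\end{align*}
with equality when $\beta = \betastar$ (so that $\Delta = 0$). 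This establishes that $\betastar$ attains the maximum in~\eqref{EqnBetaMax} for every $r > 0$.

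For uniqueness I would sharpen the bound on $G_i$ to $G_i(s) < G_i(0)$ for every $s \ne 0$: since $\epsilon_i$ is a genuine random variable, $G_i(s) \to 0$ as $|s| \to \infty$ while $G_i(0) = \mprob(|\epsilon_i| \le r) > 0$, and a short calculation with the density of $\epsilon_i$ (comparing the densities at the two moving endpoints $s\pm r$) shows that $G_i$ is strictly decreasing in $|s|$ under the strict reading of unimodality. Given this, equality in the displayed inequality for a particular $\beta$ forces $G_i(x_i^T\Delta) = G_i(0)$ almost surely, hence $x_i^T\Delta = 0$ almost surely, for every $i$. Because each $x_i$ has a density with respect to Lebesgue measure, the hyperplane $\{x : x^T\Delta = 0\}$ is $\mprob$-null unless $\Delta = 0$; therefore $\beta = \betastar$, giving the claimed uniqueness.

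I expect the uniqueness step to be the only delicate point. The inequality making $\betastar$ a maximizer needs nothing beyond the standing symmetry/unimodality assumption; ruling out a \emph{different} maximizer, however, requires excluding the degenerate case in which $G_i$ is constant on a neighborhood of $0$ (e.g.\ near-uniform noise together with a large radius $r$), which is exactly where the strict form of unimodality, together with absolute continuity of the covariates — so that $x_i^T\Delta$ is not almost surely zero when $\Delta \ne 0$ — is used.
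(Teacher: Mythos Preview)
Your proposal is correct and follows essentially the same route as the paper: condition on $x_i$, reduce to the one-dimensional fact that a symmetric unimodal density assigns maximal mass to the centered window, then take expectations and sum. For uniqueness, the paper likewise uses that $\{x:x^T\Delta=0\}$ is Lebesgue-null when $\Delta\neq 0$; you are in fact more careful than the paper in flagging that the strict inequality $G_i(s)<G_i(0)$ needs something beyond mere (non-strict) unimodality, a subtlety the paper's proof glosses over.
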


Importantly, Proposition~\ref{PropRegMax}, and the ensuing theory, does not require specific assumptions on the form of the distribution of the $x_i$'s. However, in order to derive easily interpretable error bounds on the modal interval regression estimator, we will assume further distributional assumptions (cf.\ the statement of Theorem~\ref{ThmRegression} below).

\subsection{Estimation error}

In order to obtain error bounds on $\|\betahat - \betastar\|_2$, we need to analyze the behavior of the quantities
\begin{equation*}
R_\beta := \frac{1}{n} \sum_{i=1}^n \mprob\left(|y_i - x_i^T \beta| \le r\right),
\end{equation*}
for a fixed value of $r$, chosen sufficiently large that $R_{\betastar} \ge \frac{Cd\log n}{n}$. In particular, we want to show that for $\|\beta - \betastar\|_2$ larger than a certain value, we will have $R_\beta < \frac{R_{\betastar}}{2} = \frac{1}{2n} \sum_{i=1}^n \mprob(|\epsilon_i| \le r)$.

As before, the key ingredient for deriving error bounds is a uniform concentration result. This is proved in the following lemma:

\begin{lemma}
\label{LemRegConc}
Let $t \in (0,1]$, and suppose $r$ is large enough so that $R_{\betastar} \ge \frac{Cd\log n}{n}$. Then
\begin{multline}
\label{EqnConcLin}
\mprob\left(\sup_{\beta \in \real^d, r' \le r} \left|\frac{1}{n} \sum_{i=1}^n 1\left\{|y_i - x_i^T \beta| \le r'\right\} - \frac{1}{n} \sum_{i=1}^n \E\left[1\left\{|y_i - x_i^T \beta| \le r'\right\}\right]\right| \ge t R_{\betastar}\right) \\
\le 2\exp(-cnR_{\betastar} t^2).
\end{multline}
\end{lemma}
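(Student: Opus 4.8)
The plan is to mirror the proof of Lemma~\ref{thm:highProbD} verbatim, treating the pairs $(x_i,y_i)\in\real^{d+1}$ as the independent (but not identically distributed) observations and
\[
\mathcal{G}_r := \left\{ (x,y)\mapsto 1\{|y - x^T\beta|\le r'\} : \beta\in\real^d,\ 0\le r'\le r \right\}
\]
as the governing function class. There are only two genuinely new structural facts to establish; once these are in place, the non-i.i.d.\ VC machinery of Lemma~\ref{thm:highProbD} applies without change, since that machinery never used any distributional structure of the observations beyond independence and a uniform bound on the per-observation variances.

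The first new fact is that $\mathcal{G}_r$ is a VC class of dimension $O(d)$: each of its members is the indicator of a \emph{slab} in $\real^{d+1}$, namely the intersection of the two halfspaces $\{y - x^T\beta \le r'\}$ and $\{x^T\beta - y \le r'\}$. The family of halfspaces $\{(x,y): y - x^T\beta - r' \le 0\}$, with the coefficient of $y$ frozen at $1$ and $(\beta,r')$ ranging over $\real^{d+1}$, is a subclass of affine halfspaces in $\real^{d+1}$ and hence has VC dimension $O(d)$; the class of pairwise intersections of two classes has VC dimension $O$ of the sum of theirs (see \cite{Ver18, Van00}), so $\mathcal{G}_r$ has VC dimension $O(d)$. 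The second new fact is that $R_{\betastar}$ plays exactly the role that $R^*_r$ played in the mean-estimation arguments: by Proposition~\ref{PropRegMax}, $\betastar$ maximizes $\beta\mapsto\frac1n\sum_i\E[1\{|y_i - x_i^T\beta|\le r'\}]$ for \emph{every} $r'>0$, and this quantity is nondecreasing in $r'$, so $\sup_{g\in\mathcal{G}_r}R(g) = R(g_{\betastar,r}) = R_{\betastar}$. Consequently, for each $g = g_{\beta,r'}\in\mathcal{G}_r$ and each $i$ we have $\Var(g(x_i,y_i)) \le \E[g(x_i,y_i)] = \mprob(|y_i - x_i^T\beta|\le r')$, whence $\sup_{g\in\mathcal{G}_r}\sum_{i=1}^n\Var(g(x_i,y_i)) \le nR_{\betastar}$. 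This uniform variance control is precisely what forces the final deviation to scale with $R_{\betastar}$ rather than with $n^{-1/2}$.

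With these two ingredients, the argument is the same as for Lemma~\ref{thm:highProbD}: symmetrize with an independent ghost sample $(x_i',y_i')$ (valid for non-i.i.d.\ data, since each ghost has the same marginal as its partner), which is permissible once $nR_{\betastar}$ exceeds an absolute constant over $t^2$; condition on the combined sample and randomize signs; union-bound over the at most $\Pi_{\mathcal{G}_r}(2n)\le(2en/(d+1))^{d+1}$ distinct sign patterns; and apply a Bernstein tail to each pattern using $\sum_i(g(x_i,y_i)-g(x_i',y_i'))^2 \le n(R_n(g)+R_n'(g))$, which a preliminary crude concentration step bounds by $O(nR_{\betastar})$ with high probability. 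Choosing the deviation threshold $\tfrac t2 R_{\betastar}$ and comparing the entropy term $\,(d+1)\log(2en/(d+1))\,$ against $cnR_{\betastar}t^2$ under the hypothesis $R_{\betastar}\ge Cd\log n/n$ (with $C = C(t)$) yields the stated bound $2\exp(-cnR_{\betastar}t^2)$.

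The main obstacle is the localization step: to get a Bernstein (variance-aware), rather than Hoeffding, tail per sign pattern, one must first control the random empirical radius $\sup_{g\in\mathcal{G}_r}R_n(g)$ by $O(R_{\betastar})$ with high probability, which requires running the concentration bound once with a looser constant before bootstrapping to the sharp one; this is exactly the adjustment of the classical i.i.d.\ VC argument to the independent non-i.i.d.\ setting. Everything else is bookkeeping. I would also note two small points: the lemma uses neither symmetry nor unimodality of the $\epsilon_i$ (those enter only through Proposition~\ref{PropRegMax} and, later, in turning the bound into an error bound on $\betahat$), and the identical proof applies to the computationally efficient variant that anchors the slabs at data points, since restricting the supremum to a subclass can only decrease it.
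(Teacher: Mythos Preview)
Your identification of the two structural ingredients---that slabs $\{|y-x^T\beta|\le r'\}$ form a VC class of dimension $O(d)$ as intersections of halfspaces, and that $R_{\betastar}$ plays exactly the role of $R^*_r$ via Proposition~\ref{PropRegMax}---matches the paper's argument precisely. Where you diverge is in the concentration machinery. The paper does \emph{not} run the classical symmetrize/Sauer--Shelah/Bernstein-per-pattern pipeline with a localization bootstrap; instead (following the proof of Lemma~\ref{thm:highProbD} in Appendix~\ref{app:proof_of_thm_high_probD}) it first bounds the \emph{expectation} $\E Z$ by a Dudley-type chaining argument adapted to independent non-i.i.d.\ data (Theorem~\ref{thm:vcConvExpTech}, the paper's generalization of Theorem~13.7 in Boucheron et al.), and then applies Talagrand-style concentration for suprema (Lemma~\ref{BouEtal12.9}, i.e.\ Boucheron et al.\ Theorem~12.9) around that expectation, with $\rho^2\le nR_{\betastar}$ and $v\le 6nR_{\betastar}$. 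Your route is valid and more elementary in spirit, but it genuinely needs the two-pass localization you flag (controlling the random empirical radius before the sharp Bernstein step), which is extra work; the paper's route avoids this by packaging the variance-awareness into the expectation bound and then invoking a single off-the-shelf Talagrand inequality. Either way the VC-dimension computation and the role of $R_{\betastar}$ are the substantive content, and on those you are aligned with the paper.
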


Since the proof is directly analogous to the proof of Theorem~\ref{thm:highProbD}, we only provide a sketch: The key point is to consider the VC dimension of the class of functions $f(x,y) = 1\{|y - x^T \beta| \le r\}$, indexed by the pair $(\beta, r)$. Note that the subset of points in $\real^{d+1}$ associated with the indicator function $f(x,y)$ is an intersection of two halfspaces. Using results on the VC dimension of an intersection of concept classes~\cite{VanWel09}, we see that the VC dimension of the desired hypothesis class is bounded by $C'd$. The concentration result then follows by the same arguments used to derive Theorem~\ref{thm:highProbD}.

It is generally difficult to state general bounds on estimation error that depend only on order statistics of quantiles, since as in the case of mean regression, the error bounds one can derive will be largely problem-dependent. In order to simplify our presentation, we will only discuss the case where the $\epsilon_i$'s and $x_i$'s are Gaussian: $\epsilon_i \sim N(0, \sigma_i^2)$ and $x_i \sim N(\mu_i', \Sigma_i')$. We have the following result, proved in Appendix~\ref{AppThmRegression}:

\begin{theorem}
\label{ThmRegression}
Let $\lambda_{\min} := \min_i \lambda_{\min}(\Sigma_i')$, and suppose $\lambda_{\min} > 0$. Suppose $r > 0$ is chosen such that $R_{\betastar} \ge \frac{Cd \log n}{n}$. Then the regression estimator~\eqref{EqnRegModal} satisfies
\begin{equation*}
\|\betahat - \betastar\|_2 \le \frac{c'n \sigma_{(cd\log n)}}{\sqrt{\lambda_{\min}}},
\end{equation*}
w.h.p.
\end{theorem}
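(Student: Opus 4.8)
The plan is to follow the template of Theorem~\ref{cor:modal}, with the strips $\{(x,y): |y - x^T\beta| \le r\}$ playing the role of the intervals $f_{x,r}$ and Lemma~\ref{LemRegConc} playing the role of Lemma~\ref{thm:highProb}. Two ingredients are needed: an empirical-to-population step showing that the population mass $R_{\betahat}$ at the estimator is within a constant factor of $R_{\betastar}$, and a population ``anti-concentration'' step showing that $R_\beta$ decays like $1/\|\beta - \betastar\|_2$, so that large error forces small mass.

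For the first step I would apply Lemma~\ref{LemRegConc} with, say, $t = \tfrac14$: since $nR_{\betastar} \ge Cd\log n$, with probability at least $1 - 2\exp(-c'd\log n)$ we have $\sup_\beta |R_n(\beta) - R_\beta| \le \tfrac14 R_{\betastar}$, where $R_n(\beta) := \frac1n\sum_{i=1}^n 1\{|y_i - x_i^T\beta| \le r\}$. On this event, using $R_n(\betahat) \ge R_n(\betastar)$ from the definition~\eqref{EqnRegModal} of the estimator, one gets $R_{\betahat} \ge R_n(\betahat) - \tfrac14 R_{\betastar} \ge R_n(\betastar) - \tfrac14 R_{\betastar} \ge \tfrac12 R_{\betastar}$.

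For the second step, fix $\beta$ and write $\Delta := \beta - \betastar$, so $y_i - x_i^T\beta = \epsilon_i - x_i^T\Delta$, which under the Gaussian model is $\N\big(-(\mu_i')^T\Delta,\ \sigma_i^2 + \Delta^T\Sigma_i'\Delta\big)$. A one-dimensional Gaussian of variance $v^2$ has peak density $(\sqrt{2\pi}\,v)^{-1}$, so — uniformly in its mean, hence uniformly in the nuisance term $(\mu_i')^T\Delta$ — its mass on $[-r,r]$ is at most $2r/(\sqrt{2\pi}\,v)$. Since $\sigma_i^2 + \Delta^T\Sigma_i'\Delta \ge \lambda_{\min}(\Sigma_i')\|\Delta\|_2^2 \ge \lambda_{\min}\|\Delta\|_2^2$, this holds with $v \ge \sqrt{\lambda_{\min}}\|\Delta\|_2$ for every $i$, so $R_\beta \le 2r/(\sqrt{2\pi}\,\sqrt{\lambda_{\min}}\,\|\beta - \betastar\|_2)$ (a quantitative sharpening of Proposition~\ref{PropRegMax}). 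Combining the two steps at $\beta = \betahat$ gives $\|\betahat - \betastar\|_2 \le 4r/(\sqrt{2\pi}\,\sqrt{\lambda_{\min}}\,R_{\betastar}) \le 4rn/(\sqrt{2\pi}\,Cd\log n\,\sqrt{\lambda_{\min}})$. Taking $r$ to be the smallest radius for which the hypothesis holds — the analogue of $r_{Cd\log n}$ for the noise mixture $\frac1n\sum_i\N(0,\sigma_i^2)$, which the Gaussian version of Lemma~\ref{lemma:properties}\ref{prop:5} bounds by $\sigma_{(2Cd\log n)}$ (for $\N(0,\sigma_i^2)$ one has $\mprob(|\epsilon_i|\le\sigma_i)\ge\tfrac12$) — then yields $\|\betahat - \betastar\|_2 \le c'n\sigma_{(cd\log n)}/\sqrt{\lambda_{\min}}$ with $c = 2C$, with an extra $1/(d\log n)$ factor to spare.

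I expect the only genuine subtlety to be the population step, specifically checking that perturbing $\beta$ away from $\betastar$ only helps: the added variance $\Delta^T\Sigma_i'\Delta$ widens each conditional Gaussian and thus shrinks its mass on $[-r,r]$, while at $\beta = \betastar$ this term vanishes so $R_{\betastar}$ is controlled purely by the $\epsilon_i$'s — hence there is no tension between needing $R_{\betastar}$ large (for the concentration hypothesis) and $R_\beta$ small for $\beta\ne\betastar$. No conditioning or contamination argument is required because $\betahat$ enters only through the single uniform-concentration event supplied by Lemma~\ref{LemRegConc}; everything else is routine once the VC-type bound behind that lemma is in hand.
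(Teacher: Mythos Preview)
Your proof is correct and follows the same two-step architecture as the paper: apply Lemma~\ref{LemRegConc} to get $R_{\betahat} \ge \tfrac12 R_{\betastar}$, then use a Gaussian peak-density bound to show $R_\beta$ is small when $\|\beta-\betastar\|_2$ is large. The difference is in the execution of the second step. You bound each summand uniformly by $2r/(\sqrt{2\pi}\sqrt{\lambda_{\min}}\|\Delta\|_2)$ using only the added variance $\Delta^T\Sigma_i'\Delta$, which directly gives $\|\betahat-\betastar\|_2 \le 4r/(\sqrt{2\pi}\sqrt{\lambda_{\min}}R_{\betastar})$. The paper instead splits the indices into the set $\cJ$ of the $d\log n$ smallest $\sigma_i$'s and its complement, derives a separate lower bound $R_{\betastar} \gtrsim \frac{2r}{n}\sum_{i\in\cJ}\frac{1}{\sqrt{2\pi}\sigma_i}$, and compares term-by-term to conclude $R_\beta \le \tfrac13 R_{\betastar} + O(1/n)$. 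Your route is shorter and avoids the case-split entirely; the paper's route makes the comparison to $R_{\betastar}$ more explicit but gains nothing in the final bound. Both arguments need $r \lesssim \sigma_{(Cd\log n)}$ to land on the stated conclusion; you obtain this by taking $r$ minimal, and the paper asserts it with the line ``as argued for mean estimation, we certainly have $r \le C'\sigma_{(Cd\log n)}$,'' which is really the same move.
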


We conjecture that it is possible to decrease this upper bound to $\O(\sqrt{n} \sigma_{(c\log n)})$ by an appropriate hybrid screening procedure, but we leave this to future work. Also note that in order for the bound in Theorem~\ref{ThmRegression} to be useful, the quantity $\lambda_{\min}$ must either be a constant, or else not decrease too rapidly with $n$.

\subsection{Computation}
\label{SecRegComputation}

A natural question is whether the modal interval regression estimator~\eqref{EqnRegModal} is actually computationally feasible. We claim that an estimator may be obtained in $O(n^d)$ time, using Algorithm~\ref{alg:modalReg}. The proof is in Appendix~\ref{AppThmCompReg}.

\begin{algorithm}[h]  
  \caption{Modal interval regression estimator
    \label{alg:modalReg}}  
  \begin{algorithmic}[1]  
    \Statex  
    \Function{modalIntervalRegression}{$X_{1:n}, Y_{1:n}, r, d$}  
        \State Construct the set of hyperplanes
        \begin{equation*}
        \mathcal{S}_r = \{y_i = x_i^T \beta + r\} \bigcup \{y_i = x_i^T \beta - r\}.
        \end{equation*}
    \State Let $\{S_1, \dots, S_N\}$ denote the set of subsets of $\mathcal{S}_r$ of cardinality $d$.
    \For{$j = 1, \dots, N$}
    \State Solve the system of linear equations given by $S_j$. Let $\beta_j$ be a solution (if one exists).
    \EndFor
    \State $j^* \gets \arg\max_{1 \le j \le N} \frac{1}{n} \sum_{i=1}^n 1\left\{|y_i - x_i^T \beta_j| \le r\right\}$.
    \State \Return $\beta_{j^*}$
    \EndFunction  
  \end{algorithmic}  
\end{algorithm}

\begin{theorem}
\label{ThmCompReg}
The output of Algorithm~\ref{alg:modalReg} is a maximizer of equation~\eqref{EqnRegModal}.
\end{theorem}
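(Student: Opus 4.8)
The plan is to view the objective $N(\beta) := \sum_{i=1}^n 1\{|y_i - x_i^T\beta| \le r\}$ as a piecewise-constant function on the arrangement of the $2n$ hyperplanes in $\mathcal{S}_r$, and to show that its maximum is attained at a \emph{vertex} of this arrangement --- a point lying on $d$ of these hyperplanes with linearly independent normals --- which is exactly one of the candidates $\beta_j$ enumerated in the loop of Algorithm~\ref{alg:modalReg}. Since the loop sweeps over every size-$d$ subset of $\mathcal{S}_r$ and the algorithm returns the candidate maximizing $N$, it then suffices to exhibit one global maximizer that is such a vertex.

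First, a global maximizer exists because $N$ takes only the values $\{0,1,\dots,n\}$; let $k^* := \max_\beta N(\beta)$, fix $\beta^*$ with $N(\beta^*) = k^*$, and set $S^* := \{i : |y_i - x_i^T\beta^*| \le r\}$, so $|S^*| = k^*$. The key step is the claim that, under the generic hypothesis that $x_1,\dots,x_n$ span $\real^d$ (which holds almost surely whenever the $x_i$ have densities, as in Proposition~\ref{PropRegMax}, and in particular in the Gaussian model of Theorem~\ref{ThmRegression} once $n \ge d$), the subcollection $\{x_i : i \in S^*\}$ \emph{already} spans $\real^d$. I would prove this by contradiction: if $W := \operatorname{span}\{x_i : i \in S^*\} \ne \real^d$, then some regressor $x_m \notin W$, which forces $m \notin S^*$, i.e.\ $|y_m - x_m^T\beta^*| > r$; pick $v \in W^\perp$ with $x_m^T v \ne 0$ (possible since $x_m \notin (W^\perp)^\perp = W$), and move $\beta^* \mapsto \beta^* + tv$. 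Every $i \in S^*$ has $x_i^T v = 0$, so those constraints stay satisfied for all $t$; and since $t \mapsto x_m^T\beta^* + t\,x_m^T v$ is nonconstant affine, a suitable choice of sign and magnitude of $t$ brings $x_m^T(\beta^* + tv)$ into $[\,y_m - r,\, y_m + r\,]$, so constraint $m$ also becomes satisfied. This gives $N(\beta^* + tv) \ge k^* + 1$, contradicting the maximality of $\beta^*$.

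Granting the claim, the polyhedron $P^* := \{\beta \in \real^d : |y_i - x_i^T\beta| \le r \text{ for all } i \in S^*\}$ is nonempty ($\beta^* \in P^*$), and its defining inequalities $x_i^T\beta \le y_i + r$ and $x_i^T\beta \ge y_i - r$ (for $i \in S^*$) have normal vectors $\{\pm x_i : i \in S^*\}$ of full rank $d$; hence $P^*$ contains no line and therefore has a vertex $\beta_0$. Because $r > 0$, at $\beta_0$ exactly $d$ of these inequalities --- with linearly independent normals and pairwise distinct indices --- are tight, so $\beta_0$ is the unique solution of one of the size-$d$ linear systems $S_j$ considered in the loop, i.e.\ $\beta_0 = \beta_j$ for some $j$. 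Finally $\beta_0 \in P^*$ forces $N(\beta_0) \ge |S^*| = k^*$, hence $N(\beta_0) = k^*$; since no candidate can exceed $k^*$, the returned index $j^* = \arg\max_j N(\beta_j)$ satisfies $N(\beta_{j^*}) = k^*$, so $\beta_{j^*}$ is a maximizer of~\eqref{EqnRegModal}.

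The only non-routine ingredient is the claim that a maximizer's satisfied set spans $\real^d$: this is precisely what excludes the degenerate scenario in which the optimum lives on a lower-dimensional face that contributes no vertex, and it is the single place where the regressors being in general position is used. If the $x_i$ fail to span $\real^d$, one should instead recast both the estimator and Algorithm~\ref{alg:modalReg} on the subspace $\operatorname{span}\{x_1,\dots,x_n\}$, after which the argument applies verbatim.
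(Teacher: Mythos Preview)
Your argument is correct and reaches the same conclusion, but it follows a different path than the paper's proof. The paper views $N(\beta)=\sum_i \mathbbm{1}_{U_i}(\beta)$ as a piecewise-constant, upper-semicontinuous function on the hyperplane arrangement generated by $\mathcal{S}_r$: since each $\mathbbm{1}_{U_i}$ is upper-semicontinuous, so is $N$, hence the value of $N$ at any vertex of a cell dominates the value on the cell's interior, and it suffices to scan all vertices (intersections of $d$ hyperplanes from $\mathcal{S}_r$). Your route is instead \emph{local}: fix a maximizer $\beta^*$, show that the active set $S^*$ must span $\real^d$ (otherwise a perturbation along $W^\perp$ captures an additional index, contradicting maximality), and then invoke the standard fact that a line-free polyhedron has an extreme point. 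This is a clean, self-contained polyhedral argument, and it makes explicit the one structural hypothesis both proofs need---that the regressors span $\real^d$---which the paper's proof leaves implicit (the upper-semicontinuity step is vacuous if the cells have no vertices). One small imprecision: at a vertex $\beta_0$ of $P^*$, in general \emph{at least} $d$ facet inequalities are tight, not ``exactly $d$''; but you only need to extract $d$ of them with linearly independent normals, and since the two constraints for a single index $i$ have parallel normals $\pm x_i$, any such linearly independent $d$-subset automatically uses pairwise distinct indices, so your conclusion stands.
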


\begin{remark*}
Correct application of Algorithm~\ref{alg:modalReg} would assume that $r$ is chosen appropriately. It is less clear how this parameter might be calibrated based on the data, perhaps using an appropriate variant of Lepski's method. We leave this important open question to future work.
\end{remark*}

\section{Simulations}
\label{SecSims}

We now present the results of simulations on the recurring examples to validate our theoretical predictions (cf.\ Table~\ref{table: ex123}).
Although our theorem statements involve large constants, we empirically observe that smaller constants suffice to elicit the same behavior predicted by our theory. 
We run the $k$-shorth estimator with $k= 5d\log n$ and $k$-median with $k = \sqrt{n}\log n$. We use these estimators for the hybrid estimator, i.e., the $(\sqrt{n}\log n, 5d \log n)$-hybrid estimator. The mean estimator corresponds to the simple average, whereas the median estimator refers to the (coordinatewise) sample median.

For each $n$, we run $T = 200$ simulations for univariate data and $T= 20$ simulation for multivariate data and report the average error $\frac{1}{T}\sum_{i=1}^T |\muhat - \mustar|$ of various estimators. Both axes in all of the plots are in a $\log$-scale. In particular, the slope of the curves indicates the power of $n$ in the estimation error, and vertical shifts correspond to constant prefactors.

\subsection{Univariate data}

We first present simulation results when $d = 1$. We use $r=1$ for the simulations involving $r$-modal interval estimators, since $R^*_1 = \Omega\left(\frac{\log n}{n}\right)$ in each of the recurring examples, although the constant prefactors do not exactly align with our theory.

In the case of Example~\ref{exm:IID} (i.i.d.\ observations), we generate $x_i \stackrel{i.i.d.}{\sim} \cN(0,1)$. As seen in Figure~\ref{fig:iid}(a), the mean and median estimator perform optimally in this setting, giving an error rate of $\O(n^{-0.5})$. In contrast, the shorth estimator (with $k = 5\log n)$ has a flat trend line indicative of constant error, as suggested by Remark~\ref{remark:shorth} and the phase transition arguments in Section~\ref{SecPhase}.
On the other hand, the error of the hybrid estimator decays at a rate more comparable to the mean and median. As discussed in Remark~\ref{RemHybrid}, the hybrid estimator is indeed optimal up to log factors. We see that the performance of the modal interval estimator is better than the shorth but worse than the hybrid estimator, and exhibits the cube-root asymptotic decay encountered in classical statistics~\cite{KimPol90}. Furthermore, the estimation error of the hybrid estimator behaves like more like the error of the median estimator as $n$ increases. Note that although our bounds for the shorth and modal interval estimators are tighter for smaller values of $k$ and $r$, choosing larger values results in better performance when the data are homogeneous, which is not a valid assumption in our general use case. 

For Example~\ref{exm:QuadVar} (quadratic variance), we generate $x_i \sim \mathcal{N}(0, i^2)$.
In Figure~\ref{fig:quadratic}(a), we see that the both the median and mean have similar slopes: 
Proposition~\ref{prop:exmMedian} predicts that the median would have $\tilde{\O}(\sqrt{n})$ error, compared to the $\Theta\( \sqrt{\frac{1}{n^2}\sum_{i=1}^n i^2}\) = \Theta\(\sqrt{n}\)$ error of the mean; indeed, the curves are roughly parallel.
 However, the error rate of the modal interval, shorth, and hybrid estimators is significantly smaller. As stated in Propositions~\ref{prop:exmModal} and~\ref{prop:exmHybrid}, the error of these estimators is upper-bounded by $\O(n^{\epsilon})$, for $\epsilon > 0$.

 \begin{figure}[!ht]
 \centering
    \begin{minipage}{0.45\textwidth}
        \centering
		\scalebox{0.85}{
\begin{tikzpicture}

\definecolor{color0}{rgb}{0,0.75,0.75}

\begin{axis}[
legend style={cells={align=left}},
log basis x={10},
log basis y={10},
tick align=outside,
tick pos=left,
x grid style={white!69.01960784313725!black},
xlabel={n},
xmin=23987.5800423731, xmax=22918352.4522639,
xmode=log,
xtick style={color=black},
xtick={1000,10000,100000,1000000,10000000,100000000,1000000000},
xticklabels={$\displaystyle {10^{3}}$,$\displaystyle {10^{4}}$,$\displaystyle {10^{5}}$,$\displaystyle {10^{6}}$,$\displaystyle {10^{7}}$,$\displaystyle {10^{8}}$,$\displaystyle {10^{9}}$},
y grid style={white!69.01960784313725!black},
ylabel={Average error},
ymin=0.000141753284800585, ymax=0.254801585554501,
ymode=log,
ytick style={color=black},
ytick={1e-05,0.0001,0.001,0.01,0.1,1,10},
yticklabels={$\displaystyle {10^{-5}}$,$\displaystyle {10^{-4}}$,$\displaystyle {10^{-3}}$,$\displaystyle {10^{-2}}$,$\displaystyle {10^{-1}}$,$\displaystyle {10^{0}}$,$\displaystyle {10^{1}}$}
]
\addplot [thick, color0, opacity=0.55, mark=square*, mark size=4, mark options={solid}]
table {%
32768 0.00431820489616046
65536 0.00294473338316113
131072 0.00210036884509682
262144 0.00154749476799129
524288 0.00114016408609987
1048576 0.000745498301899608
2097152 0.000524138826403824
4194304 0.000401923414218378
8388608 0.000253173850959438
16777216 0.000199284091538182
};
\addplot [thick, black, opacity=0.55, dash pattern=on 1pt off 3pt on 3pt off 3pt, mark=asterisk, mark size=4, mark options={solid}]
table {%
32768 0.0052398405505227
65536 0.00375379089805843
131072 0.00255304930784307
262144 0.001906306188697
524288 0.00141963127835771
1048576 0.000974217027526744
2097152 0.000639875202429221
4194304 0.000478914579066753
8388608 0.000326605229428896
16777216 0.000234490659749969
};
\addplot [thick, green!50.0!black, opacity=0.55, dotted, mark=diamond*, mark size=4, mark options={solid}]
table {%
32768 0.0262841340092873
65536 0.0231013546958301
131072 0.0159961746782273
262144 0.0144889598538098
524288 0.00998830857362418
1048576 0.00795205955049899
2097152 0.00666020991626187
4194304 0.00500547060822513
8388608 0.00388894814489549
16777216 0.00346544309744676
};
\addplot [thick, blue, opacity=0.55, mark=triangle*, mark size=4, mark options={solid}]
table {%
32768 0.181243577678289
65536 0.178145371830435
131072 0.171823536588354
262144 0.14558670958605
524288 0.151740394738791
1048576 0.144006992803002
2097152 0.143993111056508
4194304 0.149295958522974
8388608 0.133297407445264
16777216 0.1348680137637
};
\addplot [thick, red, opacity=0.55, dashed, mark=*, mark size=4, mark options={solid}]
table {%
32768 0.0636308847986039
65536 0.0501564356678451
131072 0.0383318587053127
262144 0.0280383139209902
524288 0.0213797189914105
1048576 0.0165780745664562
2097152 0.0122952006577982
4194304 0.00901785745150496
8388608 0.0067631228307064
16777216 0.00500146406582579
};
\end{axis}

\end{tikzpicture}}
        \caption*{(a) $d = 1$}
        \label{fig:prob1_iid}
    \end{minipage}%
    \begin{minipage}{.55\textwidth}
        \centering
		\scalebox{0.85}{
\begin{tikzpicture}

\definecolor{color0}{rgb}{0,0.75,0.75}

\begin{axis}[
legend style={cells={align=left}},
legend cell align={left},
legend style={at={(1.04,1)}, anchor=north west, draw=white!80.0!black},
log basis x={10},
log basis y={10},
tick align=outside,
tick pos=left,
x grid style={white!69.01960784313725!black},
xlabel={n},
xmin=3444.31171687921, xmax=155871.754977636,
xmode=log,
xtick style={color=black},
xtick={100,1000,10000,100000,1000000,10000000},
xticklabels={$\displaystyle {10^{2}}$,$\displaystyle {10^{3}}$,$\displaystyle {10^{4}}$,$\displaystyle {10^{5}}$,$\displaystyle {10^{6}}$,$\displaystyle {10^{7}}$},
y grid style={white!69.01960784313725!black},
ylabel={Average error},
ymin=0.00356369627736361, ymax=0.362498779152629,
ymode=log,
ytick style={color=black},
ytick={0.0001,0.001,0.01,0.1,1,10},
yticklabels={$\displaystyle {10^{-4}}$,$\displaystyle {10^{-3}}$,$\displaystyle {10^{-2}}$,$\displaystyle {10^{-1}}$,$\displaystyle {10^{0}}$,$\displaystyle {10^{1}}$}
]
\addplot [thick, color0, opacity=0.55, mark=square*, mark size=4, mark options={solid}]
table {%
4096 0.028233404747043
8192 0.0170238568701205
16384 0.0140836494618674
32768 0.00883181508218745
65536 0.00586011597009259
131072 0.00439689813789753
};
\addlegendentry{mean           }
\addplot [thick, black, opacity=0.55, dash pattern=on 1pt off 3pt on 3pt off 3pt, mark=asterisk, mark size=4, mark options={solid}]
table {%
4096 0.0330726928646258
8192 0.0268926572020795
16384 0.0166134192290906
32768 0.0105596906741519
65536 0.00660727174964788
131072 0.00584619778199807
};
\addlegendentry{median         }
\addplot [thick, green!50.0!black, opacity=0.55, dotted, mark=diamond*, mark size=4, mark options={solid}]
table {%
4096 0.165769211343611
8192 0.152528294298537
16384 0.113350993541811
32768 0.0690610411842275
65536 0.0754366314057041
131072 0.0622708017603675
};
\addlegendentry{modal\\interval}
\addplot [thick, blue, opacity=0.55, mark=triangle*, mark size=4, mark options={solid}]
table {%
4096 0.275365610477995
8192 0.2938061126958
16384 0.273744456425507
32768 0.289738397866868
65536 0.267898656153967
131072 0.216281203582755
};
\addlegendentry{shorth         }
\addplot [thick, red, opacity=0.55, dashed, mark=*, mark size=4, mark options={solid}]
table {%
4096 0.20377882032764
8192 0.17971419453546
16384 0.135168690389512
32768 0.11149272048962
65536 0.0872777643802072
131072 0.0636357222605734
};
\addlegendentry{hybrid         }
\end{axis}

\end{tikzpicture}}
        \caption*{(b) $d = 3$}
        \label{fig:prob2_iid}
    \end{minipage}%
\caption{Plot comparing average error of various estimators on Example~\ref{exm:IID}. Both the mean and median exhibit the familiar $\O(n^{-0.5})$ error rate. The modal interval has errors of order $n^{-1/3}$. As suggested by our theoretical bounds, the ($\log n$)-shorth has constant error. The hybrid estimator improves the rate of the shorth estimator, with a similar error decay as the median estimator as $n$ increases. 
}
	\label{fig:iid}
	 \end{figure}
	 
 \begin{figure}[!ht]
 \centering
    \begin{minipage}{0.45\textwidth}
        \centering
		\scalebox{0.85}{
\begin{tikzpicture}

\definecolor{color0}{rgb}{0,0.75,0.75}

\begin{axis}[
legend style={cells={align=left}},
log basis x={10},
log basis y={10},
tick align=outside,
tick pos=left,
x grid style={white!69.01960784313725!black},
xlabel={n},
xmin=23987.5800423731, xmax=22918352.4522639,
xmode=log,
xtick style={color=black},
xtick={1000,10000,100000,1000000,10000000,100000000,1000000000},
xticklabels={$\displaystyle {10^{3}}$,$\displaystyle {10^{4}}$,$\displaystyle {10^{5}}$,$\displaystyle {10^{6}}$,$\displaystyle {10^{7}}$,$\displaystyle {10^{8}}$,$\displaystyle {10^{9}}$},
y grid style={white!69.01960784313725!black},
ylabel={Average error},
ymin=0.755034362166304, ymax=221.2556729081,
ymode=log,
ytick style={color=black},
ytick={0.01,0.1,1,10,100,1000,10000},
yticklabels={$\displaystyle {10^{-2}}$,$\displaystyle {10^{-1}}$,$\displaystyle {10^{0}}$,$\displaystyle {10^{1}}$,$\displaystyle {10^{2}}$,$\displaystyle {10^{3}}$,$\displaystyle {10^{4}}$}
]
\addplot [thick, color0, opacity=0.55, mark=square*, mark size=4, mark options={solid}]
table {%
32768 8.13817138927236
65536 12.5506856130617
131072 17.5924180221759
262144 23.7558672895694
524288 32.9074336372165
1048576 48.2858095256746
2097152 69.926147377908
4194304 103.199911913929
8388608 148.388813997676
16777216 170.907585018099
};
\addplot [thick, black, opacity=0.55, dash pattern=on 1pt off 3pt on 3pt off 3pt, mark=asterisk, mark size=4, mark options={solid}]
table {%
32768 2.12777531913888
65536 2.88896538600179
131072 4.34959910831692
262144 5.83918067016443
524288 7.96418235632985
1048576 10.2051284203469
2097152 13.9523877772845
4194304 18.7488555034661
8388608 27.4100011237681
16777216 32.9654692822574
};
\addplot [thick, green!50.0!black, opacity=0.55, dotted, mark=diamond*, mark size=4, mark options={solid}]
table {%
32768 1.02130487262952
65536 1.40920915163289
131072 1.10142727728965
262144 1.33889225536346
524288 1.15730978508123
1048576 1.66591236964156
2097152 1.6121914199101
4194304 1.46173498703289
8388608 2.28016753167405
16777216 2.22319713530454
};
\addplot [thick, blue, opacity=0.55, mark=triangle*, mark size=4, mark options={solid}]
table {%
32768 0.977461801078943
65536 1.49302477004359
131072 1.48445639501344
262144 1.39658036865092
524288 1.36333088761433
1048576 2.23683123736888
2097152 2.42028177233781
4194304 2.79162800402465
8388608 4.33146158473558
16777216 2.89886740788448
};
\addplot [thick, red, opacity=0.55, dashed, mark=*, mark size=4, mark options={solid}]
table {%
32768 0.977461801078943
65536 1.49302477004359
131072 1.48445639501344
262144 1.39658036865092
524288 1.36333088761433
1048576 2.23683123736888
2097152 2.42028177233781
4194304 2.79162800402465
8388608 4.33146158473558
16777216 2.89886740788448
};
\end{axis}

\end{tikzpicture}}
        \caption*{(a) $d = 1$}
        \label{fig:prob1_quad}
    \end{minipage}%
    \begin{minipage}{.55\textwidth}
        \centering
		\scalebox{0.85}{
\begin{tikzpicture}

\definecolor{color0}{rgb}{0,0.75,0.75}

\begin{axis}[
legend style={cells={align=left}},
legend cell align={left},
legend style={at={(1.04,1)}, anchor=north west, draw=white!80.0!black},
log basis x={10},
log basis y={10},
tick align=outside,
tick pos=left,
x grid style={white!69.01960784313725!black},
xlabel={n},
xmin=3444.31171687921, xmax=155871.754977636,
xmode=log,
xtick style={color=black},
xtick={100,1000,10000,100000,1000000,10000000},
xticklabels={$\displaystyle {10^{2}}$,$\displaystyle {10^{3}}$,$\displaystyle {10^{4}}$,$\displaystyle {10^{5}}$,$\displaystyle {10^{6}}$,$\displaystyle {10^{7}}$},
y grid style={white!69.01960784313725!black},
ylabel={Average error},
ymin=0.206538183659624, ymax=44.7729589798566,
ymode=log,
ytick style={color=black},
ytick={0.01,0.1,1,10,100,1000},
yticklabels={$\displaystyle {10^{-2}}$,$\displaystyle {10^{-1}}$,$\displaystyle {10^{0}}$,$\displaystyle {10^{1}}$,$\displaystyle {10^{2}}$,$\displaystyle {10^{3}}$}
]
\addplot [thick, color0, opacity=0.55, mark=square*, mark size=4, mark options={solid}]
table {%
4096 5.74986091718944
8192 9.4430602143313
16384 12.3547861170069
32768 16.0551043087852
65536 22.8598537365641
131072 35.061724833351
};
\addlegendentry{mean           }
\addplot [thick, black, opacity=0.55, dash pattern=on 1pt off 3pt on 3pt off 3pt, mark=asterisk, mark size=4, mark options={solid}]
table {%
4096 2.05400114720653
8192 2.34073018899189
16384 4.04537657901854
32768 5.2718340482609
65536 6.17355332448955
131072 8.75670667025558
};
\addlegendentry{median         }
\addplot [thick, green!50.0!black, opacity=0.55, dotted, mark=diamond*, mark size=4, mark options={solid}]
table {%
4096 0.398339040923116
8192 0.265515417612959
16384 0.363501769620111
32768 0.399054717753222
65536 0.263744173132359
131072 0.300553876841097
};
\addlegendentry{modal\\interval}
\addplot [thick, blue, opacity=0.55, mark=triangle*, mark size=4, mark options={solid}]
table {%
4096 3.80067921249015
8192 4.56198380502765
16384 5.58704055115752
32768 6.47003995492287
65536 5.19229331547889
131072 5.68624497198428
};
\addlegendentry{shorth         }
\addplot [thick, red, opacity=0.55, dashed, mark=*, mark size=4, mark options={solid}]
table {%
4096 3.80067921249015
8192 4.56198380502765
16384 5.58704055115752
32768 6.47003995492287
65536 5.19229331547889
131072 5.68624497198428
};
\addlegendentry{hybrid         }
\end{axis}

\end{tikzpicture}}
        \caption*{(b) $d = 3$}
        \label{fig:prob2_quad}
    \end{minipage}%
\caption{
Plot comparing average error of various estimators on Example~\ref{exm:QuadVar}.
As mentioned in Table~\ref{table: ex123}, both the mean and median have $\sqrt{n}$ error rate.
The error rates of the modal interval, shorth (with $k = 5d\log n$), and hybrid estimators are superior to the median in the univariate case, and the hybrid estimator is clearly superior when $d = 3$. 
 }
\label{fig:quadratic}
 \end{figure}
 
For Example~\ref{exm:alpha-mix} ($\alpha$-mixture distributions), we generate $\lceil 10\log n \rceil$ samples from a $\mathcal{N}(0,4 \times 10^{-4})$ distribution and the remaining samples from a $\mathcal{N}(0, n^\alpha)$ distribution, with $\alpha = 0.9$ and $1.3$. The plots in Figure~\ref{fig:alpha-mix} add additional curves to the phase transition plots in Figure~\ref{FigPhase}. As suggested by Propositions~\ref{prop:exmModal}~and~\ref{prop:exmHybrid}, the modal, shorth, and hybrid estimators have constant error for $\alpha > 1$, whereas the error increases with $n$ when $\alpha < 1$. Furthermore, the hybrid estimator performs better than the shorth estimator when $\alpha < 1$, with an error rate of $\O(n^{\alpha - 0.5})$ rather than $\O(n^\alpha)$, while the modal interval estimator seems to perform comparably to the hybrid. Finally, note that the behavior of the hybrid estimator is similar to the behavior of the median estimator when $\alpha < 1$ and to the modal interval/shorth estimator when $\alpha > 1$, showing that it indeed enjoys the better of the two rates in different regimes.

 \begin{figure}[!ht]
    \centering
    \begin{minipage}{0.45\textwidth}
		\scalebox{0.85}{
\begin{tikzpicture}

\definecolor{color0}{rgb}{0,0.75,0.75}

\begin{axis}[
legend style={cells={align=left}},
log basis x={10},
log basis y={10},
tick align=outside,
tick pos=left,
x grid style={white!69.01960784313725!black},
xlabel={n},
xmin=23987.5800423731, xmax=22918352.4522639,
xmode=log,
xtick style={color=black},
xtick={1000,10000,100000,1000000,10000000,100000000,1000000000},
xticklabels={$\displaystyle {10^{3}}$,$\displaystyle {10^{4}}$,$\displaystyle {10^{5}}$,$\displaystyle {10^{6}}$,$\displaystyle {10^{7}}$,$\displaystyle {10^{8}}$,$\displaystyle {10^{9}}$},
y grid style={white!69.01960784313725!black},
ylabel={Average error},
ymin=23.7523866593611, ymax=1241385.96644655,
ymode=log,
ytick style={color=black},
ytick={1,10,100,1000,10000,100000,1000000,10000000,100000000},
yticklabels={$\displaystyle {10^{0}}$,$\displaystyle {10^{1}}$,$\displaystyle {10^{2}}$,$\displaystyle {10^{3}}$,$\displaystyle {10^{4}}$,$\displaystyle {10^{5}}$,$\displaystyle {10^{6}}$,$\displaystyle {10^{7}}$,$\displaystyle {10^{8}}$}
]
\addplot [thick, color0, opacity=0.55, mark=square*, mark size=4, mark options={solid}]
table {%
32768 51.2980982709956
65536 67.216274420189
131072 84.6022785993172
262144 122.305332313158
524288 175.355832884497
1048576 209.722970395916
2097152 261.598016871146
4194304 377.493804281258
8388608 420.299399706156
16777216 616.773759258315
};
\addplot [thick, black, opacity=0.55, dash pattern=on 1pt off 3pt on 3pt off 3pt, mark=asterisk, mark size=4, mark options={solid}]
table {%
32768 39.8802486933979
65536 57.3435542841997
131072 71.0605855970876
262144 118.944507848551
524288 171.473092644633
1048576 220.165850491761
2097152 311.906335863327
4194304 420.054386978455
8388608 513.651705726343
16777216 808.097778530003
};
\addplot [thick, green!50.0!black, opacity=0.55, dotted, mark=diamond*, mark size=4, mark options={solid}]
table {%
32768 2499.78130755391
65536 5480.25850517543
131072 9443.11581242747
262144 17715.8019524889
524288 33879.924192391
1048576 67693.6455044011
2097152 127073.378496186
4194304 215409.616342662
8388608 432880.37231722
16777216 757605.666574019
};
\addplot [thick, blue, opacity=0.55, mark=triangle*, mark size=4, mark options={solid}]
table {%
32768 58.5753280913661
65536 108.42613076394
131072 339.129034220184
262144 295.790184594861
524288 764.052034223845
1048576 3181.46981631791
2097152 5674.69604046946
4194304 20998.0578832042
8388608 45899.0356816499
16777216 72563.3295813787
};
\addplot [thick, red, opacity=0.55, dashed, mark=*, mark size=4, mark options={solid}]
table {%
32768 40.4485526123243
65536 38.9198243485712
131072 102.035823494571
262144 113.034227742584
524288 117.766370527991
1048576 316.606700835632
2097152 541.502453208356
4194304 1212.34969673444
8388608 2030.94953471995
16777216 3745.16421281304
};
\end{axis}

\end{tikzpicture}}
        \caption*{(a) $d = 1, \alpha = 0.9$}
        \label{fig:prob1_al_1}
    \end{minipage}%
    \begin{minipage}{.55\textwidth}
		\scalebox{0.85}{
\begin{tikzpicture}

\definecolor{color0}{rgb}{0,0.75,0.75}

\begin{axis}[
legend style={cells={align=left}},
legend cell align={left},
legend style={at={(1.04,1)}, anchor=north west, draw=white!80.0!black},
log basis x={10},
log basis y={10},
tick align=outside,
tick pos=left,
x grid style={white!69.01960784313725!black},
xlabel={n},
xmin=23987.5800423731, xmax=22918352.4522639,
xmode=log,
xtick style={color=black},
xtick={1000,10000,100000,1000000,10000000,100000000,1000000000},
xticklabels={$\displaystyle {10^{3}}$,$\displaystyle {10^{4}}$,$\displaystyle {10^{5}}$,$\displaystyle {10^{6}}$,$\displaystyle {10^{7}}$,$\displaystyle {10^{8}}$,$\displaystyle {10^{9}}$},
y grid style={white!69.01960784313725!black},
ylabel={Average error},
ymin=5.04320240278835, ymax=1015765.91435884,
ymode=log,
ytick style={color=black},
ytick={0.1,1,10,100,1000,10000,100000,1000000,10000000,100000000},
yticklabels={$\displaystyle {10^{-1}}$,$\displaystyle {10^{0}}$,$\displaystyle {10^{1}}$,$\displaystyle {10^{2}}$,$\displaystyle {10^{3}}$,$\displaystyle {10^{4}}$,$\displaystyle {10^{5}}$,$\displaystyle {10^{6}}$,$\displaystyle {10^{7}}$,$\displaystyle {10^{8}}$}
]
\addplot [thick, color0, opacity=0.55, mark=square*, mark size=4, mark options={solid}]
table {%
32768 3117.88526486187
65536 5858.55305807642
131072 9757.89909820569
262144 18355.1928127732
524288 27146.5219157093
1048576 50582.7536737193
2097152 80468.9743262579
4194304 155594.194263234
8388608 242510.136089129
16777216 467077.341691056
};
\addlegendentry{mean           }
\addplot [thick, black, opacity=0.55, dash pattern=on 1pt off 3pt on 3pt off 3pt, mark=asterisk, mark size=4, mark options={solid}]
table {%
32768 1899.62477958624
65536 3837.83715862428
131072 7040.30643104309
262144 17861.3193141773
524288 29288.0397077553
1048576 52996.7244295321
2097152 97362.3295180963
4194304 173385.381945333
8388608 306903.077152158
16777216 583040.550335228
};
\addlegendentry{median         }
\addplot [thick, green!50.0!black, opacity=0.55, dotted, mark=diamond*, mark size=4, mark options={solid}]
table {%
32768 886.745601249071
65536 5937.05447786775
131072 20.7883023368208
262144 20.7084387635399
524288 20.7696500393682
1048576 22.0661670294508
2097152 19.9185432914994
4194304 20.9814336260215
8388608 19.6172505429853
16777216 21.3627599366926
};
\addlegendentry{modal\\interval}
\addplot [thick, blue, opacity=0.55, mark=triangle*, mark size=4, mark options={solid}]
table {%
32768 9.70143818875214
65536 11.0626772337905
131072 10.6438349227722
262144 9.99767483931799
524288 9.52100725177815
1048576 10.9123957491632
2097152 9.85560412103858
4194304 8.87911834807886
8388608 10.2525479050476
16777216 8.78620380181044
};
\addlegendentry{shorth         }
\addplot [thick, red, opacity=0.55, dashed, mark=*, mark size=4, mark options={solid}]
table {%
32768 9.70143818875214
65536 11.0626772337905
131072 10.6438349227722
262144 9.99767483931799
524288 9.52100725177815
1048576 10.9123957491632
2097152 9.85560412103858
4194304 8.87911834807886
8388608 10.2525479050476
16777216 8.78620380181044
};
\addlegendentry{hybrid         }
\end{axis}

\end{tikzpicture}}
        \caption*{(b) $d = 1, \alpha = 1.3$}
        \label{fig:prob1_al_2}
    \end{minipage}
        \begin{minipage}{0.45\textwidth}
		\scalebox{0.85}{
\begin{tikzpicture}

\definecolor{color0}{rgb}{0,0.75,0.75}

\begin{axis}[
legend style={cells={align=left}},
log basis x={10},
log basis y={10},
tick align=outside,
tick pos=left,
x grid style={white!69.01960784313725!black},
xlabel={n},
xmin=3444.31171687921, xmax=155871.754977636,
xmode=log,
xtick style={color=black},
xtick={100,1000,10000,100000,1000000,10000000},
xticklabels={$\displaystyle {10^{2}}$,$\displaystyle {10^{3}}$,$\displaystyle {10^{4}}$,$\displaystyle {10^{5}}$,$\displaystyle {10^{6}}$,$\displaystyle {10^{7}}$},
y grid style={white!69.01960784313725!black},
ylabel={Average error},
ymin=0.024873770482471, ymax=2.79921419036637,
ymode=log,
ytick style={color=black},
ytick={0.001,0.01,0.1,1,10,100},
yticklabels={$\displaystyle {10^{-3}}$,$\displaystyle {10^{-2}}$,$\displaystyle {10^{-1}}$,$\displaystyle {10^{0}}$,$\displaystyle {10^{1}}$,$\displaystyle {10^{2}}$}
]
\addplot [thick, color0, opacity=0.55, mark=square*, mark size=4, mark options={solid}]
table {%
4096 0.346668773097066
8192 0.152651267547512
16384 0.108790322810951
32768 0.067972159231777
65536 0.0456764539036208
131072 0.0308306216825065
};
\addplot [thick, black, opacity=0.55, dash pattern=on 1pt off 3pt on 3pt off 3pt, mark=asterisk, mark size=4, mark options={solid}]
table {%
4096 0.132562883814687
8192 0.120854604384428
16384 0.0789044080688301
32768 0.0693797964107682
65536 0.0541567013127864
131072 0.0381964990648716
};
\addplot [thick, green!50.0!black, opacity=0.55, dotted, mark=diamond*, mark size=4, mark options={solid}]
table {%
4096 2.25837195303641
8192 1.81987546352944
16384 2.01765521869897
32768 1.98934552334916
65536 1.94376583811798
131072 2.25835498550087
};
\addplot [thick, blue, opacity=0.55, mark=triangle*, mark size=4, mark options={solid}]
table {%
4096 1.08926098628803
8192 1.31273111151647
16384 1.31709900623468
32768 1.38625624083514
65536 1.58182117201976
131072 1.80403022961901
};
\addplot [thick, red, opacity=0.55, dashed, mark=*, mark size=4, mark options={solid}]
table {%
4096 0.901471893371002
8192 0.801879782795581
16384 0.739344878816882
32768 0.627233917643399
65536 0.518558635267057
131072 0.477324931362032
};
\end{axis}

\end{tikzpicture}}
        \caption*{(c) $d = 3, \alpha = \frac{1}{2d}$ }
        \label{fig:prob1_al_3}
    \end{minipage}%
        \begin{minipage}{0.55\textwidth}
		\scalebox{0.85}{
\begin{tikzpicture}

\definecolor{color0}{rgb}{0,0.75,0.75}

\begin{axis}[
legend style={cells={align=left}},
log basis x={10},
log basis y={10},
tick align=outside,
tick pos=left,
x grid style={white!69.01960784313725!black},
xlabel={n},
xmin=3444.31171687921, xmax=155871.754977636,
xmode=log,
xtick style={color=black},
xtick={100,1000,10000,100000,1000000,10000000},
xticklabels={$\displaystyle {10^{2}}$,$\displaystyle {10^{3}}$,$\displaystyle {10^{4}}$,$\displaystyle {10^{5}}$,$\displaystyle {10^{6}}$,$\displaystyle {10^{7}}$},
y grid style={white!69.01960784313725!black},
ylabel={Average error},
ymin=13.7570115349747, ymax=28092.2776752561,
ymode=log,
ytick style={color=black},
ytick={1,10,100,1000,10000,100000,1000000},
yticklabels={$\displaystyle {10^{0}}$,$\displaystyle {10^{1}}$,$\displaystyle {10^{2}}$,$\displaystyle {10^{3}}$,$\displaystyle {10^{4}}$,$\displaystyle {10^{5}}$,$\displaystyle {10^{6}}$}
]
\addplot [thick, color0, opacity=0.55, mark=square*, mark size=4, mark options={solid}]
table {%
4096 1183.54969103913
8192 1974.23303528112
16384 3774.65653653201
32768 6415.36999594318
65536 10975.4356324738
131072 19866.8745848152
};
\addplot [thick, black, opacity=0.55, dash pattern=on 1pt off 3pt on 3pt off 3pt, mark=asterisk, mark size=4, mark options={solid}]
table {%
4096 24.8941353451604
8192 37.3872357326194
16384 118.891047728785
32768 118.010851890456
65536 2083.21878260737
131072 9729.21184191984
};
\addplot [thick, green!50.0!black, opacity=0.55, dotted, mark=diamond*, mark size=4, mark options={solid}]
table {%
4096 71.0934956134424
8192 89.7211671330872
16384 82.6687415041501
32768 68.5954934251291
65536 84.0160058731876
131072 67.4209326120722
};
\addplot [thick, blue, opacity=0.55, mark=triangle*, mark size=4, mark options={solid}]
table {%
4096 24.9472722719392
8192 20.8672003036699
16384 26.7020435457721
32768 22.3310707145506
65536 20.0447548147601
131072 19.452772320695
};
\addplot [thick, red, opacity=0.55, dashed, mark=*, mark size=4, mark options={solid}]
table {%
4096 24.9472722719392
8192 20.8672003036699
16384 26.7020435457721
32768 22.3310707145506
65536 20.0447548147601
131072 19.452772320695
};
\end{axis}

\end{tikzpicture}}
        \caption*{(d) $d = 3, \alpha = 1.3$}
        \label{fig:prob1_al_4}
    \end{minipage}
    \caption{Plots comparing average error of various estimators on Example~\ref{exm:alpha-mix} for different values of $\alpha$. As suggested by Proposition~\ref{prop:exmMedian}, the median and mean have superior performance to the modal interval and shorth estimators for $\alpha < 1$. Moreover, the hybrid estimator exhibits similar behavior to the median when $\alpha < 1$ and to the shorth when $\alpha > 1$.     }
\label{fig:alpha-mix}
\end{figure}

\subsection{Multivariate}

We now present simulation results for multivariate data, using $d = 3$. The data for all three recurring examples are generated with the same parameters as in the univariate case, except with isotropic distributions. We run the computationally efficient versions of the shorth and modal interval estimators described in Section~\ref{SecComputation}, with $k = 5d \log n$ and $r = \sqrt{d}$.

The trends for i.i.d.\ data, shown in Figure~\ref{fig:iid}(b), are analogous to the univariate case.  Similarly, the plots in Figure~\ref{fig:quadratic}(b) for the quadratic variance example resemble the plots in Figure~\ref{fig:quadratic}(a), with the hybrid, shorth, and modal interval estimators performing noticeably better than the mean or median. Note that for these experiments, the modal interval estimator appears to behave better than either the shorth or hybrid estimators by a constant factor. For the multivariate version of the $\alpha$-mixture distribution, we run simulations with $\alpha = \frac{1}{2d} < 1$ and $\alpha = 1.3$, where we have chosen the first value of $\alpha$ so that the upper bound in Theorem~\ref{ThmUpperBound} gives $\O\left(n^{\alpha - \frac{1}{2}}\right) = \O(n^{\frac{1}{2d} - \frac{1}{2}})$ error for the hybrid estimator, whereas the derived bounds for the modal interval and shorth are $\O(n^{\alpha}) = \O(n^{\frac{1}{2d}})$ (cf. Remark~\ref{RemHybridD}). Indeed, we see in Figure~\ref{fig:alpha-mix}(c) that the estimation error of the hybrid estimator decreases with $n$, like the mean and median estimators, whereas the shorth estimator has an increasing trend line. The curve for the modal interval estimator appears to be roughly constant (or possibly slightly increasing). The curves in Figure~\ref{fig:alpha-mix}(d) are very similar to the curves in Figure~\ref{fig:alpha-mix}(b), suggesting the existence of a phase transition for $\alpha \in \left(\frac{1}{2d}, 1\right]$ in the multivariate case, as well.

\section{Conclusion}

We have studied the problem of mean estimation of a heterogeneous mixture when the fraction of clean points tends to $0$. 
We have shown that the modal interval and shorth estimator, which perform suboptimally in i.i.d.\ settings, are superior to the sample mean in such settings.
We have also shown that these estimators and the $k$-median have complementary strengths that may be combined into a single hybrid estimator, which adapts to the given problem and is nearly optimal in certain settings. An important question for further study is whether the proposed hybrid estimator is always near-optimal, or optimal, for more general collections of variances.

Our discussion of linear regression estimators has been fairly brief. Some issues that we have not addressed include derivations for non-Gaussian error distributions and regression estimators in the case of a fixed design matrix. We leave these questions, and a derivation of optimal error rates in the linear regression setting, for future work.

\section*{Acknowledgments}

AP and PL were partially supported by NSF grant DMS-1749857, and VJ acknowledges partial support from the NSF grant CCF-1841190.  PL thanks Gabor Lugosi for introducing her to the entangled mean estimation problem at the 2017 probability and combinatorics workshop in Barbados.

\bibliography{ref}

\begin{thebibliography}{10}

\bibitem{AbrEtal04}
C.~Abraham, G.~Biau, and B.~Cadre.
\newblock On the asymptotic properties of a simple estimate of the mode.
\newblock {\em ESAIM: Probability and Statistics}, 8:1--11, 2004.

\bibitem{AchMcS05}
D.~Achlioptas and F.~McSherry.
\newblock On spectral learning of mixtures of distributions.
\newblock In {\em International Conference on Computational Learning Theory},
  pages 458--469. Springer, 2005.

\bibitem{AgaSha98}
P.~K. Agarwal and M.~Sharir.
\newblock Efficient algorithms for geometric optimization.
\newblock {\em ACM Computing Surveys}, 30(4):412--458, 1998.

\bibitem{AndEtAl72}
D.~F. Andrews, P.~J. Bickel, F.~R. Hampel, P.~J. Huber, W.~H. Rogers, and J.~W.
  Tukey.
\newblock {\em Robust Estimates of Location: Survey and Advances}.
\newblock Princeton University Press, 1972.

\bibitem{AroKan01}
S.~Arora and R.~Kannan.
\newblock Learning mixtures of arbitrary {G}aussians.
\newblock In {\em Proceedings of the 33rd annual ACM Symposium on Theory of
  Computing}, pages 247--257, 2001.

\bibitem{BouEtAl16}
S.~Boucheron, G.~Lugosi, and P.~Massart.
\newblock {\em Concentration Inequalities: A Nonasymptotic Theory of
  Independence}.
\newblock Oxford University Press, 1 edition, 4 2016.

\bibitem{Cher64}
H.~Chernoff.
\newblock Estimation of the mode.
\newblock {\em Annals of the Institute of Statistical Mathematics},
  16(1):31--41, dec 1964.

\bibitem{ChiEtAl14}
F.~Chierichetti, A.~Dasgupta, R.~Kumar, and S.~Lattanzi.
\newblock Learning entangled single-sample {G}aussians.
\newblock In {\em Proceedings of the 25th Annual Symposium on Discrete
  Algorithms, {SODA}}, pages 511--522, 2014.

\bibitem{Das99}
S.~Dasgupta.
\newblock Learning mixtures of {G}aussians.
\newblock In {\em 40th Annual Symposium on Foundations of Computer Science},
  pages 634--644. IEEE, 1999.

\bibitem{DasKpo14}
S.~Dasgupta and S.~Kpotufe.
\newblock Optimal rates for $k$-{NN} density and mode estimation.
\newblock In {\em Advances in Neural Information Processing Systems}, pages
  2555--2563, 2014.

\bibitem{DunEtal07}
M.~Dundar, B.~Krishnapuram, J.~Bi, and R.~B. Rao.
\newblock Learning classifiers when the training data is not iid.
\newblock In {\em IJCAI}, pages 756--761, 2007.

\bibitem{EppEri94}
D.~Eppstein and J.~Erickson.
\newblock Iterated nearest neighbors and finding minimal polytopes.
\newblock {\em Discrete \& Computational Geometry}, 11(3):321--350, 1994.

\bibitem{FlaEtal16}
S.~R. Flaxman, D.~B. Neill, and A.~J. Smola.
\newblock Gaussian processes for independence tests with non-iid data in causal
  inference.
\newblock {\em ACM Transactions on TIST}, 7(2):22, 2016.

\bibitem{Hub64}
P.~J. Huber.
\newblock Robust estimation of a location parameter.
\newblock {\em The Annals of Mathematical Statistics}, 35(1):73--101, 1964.

\bibitem{Jia17}
H.~Jiang.
\newblock Uniform convergence rates for kernel density estimation.
\newblock In {\em Proceedings of the 34th International Conference on Machine
  Learning}, pages 1694--1703, 2017.

\bibitem{KanEtal05}
R.~Kannan, H.~Salmasian, and S.~Vempala.
\newblock The spectral method for general mixture models.
\newblock In {\em International Conference on Computational Learning Theory},
  pages 444--457. Springer, 2005.

\bibitem{KimPol90}
J.~Kim and D.~Pollard.
\newblock Cube root asymptotics.
\newblock {\em Ann. Statist.}, 18(1):191--219, 03 1990.

\bibitem{LaiEtAl16}
K.~A. Lai, A.~B. Rao, and S.~Vempala.
\newblock Agnostic estimation of mean and covariance.
\newblock In {\em 57th Annual Symposium on Foundations of Computer Science
  (FOCS)}, pages 665--674, Oct 2016.

\bibitem{LeePer84}
D.-T. Lee and F.~P. Preparata.
\newblock Computational geometry---a survey.
\newblock {\em IEEE Transactions on Computers}, (12):1072--1101, 1984.

\bibitem{Lep91}
O.~V. Lepskii.
\newblock On a problem of adaptive estimation in {G}aussian white noise.
\newblock {\em Theory of Probability \& Its Applications}, 35(3):454--466,
  1991.

\bibitem{LiEtal19}
Jiange Li, Arnaud Marsiglietti, and James Melbourne.
\newblock Further investigations of r$\backslash$'enyi entropy power
  inequalities and an entropic characterization of s-concave densities.
\newblock {\em arXiv preprint arXiv:1901.10616}, 2019.

\bibitem{Lin95}
B.~G. Lindsay.
\newblock Mixture models: Theory, geometry and applications.
\newblock In {\em NSF-CBMS Regional Conference Series in Probability and
  Statistics}, pages i--163. JSTOR, 1995.

\bibitem{Liu88}
R.~Y. Liu.
\newblock Bootstrap procedures under some non-iid models.
\newblock {\em The Annals of Statistics}, 16(4):1696--1708, 1988.

\bibitem{PenEtal19ISIT}
A.~Pensia, V.~Jog, and P.~Loh.
\newblock Mean estimation for entangled single-sample distributions.
\newblock In {\em 2019 IEEE International Symposium on Information Theory
  (ISIT)}. IEEE, 2019.

\bibitem{PraEtal19}
A.~Prasad, S.~Balakrishnan, and P.~Ravikumar.
\newblock A unified approach to robust mean estimation.
\newblock {\em arXiv preprint arXiv:1907.00927}, 2019.

\bibitem{RasEtal10}
G.~Raskutti, M.~J. Wainwright, and B.~Yu.
\newblock Restricted eigenvalue properties for correlated {G}aussian designs.
\newblock {\em Journal of Machine Learning Research}, 11(Aug):2241--2259, 2010.

\bibitem{SteChr09}
I.~Steinwart and A.~Christmann.
\newblock Fast learning from non-iid observations.
\newblock In {\em Advances in NIPS}, pages 1768--1776, 2009.

\bibitem{Tsy08}
A.B. Tsybakov.
\newblock {\em Introduction to Nonparametric Estimation}.
\newblock Springer Series in Statistics. Springer New York, 2008.

\bibitem{Van00}
S.~A. Van~de Geer.
\newblock {\em Empirical Processes in $M$-Estimation}, volume~6.
\newblock Cambridge University Press, 2000.

\bibitem{VanWel09}
A.~Van Der~Vaart and J.~A. Wellner.
\newblock A note on bounds for {VC} dimensions.
\newblock {\em Institute of Mathematical Statistics Collections}, 5:103, 2009.

\bibitem{Ver18}
R.~Vershynin.
\newblock {\em High-Dimensional Probability: An Introduction with Applications
  in Data Science}.
\newblock Cambridge University Press, 2018.

\bibitem{Wai19}
M.J. Wainwright.
\newblock {\em High-Dimensional Statistics: {A} Non-Asymptotic Viewpoint}.
\newblock Cambridge Series in Statistical and Probabilistic Mathematics.
  Cambridge University Press, 2019.

\bibitem{WenDud81}
R.~S. Wenocur and R.~M. Dudley.
\newblock Some special {V}apnik-{C}hervonenkis classes.
\newblock {\em Discrete Mathematics}, 33(3):313--318, 1981.

\bibitem{ZhuEtal15}
T.~Zhu, P.~Xiong, G.~Li, and W.~Zhou.
\newblock Correlated differential privacy: {H}iding information in non-iid data
  set.
\newblock {\em IEEE Transactions on Information Forensics and Security},
  10(2):229--242, 2015.

\end{thebibliography}

\newpage

\appendix

\section{Properties of symmetric distributions}

In this Appendix, we derive the lemmas concerning properties of symmetric distributions (when $d = 1$) and radially symmetric distributions (when $d > 1$). We also discuss the behavior of quantities related to the shapes of the distributions in the running examples described in the paper.

\subsection{Proof of Lemma~\ref{lemma:properties}}
\label{app:lemmaProp}

The proofs proceed using simple calculus and algebraic manipulations, relying only on the properties of symmetry and unimodality.

\begin{enumerate}[label=(\roman*)]
  \item Property~\ref{prop:1} follows directly by unimodality and symmetry of $\overline{P}$.
  \item Property~\ref{prop:2} is true by the non-negativity of density.

\item Let $p(x)$ be the density of $\overline{P}$. Then $R^*_x = 2\int_0^x p(y)dy$. Define $g(x) \coloneqq \frac{R^*_x}{x}$ for $x>0$. Property~\ref{prop:3} is equivalent to showing that $\frac{d}{dx} g(x) < 0$. By unimodality of $p(\cdot)$, we have $g(x) > 2p(x)$ for $x>0$. By differentiation, we have
\begin{align*}
\frac{d}{dx} g(x) = \frac{2xp(x) - 2\int_0^xp(y) dy}{x^2} = \frac{2p(x) - g(x)}{x} < 0,
\end{align*}
as wanted.
\item Note that $r'$ can be written as $r' = (K+\alpha)r$, where $K \in \mathbb{N}$ and $\alpha\in[0,1)$. As $r' >r$, $K\geq 1$. We need to show that $R^*_{r'} > (K + \alpha) R(f_{r',r})$. We may write
\begin{align*}
R^*_{r'} &= 2\int_{0}^{r'}p(x)dx \\
& = 2\int_{0}^{\alpha r} p(x)dx + \sum_{k=1}^{K}2\int_{r'-kr}^{r'-(k-1)r}p(x)dx
\end{align*}
By~\ref{prop:3} above, we have $R^*_{\alpha r} > \alpha R^*_{r}$. Therefore,
\begin{align*}
R^*_{r'} &> 2\alpha \int_{0}^{r} p(x)dx + \sum_{k=1}^{K}2\int_{r'-kr}^{r'-(k-1)r}p(x)dx \\
&> \alpha \int_{r'-r}^{r'+r} p(x)dx + \sum_{k=1}^{K}\int_{r'-r}^{r'+ r}p(x)dx \\
&= (\alpha + K)R(f_{r',r}),
\end{align*}
where the last inequality again uses unimodality of $\overline{P}$.

\item 
Note that
\begin{equation*}
R^*_{q_{(2k)}} = \frac{1}{n} \sum_{i=1}^n \P(|X_i| \le q_{(2k)}) > \frac{1}{2} \cdot \frac{2k}{n} = \frac{k}{n}.
\end{equation*}
Let $\tilde{R}_i(f)$ be the expectation of $f$ under $P_i$, i.e., $\tilde{R}_i(f) = \E f(X_i)$.
For the second inequality, note that by Chebyshev's inequality,
\begin{align*}
\tilde{R}_i (f_{0,2\sigma_i}) = \P(|X_i - \mu| \leq 2 \sigma_i)  \geq \frac{3}{4},
\end{align*}
for all $i$. Therefore, an interval of length $4\sigma_{(2k)}$ covers at least $\frac{3}{4}$ mass of at least $2k$ distributions, implying that
\begin{align*}
R^*_{2 \sigma_{(2k)}} = R(f_{0, 2 \sigma_{(2k)}}) &= \frac{1}{n} \sum_{i=1}^n \tilde{R}_i(f)  
          \geq \frac{1}{n} \cdot \frac{3 \times 2k}{4}   > \frac{k}{n}. 
\end{align*}
\end{enumerate}

\subsection{Proof of Lemma~\ref{lemma:prop2}}
\label{AppLemProp2}

\begin{enumerate}
  \item Note that $R(f_{x,r})$ can be written as convolution of $\overline{P}$ with indicator function of $B_r$, both of which are unimodal and radially symmetric. The desired result then follows by Proposition 8 in Li et al.~\cite{LiEtal19}, which implies that $R(f_{x,r})$ is also unimodal and radially symmetric.  
  \item This follows from the nonnegativity of the density.
  \item 
  As $\overline{P}$ is radially symmetric, let the density of $\overline{P}$ at $x$ be given by $p(\|x\|)$. $R^*_r$ can be written as $R^*_r = C\int_0^rp(s)s^{d-1}ds$ where $C$ is a constant for a fixed dimension. Define $g(r) \coloneqq \frac{R^*_r}{Cr^d} = \frac{\int_0^r p(s)s^{d-1}ds}{r^d}$ for $r > 0$. Property~\ref{prop2:3} is equivalent to showing that $ \frac{d}{dr}g(r) < 0$. By unimodality of $p(\cdot)$, it follows that $g(r) > \frac{p(r)}{d}$. Differentiating $g(\cdot)$, we get
	  \begin{align*}
	  \frac{d}{dr}g(r) = \frac{p(r)r^{d-1}r^{d} - dr^{d-1}\int_{0}^rp(s)s^{d-1}ds }{r^{2d}} = \frac{p(r) - d g(r)}{r} < 0.
	  \end{align*}
	  
  \item Note that any $r_1$-packing of $B(0, r_2-r_1)$ has the property that all balls in the packing must be entirely contained within the larger ball $B_{r_2}$.
   Furthermore, by Lemma~\ref{lemma:prop2}\ref{prop2:1} above, we know that $R(f_{x,r_1}) \ge R(f_{r_2, r_1})$ when $\|x\|_2 \le r_2$.
    Hence, by summing up the densities of all balls in the packing, we obtain
\begin{equation*}
R(f_{0, r_2}) \ge P(B_{r_2 - r_1}, r_1) R(f_{r_2, r_1}),
\end{equation*}
from which the first inequality follows.

To obtain the second inequality, we use the sphere-packing lower bound
\begin{equation*}
P(B_{r_2 - r_1}, r_1) \ge N(B_{r_2 - r_1}, 2r_1) \ge \left(\frac{r_2 - r_1}{2r_1}\right)^d,
\end{equation*}
where $N(\cdot, \cdot)$ denotes the covering number (cf.\ Proposition 4.2.12 of Vershynin~\cite{Ver18}).
  \item The proof of the first inequality is the same as the proof of the corresponding statement in Lemma~\ref{lemma:properties}. The second inequality follows by noting that $\E\|X_i - \mu\|_2^2  = \text{Tr}(\Sigma_i) = d \sigma_i^2$. By Chebyshev's inequality, we have
\begin{align*}
\tilde{R}_i (f_{0,2\sigma_i \sqrt{d}}) = \P(\|X_i - \mu\|_2 \leq 2 \sqrt{d}\sigma_i)  \geq \frac{3}{4},
\end{align*}
for each $i$. Thus, $B_{2 \sigma_{(2k) \sqrt{d}}}$ covers at least $\frac{3}{4}$ of the mass of at least $2k$ distributions, implying the desired result.
 \end{enumerate}

\subsection{Proof of Proposition~\ref{prop:exmple_r_k}}
\label{app:prop_r_k}

\begin{enumerate}
	\item The lower bound follows by noting that the density at $0$ is $\frac{1}{\sqrt{2 \pi}\sigma}$. The upper bound follows by noting that density at $x = |\sigma|$ is within constant factor of the density at $0$.
	\item The lower bound follows by noting that the density at $x = 0$ is
	\begin{equation*}
	\overline{p}(0)= \(\sum_{i=1}^n \frac{1}{\sqrt{2 \pi}c i n}\) = \Theta\(\frac{\log n}{cn}\).
	\end{equation*}
	The upper bound follows by noting that the density at $x = 1$ is
	\begin{equation*}
	\overline{p}(1) =  \(\sum_{i=1}^n \frac{e^{-\frac{1}{i^2c^2}}}{ \sqrt{2 \pi}c i n}\) \geq  \overline{p}(0) - \O\(\frac{1}{n}\).
\end{equation*}
	\item For $\alpha \geq 1$, the upper bound follows from the fact that at least $c\log n$ distributions have small variance $1$. Thus the interval $[-1,1]$ contains more than $0.6$ probability of at least $c \log n$ distributions. The lower bound follows by noting that the density at 0 is
	\begin{equation*}
	\frac{c \log n}{n} \frac{1}{\sqrt{2 \pi}} + \frac{n - c\log n}{n} \frac{1}{n^{\alpha} } = \Theta\(\frac{\log n}{n}\).
	\end{equation*}

	For $\alpha < 1$, the density at $0$ is
	\begin{equation*}
	\frac{c \log n}{n} \frac{1}{\sqrt{2 \pi}} + \frac{n - c\log n}{n} \frac{1}{n^{\alpha} } = \Theta\(\frac{1}{n^\alpha}\).
	\end{equation*}
	The lower bound follows by noting that the density at $x =1 $ is also  $\Theta\(\frac{1}{n^\alpha}\) $.
\end{enumerate}

\section{Proofs of concentration inequalities}

In this appendix, we provide the proofs of the main technical results underlying the success of our estimators.

\subsection{Proof of Lemma~\ref{thm:highProb}} %
\label{app:proof_of_thm_high_prob}

This proof is a special case of the proof of Lemma~\ref{thm:highProbD} in Appendix~\ref{app:proof_of_thm_high_probD}, with $V = 2$.

\subsection{Proof of Lemma~\ref{thm:highProbD}}
\label{app:proof_of_thm_high_probD}

Recall that $\tilde{R}_i(f) = \E f(X_i)$. We define the random variables
\begin{align*}
Y_{f,i} &\coloneqq f(X_i) - \tilde{R}_i(f).
\end{align*}
Note that $\E_i[Y_{f,i}] =0$ and $|Y_{f,i}| \leq 1$. Furthermore, the variables $(Y_{f,i})_{i=1}^n$  are independent for each fixed $f$.
Let 
\begin{align*}
Z &\coloneqq \sup_{f \in \cH_{r}} \left(R_n(f) - R(f)\right) =  \sup_{f \in \cH_{r}} \frac{1}{n}\sum_{i=1}^nY_{f,i}.
\end{align*}

We will apply Lemma~\ref{thm:Bou12.9} to obtain a high-probability upper bound on $Z$. Here $V = d+1$, the VC dimension of balls.

Since its application requires a bound on the expectation, we first derive the following lemma:

\begin{lemma}
If $nR^*_{r} \geq 1300 V \log n$ with both $ n > 1$ and $d \geq 1$, then 
\begin{align*}
\E Z \leq 72 \sqrt{V\frac{R^*_{r} \log n}{2n}}.
\end{align*}
\label{lemma:ConvergenceInExpectation}
\end{lemma}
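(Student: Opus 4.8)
The argument follows the classical route of symmetrization followed by a maximal inequality over the finite class obtained by restricting $\cH_r$ to the sample, with one essential twist dictated by the non-i.i.d.\ setup: because the summands are independent but not identically distributed, the relevant variance proxy for $f\in\cH_r$ is $\frac1n\sum_i \tilde R_i(f)(1-\tilde R_i(f)) \le \frac1n\sum_i \tilde R_i(f) = R(f)\le R^*_r$, and it is this ``localization'' that forces the bound to scale with $\sqrt{R^*_r}$ rather than with a constant. The price of localization is that the resulting estimate is self-referential and must be closed off by solving a quadratic inequality in $\E Z$.

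First I would symmetrize. Introduce an independent ghost sample $(X_i')_{i=1}^n$ with $X_i'\sim P_i$; for each fixed $i$ the variable $f(X_i)-f(X_i')$ is symmetric about $0$, and the pairs $(X_i,X_i')$ are independent across $i$, so Jensen's inequality and the standard insertion of Rademacher signs give $\E Z \le 2\,\E_X\E_\epsilon\sup_{f\in\cH_r}\frac1n\sum_{i=1}^n\epsilon_i f(X_i)$. Now condition on $X_1,\dots,X_n$: since $\cH_r$ has VC dimension $V=d+1$, the Sauer--Shelah lemma bounds the number of distinct $0/1$ vectors $\{(f(X_1),\dots,f(X_n)):f\in\cH_r\}$ by $(n+1)^V$. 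For each such $f$ the Rademacher sum $\frac1n\sum_i\epsilon_i f(X_i)$ is a sum of independent bounded symmetric terms, hence sub-Gaussian conditionally on $X$ with variance proxy $\frac1{n^2}\sum_i f(X_i)^2 = \frac1n R_n(f) \le \frac1n\sup_{g\in\cH_r}R_n(g)$. The maximal inequality for finitely many sub-Gaussian variables then yields $\E_\epsilon\sup_{f}\frac1n\sum_i\epsilon_i f(X_i)\le \sqrt{\tfrac{2V\log(n+1)}{n}\,\sup_{g}R_n(g)}$.

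It remains to remove the random quantity $\sup_g R_n(g)$. Since $R_n(g)-R(g)\le Z$ for all $g$ and $\sup_g R(g)=R^*_r$, we have $\sup_g R_n(g)\le R^*_r + Z$; taking expectations and using $\E\sqrt{\cdot}\le\sqrt{\E\cdot}$ together with the previous displays gives $\E Z\le 2\sqrt{\tfrac{2V\log(n+1)}{n}}\,\sqrt{R^*_r+\E Z}$. Writing $A:=2\sqrt{2V\log(n+1)/n}$ and $w:=\E Z$, this is the quadratic inequality $w^2\le A^2(R^*_r+w)$, whence $w\le A^2 + A\sqrt{R^*_r}$. Under the hypothesis $nR^*_r\ge 1300\,V\log n$ one checks (using $\log(n+1)\le 2\log n$ for $n\ge 2$, so $A^2 = 8V\log(n+1)/n\le 16V\log n/n\le R^*_r$) that $A^2\le A\sqrt{R^*_r}$, and therefore $\E Z\le 2A\sqrt{R^*_r} = 4\sqrt2\,\sqrt{V\log(n+1)R^*_r/n}$, which is comfortably below the claimed $72\sqrt{V R^*_r\log n/(2n)}$; the generous slack in the constant also absorbs any looseness if one instead routes the conditional step through Dudley's entropy integral and a covering-number estimate for the VC class.

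The only genuinely non-routine point is the localization/bootstrapping just described: one must resist the temptation to bound $\sup_g R_n(g)$ by the trivial constant $1$ (which would give the non-adaptive $\sqrt{V\log n/n}$ rate), feeding it back through $Z$ instead, and one must check that the growth condition $nR^*_r=\Omega(V\log n)$ is strong enough for the quadratic inequality to collapse to the advertised $\sqrt{R^*_r}$-order bound. Everything else---the symmetrization constant, Sauer--Shelah, and the sub-Gaussian maximal inequality---is standard bookkeeping.
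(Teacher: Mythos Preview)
Your argument is correct and is in fact a somewhat more elementary route than the one the paper takes. Both proofs share the same skeleton: symmetrize, bound the conditional Rademacher process with a localized variance proxy, and then close off the resulting self-referential inequality $\E Z \le A\sqrt{R^*_r + \E Z}$ using the hypothesis $nR^*_r \ge 1300\,V\log n$. The difference lies in the middle step. The paper does not argue directly; it invokes a non-i.i.d.\ generalization of Theorem~13.7 in Boucheron--Lugosi--Massart, which handles the conditional Rademacher process via Dudley chaining with universal entropy and then bootstraps through the random radius $\delta_n^2 = \max(\sup_g R_n(g),\sigma^2)$. That machinery yields the sharper entropy factor $\log(4e^2/\sigma)$, which the paper then collapses to $\tfrac12\log n$ under the growth condition before reading off the constant $72$. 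Your approach replaces chaining by the crude Sauer--Shelah count $(n+1)^V$ and the finite sub-Gaussian maximal inequality, giving a one-line conditional bound and a slightly different bootstrap through $\sup_g R_n(g) \le R^*_r + Z$. You lose the refined $\log(1/\sigma)$ dependence, but since the lemma only claims a $\log n$ factor and the constant $72$ is generous, your $8\sqrt{VR^*_r\log n/n}$ lands well inside the target. In short: the paper's route is the ``right'' one for adaptive/localized empirical process bounds in general, but for this particular lemma your shortcut is cleaner and entirely sufficient.
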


\begin{proof}
We will use Theorem~\ref{thm:vcConvExpTech} from Appendix~\ref{AppAux},
with $\sigma^2 = \sup_{x, r' \leq r}R(f_{x,r'}) = R^*_{r}$.
In particular, note that since $n\sigma^2 \geq 1300 V \log n $, we have
\begin{align*}
\log\( \frac{4e^2}{\sigma}\) & = \frac{1}{2}\log\( \frac{16e^4}{\sigma^2}\) \leq  \frac{1}{2}\log\( \frac{16e^4n}{1300 V \log n}\) \\
& \leq \frac{\log n}{2},
\end{align*}
so
\begin{align*}
 \(24 \sqrt{ \frac{V}{5n} \log \(\frac{4e^2}{\sigma}\)}\)^2 &= \frac{576V}{5n}\log\(\frac{4e^2}{\sigma}\)\\
  &\leq \frac{576V}{5n} \cdot \frac{\log n}{2} = 57.6 V \frac{\log n}{n} \le \sigma^2.
\end{align*}
Thus, Theorem~\ref{thm:vcConvExpTech} is applicable and leads to the following bound:\footnote{Note that the definition of $Z$ in Theorem~\ref{thm:vcConvExpTech} has a factor of $1/\sqrt n$ as opposed to the factor of $1/n$ here.}
\begin{align*}
\E Z \leq 72 \frac{\sqrt{R^*_r}}{\sqrt{n}} \sqrt{V \log\(\frac{4e^2}{\sigma}\)} \leq 72 \sqrt{ \frac{VR^*_r \log n}{2n} }.  
\end{align*}
\end{proof}

We now apply Theorem 12.9 from Boucheron et al.~\cite{BouEtAl16} (stated in Lemma~\ref{BouEtal12.9} in Appendix~\ref{AppAux}) with $W_{i,s} = Y_{i,f}$ and
\begin{align*}
\rho^2 &= \sup_{f \in \cH_r} \sum_{i=1}^n \E Y_{i,f}^2 
                        = \sup_{f \in \cH_r} \sum_{i=1}^n \Var[ f(X_i)] \\
                        &\leq \sup_{f \in \cH_r} \sum_{i=1}^n \E [f(X_i)]
                        = \sup_{f \in \cH_r} nR(f) 
                        = nR^*_r,
\end{align*}
where the inequality holds because the variance of a Bernoulli random variable is bounded by its expectation. Hence, using Lemma~\ref{lemma:ConvergenceInExpectation} and the assumption $nR^*_r \geq 1300 V\log n$, we have
\begin{align*}
v & = 2n \E Z + \rho^2 \leq 2n \E Z + nR^*_r 
\\ &\leq 144\sqrt{ 0.5V  n R^*_r \log n} + nR^*_r 
    \leq nR^*_r\(144 \sqrt{\frac{0.5V\log n}{nR^*_r}} + 1\) \\
    &\leq nR^*_r\(144 \sqrt{\frac{0.5V\log n}{1300V \log n} } + 1\) < 6nR^*_r.
\end{align*}
Thus, $\frac{ntR^*_r}{2v} > \frac{t}{12}$, so 
\begin{align}
\log\(1 + 2\log \(1 + \frac{ntR^*_r}{2v}\)\) & \ge \log \(1 + 2 \log\left(1 + \frac{t}{12}\right)\) \ge \frac{t}{50},
\label{eq:logApprox}
\end{align}
using the fact that $t \le 1$.

Now suppose $nR^*_r \geq C_t \frac{V}{2} \log n$ for the constant $C_t = \(\frac{144}{t}\)^2 $. Note that for $t \leq 1$, we have $nR^*_r \geq 1300V\log n$, so all the previous results are also valid. Moreover, we have
\begin{align*}
\frac{\E Z}{0.5tR^*_r} &= \frac{n\E Z}{0.5tnR^*_r} \leq \frac{72 \sqrt{0.5VnR^*_r\log n} }{0.5tnR^*_r} = \frac{144 \sqrt{0.5V\log n} }{t \sqrt{nR^*_r}}  \\&\leq  \frac{144 \sqrt{0.5V\log n} }{t \sqrt{0.5C_tV\log n}} =  \frac{144 }{t \sqrt{C_t}} < 1.
\end{align*}

Now we have all the ingredients required for the application of Theorem 12.9
:\begin{align*}
\P\{ Z & \geq tR^*_r \} 
\leq    
 \P\{Z \geq \E Z + 0.5tR^*_r \} \\
 &\leq \exp\(-\frac{ntR^*_r}{4}\log\(1 + 2\log\(1 + \frac{ntR^*_r}{2v}\)\)\) \\
 &\leq \exp\(-\frac{1}{200}nt^2R^*_r\),
\end{align*}
where the last inequality follows by inequality~\eqref{eq:logApprox}.

An identical argument can be used to upper-bound the quantity
\begin{equation*}
\sup_{f \in \cH_{r}} \left(R(f) - R_n(f)\right),
\end{equation*}
concluding the proof.

\subsection{Proof of Lemma~\ref{ThmUniformProb}}
\label{AppThmUniformProb}

We begin by proving inequality~\eqref{EqnUniformProb1}. First consider the following peeling lemma, an adaptation of Lemma 3 in Raskutti et al.~\cite{RasEtal10}:
\begin{lemma}
\label{LemPeel}
Let $A \subseteq \real^p$, and suppose $\{Y_x\}_{x \in A}$ is a collection of random variables indexed by $x$. Also suppose $g: \real \rightarrow \real_+$ is a strictly increasing
function such that $\inf_{x \in A} g(h(\|x\|_2)) \ge \mu$, for some $\mu > 0$, and $h: \real_+ \rightarrow \real_+$ is a constraint function, and the tail bound
\begin{equation*}
\mprob\left(\sup_{x \in A: h(\|x\|_2) \le s} Y_x \ge g(s)\right) \le 2 \exp\big(-c g(s)\big)
\end{equation*}
holds for all $s \in \mathrm{range}(h)$. Then
\begin{equation}
\label{EqnUniform}
\mprob\Big(Y_x \le 2g(h(\|x\|_2)), \quad \forall x \in A\Big) \ge 1 - \frac{2\exp(-c\mu)}{1-\exp(-c\mu)}.
\end{equation}
\end{lemma}

\begin{proof}
We define the sets
\begin{equation*}
A_m := \left\{x \in A: 2^{m-1} \mu \le g(h(\|x\|_2)) \le 2^m \mu \right\},
\end{equation*}
for $m \ge 1$. By a union bound, we have
\begin{equation*}
\mprob\Big(\exists x \in A \text{ s.t. } Y_x > 2g(h(\|x\|_2)) \Big) \le \sum_{m=1}^M \mprob\Big(\exists x \in A_m \text{ s.t. } Y_x > 2g(h(\|x\|_2)) \Big),
\end{equation*}
where $M = \sup_{m \ge 1} g^{-1}(2^{m-1} \mu) \in \mathrm{range}(h)$.

Further note that if $x \in A_m$ satisfies $Y_x > 2g(h(\|x\|_2))$, then $g(h(\|x\|_2)) \ge 2^{m-1} \mu$, so
\begin{align*}
\mprob\left(\sup_{x \in A_m} Y_x > 2g(h(\|x\|_2))\right) & \le \mprob\left(\sup_{x \in A_m} Y_x > 2 \cdot 2^{m-1} \mu\right) \\
& \le \mprob\left(\sup_{x \in A: g(h(\|x\|_2)) \le 2^m \mu} Y_x > 2^{m} \mu \right) \\
& = \mprob\left(\sup_{x \in A: h( \|x\|_2) \le g^{-1} (2^m \mu)} Y_x > 2^m \mu \right) \\
& \le 2\exp\left(-c \cdot 2^m \mu\right),
\end{align*}
if $m < M$. If $m = M$, the same logic shows that
\begin{equation*}
\mprob\left(\sup_{x \in A_m} Y_x > 2g(h(\|x\|_2))\right) \le \mprob\left(\sup_{x \in A: h(\|x\|_2) \le \nu} Y_x > 2^m\mu \right),
\end{equation*}
where $\nu = \sup_{x \in A} h(\|x\|_2)$. Furthermore, $2^{m-1} \mu \le g(\nu) \le 2^m \mu$, so the last probability is upper-bounded by
\begin{equation*}
\mprob\left(\sup_{x \in A: h(\|x\|_2) \le \nu} Y_x \ge g(\nu) \right) \le 2\exp(-c g(\nu)) \le 2\exp(-c \cdot 2^{m-1} \mu).
\end{equation*}
It follows that
\begin{equation*}
\mprob\left(\sup_{x \in A_m} Y_x > 2g(h(\|x\|_2))\right) \le 2\exp(-c\cdot 2^{m-1} \mu),
\end{equation*}
for all $m \ge 1$, so summing up over $m$ then gives
\begin{align*}
\mprob\Big(\exists x \in A \text{ s.t. } Y_x > 2g(h(\|x\|_2)) \Big) & \le \sum_{m=1}^\infty 2\exp(-c \cdot 2^{m-1} \mu) \le \frac{2\exp(-c\mu)}{1-\exp(-c\mu)},
\end{align*}
implying inequality~\eqref{EqnUniform}.
\end{proof}

We apply Lemma~\ref{LemPeel} with $A = \{x: \|x\|_2 \le \bar{r}\}$, and
\begin{equation*}
Y_x = |R_n(f_{x,r}) - R(f_{x,r})|, \qquad h(\|x\|_2) = R(f_{x,r}), \qquad g(s) = ts,
\end{equation*}
for fixed values of $\bar{r}, r > 0$ and $t \in (0,1]$. Clearly, $g$ is monotonically increasing and satisfies $\inf_{x \in A} g(h(\|x\|_2)) \ge tR(f_{\bar{r}, r})$. Note that for any $s \in \mathrm{range}(h)$, we have $s = R(f_{x_s, r})$ for some $x_s$, and
\begin{align*}
\mprob\left(\sup_{x \in A: h(\|x\|_2) \le s} |R_n(f_{x,r}) - R(f_{x,r})| \ge g(s)\right) & = \mprob\left(\sup_{\|x_s\|_2 \le \|x\|_2 \le \bar{r}} |R_n(f_{x,r}) - R(f_{x,r})| \ge tR(f_{x_s, r})\right) \\
& \le 2\exp(-cn R(f_{x_s, r}) t^2) \\
& = 2\exp(-cnt g(s)),
\end{align*}
assuming $nR(f_{\bar{r}, r}) \geq C_t d \log n$, where we use a slight modification of Lemma~\ref{thm:highProbD} where $\cH_r$ is the set of balls centered around points in $\{\|x\|_2 \ge \|x_s\|_2\}$. Lemma~\ref{LemPeel} then implies the desired concentration inequality.

To establish inequality~\eqref{EqnLargeX}, note that we can simply use a modification of Theorem~\ref{thm:highProbD}, where $\cH_r$ is now the set of balls centered around points in $\{\|x\|_2 > \bar{r}\}$.

\section{Modal interval estimator}

In this appendix, we provide proofs of the various theorems and lemmas related to the modal interval estimator.

\subsection{Proof of Lemma~\ref{thm:modalIntervalMain}}
\label{app:proof_of_theorem_modal}

This will follow from Lemma~\ref{thm:highProb} by choosing $0.5t$ instead of $t$. 
If $R^*_r \geq C_{0.5 t}\frac{\log n}{n}$, then with probability $1 - 2\exp(-cnR^*_rt^2/4)$, we have
\begin{align*}
 |R_n(f) - R(f)| \le \frac{tR^*_r}{2},
 \end{align*}
uniformly over $f \in \cH_r$. Assume that this event happens. Note that $R(f_{0,r})=R^*_r$ and $R_n(f_{\muhat_{M,r},r}) \geq R_n(f_{0,r})$ by maximality of the modal interval estimator. Since $f_{\muhat_{M,r},r}, f_{0,r} \in \cH_r$, we have
  \begin{align*}
  R(f_{\muhat_{M,r},r}) &\geq  R_n(f_{\muhat_{M,r},r}) - \frac{tR^*_r}{2} \geq R_n(f_{0,r}) - \frac{tR^*_r}{2} \\
              &\geq R(f_{0,r}) - tR^*_r = R^*_r - tR^*_r,
\end{align*}
as wanted.

\subsection{Proof of Theorem~\ref{cor:modal}}

By Lemma~\ref{lemma:properties}\ref{prop:1}, we know that if $ R(f_{r', r}) < R(f_{\muhat_{M,r}, r})$, then $|\widehat{\mu}_{M,r}| \leq r'$.
Furthermore, taking $t = \frac{1}{2}$ in Lemma~\ref{thm:modalIntervalMain}, we have $R(f_{\muhat_{M,r}, r}) \ge \frac{R^*_r}{2}$, with probability at least $1-2\exp(-c'nR^*_r/4)$. Thus, inequality~\eqref{EqnMuhatBd1} holds provided $R(f_{r',r}) < \frac{R^*_r}{2}$.

Now suppose Let $r' = \frac{2r}{R^*_r}$. By Lemma~\ref{lemma:properties}\ref{prop:4} and noting that $R^*_{r'} \leq 1$, we have
\begin{align*}
R(f_{r',r}) &< \frac{r}{r'} R^*_{r'}
      \leq \frac{r}{r'} = \frac{r}{\frac{2r}{R^*_r}} =  \frac{R^*_r}{2}.
\end{align*}
This establishes inequality~\eqref{EqnMuhatBd}.

\subsection{Proof of Proposition~\ref{prop:exmModal}}
\label{app:exmPropModal}

Since $r = r_{C\log n}$, we have $R^*_{r} =  \frac{C \log n}{n}$.
By inequality~\eqref{EqnMuhatBd} of Theorem~\ref{cor:modal}, we have 
\begin{align}
|\muhat_{M,r}| \leq \frac{2nr_{C \log n}}{C \log n},
\label{eq:modalCorollary}
\end{align}
w.h.p.
\begin{enumerate}
	\item Analogously to Proposition~\ref{prop:exmple_r_k}, we have $r_{C \log n} = \Theta\( \frac{C \sigma\log n}{n}\)$. Inequality~\eqref{eq:modalCorollary} then gives the result.
	\item The bound of $\tilde\O(n)$ follows by inequality~\eqref{eq:modalCorollary} and noting that $r_{C \log n} = \O(1)$ for a fixed $C$ and sufficiently small $c>0$.
	We now focus on how to obtain the tighter bound of $\O(n^{\epsilon})$ for an $\epsilon > 0$, using inequality~\eqref{EqnMuhatBd1}.

	Let $\tilde{R}_i(f)$ be the expectation of $f$ under $P_i$, i.e., $\tilde{R}_i(f) = \E f(X_i)$.
	Fix an $\epsilon > 0$. Let $r' = n^\epsilon$ and $r=1$. Then it suffices to show that $R^*_r - R(f_{r',r}) \geq C'R^*_r$ where $C' > 0$ might depend on $\epsilon$ but not on $n$. 

	We will show that 
	\begin{enumerate}
		\item $R^*_r - R(f_{r',r}) \geq c_1 \sum_{i \leq \frac{r'}{10c}}\tilde{R_i}(f_{0,r})$,
		\item $\sum_{i \leq \frac{r'}{5c}}\tilde{R}_i(f_{0,r}) \geq c_2n R^*_r$.
	\end{enumerate}
	To derive the first inequality, note that 
	\begin{align*}
	nR^*_r - nR(f_{r',r}) &\geq \sum_{i \leq \frac{r'}{10c}} R(f_{0,1}) - R(f_{r',1}) \\
						  &\geq \sum_{i \leq \frac{r'}{10c}} 2\int_{0}^{1} \frac{1}{\sqrt{2 \pi}ci}\(e^{- \frac{x^2}{2c^2i^2}} - e^{- \frac{(0.5r'+ x)^2}{2c^2i^2}}\)dx \\
						  &\geq \sum_{i \leq \frac{r'}{10c}} 2\int_{0}^{1} \frac{(1 - e^{-\frac{0.25r'^2}{2c^2i^2}})}{\sqrt{2 \pi}ci}e^{- \frac{x^2}{2c^2i^2}}dx \\
						  &\geq (1 - e^{-10}) \sum_{i \leq \frac{r'}{10c}}\tilde{R}_i(f_{0,r}).
	\end{align*}
	Now it remains to show that $\sum_{i \leq \frac{r'}{10c}}\tilde{R}_i(f_{0,r}) \geq c_2 R^*_r$. First note that $nR^*_r \leq \frac{\log n}{c}$. Hence,
	\begin{align*}
	\sum_{i \leq \frac{r'}{10c}}\tilde{R}_i(f_{0,1})  &\geq \sum_{i: \frac{1}{c} < i \leq \frac{r'}{10c}}\tilde{R}_i(f_{0,1}) \geq  \sum_{i: \frac{1}{c} < i \leq \frac{r'}{10c}}  \frac{2 e^{-0.5}}{\sqrt{2 \pi} ci} \geq c_3 \log\(\frac{r'}{10e}\) \geq c_4 \log n^ \epsilon  \geq c_5 \epsilon nR^*_r.
	\end{align*}
	
	\item For $\alpha < 1$, let $r' = \Theta\(n^{\alpha}\)$. Then it is easy that $R(f_{r',r}) \leq \frac{R^*_r}{2}$. This follows by observing that the density of a Gaussian distribution decreases by more than half at a distance of $\sigma$ from the mean.

	For $\alpha \geq 1$, let $r' = 10$. Then $R^*_r \geq 0.5  \frac{C \log n}{n}$, as a Gaussian distribution contains about $0.68$ mass within 1 standard deviation of the mean.
	Moreover,
	\begin{equation*}
	R(f_{r',r}) \leq 0.1 \frac{C \log n}{n} + \frac{n}{\sqrt{2 \pi}n^\alpha} \leq 0.2 \frac{C \log n}{n} \leq \frac{R^*_r}{2}.
	\end{equation*}
	Inequality~\eqref{EqnMuhatBd1} then implies the result.
\end{enumerate}

\subsection{Proof of Theorem~\ref{ThmLepski}}
\label{app:prfLepski}
\begin{proof}
Let $j' := \min\{j \in \scriptJ: r_j \ge r^*\}$. Then
\begin{align*}
\P(j_* > j') & = \P\left(\bigcup_{i \in \scriptJ: i > j'} \left\{|\muhat_{M, r_i} - \muhat_{M, r_{j'}}| > \frac{4nr_i}{C_{0.25} \log n}\right\}\right) \\
& \le \P\left(|\muhat_{M, r_{j'}}| > \frac{2nr_{j'}}{C_{0.25} \log n}\right)  \\
 &\qquad + \sum_{i \in \scriptJ: i > j'} \P\left(|\muhat_{M, r_i}| > \frac{2nr_i}{C_{0.25} \log n}\right),
\end{align*}
using a union bound and the triangle inequality. We may use Theorem~\ref{cor:modal} to bound each individual term, so that the probability of the bad event
\begin{align*}
E := \bigcup_{i \in \scriptJ: i > j'} \left\{|\muhat_{M, r_i}| > \frac{2nr_i}{C_{0.25}\log n}\right\} \cup \left\{|\muhat_{M, r_{j'}}| > \frac{2nr_{j'}}{C_{0.25} \log n}\right\}
\end{align*}
is bounded by
\begin{align*}
\P(E) &\le (1+|\scriptJ|) \cdot 2\exp(-cC_{0.25}\log n/16) \\
&\le 2\left(1+\log_2\left(\frac{2r_{\max}}{r_{\min}}\right)\right) \exp(-c C_{0.25}\log n/16).
\end{align*}
Finally, note that on the event $E^c$, we have $j_* \le j'$ (establishing that $j_*$ is finite), so
\begin{equation*}
|\muhat_{M, r_{j_*}} - \muhat_{M, r_{j'}}| \le \frac{4nr_{j'}}{C_{0.25} \log n}.
\end{equation*}
Combined with the inequality $|\muhat_{M, r_{j'}}| < \frac{2nr_{j'}}{C_{0.25} \log n}$, we conclude that
\begin{align*}
|\muhat_{M, r_{j_*}}| & \le \frac{4 nr_{j'}}{C_{0.25} \log n} + \frac{2nr_{j'}}{C_{0.25} \log n} \le \frac{6nr_{j'}}{C_{0.25} \log n} \\
& \le \frac{12r^*}{C_{0.25} \log n},
\end{align*}
using the fact that $r_{j'} < 2 r^*$.
\end{proof}

\subsection{Proof of Lemma~\ref{lemma:shorthlength}} %
\label{app:length_of_shortest_gap}

We first prove the upper bound. Consider the fixed interval $f_{0, r_{2k}}$. Note that $R(f_{0, r_{2k}}) = R^*_{r_{2k}}= \frac{2k}{n}$. It suffices to show that this interval contains at least $k$ points, with high probability.
By the multiplicative form of the Chernoff bound (Lemma~\ref{LemChernoff} in Appendix~\ref{AppAux}), 
\begin{align*}
\P \left( R_n(f_{0, r_{2k}}) \leq \frac{k}{n}\right) & = \P \left( R_n(f_{0, r_{2k}}) \leq \frac{1}{2} R(f_{0,r_{2k}})\right) \\
&\leq \exp\left(- n \cdot \frac{k}{n} \cdot \frac{1}{8}\right) 
          =  \exp(-k/8).
\end{align*}
Therefore, with probability at least $1 - \exp(-k/8)$, an interval of size $2r_{2k}$ contains at least $k$ points, implying that the shortest gap, $\widehat{r}_{k} \le r_{2k}$.

We now turn to verifying the lower bound. We will prove that with high probability, no interval of size $2r_{k/2}$ contains at least $k$ points, so that $\widehat{r}_k > r_{k/2}$. By definition, $nR^*_{r_{k/2}} = \frac{k}{2}$. Thus, assuming $k \geq 2C_{0.5}\log n$, we may apply Lemma~\ref{thm:highProb} to conclude that
\begin{align*}
\sup_{f \in \cH_{r_{k/2}}} R_n(f) - R(f)  &\leq \frac{R^*_{ r_{k/2}}}{2},
\end{align*}
with probability at least
\begin{equation*}
1 - \exp\left(-\frac{c n}{4}R^*_{r_{k/2}} \right) = 1 - \exp(-ck/8).
\end{equation*}
This implies that
\begin{align*}
\sup_{f \in \cH_{r_{k/2}}} R_n(f) &\leq \frac{3}{2} \cdot R^*_{r_{k/2}} 
                        = \frac{3}{2} \cdot \frac{k}{2n} < \frac{k}{n},
\end{align*}
which is exactly what we want.

\subsection{Proof of Proposition~\ref{lem:modal_lower_bound}}
\label{app:modal_lower_bound}

We first provide the main steps of the proof. Proofs of supporting lemmas are contained in further sub-sections.

\subsubsection{Main argument}

Let $A = [-2,2]$. Consider two disjoint set of hypothesis classes $\cK$ and $\cJ$, with $\cK = \{f_{x,1}: x \in A\}$ and $\cJ = \{f_{x,1}: x \not \in A \}$. 
Note that $R^*_1 = \sup_{f \in \cK} R(f) = \Theta\(n^{-\alpha}\) $. Define $R^*_{\cJ} \deff \sup_{f \in \cJ} R(f)$. Note that supremum is achieved in both the cases and $R^*_{\cJ} < R^*_1$. Moreover, we have the following straightforward relations:
\begin{enumerate}
	\item $  2R^*_1 \geq \P(A) \geq R^*_1$.
	\item $nR^*_{\cJ} =  \Theta\(n^{1 - \alpha}\)$.

	\item $\P(A) \sqrt{nR^*_\cJ} = \O(1)$.

	\item For every constant $C'$, there exists another constant $C > 0$ such that
	\begin{align*}
		 R^*_{\cJ} + C\(\sqrt{\frac{R^*_{\cJ}}{n}}	\) \geq R^*_1 + C'\(\sqrt{\frac{R^*_{1}}{n}}\).
	\end{align*}
\end{enumerate}
Define the following random variables:
\begin{align*}
Z_1 = \sup_{f \in \cK} R_n(f) , \qquad
Z_2 = \sup_{f \in \cJ} R_n(f) 
\end{align*}
These relations suffice for showing that $Z_1 < Z_2$ with constant probability. To this end, we would show that with constant probability both (1) $Z_1 = R^*_1 + \O\(\sqrt{\frac{R^*_{1}}{n}}\)$, and (2) $Z_2 \geq R^*_{\cJ} + C\(\sqrt{\frac{R^*_{\cJ}}{n}}	\)$, for any $C > 0$. 
Note that these events are dependent and thus we'd use the following lemma, which shows that conditioned on the inclusion of points in each of two disjoint intervals, the distributions of the histograms on each of the intervals behave independently:

\begin{lemma}
\label{LemIndep}
Let $\{x_1, \dots, x_n\}$ be i.i.d.\ draws from a distribution with density $p_i$. Consider two disjoint intervals $A$ and $B$. For any two disjoint subsets $S, T \subseteq \{1, \dots, n\}$, we use $x_S$ to denote the vector $(x_i: i \in S)$, and we define $x_T$ similarly. Let $E$ denote the event that $x_i \in A$ for all $i \in S$, and $x_i \in B$ for all $ \in T$. Then for $x_S \subseteq A$ and $x_T \subseteq B$, we have
\begin{equation*}
p_{S,T}(x_S, x_T \mid E) = p_S(x_S \mid E) p_T(x_T \mid E).
\end{equation*}
Furthermore,
\begin{align*}
p_S(x_S \mid E) & = \prod_{i \in S} \frac{p_i(x_i)}{\mprob(X_i \in A)}, \quad \text{and} \\
p_T(x_T \mid E) & = \prod_{i \in T} \frac{p_i(x_i)}{\mprob(X_i \in B)}
\end{align*}
are the joint densities of independent draws from the renormalized distributions of the points lying in each interval.
\end{lemma}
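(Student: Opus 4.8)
The plan is to prove Lemma~\ref{LemIndep} directly from the definition of conditional density, exploiting the fact that the $x_i$ are independent (so the joint density factorizes) and that $A$ and $B$ are disjoint (so the events $\{x_i \in A\}$ for $i \in S$ and $\{x_i \in B\}$ for $i \in T$ involve disjoint coordinates and cannot interfere). First I would write $\mprob(E) = \prod_{i \in S}\mprob(X_i \in A)\cdot\prod_{i \in T}\mprob(X_i \in B)$, which holds because $E$ is an intersection of events on distinct coordinates of a product measure. Then, for $x_S \subseteq A$ and $x_T \subseteq B$, the conditional joint density is, by definition,
\begin{equation*}
p_{S,T}(x_S, x_T \mid E) = \frac{\text{(density of }(x_S,x_T)\text{ restricted to }E\text{)}}{\mprob(E)} = \frac{\prod_{i \in S} p_i(x_i) \prod_{i \in T} p_i(x_i)}{\prod_{i \in S}\mprob(X_i \in A)\prod_{i \in T}\mprob(X_i \in B)},
\end{equation*}
where the numerator uses independence of the $x_i$ together with the fact that, on the event $E$, integrating out the coordinates outside $S \cup T$ contributes a factor of $1$ (the full mass of each such coordinate's conditional law) and does not affect the density of the coordinates in $S \cup T$.

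Next I would simply regroup this product: it equals $\left(\prod_{i \in S} \frac{p_i(x_i)}{\mprob(X_i \in A)}\right)\left(\prod_{i \in T} \frac{p_i(x_i)}{\mprob(X_i \in B)}\right)$, and I would define the two factors to be $p_S(x_S \mid E)$ and $p_T(x_T \mid E)$ respectively. To justify that these are indeed the claimed marginal conditional densities (and not merely a convenient factorization), I would note that $\prod_{i \in S}\frac{p_i(x_i)}{\mprob(X_i \in A)}$ integrates to $1$ over $A^{|S|}$ and agrees with the conditional density of $x_S$ given $E$ obtained by integrating $p_{S,T}(\cdot,\cdot \mid E)$ over $x_T \in B^{|T|}$ (again using that $\prod_{i\in T}\frac{p_i(x_i)}{\mprob(X_i\in B)}$ integrates to $1$). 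This gives both the factorization (conditional independence of the two histograms) and the explicit renormalized-product form, which is exactly what the later argument for Proposition~\ref{lem:modal_lower_bound} needs in order to treat $Z_1$ and $Z_2$ as independent once we condition on how many and which points fall in $A$ versus outside.

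I do not expect any genuine obstacle here: the statement is essentially a bookkeeping consequence of independence plus disjointness of the conditioning events, and the only mild care needed is in phrasing the marginalization over the coordinates outside $S \cup T$ and over the "other" interval, so that one is explicit about where densities versus probabilities appear. If one wanted to be fully rigorous about measure-theoretic details, the cleanest route is to verify the identity against test functions (i.e., check $\E[\varphi(x_S)\psi(x_T) \mid E]$ factorizes for bounded measurable $\varphi,\psi$), but for the purposes of this paper the density-level computation above suffices. The main thing to get right is simply that $E$ decomposes across coordinates so that conditioning on $E$ is the same as independently conditioning each $X_i$, $i \in S$, to lie in $A$ and each $X_i$, $i \in T$, to lie in $B$, leaving all other coordinates untouched.
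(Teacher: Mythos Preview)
Your proposal is correct and follows essentially the same approach as the paper: write $\mprob(E)$ as a product over coordinates using independence, compute the conditional joint density as the ratio of the factorized joint density to $\mprob(E)$, and then regroup into the two claimed marginal conditional densities. If anything, your version is slightly more careful in verifying that the factors are indeed the marginals (by integrating the other block to $1$), but the substance is identical.
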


Let $S \subset \{1, \hdots, n\}$ be an index set.
For a fixed index set $S$, let the event $E_S$ be $E_S = \{X_S \subset A, X_{S^c} \subset A^c\}$, 
where $X_S$ is the vector $(X_i : i \in S )$ and $A$ is defined above.

Conditioned on $E_S$, Lemma~\ref{LemIndep} states that $X_i$'s are independent. 
Thus conditioned on $E_S$, the random variables $Z_1$ and $Z_2$ are independent.

\begin{lemma}
Consider the setting in Proposition~\ref{lem:modal_lower_bound}. Let $S \subset [n]$ be such that $|S| \leq  n\P(A)$.
Then for some $C' > 0$,
\begin{align*}
 Z_1  \leq R^* + C'\sqrt{ \frac{R^*}{n}} 
\end{align*}
with a constant nonzero probability, conditioned on the event $E_S$.
\label{LemLowZ_1}
\end{lemma}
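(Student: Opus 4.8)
The plan is to control $Z_1 = \sup_{f \in \cK} R_n(f)$ by invoking the uniform concentration bound of Lemma~\ref{thm:highProb} restricted to the function class $\cK$. First I would observe that conditioned on the event $E_S$, by Lemma~\ref{LemIndep} the points $X_1,\dots,X_n$ remain independent, each $X_i$ for $i \in S$ being an independent draw from $P_i$ renormalized to $A$, and each $X_i$ for $i \notin S$ being an independent draw from $P_i$ renormalized to $A^c$. In particular, since $\cK$ consists of indicators of intervals of length $2$ centered in $A = [-2,2]$, the set $[x-1,x+1]$ for $x \in A$ is contained in $[-3,3] \subseteq A' $ for a slightly larger interval; the key point is that $R_n(f)$ for $f \in \cK$ only counts points in the region $A$ (up to the slight enlargement), so conditioning on $E_S$ only affects finitely many relevant distributions in a controlled way. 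I would set up $\cK$ as a VC class of dimension $2$ (it is a subclass of $\cH_1$), with conditional population mass $R^{*,\mathrm{cond}}_1 := \sup_{f\in\cK}\E[R_n(f)\mid E_S]$, and note that $R^{*,\mathrm{cond}}_1 = \Theta(R^*_1) = \Theta(n^{-\alpha})$ because conditioning on $E_S$ with $|S| \le n\P(A)$ changes the normalization of only a small fraction of points and the mass of the other points inside $A$ can only shrink slightly.

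Next I would apply Lemma~\ref{thm:highProb} (with the conditional independent structure) to the class $\cK$ with a fixed constant $t = 1/2$, say, which is valid since $nR^{*,\mathrm{cond}}_1 = \Theta(n^{1-\alpha}) = \omega(\log n)$ for $\alpha < 1$ (here $\alpha \ge 1/3$ guarantees $1-\alpha \le 2/3$ but that is not needed — only $\alpha<1$ matters for $nR^*_1 \to\infty$). This gives $|R_n(f) - \E[R_n(f)\mid E_S]| \le \tfrac12 R^{*,\mathrm{cond}}_1$ uniformly over $f\in\cK$ with probability at least $1 - 2\exp(-c'nR^{*,\mathrm{cond}}_1)$, which tends to $1$. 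Hence $Z_1 \le R^{*,\mathrm{cond}}_1 + \tfrac12 R^{*,\mathrm{cond}}_1 \le 2R^{*,\mathrm{cond}}_1$. To upgrade this to the sharper bound $Z_1 \le R^* + C'\sqrt{R^*/n}$ claimed in the statement, I would instead use a sharper version of the concentration result with $t$ chosen to scale like $\sqrt{\log n / (nR^*)}$ — more precisely, Lemma~\ref{thm:highProb} with a variable $t$, or directly the expectation bound (Lemma~\ref{lemma:ConvergenceInExpectation}), which gives $\E[Z_1 - R^{*,\mathrm{cond}}_1 \mid E_S] = O(\sqrt{R^* \log n / n})$, combined with a one-sided deviation bound at scale $\sqrt{R^*/n}$; then Markov's inequality (or the exponential tail) yields $Z_1 \le R^{*,\mathrm{cond}}_1 + C'\sqrt{R^*/n}$ with constant probability. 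Finally, I would note $R^{*,\mathrm{cond}}_1 \le R^*_1 + o(\sqrt{R^*_1/n})$ (the conditioning can only increase the mass of the finitely many $S$-points inside $A$ by a $1/\P(A)$ factor, but there are at most $O(\log n)$ such low-variance points so the total effect is lower order), absorbing this into the constant $C'$.

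The main obstacle I anticipate is the careful bookkeeping of how conditioning on $E_S$ distorts the relevant masses: one must verify that $R^{*,\mathrm{cond}}_1$ stays $\Theta(R^*_1)$ and, for the sharp bound, that the distortion is $o(\sqrt{R^*_1/n})$, which requires exploiting that only $O(\log n)$ of the $P_i$ are the low-variance $U[-1,1]$ components and the rest (the $U[-n^\alpha,n^\alpha]$ components) contribute negligibly inside $A$ even after renormalization. A secondary technical point is justifying that the polylog cutoff in Lemma~\ref{thm:highProb}, namely $nR^* \ge C_t \log n$, indeed holds for the conditional class; since $nR^*_1 = \Theta(n^{1-\alpha})$ this is immediate for $\alpha < 1$, but I would state it explicitly. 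Everything else is a routine combination of the VC concentration machinery already developed with the conditional-independence structure from Lemma~\ref{LemIndep}.
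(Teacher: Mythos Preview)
Your plan has a genuine gap: every route you propose through Lemma~\ref{thm:highProb} or Lemma~\ref{lemma:ConvergenceInExpectation} carries an unavoidable $\sqrt{\log n}$ factor, and you cannot simply absorb it into the constant $C'$. Concretely, if you pick $t$ so that $tR^*\asymp\sqrt{R^*/n}$, i.e.\ $t\asymp 1/\sqrt{nR^*}$, then the hypothesis $nR^*\ge C_t\log n=(144/t)^2\log n$ forces $C'\gtrsim\sqrt{\log n}$; likewise the expectation bound in Lemma~\ref{lemma:ConvergenceInExpectation} gives $\E[Z_1-R^{*,\mathrm{cond}}_1\mid E_S]=O(\sqrt{R^*\log n/n})$, and Markov then only yields $Z_1\le R^{*,\mathrm{cond}}_1+O(\sqrt{R^*\log n/n})$, not the stated $O(\sqrt{R^*/n})$. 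This $\sqrt{\log n}$ matters: in the main argument you must match the $Z_1$ upper bound against the Berry--Esseen lower bound for $Z_2$ (Lemma~\ref{LemLowZ_2}), and the latter only holds with constant probability for a \emph{constant} multiple of $\sqrt{R^*_{\cJ}/n}$; a growing constant would drive that probability to zero.

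The paper's argument sidesteps the log altogether by a change of scale you are missing. Conditioned on $E_S$, only the $|S|$ points in $A$ contribute to $Z_1$, and by Lemma~\ref{LemIndep} they are i.i.d.\ from $Q_{n\mid A}$, for which the supremum mass over $\cK$ equals $R^*_1/\P(A)\ge 1/2$, a constant. In this regime the crude VC bound (Theorem~\ref{ThmVCAbsDeviation}, i.e.\ Theorem~8.3.23 in Vershynin) gives
\[
\E\Big[\sup_{f\in\cK}\Big|\sum_{i\in S}\big(f(X_i)-\E[f(X_i)\mid E_S]\big)\Big|\;\Big|\;E_S\Big]\le C\sqrt{|S|}
\]
with no logarithm. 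Markov's inequality then yields, with constant probability,
\[
nZ_1\le |S|\cdot\frac{R^*_1}{\P(A)}+C'\sqrt{|S|}\le nR^*_1+C'\sqrt{nR^*_1},
\]
where the last step uses $|S|\le n\P(A)\le 2nR^*_1$. The moral is that by passing to the conditional i.i.d.\ problem on $|S|$ points the ``effective'' $R^*$ becomes $\Theta(1)$, and the log factor in the localized bounds disappears; your attempt to work directly with the unconditional class at scale $R^*_1=\Theta(n^{-\alpha})$ cannot achieve this.
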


\begin{lemma}
Consider the setting in Proposition~\ref{lem:modal_lower_bound}. Let $S \subset [n]$ be such that $|S^c| \geq  n\P(A^c)$. Conditioned on the event $E_S$, 
we have that for all $C > 0$,
\begin{align*}
 Z_2 \geq R^*_{\cJ_n} + C \(\sqrt{\frac{R^*_{\cJ_n}}{n}}	\)
 \end{align*}
 with a constant, nonzero probability depending on the constant $C$. 
\label{LemLowZ_2}
\end{lemma}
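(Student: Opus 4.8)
The plan is to prove the statement by exhibiting a \emph{single} interval in $\cJ$ whose empirical mass, conditioned on $E_S$, already overshoots $R^*_{\cJ}$ by the required amount. Concretely, I would work with $f_{4,1}$, the indicator of $[3,5]$, which lies in $\cJ$ and is disjoint from $A=[-2,2]$. The first step is to identify $R(f_{4,1})$ with $R^*_{\cJ}$: on the region $(1,n^\alpha)$ the density of $Q_n$ is the constant $\frac{n-c\log n}{2n^{\alpha+1}}$ (only the wide uniform component contributes there), so every length-$2$ subinterval of $(1,n^\alpha)$ — in particular $[3,5]$ — has $Q_n$-mass $\frac{n-c\log n}{n^{\alpha+1}}=\Theta(n^{-\alpha})$, and one checks that the supremum defining $R^*_{\cJ}$ over $\{f_{x,1}:x\notin A\}$ is attained by exactly such intervals. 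Hence $R(f_{4,1})=R^*_{\cJ}$, and, as recorded among the relations in the main argument, $nR^*_{\cJ}=\Theta(n^{1-\alpha})\to\infty$ while $\mprob(A)=\Theta(n^{-\alpha})\to 0$.

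Next I would describe the conditional law of $nR_n(f_{4,1})$ given $E_S$. Since $[3,5]$ is disjoint from $A$ and every $X_i$ with $i\in S$ lies in $A$ on $E_S$, we have $N^* := nR_n(f_{4,1}) = \sum_{i\in S^c} 1\{X_i\in[3,5]\}$ on $E_S$. By Lemma~\ref{LemIndep}, conditionally on $E_S$ the variables $\{X_i\}_{i\in S^c}$ are i.i.d.\ with the law of $Q_n$ renormalized to $A^c$, so $N^*$ is $\mathrm{Binomial}(|S^c|,q)$ with $q=R^*_{\cJ}/\mprob(A^c)=\Theta(n^{-\alpha})$. Its conditional mean $\mu^* := |S^c|\,q$ satisfies $nR^*_{\cJ}\le \mu^*\le nR^*_{\cJ}/\mprob(A^c) = (1+o(1))\,nR^*_{\cJ}$ — the lower bound being exactly where the hypothesis $|S^c|\ge n\mprob(A^c)$ is used — and its conditional variance is $\mu^*(1-q)$.

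The final step is a central limit theorem. Because $q\to 0$ and $\mu^*=\Theta(n^{1-\alpha})\to\infty$, we have $|S^c|\,q(1-q)\to\infty$, so $(N^*-\mu^*)/\sqrt{\mu^*(1-q)}$ converges in distribution to $\mathcal{N}(0,1)$ conditionally on $E_S$. Hence, for any fixed $C>0$ and all $n$ large enough,
\begin{equation*}
\mprob\!\left(N^* \ge \mu^* + C\sqrt{\mu^*} \,\middle|\, E_S\right) \ge \tfrac12\,\Phi(-C) =: p_C > 0 .
\end{equation*}
Since $\mu^*\ge nR^*_{\cJ}$ we have $\mu^*+C\sqrt{\mu^*}\ge nR^*_{\cJ}+C\sqrt{nR^*_{\cJ}}$, so this event forces $N^*\ge nR^*_{\cJ}+C\sqrt{nR^*_{\cJ}}$; dividing by $n$ and using $Z_2\ge R_n(f_{4,1})=N^*/n$ yields $Z_2\ge R^*_{\cJ}+C\sqrt{R^*_{\cJ}/n}$ with conditional probability at least $p_C$, which depends only on $C$ — the asserted claim.

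The only genuinely delicate point I anticipate is matching the threshold: the CLT naturally recenters at $\mu^*$, whereas the lemma measures the overshoot against $nR^*_{\cJ}$. This is resolved by the two-sided control $nR^*_{\cJ}\le\mu^*\le(1+o(1))nR^*_{\cJ}$ — i.e.\ by $|S^c|\ge n\mprob(A^c)$ together with $\mprob(A^c)\to 1$ — which makes the discrepancy lower order than $\sqrt{nR^*_{\cJ}}$; one should also note that the CLT here is applied to a genuine triangular array, but the condition $|S^c|q(1-q)\to\infty$ is precisely what is required. If at some point a bound $p_C$ not degrading as $C\to\infty$ were needed, one could instead pack $\Theta(n^\alpha)$ disjoint length-$2$ subintervals of $(1,n^\alpha)$ and invoke the negative association of the corresponding multinomial counts; but since the statement asks only for a positive constant for each fixed $C$, the single-interval argument suffices.
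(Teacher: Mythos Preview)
Your proposal is correct and follows essentially the same route as the paper: fix a single $f\in\cJ$ with $R(f)=R^*_{\cJ}$, observe that conditionally on $E_S$ the count $\sum_{i\in S^c}f(X_i)$ is $\mathrm{Binomial}(|S^c|,\,R^*_{\cJ}/\P(A^c))$, and apply a Gaussian approximation (the paper invokes Berry--Esseen explicitly). Your recentering at $\mu^*\ge nR^*_{\cJ}$ together with the monotonicity of $x\mapsto x+C\sqrt{x}$ is in fact slightly cleaner than the paper's direct recentering at $nR^*_{\cJ}$, which has to absorb the shift via $\P(A)\sqrt{nR^*_{\cJ}}=O(1)$ (this is where the paper uses $\alpha\ge 1/3$), but the two arguments are otherwise the same.
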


\begin{lemma}
Let $X_1, \hdots, X_n \iidm P$, where $P$ is a uniform distribution over $[-b,-a] \bigcup [a,b]$ for some $0 \leq a < b$.
Let $Z = \sup_{f \in \cH_r} R_n(f)$ and $k \in \mathbb{N}$ such that $E = \{Z=k\}$ is an event of nonzero probability. If $ \frac{b-a}{r} > C $, then
\begin{enumerate}
	\item $\P(|\muhat_{M,r}| \geq \frac{b-a}{2}) \geq c > 0$.
	\item $\P(|\muhat_{M,r}| \geq \frac{b-a}{2} | Z \geq k) \geq c > 0$.
\end{enumerate}
\label{LemUnifSym}
\end{lemma}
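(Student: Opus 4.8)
The plan is to combine the symmetries of the uniform law $P$ on $[-b,-a]\cup[a,b]$ with the uniform concentration bound of Lemma~\ref{thm:highProb} (which requires only the VC bound on $\cH_r$, not unimodality) to show that, with constant probability, every maximizer of $R_n(f_{\cdot,r})$ lies well inside one of the two humps, hence at distance at least $m:=\tfrac{a+b}{2}\ge\tfrac{b-a}{2}$ from the origin. I describe the argument in the principal case $r<a$ (the one needed in Proposition~\ref{lem:modal_lower_bound}, where $a=2$, $r=1$), in which every window $[x-r,x+r]$ that contains a sample point is contained in a single hump. The cases $a=0$ and $r\ge a$ differ only cosmetically: windows straddling the gap or the outer ends of the support carry population mass strictly below $R^*_r=\tfrac{r}{b-a}$ and are eliminated by the concentration step below.

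First I would note $nR^*_r=\tfrac{nr}{b-a}$, and since $(b-a)/r>C$ this is $\Omega(\log n)$ in the regime of interest (in the application $nR^*_r\asymp n^{1-\alpha}\gg\log n$ for $\alpha<1$), so Lemma~\ref{thm:highProb} applies: for any fixed small $t$, with probability $1-o(1)$ we have $|R_n(f_{x,r})-R(f_{x,r})|\le tR^*_r$ uniformly over $\cH_r$. On this event the maximal value of $R_n(f_{\cdot,r})$ is at least $(1-t)R^*_r$, so every maximizer $x$ satisfies $R(f_{x,r})\ge(1-2t)R^*_r$; writing $R(f_{x,r})=\frac{1}{2(b-a)}\big|[x-r,x+r]\cap([-b,-a]\cup[a,b])\big|$, this confines all maximizers to the bulk $[a+2tr,\,b-2tr]\cup[-b+2tr,\,-a-2tr]$, a bounded distance from every hump endpoint. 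A windowwise Chernoff estimate shows that, again with probability $1-o(1)$, the maximizing set is a single interval of length less than $2r$ (essential uniqueness of the modal window, valid for uniform data once $nR^*_r$ is large: the densest clump is unique with probability $1-o(1)$).

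Next I would use the group of measure‑preserving involutions of $(\real,P)$ generated by $\tau(x)=-x$, by $\sigma_{+}$ (which reflects $x\mapsto(a+b)-x$ when $x>0$ and is the identity otherwise), and by $\sigma_{-}$ (which reflects $x\mapsto-(a+b)-x$ when $x<0$ and is the identity otherwise). Each generator fixes $P$, hence the joint law of $(X_1,\dots,X_n)$, and acts on the random function $x\mapsto R_n(f_{x,r})$ by the corresponding reflection of its graph, so it permutes the argmax set accordingly. Restricting to the bulk event above (on which the maximizing set is a single short interval in one hump), these reflections identify the four cells --- the inner and outer halves $[a+r,m]$, $[m,b-r]$ of the positive hump and their mirror images in the negative hump --- and show the maximizing interval is equally likely to be contained in each; straddling events (the interval meeting $m$, $0$, or an endpoint) have probability $o(1)$ because $b-a\gg r$. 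Hence each cell receives probability at least $\tfrac14(1-o(1))\ge c>0$. In particular, with probability at least $c$ the entire maximizing interval lies in $[m,b-r]$, so, whatever the tie‑breaking rule, $|\muhat_{M,r}|\ge m\ge\tfrac{b-a}{2}$; this proves part~(1).

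For part~(2) the only new point is that the conditioning event $\{Z\ge k\}$ is invariant under the whole symmetry group: every generator permutes $\{f_{x,r}\}$ within each hump and maps humps to humps, so it preserves $Z=\sup_{f\in\cH_r}R_n(f)$. Thus all the symmetry identifications survive conditioning on $\{Z\ge k\}$, and the $1-o(1)$ concentration statements also survive as long as $\{Z\ge k\}$ is not negligible, which is ensured by the hypothesis that $\{Z=k\}$ has positive probability together with $k$ being of the relevant order $\Theta(nR^*_r)$. Repeating the argument gives $\P\big(|\muhat_{M,r}|\ge\tfrac{b-a}{2}\,\big|\,Z\ge k\big)\ge c'>0$. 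I expect the main obstacle to be exactly this interface between the arbitrary tie‑breaking in the definition of $\muhat_{M,r}$ and the symmetry argument --- the reflections only control the distribution of the \emph{maximizing set}, not of the selected point --- which is why the concentration step is essential: it first squeezes all maximizers into one short interval in the bulk, after which the reflection symmetry does the rest.
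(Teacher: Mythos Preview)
Your approach via the reflection group $\langle\tau,\sigma_+,\sigma_-\rangle$ and concentration is different from the paper's, and for part~(1) it is essentially sound in the regime you specify, but it leaves two gaps relative to the lemma as stated.

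\textbf{Gap 1 (both parts): the concentration hypothesis.} Lemma~\ref{thm:highProb} needs $nR^*_r\ge C_t\log n$, and you correctly compute $nR^*_r=nr/(b-a)$. But the only hypothesis of the lemma is $(b-a)/r>C$, which bounds $R^*_r$ \emph{above}, not $nR^*_r$ below. Your parenthetical ``in the regime of interest $nR^*_r\asymp n^{1-\alpha}$'' is an extra assumption; the lemma claims a uniform constant $c$ with no growth condition on $n$. Your argument therefore proves a weaker statement (sufficient for Proposition~\ref{lem:modal_lower_bound}, but not the lemma as written).

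\textbf{Gap 2 (part 2): concentration does not survive conditioning on rare events.} You argue that $\{Z\ge k\}$ is invariant under your symmetry group --- this is correct when $r<a$, since each generator permutes the count function globally --- so the conditional law of the argmax inherits the reflection symmetry. But to turn this into a statement about $\muhat_{M,r}$ you still invoke concentration to confine the argmax set. You write that concentration ``survives as long as $\{Z\ge k\}$ is not negligible \dots\ together with $k$ being of the relevant order $\Theta(nR^*_r)$''. The lemma places no such restriction on $k$: it must hold for every $k$ with $\P(Z=k)>0$, including values of $k$ far in the tail where $\P(Z\ge k)$ is exponentially small. For such $k$ the concentration event, having probability $1-o(1)$ unconditionally, can have conditional probability zero. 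Also, your ``essential uniqueness of the modal window with probability $1-o(1)$'' is not established (ties between humps, and multiple maximizing plateaus within a hump, occur with non-vanishing probability) and would likewise not survive conditioning.

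\textbf{What the paper does instead.} The paper avoids concentration entirely by a \emph{wrap-to-circle} construction: it replaces the boundary windows in $\cH$ by enlarged/wrapped windows to form a class $\cH'$ in which every member has the same population mass, making the problem equivalent to $n$ i.i.d.\ uniforms on a circle of circumference $2(b-a)$. This upgrades your finite reflection group to the full rotation group. Two consequences: (i) the location of the circle mode $\muhat'_{M,r}$ is exactly uniform, giving $\P(|\muhat'_{M,r}|\in[\tfrac{b-a}{2},\tfrac{3(b-a)}{4}])=1/4$ for every $n$; and (ii) by rotational invariance (Lemma~\ref{LemRotInd}), the location of the circle mode is \emph{independent} of the circle maximum $Z'$, so conditioning on $\{Z'\ge k\}$ leaves the uniform law intact for \emph{every} $k$. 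Since windows in the middle half of each hump lie in $\cH\cap\cH'$ (this is where $(b-a)/r>C$ is used), events on that region transfer back to $\muhat_{M,r}$ and $Z$. No concentration, no restriction on $n$ or $k$, and the cases $a<r$ and $a=0$ are handled automatically. If you want to keep your reflection strategy, you would need to replace the concentration step by an exact combinatorial argument controlling the argmax set under conditioning; the circle trick is precisely such an argument, packaged more cleanly.
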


Lemmas~\ref{LemLowZ_1},~\ref{LemLowZ_2}, and~\ref{LemUnifSym} give us the required lower bound on the probability of error. Let $\muhat_{M,1,\cJ} := \arg\max_{f \in \cJ}R_n(f)$. Clearly, we can write
\begin{align*}
& \P\left\{ |\muhat_{M,r}| \geq \frac{n^{\alpha}}{2}\right\} = \P\left\{Z_1 < Z_2 , |\muhat_{M,1,\cJ}| \geq \frac{n^{\alpha}}{2}\right\}  \\
 & \qquad = \sum_{S \subset [n]} \P(E_S) \P\(Z_1 \leq Z_2, |\muhat_{M,1,\cJ}| \geq \frac{n^{\alpha}}{2} \bigg| E_S\)  \\
& \qquad \geq \sum_{S \subset [n]: |S| \leq n\P(A)} \P(E_S)\P\(Z_1 \leq  n R^* + C\sqrt{nR^*}, Z_2 \geq  n R^* + C\sqrt{nR^*} , |\muhat_{M,1,\cJ}| \geq \frac{n^\alpha}{2} \bigg|  E_S\).
\end{align*}
Furthermore, note that since $Z_1$ is computed over the points lying in $A$ and $Z_2$ and $\muhat_{M,1,\cJ}$ is computed over the points lying in $A^c$, Lemma~\ref{LemIndep} implies that
\begin{align*}
& \P\(Z_1 \leq  n R^* + C\sqrt{nR^*}, Z_2 \geq  n R^* + C\sqrt{nR^*} , |\muhat_{M,1,\cJ}| \geq \frac{n^\alpha}{2} \bigg|  E_S\) \\
& \qquad = \mprob\left(Z_1 \leq  n R^* + C\sqrt{nR^*} \bigg| E_S\right) \mprob\left(Z_2 \geq  n R^* + C\sqrt{nR^*} , |\muhat_{M,1,\cJ}| \geq \frac{n^\alpha}{2} \bigg|  E_S\right) \\
& = \mprob\left(Z_1 \leq  n R^* + C\sqrt{nR^*} \bigg| E_S\right) \mprob\left(Z_2 \geq  n R^* + C\sqrt{nR^*} \bigg| E_S\right) \\
& \qquad \qquad \cdot \mprob\left(|\muhat_{M,1,\cJ}| \geq \frac{n^\alpha}{2} \bigg|  Z_2 \geq  n R^* + C\sqrt{nR^*}, E_S\right).
\end{align*}
Finally, note that conditioned on $E_S$, the points in $A^c$ are certainly still uniformly distributed by the construction. Hence, we can apply Lemmas~\ref{LemLowZ_1},~\ref{LemLowZ_2}, and~\ref{LemUnifSym} to lower-bound each of the three factors by a constant. We conclude that
\begin{align*}
 \P\left\{ |\muhat_{M,r}| \geq \frac{n^{\alpha}}{2}\right\} 
&\geq \sum_{S \subset [n]: |S| \leq n\P(A)} \P(E_S) \Theta(1)  = \Theta(1),
\end{align*}
where the final equality uses the fact that for $X \sim \text{Bin}(n,p)$, we have $\P(X \leq \E X) = \Theta(1)$.

\subsubsection{Proof of Lemma~\ref{LemIndep}}

Clearly, we have
\begin{equation*}
p_{S,T}(x_S, x_T \mid E) = \frac{p_{S,T}(x_S, x_T)}{\mprob(E)} = \frac{\prod_{i \in S} p_i(x_i) \prod_{j \in T} p_i(x_j)}{\mprob(E)}.
\end{equation*}
Similarly, we may write
\begin{align*}
p_S(x_S \mid E) & = \frac{p_i(x_S) \prod_{j \in T} \mprob(X_j \in B)}{\mprob(E)}, \\
p_T(x_T \mid E) & = \frac{p_i(x_T) \prod_{i \in S} \mprob(X_i \in A)}{\mprob(E)}.
\end{align*}
Using the fact that
\begin{equation*}
\mprob(E) = \prod_{i \in S} \mprob(X_i \in A) \prod_{j \in T} \mprob(X_j \in B)
\end{equation*}
implies the desired statements.

\subsubsection{Proof of Lemma~\ref{LemLowZ_1}}

 Conditioned on $E_S$, Lemma~\ref{LemIndep} states that $X_S$ is a vector of $|S|$ \iid  points with distribution, say, $Q_{n|A}$.
 Under $Q_{n|A}$, $\sup_{f \in \cH_n} R(f) = \frac{R^*_1}{\P(A)} \geq \frac{1}{2}$ .

Using Theorem~\ref{ThmVCAbsDeviation} (Theorem 8.3.23 in Vershynin~\cite{Ver18}), we get that
\begin{align*}
\E\left[\left|\sup_{f \in \cK} \sum_{i \in S} f(X_i) - \E[f(X_i)|E_S]\right|\right]  \leq C\sqrt{|S|}  \leq C \sqrt{2|S| \frac{R^*_1}{\P(A)}}.
\end{align*}
Thus, with constant positive probability,
\begin{align*}
Z_1 = \sup_{f \in \cK} \sum_{i} f(X_i) &\leq |S| \frac{R^*_r}{\P(A)} + C'\sqrt{|S| \frac{R^*_1}{\P(A)}} \\
	&\leq nR^*_r + C' \sqrt{nR^*_1}, 
\end{align*}
where we use Markov's inequality and the assumption that $|S| \leq n\P(A)$.

\subsubsection{Proof of Lemma~\ref{LemLowZ_2}}

Consider a fixed function $f \in \cJ_n$. As the distribution is uniform, $R(f) = R^*_{\cJ}$.
Once we have conditioned on $E_S$, there are $|S^c|$ points distributed over $A^c$ according to Lemma~\ref{LemIndep}, i.e., \iid with a uniform distribution, say, $Q_{n|A^c}$.

For each $i \in S^c$, let  $Y_i = f(X_i) - \frac{R^*_{\cJ}}{\P(A^c)}$. $Y_i$'s are centered \iid Bernoulli random variables. 
 We calculate the following quantities required for the Berry-Esseen Theorem,
\begin{align*}
\E[Y_i]  &= 0 \\
\Var[Y_i] &= \frac{R^*_{\cJ}}{\P(A^c)}\(1 - \frac{R^*_{\cJ}}{\P(A^c)}\)  \geq \frac{R^*_{\cJ}}{2\P(A^c)} \\
\E|Y_i|^3		 &= \frac{R^*_{\cJ}}{\P(A^c)}\left|1 - \frac{R^*_{\cJ}}{\P(A^c)}\right|^3 + \(1 - \frac{R^*_{\cJ}}{\P(A^c)}\) \left|\frac{R^*_{\cJ}}{\P(A^c)}\right|^3\\
				&\leq  \frac{R^*_{\cJ}}{\P(A^c)} + \(\frac{R^*_{\cJ}}{\P(A^c)}\)^3 \leq 2\frac{R^*_{\cJ}}{\P(A^c)} 
\end{align*}
By the Berry-Esseen Theorem~\cite{Ver18}, we have
\begin{align*}
\P\left\{ \frac{\sum_{i \in S^c}  Y_i}{\sqrt{|S^c| \Var[Y_i] }}   \geq t \right\} &\geq \phi(t) - \frac{ \E |Y_i|^3  }{ \sqrt{\Var[Y_i]^3 |S^c|} } 
					\geq \phi(t) - \frac{ \frac{2R^*_{\cJ}}{\P(A^c)} }{ \sqrt{ \frac{\(R^*_{\cJ}\)^3}{8\P(A^c)^3}   n\P(A^c) } } \\
					&\geq \phi(t) - \frac{c' }{ \sqrt{n R^*_{\cJ}} } = \phi(t) - o_n(1),
\end{align*}
where $\phi(t) \deff \P(g \leq t)$ and $g \sim \cN(0,1)$.
Therefore,
\begin{align*}
\P\left\{Z_2 \geq R^*_{\cJ} + C \(\sqrt{\frac{R^*_{\cJ}}{n}}	\) \right\} & \geq 
\P\left\{ \sum_{i \in S^c}f(X_i) \geq  nR^*_{\cJ} + C  \sqrt{nR^*_{\cJ}}\right\} \\
				&= \P\left\{ \sum_{i \in S^c}Y_i \geq  |S|R^*_{\cJ} + C  \sqrt{nR^*_{\cJ}}\right\} \\
				&= \P\left\{  \frac{1}{\sqrt{|S^c|\Var[Y_i]}}\sum_{i \in S^c}Y_i \geq \frac{ |S|R^*_{\cJ} + C  \sqrt{nR^*_{\cJ}} }{\sqrt{|S^c|\Var[Y_i]} }\right\} \\
				&\geq  \phi\(\frac{ |S|R^*_{\cJ} + C  \sqrt{nR^*_{\cJ}} }{\sqrt{|S^c|\Var[Y_i]} }\) - o_n(1) \\
				&\geq \phi\(\frac{ n\P(A)R^*_{\cJ} + C  \sqrt{nR^*_{\cJ}} }{ \sqrt{n\P(A^c) \frac{R^*_{\cJ}}{2\P(A^c)}} }\) - o_n(1) \\
				&\geq \phi\( \P(A)\sqrt{nR^*_{\cJ}} + \sqrt{2}C\) - o_n(1) \geq c\phi\( C'+  \sqrt{2}C\)
\end{align*}
where we use that for $\alpha \geq \frac{1}{3}$, $\P(A)\sqrt{nR^*_\cJ} = \Theta\(n^{-\alpha + \frac{1- \alpha}{2}}\) = \O(1)$.

\subsubsection{Proof of Lemma~\ref{LemUnifSym}}

Let $\cH$ be the set of intervals of width equal to $2r$.
Currently the intervals near the end points have less probability mass. We will replace such intervals with bigger intervals to make the process symmetric. 
First consider the intervals near $\pm a$ which have less probability mass: we can instead focus on bigger intervals to include the middle interval $[-a,a]$. Let $\cJ \coloneqq \{\1_{[x, y]}: |x-y| = 2r + 2(b-a), |x+a| \leq 2r \}$. 
Next we can consider warping the number line and ``joining'' the two endpoints, i.e., let  $\cK \coloneqq \{\1_{[-\infty, x] \cup [y, \infty] }: 0\leq b-y \leq 2r, 0 \leq x + b \leq 2r, y -x = 2b -2r \}$.

Let $\cH' \coloneqq \cJ \cup \cK \cup \cH \setminus \{f \in \cH: R(f) < \frac{2r}{2(b-a)}\}$ and $\muhat'_{M,r} = \arg\max_{f \in \cH'}R_n(f)$.
Note that every function in $\cH'$ contains equal mass and the distribution is uniform.
Moreover, for $|x| \in [\frac{b-a}{2}, \frac{3(b-a)}{4}] $, $f_{x,r} \in \cH' \cap \cH$ because $b-a \geq C r$. Thus we have not removed a lot of functions from $\cH$. 

 The problem of the location of $\muhat'_{M,r}$ is equivalent to a uniform distribution on a circle of circumference $2(b-a)$, where we form the circle by joining $-a$ and $a$ at a single point, and join $-b$ to $b$.
By symmetry, we obtain that  $|\muhat'_{M,r}|$ is uniform on $[a, b]$. Thus 
$\P\( |\muhat'_{M,r}| \in  [\frac{b-a}{2},  \frac{3(b-a)}{4} ]\) = \frac{1}{4}  $.
\begin{align*}
\P\( |\muhat_{M,r}| \geq \frac{b-a}{2}\) &\geq \P\( |\muhat_{M,r}| \in  \left[\frac{b-a}{2},  \frac{3(b-a)}{4} \right]\) \\
			&\geq \P\( |\muhat'_{M,r}| \in  \left[\frac{b-a}{2},  \frac{3(b-a)}{4} \right]\) = \frac{1}{4}. 
\end{align*}
This proves the first statement.
Now, we consider the case when we condition on the value of $Z$. Note that if $|\muhat'_{M,r}| \in  \left[\frac{b-a}{2},  \frac{3(b-a)}{4} \right]$, then $Z = Z'$.
\begin{align*}
\P\( |\muhat_{M,r}| \geq \frac{b-a}{2} \bigg| Z \geq k\) &\geq \P\( |\muhat_{M,r}| \in  \left[\frac{b-a}{2},  \frac{3(b-a)}{4}  \right] \bigg| Z \geq k\) \\
			&\geq \P\( |\muhat'_{M,r}| \in  \left[\frac{b-a}{2},  \frac{3(b-a)}{4} \right] \bigg| Z \geq k\) \\
			&= \frac{\P\( |\muhat'_{M,r}| \in  \left[\frac{b-a}{2},  \frac{3(b-a)}{4} \right] ,  Z \geq k\)}{\P\( Z \geq k\)} \\
			&\geq \frac{\P\( |\muhat'_{M,r}| \in  \left[\frac{b-a}{2},  \frac{3(b-a)}{4} \right] ,  Z \geq k\)}{\P\( Z' \geq k\)} \\
			&= \frac{\P\( |\muhat'_{M,r}| \in  \left[\frac{b-a}{2},  \frac{3(b-a)}{4} \right] ,  Z' \geq k\)}{\P\( Z' \geq k\)} \\
			&= \P\( |\muhat'_{M,r}| \in  \left[\frac{b-a}{2},  \frac{3(b-a)}{4} \right] \bigg|  Z' \geq k\) 
			= \frac{1}{4}
\end{align*}
where we use the following Lemma~\ref{LemRotInd} for independence of $\muhat'_{M,r}$ and $Z'$.
\begin{lemma}
Suppose $X_1, \dots, X_n$ are i.i.d.\ uniform points on a circle. Let $E$ be the event that the maximum number of points contained in an arc of a certain length is equal to $k$. Then the joint distribution $p(x_1, \dots, x_n)$ is rotationally invariant.
\label{LemRotInd}
\end{lemma}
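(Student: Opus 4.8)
The plan is to reduce the statement to two elementary facts: the unconditional law of $(X_1,\dots,X_n)$ is rotationally invariant, and the conditioning event $E$ is itself rotationally invariant; combining them gives invariance of the conditional law. To set up notation, identify the circle with $\mathbb{T} = \real/(L\mathbb{Z})$, where $L$ is its circumference (in the application $L = 2(b-a)$, but the value is irrelevant), and for $\theta \in \mathbb{T}$ let $\rho_\theta\colon \mathbb{T}^n \to \mathbb{T}^n$ denote the simultaneous rotation $\rho_\theta(x_1,\dots,x_n) = (x_1+\theta,\dots,x_n+\theta)$. Since $X_1,\dots,X_n$ are i.i.d.\ uniform on $\mathbb{T}$, their joint density with respect to the $n$-fold product of normalized arc-length measure is the constant $1$; in particular it is invariant under every $\rho_\theta$.

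Next I would verify that $E$ is rotationally invariant. For a configuration $x = (x_1,\dots,x_n)$ and an arc $J$ of the prescribed length $\ell$, write $N(x,J) = \#\{i : x_i \in J\}$, and set $M(x) = \sup_J N(x,J)$, the supremum over all arcs of length $\ell$. This supremum is attained and $M$ is a well-defined, measurable function of $x$, since $N(x,\cdot)$ changes only at the finitely many arcs whose endpoint coincides with some $x_i$. For any rotation $\rho_\theta$ and any arc $J$ we have $N(\rho_\theta(x),J) = N(x, J-\theta)$, and as $J$ ranges over all arcs of length $\ell$ so does $J-\theta$; hence $M(\rho_\theta(x)) = M(x)$. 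Consequently the event $E = \{M(X) = k\}$ — and likewise $\{M(X) \ge k\}$, which is the form actually used in the proof of Lemma~\ref{LemUnifSym} — satisfies $\rho_\theta^{-1}(E) = E$ for every $\theta$.

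Finally I would conclude: on $E$ (which has positive probability in the application), the conditional density equals $p(x \mid E) = \mathbf{1}_E(x)/\mprob(E)$ times the constant unconditional density, so for every $\theta$,
\[
p(\rho_\theta(x) \mid E) = \frac{\mathbf{1}_E(\rho_\theta(x))}{\mprob(E)} = \frac{\mathbf{1}_E(x)}{\mprob(E)} = p(x \mid E),
\]
using the invariance of $\mathbf{1}_E$ established above. This is precisely rotational invariance of the conditional joint distribution. There is no substantial obstacle here; the only point demanding a little care is checking that ``the maximum number of points in an arc of length $\ell$'' is a genuine, measurable function of the configuration and is unaffected by a common rotation, after which the conclusion is a one-line computation.
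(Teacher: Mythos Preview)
Your proof is correct and, in fact, cleaner than the paper's. You argue directly: the unconditional joint density is constant, the event $E=\{M(X)=k\}$ is invariant under any common rotation $\rho_\theta$, hence the conditional density $p(x\mid E)=\mathbf{1}_E(x)/\mprob(E)$ is rotationally invariant. The paper instead introduces an auxiliary representation $X_i=Y_i+R$ with $Y_i$ i.i.d.\ uniform and an independent uniform rotation $R$, then computes $p(x\mid E)$ by integrating the joint density $p(x,y)$ over the orbit $E'$ of $x$ under rotation, and verifies that the resulting expression $\int_{E'}p(y)\,dy/\mprob(E)$ does not change when $x$ is replaced by $x+r$. Both routes exploit the same underlying fact --- that $E$ is a union of rotation orbits --- but your argument isolates that fact explicitly and dispenses with the extra randomization, while the paper's coupling makes the invariance ``visible'' through the auxiliary $R$ at the cost of more bookkeeping.
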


\begin{proof}
Suppose without loss of generality that the circle has circumference 1. Note that the law of $(X_1, \dots, X_n)$ can be equivalently generated as follows: First generate $Y_1, \dots, Y_n \stackrel{i.i.d.}{\sim} Unif[0,1]$. Next, generate $R \sim Unif[0,1]$, and define $X_i = Y_i + R$ for all $1 \le i \le n$, where the addition is taken modulo 1. We want to show that
\begin{equation}
\label{EqnRotate}
p(x_1, \dots, x_n \mid E) = p(x_1 + r, \dots, x_n + r \mid E)
\end{equation}
for any $r \in [0,1]$, where addition is again taken modulo 1. Clearly, it suffices to consider configurations $(x_1, \dots, x_n)$ that are consistent with $E$.

We can calculate
\begin{align*}
p(x_1, \dots,x_n \mid E) = \frac{\int_{E'} p(x_1, \dots, x_n, y_1, \dots, y_n)dy}{P(E)},
\end{align*}
where the integral is taken over the region of $[0,1]^n$ containing points $(y_1, \dots, y_n)$ that can be obtained from $(x_1, \dots, x_n)$ via some rotation. Importantly, note that
\begin{equation*}
p(x_1, \dots, x_n, y_1, \dots, y_n) = p(x_1, \dots, x_n \mid y_1, \dots, y_n) p(y_1, \dots, y_n) = p(y_1, \dots, y_n),
\end{equation*}
since $R$ is uniform, so we have
\begin{equation*}
p(x_1, \dots, x_n \mid E) = \frac{\int_{E'} p(y_1, \dots, y_n) dy}{P(E)}.
\end{equation*}

Similarly, we can write
\begin{equation*}
p(x_1 + r, \dots, x_n + r \mid E) = \frac{\int_{E'} p(x_1+r, \dots, x_n+r, y_1, \dots, y_n) dy}{P(E)} = \frac{\int_{E'} p(y_1, \dots, y_n) dy}{P(E)}.
\end{equation*}
This establishes the desired equality~\eqref{EqnRotate} and completes the proof.
\end{proof}

\subsection{Proof of Theorem~\ref{LemModalD}}
\label{AppThmModalD}

The initial steps in the proof parallel the proof of Theorem~\ref{cor:modal}, where Lemma~\ref{thm:modalIntervalMain} is proved using the concentration inequality in Lemma~\ref{thm:highProbD} rather than Lemma~\ref{thm:highProb}. It then follows that if we choose $r$ such that $R^*_r \ge C_{0.5} \left(\frac{(d+1) \log n}{n}\right)$, we have $R(f_{\muhat_{M,r},r}) \ge \frac{R^*_r}{2}$, w.h.p.

Now let $r_2 = 4r \left(\frac{2}{R^*_r}\right)^{\frac{1}{d}}$. By Lemma~\ref{lemma:prop2}(i), the desired result will follow if we can show that $R(f_{r_2, r}) \leq \frac{R^*_r}{2}$. By Lemma~\ref{lemma:prop2}(iv), we have
\begin{align*}
R(f_{r_2,r}) & \leq \frac{R^*_r}{2} \cdot R^*_{r_2} \le \frac{R^*_r}{2}.
\end{align*}

To obtain inequality~\eqref{EqnModalD}, note that using Lemma~\ref{lemma:prop2}\ref{prop2:5}, we know that $r = 2 \sqrt{d}\sigma_{(2Cd\log n)}$ satisfies the assumption on $R^*_r$. Plugging into inequality~\eqref{EqnModalD1} then produces the desired bound.

\subsection{Proof of Theorem~\ref{ThmModalCompute}}
\label{AppThmModalCompute}

We begin by deriving the proof for the modal interval estimator.
Let $s_1 = \frac{r}{2}$, and define $s_2$ such that $R(f_{s_2, r}) = \frac{1}{3} R(f_{s_1, r})$. Note that
\begin{equation*}
R(f_{s_1, r}) \ge R(f_{0,r/2}) \ge \frac{3C_{1/6} d\log n}{n},
\end{equation*}
so $R(f_{s_2, r}) \ge \frac{C_{1/6} d\log n}{n}$. Applying Lemma~\ref{ThmUniformProb} with $\bar{r} = s_1$ and $t = \frac{1}{6}$, we conclude that
\begin{equation}
\label{EqnFirst}
R_n(f_{x,r}) \ge \frac{2}{3} R(f_{x,r}) \ge \frac{2}{3} R(f_{s_1,r}),
\end{equation}
uniformly over $\|x\|_2 \le s_1$, with probability at least $1 - \frac{2\exp(-cn R(f_{s_1,r})/36)}{1-\exp(-cn R(f_{s_1,r})/36)}$, which is in turn lower-bounded by $1-4\exp(-c_1 d\log n)$.

Furthermore, inequality~\eqref{EqnLargeX} implies that
\begin{equation}
\label{EqnSecond}
R_n(f_{x,r}) \le R(f_{x,r}) + \frac{1}{3} R(f_{s_2, r}) \le \frac{4}{3} R(f_{s_2,r}) = \frac{4}{9} R(f_{s_1,r}),
\end{equation}
uniformly over $\|x\|_2 > s_2$, with probability at least $1-2\exp(-cn R(f_{s_2, r})/9) \ge 1 - 2\exp(-c_2 d\log n)$. Thus, combining inequalities~\eqref{EqnFirst} and~\eqref{EqnSecond}, we conclude that
\begin{equation}
\label{EqnThird}
\sup_{\|x\|_2 > s_2} R_n(f_{x,r}) < \inf_{\|x\|_2 \le s_1} R_n(f_{x,r}),
\end{equation}
with probability at least $1-6\exp(-c_3 d\log n)$.

Now note that by inequality~\eqref{EqnFirst}, we also have $R_n(f_{0, s_1}) \ge \frac{2}{3} R(f_{0, s_1}) > 0$, implying that $\{x_1, \dots, x_n\} \cap B(0, s_1) \neq \emptyset$. In particular,
\begin{equation*}
\sup_{x \in \{x_1, \dots, x_n\}} R_n(f_{x,r}) \ge \inf_{\|x\|_2 \le s_1} R_n(f_{x,r}).
\end{equation*}
Together with inequality~\eqref{EqnThird}, we conclude that $\|\mutil_{M,r}\|_2 < s_2$.

Finally, we claim that $s_2 \le 4r\left(\frac{n}{C_{1/6}d \log n}\right)^{1/d}$.
To see this, let $\tilde{s}_2 := 4r\left(\frac{n}{C_{1/6}d\log n}\right)^{1/d}$, and note that by Lemma~\ref{lemma:prop2}(iv), we have
\begin{align*}
R(f_{\tilde{s}_2, r}) & \le \frac{C_{1/6}d \log n}{n} \cdot R^*_{\tilde{s}_2} \le \frac{C_{1/6}d \log n}{n}.
\end{align*}
Since the last quantity is upper-bounded by $R(f_{s_2, r})$, we conclude that $s_2 \le \tilde{s}_2$, as claimed.

Turning to the analysis of the computationally efficient shorth estimator, we adapt the argument in the proof of Theorem~\ref{cor:shorth}. By Lemma~\ref{thm:highProbD}, if $R^*_{2r_{2k}} \ge \frac{C_{0.5} (d+1) \log n}{n}$, we have
\begin{equation*}
    \sup_x \sup_{r \le 2r_{2k}} \left(R_n(f_{x, r}) - R(f_{x,r})\right) < \frac{t}{2} R^*_{2r_{2k}},
\end{equation*}
with probability at least $1-2\exp(-cnR^*_{2r_{2k}}t^2) \ge 1-2\exp\left(-cnt^2 \cdot \frac{2k}{n}\right)$.

We know that $\frac{k}{n} = R_n(f_{\mutil_{S,k}, \rtil_k}) \le R_n(f_{\mutil_{S,k}, 2r_{2k}})$. Let $s$ be defined such that $R(f_{s, 2r_{2k}}) = \frac{k}{2n}$. By inequality~\eqref{EqnLargeX}, we know that
\begin{equation*}
\sup_{\|x\|_2 \ge s} \left|R_n(f_{x,2r_{2k}}) - R(f_{x,2r_{2k}})\right| \le \frac{1}{2} R(f_{s,2r_{2k}}),
\end{equation*}
with probability at least $1-2\exp(-ck)$, implying that for $\|x\|_2 \geq s$, we 
have
\begin{equation*}
R_n(f_{x,2r_{2k}}) \le R(f_{x,2r_{2k}}) + \frac{1}{2} R(f_{s,2r_{2k}}) \le \frac{3}{2} R(f_{s, 2r_{2k}}) = \frac{3k}{4n}.
\end{equation*}
Since this is strictly smaller than $R_n(f_{\mutil_{S,k},2r_{2k}})$, we conclude that $\|\mutil_{S,k}\|_2 \le s$, w.h.p.
, which also implies that $R(f_{\mutil_{S,k}, 2r_{2k}}) \ge \frac{k}{2n}$.

Finally, let $r' = 4r_{2k} \left(\frac{2n}{k}\right)^{1/d}$. By Lemma~\ref{lemma:prop2}(iv), we have
\begin{equation*}
    R(f_{r', 2r_{2k}}) < \frac{k}{2n} \cdot R^*_{r'} \le \frac{k}{2n} < R(f_{\mutil_{S,k}, 2r_{2k}}).
\end{equation*}
Applying Lemma~\ref{lemma:prop2}(i), we conclude that $\|\mutil_{S, k}\|_2 \le r'$.

\section{Shorth estimator}

In this appendix, we provide proofs of the various theorems and lemmas related to the shorth estimator.

\subsection{Proof of Theorem~\ref{cor:shorth}}
\label{app:shorthMassProof}

The proof of Theorem~\ref{cor:shorth} is similar in spirit to the proof of Theorem~\ref{cor:modal}. We begin by proving a lemma, which replaces Lemma~\ref{thm:modalIntervalMain}:

\begin{lemma}
For $2k \geq C_{0.5t}\log n$ and $t \in (0,1]$, with probability at least $1 - 2\exp(-c'kt^2)$, we have
\begin{align*}
R(f_{\widehat{\mu}_{S,k}, r_{2k}}) \geq (1-t)R^*_{r_{k}} = (1-t) \frac{k}{n}.
\end{align*}
\label{thm:shorthMass}
\end{lemma}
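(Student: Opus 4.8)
The plan is to run the same argument used for the modal interval estimator (Lemma~\ref{thm:modalIntervalMain}), but on the function class $\cH_{r_{2k}}$ rather than $\cH_r$ for a fixed $r$, and then to exploit the definition of $\widehat{r}_k$. First I would record the population-level facts that, since $\overline{P}$ has a density, $R^*_{r_k} = \frac{k}{n}$ and $R^*_{r_{2k}} = \frac{2k}{n}$. The key observation is that the hypothesis $2k \ge C_{0.5t}\log n$ is \emph{exactly} the requirement $R^*_{r_{2k}} \ge C_{t/2}\frac{\log n}{n}$ needed to invoke Lemma~\ref{thm:highProb} on $\cH_{r_{2k}}$ with deviation parameter $t/2$, because $C_{t/2} = (144/(t/2))^2 = (288/t)^2 = C_{0.5t}$. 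So Lemma~\ref{thm:highProb} gives an event $\cE$ of probability at least $1 - 2\exp(-cnR^*_{r_{2k}}(t/2)^2) = 1 - 2\exp(-\tfrac{c}{2}kt^2)$ on which $|R_n(f)-R(f)| \le \tfrac{t}{2}R^*_{r_{2k}} = \tfrac{tk}{n}$ simultaneously for all $f \in \cH_{r_{2k}}$; this yields the stated probability with $c' = c/2$.

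Working on $\cE$, the next step is to show $\widehat{r}_k \le r_{2k}$, which is what lets $f_{\widehat{\mu}_{S,k},\widehat{r}_k}$ belong to $\cH_{r_{2k}}$. Apply the uniform deviation bound to the \emph{fixed} interval $f_{0,r_{2k}} \in \cH_{r_{2k}}$: since $R(f_{0,r_{2k}}) = R^*_{r_{2k}} = \frac{2k}{n}$, we get $R_n(f_{0,r_{2k}}) \ge \frac{2k}{n} - \frac{tk}{n} = \frac{(2-t)k}{n} \ge \frac{k}{n}$ using $t\le 1$, so $\sup_x R_n(f_{x,r_{2k}}) \ge \frac{k}{n}$ and hence $\widehat{r}_k \le r_{2k}$ by the definition of $\widehat{r}_k$. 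Because the shortest interval containing at least $k$ of the $n$ data points may be taken with both endpoints at data points, the infimum defining $\widehat{r}_k$ is attained, so $R_n(f_{\widehat{\mu}_{S,k},\widehat{r}_k}) \ge \frac{k}{n}$, where $\widehat{\mu}_{S,k} = \widehat{\mu}_{M,\widehat{r}_k}$. Now $f_{\widehat{\mu}_{S,k},\widehat{r}_k} \in \cH_{r_{2k}}$, so the uniform bound gives
\begin{equation*}
R(f_{\widehat{\mu}_{S,k},\widehat{r}_k}) \ge R_n(f_{\widehat{\mu}_{S,k},\widehat{r}_k}) - \frac{tk}{n} \ge (1-t)\frac{k}{n},
\end{equation*}
and monotonicity of $R(f_{x,\cdot})$ in the radius (Lemma~\ref{lemma:properties}\ref{prop:2}) together with $\widehat{r}_k \le r_{2k}$ upgrades this to $R(f_{\widehat{\mu}_{S,k},r_{2k}}) \ge R(f_{\widehat{\mu}_{S,k},\widehat{r}_k}) \ge (1-t)\frac{k}{n} = (1-t)R^*_{r_k}$, which is the claim.

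I do not expect a genuine obstacle here; the argument is essentially bookkeeping. The two points that need care are (i) tracking the constant so that the application of Lemma~\ref{thm:highProb} with parameter $t/2$ lands exactly on the hypothesis $2k \ge C_{0.5t}\log n$, and (ii) justifying that the infimum in the definition of $\widehat{r}_k$ is attained, so that $R_n(f_{\widehat{\mu}_{S,k},\widehat{r}_k}) \ge \frac{k}{n}$ holds with no slack. I would deliberately derive $\widehat{r}_k \le r_{2k}$ from the same event $\cE$ (via the fixed interval $f_{0,r_{2k}}$) rather than invoking Lemma~\ref{lemma:shorthlength}, since that lemma carries the slightly different hypothesis $k \ge 2C_{0.5}\log n$ and would introduce a superfluous second failure probability.
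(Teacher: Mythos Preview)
Your proposal is correct and follows essentially the same approach as the paper: apply Lemma~\ref{thm:highProb} with parameter $t/2$ on $\cH_{r_{2k}}$, use $\widehat{r}_k \le r_{2k}$ to place $f_{\widehat{\mu}_{S,k},\widehat{r}_k}$ in that class, and finish via Lemma~\ref{lemma:properties}\ref{prop:2}. The one difference is that the paper invokes Lemma~\ref{lemma:shorthlength} separately to obtain $\widehat{r}_k \le r_{2k}$ (incurring an extra $\exp(-k/8)$ term, later absorbed into $c'$), whereas you extract this directly from the same concentration event via $f_{0,r_{2k}}$; your route is slightly cleaner and avoids the mismatch in hypotheses you flagged.
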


\begin{proof}
By assumption, we have $nR^*_{r_{2k}} = 2k \geq C_{0.5t}\log n$. Applying Lemma~\ref{thm:highProb} with $t = 0.5t$ and $r = r_{2k}$, we know that with probability at least $1 - \exp(-c2kt^2/4)$, we have
\begin{equation*}
\sup_{x,r \leq r_{2k}} R_n(f_{x,r}) - R(f_{x,r}) < \frac{t}{2}R^*_{r_{2k}}.
\end{equation*}
Combined with the guarantee of Lemma~\ref{lemma:shorthlength}, we conclude that
\begin{equation*}
R_n(f_{ \widehat{\mu}_{S,k} , \widehat{r}_k }) - R(f_{ \widehat{\mu}_{S,k} , \widehat{r}_k }) < \frac{t}{2} R^*_{r_{2k}},
\end{equation*}
with probability at least $1 - \exp(-ckt^2/2) - \exp(-k/8)$.

Furthermore, since all the distributions have densities, all the $X_i$'s are distinct with probability $1$, so $R_n(f_{ \widehat{\mu}_{S,k} , \widehat{r}_k }) = \frac{k}{n}$. We thus conclude that
\begin{align*}
\frac{k}{n} - R(f_{ \widehat{\mu}_{S,k} , \widehat{r}_k }) &<  \frac{t}{2} \cdot \frac{2k}{n},
\end{align*}
so $R(f_{ \widehat{\mu}_{S,k} , \widehat{r}_k }) > (1 - t)\frac{k}{n} = (1 - t)R^*_{r_{k}}$.
Again using the fact that $\widehat{r}_k \leq r_{2k}$, we can use Lemma~\ref{lemma:properties}\ref{prop:2} to conclude that
$R(f_{ \widehat{\mu}_{S,k} , \widehat{r}_k }) \leq R(f_{ \widehat{\mu}_{S,k} , r_{2k} })$, so the required statement holds.
\end{proof}

Let $r' = \frac{2nr_{2k}}{k}$. Taking $t = \frac{1}{2}$ in Lemma~\ref{thm:shorthMass} and using Lemma~\ref{lemma:properties}\ref{prop:1}, it suffices to show that $R(f_{r' , r_{2k}}) < \frac{k}{2n}$, which follows by Lemma~\ref{lemma:properties}\ref{prop:4} and the fact that $R^*_{r'} \leq 1$.

\subsection{Proof of Theorem~\ref{ThmShorthD}}
\label{AppThmShorthD}

We parallel the proof of Theorem~\ref{cor:shorth}. Note that the guarantees of Lemma~\ref{lemma:shorthlength} and Lemma~\ref{thm:shorthMass} continue to hold in $d$ dimensions, except that we have the lower bound $k \ge 2C_{0.5}(d+1) \log n$ instead, since the concentration inequality in Lemma~\ref{thm:highProb} will be replaced by the concentration inequality in Lemma~\ref{thm:highProbD}. We then conclude that $R(f_{\muhat_{S,k}, r_{2k}}) \ge \frac{k}{2n}$, with probability at least $1-2\exp(-c'd\log n)$.

Setting $r' = 4r_{2k} \left(\frac{2n}{k}\right)^{1/d}$, it thus suffices to show that $R(f_{r', r_{2k}}) \le \frac{k}{2n}$. By Lemma~\ref{lemma:prop2}(iv), we have
\begin{equation*}
R(f_{r', r_{2k}}) \le \frac{k}{2n} \cdot R^*_{r'} \le \frac{k}{2n},
\end{equation*}
as wanted.

\section{Proofs for the hybrid estimator} %

In this appendix, we provide proofs of the results related to the single- and multi-dimensional hybrid estimator.

\subsection{Proof of Lemma~\ref{lemma:median}}
\label{app:k_median}

The $k$-median was defined using $\psi_n$. It is therefore instructive to study the properties of the population-level quantity $\psi(\theta) \coloneqq \E \psi_n(\theta)$.  For $\theta > 0$, we have
\begin{align*}
\psi(\theta) &\coloneqq \E \psi_n(\theta) = \frac{1}{n} \sum_{i=1}^n \E[\text{sign}(\theta - X_i)] 
\\
 &
 = \frac{1}{n} \sum_{i=1}^n\P( -\theta \leq X_i < \theta) = R(f_{0,\theta})  = R^*_{\theta}.
\end{align*}
In particular $\psi(r_{k}) = R^*_{r_k} = \frac{k}{n}$.
Similarly, for $\theta < 0$, we have $\psi(\theta) = -R^*_\theta$.

We will focus only on the error on the positive side, i.e., $\widehat{\theta}_{\text{med},k} > r_{k + \delta}$. The analysis for $\widehat{\theta}_{\text{med},-k} < - r_{k + \delta}$ is similar by symmetry.  
Recall that $\psi_n(\widehat{\theta}_{\text{med},k}) = \frac{k}{n}$ a.s., so by monotonicity of $\psi_n$, it follows that
\begin{align*}
\P\(  \widehat{\theta}_{\text{med},k} > r_{k+\delta} \) &\leq \P\(  \psi_n(r_{k + \delta}) \le \frac{k}{n} \)  \\
&= \P\(  \psi_n(\epsilon) - \psi(\epsilon) \le -\(\psi(\epsilon) - \frac{k}{n}\) \).
  \end{align*}
Since $\psi_n(\cdot) - \psi(\cdot)$ is a centered sum of independent bounded random variables, we may apply Hoeffding's inequality on its negative tail. By assumption,  $\psi(\epsilon)  - \frac{k}{n} = \delta \ge 0$. Therefore,
\begin{align*}
\P\(  \widehat{\theta}_{\text{med},k} > \epsilon \)  \leq \exp\(-cn\(\psi(\epsilon) - \frac{k}{n}\)^2\) \le \exp(-cn\delta^2).
\end{align*}

\subsection{Proof of Proposition~\ref{prop:exmMedian}}
\label{app:PropExmMedian}

In the following, we will show the bounds on $r_{2\sqrt{n}\log n}$, which gives us the result:
\begin{enumerate}
	\item As in the proof of Proposition~\ref{prop:exmple_r_k}, we have $r_k = \Theta\(\frac{\sigma k}{n}\)$ for small $k$.
	\item By Lemma~\ref{lemma:properties}\ref{prop:1}, we have $r_{2\sqrt{n}\log n}\leq 2\sigma_{(4\sqrt{n}\log n )} = \O\(\sqrt{n}\log n\)$.
	\item Note that for any fixed $k$, the value of $r_{k}$ for Example~\ref{exm:alpha-mix} is smaller than the value of $r_{k}$ for Example~\ref{exm:IID} with $\sigma = n^\alpha$.
	Thus, we have $r_{2 \sqrt{n}\log n} = \O\(\frac{n^\alpha \sqrt{n}\log n }{n}\) = \O\( n^{\alpha-0.5}\log n\) $. 
\end{enumerate}

\subsection{Proof of Lemma~\ref{LemCubOrigin}}
\label{AppLemCubOrigin}

We first show (i). Note that by Lemma~\ref{lemma:kmedianContains0}, we know that for a fixed $i$, we have $0 \in [\min(S_{k,i}, \max(S_{k,i}))]$ with probability at least $1 -  2\exp(-k^2/n)$. Taking a union bound gives the desired result.

Note that (ii) follows from Lemma~\ref{lemma:median} and a union bound.

\subsection{Proof of Theorem~\ref{thm:hybrid}}
\label{app:proof_hybrid}

\begin{proof}
Let $r' = \frac{4\sqrt{n}\log n}{k_2} r_{2k_2}$.
We break down the analysis in two cases:

\paragraph{\textbf{Case 1:}} Suppose $R^*_{r'} \geq \frac{2\log n}{\sqrt{n}} $. Note that $r' \geq r_{2k_1}$, so
  by Lemma~\ref{lemma:median}, we have $\max(S_{k_1}) \leq r'$ and $\min(S_{k_1}) \geq -r'$, with probability at least $1 - 2\exp(-c\log^2n)$.
  Since the final prediction is always within the set spanned by $S_{k_1}$, we must have $ |\widehat{\mu}_{k_1,k_2}| \leq  r'$ with probability at least $1 - 2\exp(-c\log^2n)$.

\paragraph{\textbf{Case 2:}} Suppose $R^*_{r'} < \frac{2 \log n}{\sqrt{n}}$. We will first show that $ |\widehat{\mu}_{S,k_2}| \leq r'$. Similar to the proof of Theorem~\ref{cor:shorth}, it suffices to show that $R(f_{r',r_{2k_2}}) < \frac{k_2}{2n}$. Indeed, we have
\begin{align*}
R(f_{r',r_{2k_2}}) < \frac{r_{2k_2} }{ r'  }R^*_{r'} <  \frac{1}{\frac{4\sqrt{n}\log n}{k_2}}\frac{2 \log n}{ \sqrt{n}} = \frac{k_2}{2n},
\end{align*}
with probability at least $1 - 2\exp(-c'k_2)$.

However, we are still not completely done. We still have to show that the final prediction, $\widehat{\mu}_{k_1,k_2}$, is not any worse. Based on Algorithm~\ref{alg:hybrid2}, we have two cases to consider:
\begin{enumerate}
  \item $\widehat{\mu}_{S,k_2} \in [\min(S_{k_1}),\max(S_{k_1})]$.

    Then the output is the shortest gap estimator itself, so $|\widehat{\mu}_{k_1,k_2}| =  |\widehat{\mu}_{S,k_2}|$.
    
  \item $\widehat{\mu}_{S,k_2} \not\in [\min(S_{k_1}),\max(S_{k_1})]$.
  
  We will use the following lemma, a slight generalization of Lemma $4.1$ from Chierichetti et al.~\cite{ChiEtAl14}. The lemma states that for sufficiently large values of $k$, the $k$-median contains the true mean, $\mustar = 0$, with high probability.

\begin{lemma}
\label{lemma:kmedianContains0}
Let $S_k$ be the output of the $k$-median algorithm. Then with probability at least $1 - 2\exp\(-c\frac{k^2}{n}\)$, we have $0 \in [\min(S_k),\max(S_k)]$. 
\label{lemma}
\end{lemma}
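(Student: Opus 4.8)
The plan is to reduce the statement to a one-dimensional concentration bound for the empirical sign process evaluated at the true mean, and then convert that bound into the desired containment using the deterministic structure of the $k$-median. Recall that $\psi_n(\theta) = \frac1n\sum_{i=1}^n \text{sign}(\theta - X_i)$ is nondecreasing, and since each $P_i$ has a density, with probability one all the $X_i$ are distinct and nonzero, so $\psi_n$ is a step function whose jumps occur only at the data points. By definition $\widehat{\theta}_{\text{med},k} = \inf\{\theta : \psi_n(\theta) \ge k/n\}$ and $\widehat{\theta}_{\text{med},-k} = \sup\{\theta : \psi_n(\theta) \le -k/n\}$, and $S_k$ is exactly the set of data points lying in $[\widehat{\theta}_{\text{med},-k}, \widehat{\theta}_{\text{med},k}]$.

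First I would establish the deterministic implication: on the event $\{|\psi_n(0)| < k/n\}$, intersected with the probability-one event that no $X_i = 0$, monotonicity of $\psi_n$ forces $\psi_n(\theta) < k/n$ for all $\theta \le 0$, hence $\widehat{\theta}_{\text{med},k} > 0$, and symmetrically $\widehat{\theta}_{\text{med},-k} < 0$. Moreover, because $\psi_n$ can only increase through the level $k/n$ at a data point, there must be at least one data point in $(0, \widehat{\theta}_{\text{med},k}]$; this point lies in $S_k$ and is positive, so $\max(S_k) > 0$. The symmetric argument gives a data point in $[\widehat{\theta}_{\text{med},-k}, 0)$, so $\min(S_k) < 0$, and therefore $0 \in [\min(S_k), \max(S_k)]$.

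Second I would supply the probabilistic ingredient. Writing $\psi_n(0) = \frac1n\sum_{i=1}^n W_i$ with $W_i := \text{sign}(-X_i)$, the $W_i$ are independent, take values in $[-1,1]$, and by the symmetry of $P_i$ about $0$ satisfy $\E W_i = \mprob(X_i < 0) - \mprob(X_i > 0) = 0$. Hoeffding's inequality then gives $\mprob(|\psi_n(0)| \ge k/n) = \mprob\bigl(\bigl|\sum_i W_i\bigr| \ge k\bigr) \le 2\exp(-k^2/(2n))$, which is of the claimed form $2\exp(-ck^2/n)$. Combining the two steps finishes the proof.

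The routine but slightly delicate part---and essentially the only place requiring care---is the deterministic bookkeeping in the first step: tracking whether the infimum and supremum defining $\widehat{\theta}_{\text{med},\pm k}$ are attained, and confirming that the step function $\psi_n$ genuinely produces a data point strictly on the correct side of $0$ when it crosses $\pm k/n$. This is where the almost-sure distinctness and nonvanishing of the $X_i$ enter, and it is essentially the same argument already used in the proof of Lemma~\ref{lemma:median}; the statement mildly generalizes Lemma 4.1 of Chierichetti et al.~\cite{ChiEtAl14} to the independent, non-identically-distributed setting.
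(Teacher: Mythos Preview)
Your proposal is correct and follows essentially the same approach as the paper: both reduce the containment $0\in[\min(S_k),\max(S_k)]$ to the event $|\psi_n(0)|<k/n$ via monotonicity of $\psi_n$, and then bound $\mprob(|\psi_n(0)|\ge k/n)$ by Hoeffding's inequality using that $\E\,\text{sign}(-X_i)=0$ by symmetry. Your deterministic bookkeeping (verifying that a data point actually lands on each side of $0$) is in fact slightly more careful than the paper's, which tacitly identifies $\max(S_k)$ with $\widehat{\theta}_{\text{med},k}$.
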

\begin{proof}
We bound the probability that $\max(S_k) < 0$; the bound for $\min(S_k) > 0$ is analogous. If $\max(S_k) < 0$, then $\psi_n(0) \ge \frac{k}{n}$ by monontonicity of $\psi_n$ and the fact that $\max(S_k) = \widehat{\theta}_{\text{med},k}$ and $\psi_n(\widehat{\theta}_{\text{med},k}) = \frac{k}{n}$.
By Hoeffding's inequality, we then have
\begin{align*}
\P\(\max(S_k) < 0\) & \leq \P\( \psi_n(0) \ge \frac{k}{n} \) = \P\left(\psi_n(0) - \psi(0) \ge \frac{k}{n}\right) 
\\
& \leq \exp\left(-cn \cdot \frac{k^2}{n^2}\right) = \exp\(-c\frac{k^2}{n}\).
\end{align*}
\end{proof}

    By Lemma~\ref{lemma:kmedianContains0}, we have $0 \in [\min(S_{k_1}), \max(S_{k_1})]$ with probability at least $1 - 2\exp(-c\log^2n)$. If this event holds, then the output $\widehat{\mu}_{k_1, k_2} =$ closestPoint$(S_{k_1}, \widehat{\mu}_{S,k_2})$ must lie between $\widehat{\mu}_{S,k_2}$ and 0. It follows that $|\widehat{\mu}_{k_1,k_2}| \le |\widehat{\mu}_{S,k_2}|$.

Altogether, we conclude that $|\widehat{\mu}_{k_1,k_2}| \leq r'$, with probability at least $1 - 2\exp(-c'k_2) - 4\exp(-c\log^2n)$. 
\end{enumerate}
\end{proof}

\subsection{Proof of Theorem~\ref{thm:HybridScreening}}
\label{AppThmHybridMult}

We first derive an upper bound of $\sqrt{d} r_{ 2\sqrt{n}\log n,1}$.
We begin by deriving the following lemma, relating the statistics of marginal distributions to the statistics of the overall distribution:

\begin{lemma}
\label{LemRkMarginal}
We have that $r_{\frac{k}{2},1} \leq \frac{C}{\sqrt{d} } r_{k}$, for some absolute constant $C > 0$ and any $k \leq n$.
\end{lemma}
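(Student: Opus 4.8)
The plan is to reduce the lemma to the single inequality
\[
\mprob_{\overline{P}}\!\left(|X(1)| \le \tfrac{2 r_k}{\sqrt d}\right) \ge \frac{k}{2n},
\]
where $X(1)$ denotes the first coordinate of $X \sim \overline{P}$. Indeed, once this is established, the definition of $r_{k/2,1}$ as the infimum of half-widths $s$ with $\mprob_{\overline{P}_1}(|X(1)|\le s)\ge \frac{k/2}{n}$ immediately gives $r_{k/2,1}\le \frac{2}{\sqrt d}r_k$, i.e.\ the claim with $C=2$ (recall $\overline{P}_1$ is the common marginal of $\overline P$).

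First I would condition on the ball $B_{r_k}$. Since each $P_i$ has a density, so does $\overline{P}$, hence $r\mapsto R^*_r=R(f_{0,r})$ is continuous and $\mprob_{\overline{P}}(\|X\|_2\le r_k)=R^*_{r_k}=\frac{k}{n}>0$; thus conditioning on $B_{r_k}$ is legitimate. Because $B_{r_k}$ is itself radially symmetric and $\overline{P}$ is radially symmetric, the conditional law of $X$ given $\|X\|_2\le r_k$ is again radially symmetric, and in particular its coordinates are exchangeable (and sign-symmetric). Therefore
\[
\E\!\left[X(1)^2 \,\middle|\, B_{r_k}\right] \;=\; \frac1d\,\E\!\left[\|X\|_2^2 \,\middle|\, B_{r_k}\right] \;\le\; \frac{r_k^2}{d},
\]
the last inequality being trivial since $\|X\|_2^2\le r_k^2$ pointwise on $B_{r_k}$.

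Next I would apply Markov's inequality to $X(1)^2$ under the conditional measure:
\[
\mprob\!\left(|X(1)|> \tfrac{2r_k}{\sqrt d} \,\middle|\, B_{r_k}\right) \le \frac{\E[X(1)^2\mid B_{r_k}]}{4 r_k^2/d} \le \frac14,
\]
so $\mprob(|X(1)|\le \tfrac{2r_k}{\sqrt d}\mid B_{r_k})\ge \tfrac34$. Multiplying by $\mprob(B_{r_k})=\tfrac kn$ and using $\{|X(1)|\le \tfrac{2r_k}{\sqrt d}\}\cap B_{r_k}\subseteq\{|X(1)|\le \tfrac{2r_k}{\sqrt d}\}$ yields $\mprob_{\overline P}(|X(1)|\le \tfrac{2r_k}{\sqrt d})\ge \tfrac{3k}{4n}\ge \tfrac{k}{2n}$, which is the reduced inequality above; taking $C=2$ finishes the proof. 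The only point requiring care—and hence the "main obstacle," though a minor one—is justifying that conditioning on $B_{r_k}$ preserves radial symmetry and thus coordinate exchangeability, which is exactly what turns the pointwise bound $\|X\|_2\le r_k$ into the per-coordinate bound $\E[X(1)^2\mid B_{r_k}]\le r_k^2/d$ with the crucial $1/d$ factor.
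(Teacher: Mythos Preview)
Your proof is correct, but it takes a genuinely different route from the paper's. The paper decomposes the radially symmetric distribution (restricted to $B_{r_k}$) as a mixture of uniform distributions on spheres, and then invokes a concentration result for the uniform measure on the sphere (Theorem~3.4.6 in Vershynin~\cite{Ver18}) to argue that for each sphere of radius $r\le r_k$, at least half its mass projects into the slab $[-Cr/\sqrt d,\,Cr/\sqrt d]\times\real^{d-1}$; integrating over shells then gives the conclusion. You bypass this external result entirely: conditioning on $B_{r_k}$ preserves radial symmetry, so the coordinates are exchangeable, whence $\E[X(1)^2\mid B_{r_k}]=\tfrac1d\,\E[\|X\|_2^2\mid B_{r_k}]\le r_k^2/d$, and Markov finishes the job. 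Your argument is more elementary and self-contained, and it even yields the explicit constant $C=2$, whereas the paper's constant is inherited from Vershynin's theorem. The paper's approach is morally sharper in that sphere concentration gives sub-Gaussian tails on each coordinate rather than just a second-moment bound, but for this lemma only the second-moment estimate is needed, so your Chebyshev-type argument is entirely adequate.
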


\begin{proof}
Consider a uniform distribution on a sphere (or shell) of radius $r$ in $\real^d$. Theorem 3.4.6 in Vershynin~\cite{Ver18} provides a concentration result which states that most of the probability of such a distribution lies close to the equator; i.e., the set $\left[-\frac{Cr}{\sqrt{d}}, \frac{Cr}{\sqrt{d}}\right] \times \real^{d-1}$  contains at least half the probability for some absolute constant $C > 0$. Notice that a radially symmetric distribution is simply a weighted sum of uniform distributions on spheres. Thus, given a radially symmetric distribution restricted to the ball of radius $r$, the set $\left[-\frac{Cr}{\sqrt{d}}, \frac{Cr}{\sqrt{d}}\right] \times \real^{d-1}$ will contain at least half the total probability assigned to the ball.

By our definition of $r_k$, the ball of radius $r_k$ centered at origin, $\B_{r_k}$, contains $\frac{k}{n}$ probability mass. The above argument implies that the set $\left[-\frac{Cr_k}{\sqrt{d}}, \frac{Cr_k}{\sqrt{d}}\right] \times \real^{d-1}$ will contain at least half the probability of the total probability contained in $\B_{r_k}$. Equivalently, $r_{\frac{k}{2},1} \leq \frac{C}{\sqrt{d} } r_{k}$. 
\end{proof}

Since the output of the hybrid algorithm must lie within the cuboid $S_{\sqrt{n} \log n}^\infty$, it is clear that we have the error bound
\begin{equation*}
\|\muhat_{k_1, k_2}\|_2 \le \sqrt{n}^{1/d} \cdot \sqrt{d} r_{ 2\sqrt{n}\log n,1}.
\end{equation*}

To obtain the second upper bound expression, we parallel the proof of Theorem~\ref{thm:hybrid}, by splitting into two cases:

\paragraph{\textbf{Case 1:}} $r_{4\sqrt{n} \log n} \le \sqrt{n}^{1/d} r_{8d \log n}$.
By Lemma~\ref{LemRkMarginal}, we therefore have
\begin{equation*}
r_{ 2\sqrt{n} \log n,1} \le \frac{C}{\sqrt{d}} r_{4\sqrt{n} \log n} \le \frac{C}{\sqrt{d}} \cdot \sqrt{n}^{1/d} r_{8d\log n}.
\end{equation*}
By Lemma~\ref{LemCubOrigin}, w.h.p., the cuboid $S_{\sqrt{n} \log n}^\infty$ is entirely contained in the $\ell_2$-ball of radius $\sqrt{d} r_{ 2\sqrt{n} \log n,1}$ around the origin. This ball in turn lies inside the $\ell_2$-ball of radius $C \sqrt{n}^{1/d} r_{8d\log n}$ around the origin. Since the output of the hybrid algorithm must also lie within this ball, the desired result follows.

\paragraph{\textbf{Case 2:}} $r_{4\sqrt{n} \log n} > \sqrt{n}^{1/d} r_{8d \log n}$. Denoting $r' = \sqrt{n}^{1/d} r_{8d \log n}$, we therefore have the relation $R^*_{r'} < \frac{4\sqrt{n} \log n}{2n}$. In particular, since
\begin{equation*}
R(f_{\muhat_{S, 8d\log n}, r_{8d\log n}}) \ge R_n(f_{\muhat_{S, 8d\log n}, r_{8d\log n}}) - \frac{1}{2} R^*_{r_{8d\log n}} = \frac{8d \log n}{4n},
\end{equation*}
w.h.p., by Lemma~\ref{thm:highProbD}, we have
\begin{equation*}
R(f_{r', r_{8d \log n}}) \le \left(\frac{1}{\sqrt{n}^{1/d}}\right)^d R^*_{r'} < \frac{1}{\sqrt{n}} \cdot \frac{2\log n}{\sqrt{n}} = \frac{8d\log n}{4n} \le R(f_{\muhat_{S, 8d\log n}, r_{8d \log n}}).
\end{equation*}
This implies that $\muhat_{S, 8d \log n}$ is within $r'$ of the origin.

Finally, we need to show that projecting the shorth estimator on the cuboid does not increase its distance from the origin. Note that $\ell_2$-projection onto a cuboid is simply a componentwise operation of projection on each interval defining an edge of the cuboid. Furthermore, Lemma~\ref{LemCubOrigin} guarantees that the origin lies within the cuboid, w.h.p., in which case each interval contains 0. As argued in the proof of Theorem~\ref{thm:hybrid}, the distance from the shorth estimator to the origin computed along any dimension will not increase after the projection. Therefore, the $\ell_2$-norm of the projected estimator is also upper-bounded by $r'$.

Hence, if we take $C' = \max\{C, 1\}$, we have the desired bound in both cases. This concludes the proof.

\section{Proofs for expected error bounds}
\label{app:ExpBounds}

In this appendix, we prove the results stated in Section~\ref{SecExpect}.

\subsection{Proof of Proposition~\ref{prop:HighExp2}}
\label{app:HighExp2}

The proof sketch is that we will show that with finite probability, no interval contains more than one low-variance point, and all the high-variance points lie far from origin. Conditioned on this event, the modal interval estimator incurs a high error. 

Let $E = A \cap B$, where we define the events
\begin{align*}
 A  & =\{ R_n(f_{x,1}) \leq 1, \quad \forall x: |x| \leq 3C\log n\}, \\
B & = \{ X_i \not \in [-4C\log n, 4C\log n], \quad \forall i > C\log n\}.
 \end{align*}
Hence, on the event $E$, no interval overlapping with $[-3C\log n, 3C\log n]$ contains two low-variance points or a single high-variance point. Then $\P(E)$ is lower-bounded by
\begin{align*}
\P(E) &\geq  \(\prod_{i=1}^{C\log n}  \P\{X_i \in  [3i -3 , 3i - 2]\}\)\(\prod_{i > C\log n} \P\{X_i \not \in [-4C\log n, 4C\log n]\} \) \\
	 &= \(\prod_{i=1}^{C\log n} \frac{1}{6i} \)\(\prod_{i > C\log n} (1 - n^{- \alpha} - h_n (8C\log n -2)) \) \\
	 &\geq \frac{1}{6^{C \log n}\Gamma(3C \log n)} e^{-c n^{1 - \alpha}} \\
	 &\geq  \exp\( -cn^{1 - \alpha} - \O\(\log^2n \)  \),
\end{align*}
assuming $h_n \log n \ll n^{-\alpha}$, which happens for $q_n = \Omega(n)$.

However, conditioned on $E$, the points $\{X_i\}_{i > C \log n}$ are \iid with the following distribution:
\begin{align*}
 p_{i,E}(x) = \begin{cases} 0 , & |x| \leq  4C \log n, \\
					\frac{h_n}{(1 - n^{- \alpha} - h_n (8C\log n -2)) }, &4C \log n <  |x| \leq q_n, \\
					0, & \text{otherwise}.
		\end{cases}
\end{align*}
We can now apply the symmetry arguments of Lemma~\ref{LemUnifSym}.
Note that no interval lying inside $[-3C\log n, 3C\log n]$ can contain more than one point.
Thus unless a tie occurs, the mode will be located outside the interval $[-3C\log n, 3C \log n]$, and hence a distance of $\Theta(q_n)$ away from the mean in expectation.
Even if we were to break ties randomly, a large error would occur with probability at least $\frac{1}{n}$, since at most $n$ ties can occur.
Thus,
\begin{equation*}
\E[|\widehat{\mu}_{M,1}||E] \geq \P(E) \E[|\widehat{\mu}_{M,1}||E] 
				\geq \exp( -cn^{1 - \alpha})\Theta(q_n).
\end{equation*}
The bounds in high probability follow from Theorem~\ref{thm:modalIntervalMain} by noting that $nR^*_r = \Omega\(n^{-\alpha}\) = \Omega(\log n)$. Moreover, the density drops by at least half at $x > 1$.

\subsection{Proof of Theorem~\ref{ThmExpBound}}
\label{AppThmExpBound}

We begin by proving (i). By Theorem~\ref{LemModalD}, we have
\begin{align*}
\|\muhat_{M,r}\|_2 = \O\(r\left(\frac{c}{R^*_r}\right)^{1/d}\),
\end{align*}
with probability at least most $1 - \O(\exp\(-c' nR^*_r\))$. In the worst case, the modal interval estimator returns the point which is furthest from the origin, which has expected value bounded as
\begin{equation*}
\E\left[\max_{i} \|X_i\|_2\right] \leq \E\left[\sqrt{\sum_{i=1}^n \|X_i\|_2^2}\right] \le \sqrt{\sum_{i=1}^n \E[\|X_i\|_2^2]} \le \sqrt{n \cdot d \sigma_{(n)}^2}.
\end{equation*}

Using the assumption that $\sigma_{n} \leq r \exp(CnR^*_r)$, for some constant $C > 0$, we then have
\begin{align*}
\E\|\muhat_{M,r}\|_2 & \leq \O\(r\left(\frac{c}{R^*_r}\right)^{1/d}\) + \O(\exp\(-c' nR^*_r\)) \sqrt{nd} \sigma_{(n)} 
\\
& \leq \O\(r\left(\frac{c}{R^*_r}\right)^{1/d}\) +  \O\(\exp\(-c'nR^*_r\) r \sqrt{nd} \exp(CnR^*_r)\) \\
& = \O \(r\left(\frac{c}{R^*_r}\right)^{1/d}\),
\end{align*}
where in the last inequality, we use the facts that
\begin{equation*}
\exp(-c' nR_r^*) \sqrt{nd} = O(\exp(-c'' n R_r^*))
\end{equation*}
and $nR^*_r = \Omega\(d\log n\)$, and choose $C < c''$.

Turning to (ii), we first prove the following concentration result, which may be viewed as a refinement of Lemma~\ref{thm:highProb} that is suitable for our settings. For example, note that if $R^*_{\cJ} =\O\left(\frac{1}{n}\right)$, the derivations from Lemma~\ref{thm:highProb} would not be meaningful since $R^*_{\cJ} = o\left(\frac{\log n}{n}\right)$. On the other hand, if $KR^*_{\cJ} = \Theta\left(\frac{\log n}{n}\right)$, Lemma~\ref{LemBadIntErr} gives a vanishing upper bound. 

\begin{lemma}
Let $\cJ$ be a set of intervals and define $R^*_{\cJ} := \sup_{f \in \cJ} R(f)$. If $R^*_{\cJ} \le \frac{1}{3}$, then for any $K \geq 8$, we have
\begin{align*}
\P\left\{\sup_{f \in \cJ } R_n(f) \geq K R^*_{\cJ}\right\}   &\leq \frac{2}{R^*_\cJ} \exp\(- cn R^*_{\cJ}  K \log K  \).
\end{align*}
\label{LemBadIntErr}
\end{lemma}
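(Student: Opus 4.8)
The plan is to mimic the VC-type symmetrization and chaining argument underlying Lemma~\ref{thm:highProb}/Lemma~\ref{thm:highProbD}, but to track the multiplicative deviation more carefully in the regime where the target mass $R^*_{\cJ}$ is so small that $nR^*_{\cJ}$ need not dominate $\log n$. The key observation is that the event $\{\sup_{f\in\cJ}R_n(f)\ge KR^*_{\cJ}\}$ forces the empirical average of some indicator to exceed its mean by a large multiplicative factor $K$, and for sums of Bernoulli-type random variables a multiplicative Chernoff bound gives an exponent of order $nR^*_{\cJ}K\log K$ rather than $nR^*_{\cJ}K^2$. The proof therefore has two ingredients: (1) a union/peeling bound over the function class $\cJ$, controlled by the VC dimension (here $2$, since intervals have VC dimension $2$), and (2) the multiplicative Chernoff tail for a fixed $f$.

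Concretely, first I would set $t=KR^*_{\cJ}$ and invoke the symmetrization inequality: for $nt\ge 8$ (guaranteed since $K\ge 8$ and we may assume $nR^*_{\cJ}\ge 1$, or handle the trivial case separately), $\mprob\{\sup_{f\in\cJ}(R_n(f)-R(f))\ge t/2\}\le 2\,\mprob\{\sup_{f\in\cJ}|R_n^\circ(f)|\ge t/4\}$ where $R_n^\circ$ is the Rademacher/ghost-sample symmetrized process. Then I would apply the VC union bound: the number of distinct sign patterns of $\cJ$ on $2n$ points is at most $(2n)^2$ (Sauer–Shelah with VC dimension $2$), so a union bound reduces the problem to controlling a single symmetrized indicator sum at level $t/4$ and multiplying the tail by $(2n)^2$. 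For a single $f$ with $R(f)\le R^*_{\cJ}$, the multiplicative Chernoff bound gives $\mprob\{R_n(f)\ge KR^*_{\cJ}\}\le \exp(-c\,nR^*_{\cJ}\,K\log K)$. Combining, the total failure probability is bounded by $(2n)^2\exp(-c'\,nR^*_{\cJ}K\log K)$, and since $K\log K$ grows, the $(2n)^2$ prefactor is absorbed into the exponent, leaving a bound of the form $\tfrac{2}{R^*_{\cJ}}\exp(-c\,nR^*_{\cJ}K\log K)$ once one checks that $\log((2n)^2)\le \tfrac12 c'\,nR^*_{\cJ}K\log K + \log(2/R^*_{\cJ})$; the slack term $\log(2/R^*_{\cJ})$ is exactly what produces the $\tfrac{2}{R^*_{\cJ}}$ prefactor in the statement, and is needed precisely because we do not assume $nR^*_{\cJ}=\Omega(\log n)$.

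The main obstacle I anticipate is the bookkeeping in the regime where $nR^*_{\cJ}$ is small: one must verify that the $(2n)^2$ VC prefactor can genuinely be traded against $nR^*_{\cJ}K\log K$ without any lower-bound assumption on $R^*_{\cJ}$, which is why the cleaner statement carries the explicit $\tfrac{2}{R^*_{\cJ}}$ factor rather than a constant. A secondary subtlety is that the standard symmetrization step requires $nt\gtrsim 1$; if $nR^*_{\cJ}K<1$ the claimed bound is vacuous anyway (the right-hand side exceeds $1$), so that case is trivial. One should also double-check the constant regime $R^*_{\cJ}\le\tfrac13$ and $K\ge 8$ is used only to ensure $KR^*_{\cJ}$ stays below $1$ so that the deviation event is nontrivial and the multiplicative Chernoff exponent $K\log K - K + 1 \ge \tfrac12 K\log K$ holds. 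Modulo these checks, the argument is a routine adaptation of the peeling proof of Lemma~\ref{thm:highProb} with the additive Bernstein bound replaced by the multiplicative Chernoff bound.
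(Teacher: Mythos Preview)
Your plan has a methodological gap in how it combines symmetrization with the multiplicative Chernoff exponent. After you symmetrize and take a union over the $(2n)^2$ sign patterns from Sauer--Shelah, the remaining single-term tail is a \emph{Rademacher} sum $\frac{1}{n}\sum_i\epsilon_i\1_{X_i\in A}$ conditioned on the sample, not the original Bernoulli sum. Hoeffding on this Rademacher sum yields a sub-Gaussian exponent of order $n(KR^*_{\cJ})^2/R^*_{\cJ}=nR^*_{\cJ}K^2$ over the (random) variance proxy, not the $nR^*_{\cJ}K\log K$ you want; the multiplicative Chernoff bound you quote applies to a \emph{fixed} $f$ before symmetrization, and you cannot simply swap it in afterward. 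Moreover, even granting the exponent, the absorption of $(2n)^2$ into $\tfrac{2}{R^*_{\cJ}}\exp(-c\,nR^*_{\cJ}K\log K)$ fails in the regime you yourself flag: take $R^*_{\cJ}=\tfrac{\log n}{n}$ and $K=8$, so that $nR^*_{\cJ}K\log K=\Theta(\log n)$; your bound is $4n^2\cdot n^{-\Theta(1)}$ while the lemma's is $\tfrac{2n}{\log n}\cdot n^{-\Theta(1)}$, and for small constants the former diverges while the latter vanishes.

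The paper's proof avoids both issues by dispensing with VC/symmetrization entirely. In one dimension you can build an explicit \emph{finite} covering family $\cF$ of $\lceil 1/R^*_{\cJ}\rceil\le 2/R^*_{\cJ}$ consecutive intervals, each carrying $\overline{P}$-mass in $[R^*_{\cJ}/2,\,R^*_{\cJ}]$ (construct them greedily from the quantiles of $\overline{P}$). Every interval $f\in\cJ$ has $R(f)\le R^*_{\cJ}$, hence is pointwise dominated by the union of at most two consecutive members $f_1,f_2\in\cF$; so $\sup_{f\in\cJ}R_n(f)\ge KR^*_{\cJ}$ forces some $f_j\in\cF$ to satisfy $R_n(f_j)\ge \tfrac{K}{2}R^*_{\cJ}\ge \tfrac{K}{2}R(f_j)$. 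Now a plain union bound over the $|\cF|\le 2/R^*_{\cJ}$ \emph{fixed} intervals, together with the multiplicative Chernoff bound (Lemma~\ref{LemChernoff}(ii)) for each, gives exactly $\tfrac{2}{R^*_{\cJ}}\exp(-c\,nR^*_{\cJ}K\log K)$. The $\tfrac{2}{R^*_{\cJ}}$ prefactor is thus the size of the covering, not a residual from trading against $\log n$, and the $K\log K$ exponent is legitimate because Chernoff is applied to fixed (non-random) sets.
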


\begin{proof}
For a given $f \in \cJ$, the desired bound follows from Chernoff's inequality. 
We want to upper-bound the probability that any one interval in $\cJ$ has too many points. 
In general, the set $\cJ$ may be infinite, so a direct union bound is not feasible.
We thus create a new finite set of intervals $\cF$, not necessarily a subset of $\cJ$, satisfying the following properties: 
\begin{enumerate}
 	\item For each $f \in \cF$, we have $\frac{R^*_{\cJ}}{2} \le R(f) \le R^*_{\cJ}$.
 	\item $|\cF| \leq \frac{2}{R^*_{\cJ}}$.
 	\item $\cF$ covers $\cJ$ in the sense that $\forall f \in \cJ, \exists f_1,f_2 \in \cF: f(x) \leq f_1(x) + f_2(x)$.
 \end{enumerate} 
It follows that if any interval in $\cJ$ contains at least $k$ points, then at least one interval in $\cF$ contains at least $\frac{k}{2}$ points. We construct $\cF$ of cardinality $|\cF| = \lceil \frac{1}{R^*_{\cJ}} \rceil \leq \frac{2}{R^*_\cJ}$, as follows: To create the first interval ($i=1$),  define $x_1 \in \real$ such that $R(\1_{(-\infty,x_1])}) = \frac{1}{|\cF|}$. (Such an $x_1$ exists because $\overline{P}$ is assumed to have a density.) Then iteratively, for each $i \geq 1$, define $x_i$ such that $R(\1_{(x_{i-1},x_i]}) = \frac{1}{|\cF|}$. For the final interval, add $\1_{[x_{i-1}, \infty)}$ to $\cF$ and terminate the construction. Note that for each $f \in \cF$, we have $R(f) = \frac{1}{\lceil 1/R^*_{\cJ} \rceil}$, which clearly lies in $\left[\frac{R^*_{\cJ}}{2}, R^*_{\cJ}\right]$ under the assumptions.

 We are now ready to apply the union bound on $\cF$ using Lemma~\ref{LemChernoff}(ii):
 \begin{align*}
\P\left\{\sup_{f \in \cJ} R_n(f) \geq KR^*_{\cJ}\right\} 
&\leq \P\left\{\sup_{f \in \cF} R_n(f) \geq \frac{KR^*_{\cJ}}{2}\right\}\\
 &\leq |\cF| \P\left\{ R_n(f) \geq \frac{KR^*_{\cJ}}{2} \text{ for a fixed } f \text{ with } R(f) \leq R^*_{\cJ}\right\}\\
 &\leq \frac{2}{R^*_{\cJ}}\exp\(-cnR^*_{\cJ} K \log K \).
 \end{align*} 
\end{proof}

For an $s \geq 0$, let $\cJ_s = \{f_{x,r} : \|x\|_2 \geq s\}$, i.e., the set of intervals which incur large error.
By assumption, the support of at least $CnR^*_r$ points is contained in $[-r,r]$, implying that $R_n(f_{0,r}) \geq CR^*_r$, a.s.
If $\|\muhat_{M,r}\|_2 \geq s$, then $ \sup_{f \in \cJ_s} R_n(f) \geq CR^*_r$.
However as $s$ increases, the quantity $R^*_{\cJ_s} := \sup_{f \in \cJ_s}  R(f) = R(f_{s,r})$ decreases.
We can then use Lemma~\ref{LemBadIntErr} to control this probability of error.

For $s \geq \frac{K r}{CR^*_r}$, it follows from Lemma~\ref{lemma:prop2}\ref{prop2:4} that $R^*_{\cJ_s} = R(f_{s,r}) \leq \frac{CR^*_{r}}{K}$. Taking $K \ge C'$, we then have
\begin{align*}
\P\{|\widehat{\mu}_{M,r}| \geq s\} &\leq \P\left(\sup_{f \in \cJ_s} R_n(f) \geq CR^*_r\right) \\
& = \P\left(\sup_{f \in \cJ_s} R_n(f) \geq \frac{CR^*_r}{R^*_{\cJ_s}} R^*_{\cJ_s}\right) \\
		&\leq  \frac{2}{R^*_{\cJ_s}} \exp\(- cnR^*_{\cJ_s} \frac{CR^*_r}{R^*_{\cJ_s}} \log\(\frac{CR^*_r}{R^*_{\cJ_s}}\)  \) \\
		&= \frac{2}{R^*_r}\exp\(- cCnR^*_r \log\(\frac{CR^*_r}{R^*_{\cJ_s}}\)  + \log\(\frac{R^*_r}{R^*_{\cJ_s}}\)\) \\
		&\leq \frac{2}{R^*_r} \exp\(- c'nR^*_r \log\(\frac{R^*_r}{R^*_{\cJ_s}}\) \),
\end{align*}
where we have applied Lemma~\ref{LemBadIntErr} in the second inequality. Thus,
\begin{align*}
\E|\muhat_{M,r}| &\leq \frac{4r}{CR^*_r} + \int_{\frac{4r}{CR^*_r} }^ \infty  \P\{|\widehat{\mu}_{M,r}| \geq s\} ds \\
				&\leq \O\(\frac{r}{R^*_r}\) + \frac{2}{R^*_r} \int_{\frac{4r}{CR^*_r} }^ \infty \exp\(- c'nR^*_r \log\(\frac{R^*_r}{R^*_{\cJ_s}}\) \)  ds \\
				&\leq \O\(\frac{r}{R^*_r}\) + \frac{2}{R^*_r} \int_{\frac{4r}{CR^*_r} }^ \infty \exp\(- c'nR^*_r \log\(\frac{sR^*_r}{r}\)  \)  ds \\
				&\leq \O\(\frac{r}{R^*_r}\) + \frac{r}{R^*_r} \frac{2}{R^*_r} \int_{4/C}^ \infty \exp\(-c'nR^*_r \log s_1\)  ds_1 \\
				&= \O\(\frac{r}{R^*_r}\) + \frac{r}{R^*_r} \frac{2}{R^*_r} \int_{4/C}^ \infty s_1^{-c'nR^*_r}  ds_1 \\
				&\leq \O\(\frac{r}{R^*_r}\) + \frac{r}{R^*_r} \frac{2}{R^*_r} \cdot \frac{1}{c'nR_r^* - 1} (4/C)^{1-c'nR_r^*} \\
				&= \O\(\frac{r}{R^*_r}\),
\end{align*}
where the third inequality uses the fact that $R^*_{\cJ_s} = R(f_{s,r}) \le \frac{r}{s}$, and the last equality follows from an appropriately small choice of $C$.

\subsection{Proof of Theorem~\ref{ThmLowerBound}}
\label{AppThmLowerBound}

Note that for any $s > 0$, Markov's inequality gives
\begin{align*}
\min_{\muhat} \max_{\{P_i\} \subseteq \scriptP(\sigma_1, \sigma_2, p)} \E[\|\muhat - \mu\|_2] & \ge \min_{\muhat} \max_{\{P_i\} \subseteq \scriptP(\sigma_1, \sigma_2, p)} s \cdot \mprob(\|\muhat - \mu\|_2 \ge s).
\end{align*}
Clearly, the right-hand expression is lower-bounded by the maximum over any specific collection of distributions in the class $\scriptP(\sigma_1, \sigma_2, p)$. In particular, let $\scriptP_m^\mu$ be the collection of multivariate distributions where each distribution is either $N(\mu, \sigma_1^2 I)$ or $N(\mu, \sigma_2^2 I)$, with $m$ distributions of the latter type. We then have
\begin{align*}
\min_{\muhat} \max_{\{P_i\} \subseteq \scriptP(\sigma_1, \sigma_2, p)} \mprob(\|\muhat - \mu\|_2 \ge s) & \ge \min_{\muhat} \max_\mu \max_{np \le m \le 2np} \mprob(\|\muhat - \mu\|_2 \ge s \mid \{P_i\} = \scriptP_m^\mu) \\
& \ge \min_{\muhat} \max_\mu \sum_{np \le m \le 2np} \mprob\left(\|\muhat - \mu\|_2 \ge s \mid \{P_i\} = \scriptP_m^\mu \right) p_m,
\end{align*}
where $\{p_m\}$ is any allocation of probabilities defined over $\{\scriptP_{np}^\mu, \dots, \scriptP_{2np}^\mu\}$, such that $0 \le p_m \le 1$ for all $m$ and $\sum_m p_m \le 1$. In particular, consider the probability mass function $\{q_m\}_{m=1}^n$ over $\{\scriptP_1^\mu, \dots, \scriptP_n^\mu\}$ corresponding to the Binomial$(n, p)$ distribution, and define $p_m = q_m$ for all $np \le m \le 2np$.

Now let $\mprob_{\text{Bin}}^\mu$ denote the probability distribution when the $P_i$'s are chosen i.i.d.\ in the following manner: with probability $p' := 1.5p$, the distribution is $N(\mu, \sigma_2^2 I)$, and with probability $1-1.5p$, the distribution is $N(\mu, \sigma_1^2 I)$. Then
\begin{equation*}
\mprob_{\text{Bin}}^\mu(\|\muhat - \mu\|_2 \ge s) = \sum_{m =1}^n \mprob\left(\|\muhat - \mu\|_2 \ge s \mid \{P_i\} = \scriptP_m^\mu\right) q_m.
\end{equation*}
Hence,
\begin{align*}
\left|\sum_{np \le m \le 2np} \mprob\left(\|\muhat - \mu\|_2 \ge s \mid \{P_i\} = \scriptP_m^\mu \right) p_m - \mprob_{\text{Bin}}^\mu (\|\muhat - \mu\|_2 \ge s)\right| & \le \sum_{m < np} q_m + \sum_{m > 2np} q_m \\
& \le 2\exp(-cnp) \\
& \le 2\exp(-c'\log n),
\end{align*}
where second inequality follows from the multiplicative Chernoff bound (Lemma~\ref{LemChernoff}) and the last inequality follows by the assumption $p = \Omega\left(\frac{\log n}{n}\right)$. Combining the inequalities, we conclude that
\begin{equation*}
\min_{\muhat} \max_{\{P_i\} \subseteq \scriptP(s_1, s_2, p)} \E[\|\muhat - \mu\|_2] \ge s \left( \min_{\muhat} \max_\mu \mprob_{\text{Bin}}^\mu(\|\muhat - \mu\|_2 \ge s) - 2 \exp(-c'\log n)\right).
\end{equation*}
Thus, it suffices to find $s$ such that the expression $\min_{\muhat} \max_\mu \mprob_{\text{Bin}}^\mu(\|\muhat - \mu\|_2 \ge s)$ can be lower-bounded by a constant.

For part (i), using standard techniques~\cite{Tsy08, Wai19}, we may obtain such a lower bound via Fano's inequality. In particular, if we can construct a set $\{\mu_1, \dots, \mu_M\} \subseteq \real^d$ such that $\|\mu_j - \mu_k\|_2 \ge 2s$ and $KL(\mprob_{\text{Bin}}^{\mu_j}, \mprob_{\text{Bin}}^{\mu_k}) \le \alpha$ for all $j \neq k$, then
\begin{equation*}
\min_{\muhat} \max_\mu \mprob_{\text{Bin}}^\mu(\|\muhat - \mu\|_2 \ge s) \ge s\left(1 - \frac{\alpha + \log 2}{\log M}\right).
\end{equation*}
Note that by tensorization and convexity of the KL divergence, we have the upper bound
\begin{equation}
\label{EqnKLExp}
KL(\mprob_{\text{Bin}}^{\mu_j}, \mprob_{\text{Bin}}^{\mu_k}) \le n(1-p') KL\left(N(\mu_j, \sigma_1^2 I), N(\mu_k, \sigma_1^2 I)\right) + np' KL\left(N(\mu_j, \sigma_2^2 I), N(\mu_k, \sigma_2^2 I)\right),
\end{equation}
where the KL divergences in the right-hand expression are computed with respect to single samples from the respective multivariate normal distributions. Furthermore, the right-hand side of inequality~\eqref{EqnKLExp} is easily calculated to be
\begin{equation*}
n(1-p') \cdot \frac{\|\mu_j - \mu_k\|_2^2}{2\sigma_1^2} + np' \cdot \frac{\|\mu_j - \mu_k\|_2^2}{2\sigma_2^2} = n\|\mu_j - \mu_k\|_2^2\left(\frac{1-p'}{2\sigma_1^2} + \frac{p'}{2\sigma_2^2}\right).
\end{equation*}

In particular, suppose $\{\mu_1, \dots, \mu_M\}$ is a $2s$-packing of the ball of radius $4s$ in $\ell_2$-norm, with $s = C \sqrt{d} \min\left\{\frac{\sigma_1}{\sqrt{n}}, \frac{\sigma_2}{\sqrt{np'}}\right\}$. Then $\log M \ge cd$ and
\begin{equation*}
KL(\mprob_{\text{Bin}}^{\mu_j}, \mprob_{\text{Bin}}^{\mu_k}) \le 4ns^2\left(\frac{1-p'}{2\sigma_1^2} + \frac{p'}{2\sigma_2^2}\right) \le 4C^2 d := \alpha.
\end{equation*}
For a sufficiently small choice of $C$, we conclude that $\min_{\muhat} \max_\mu \mprob_{\text{Bin}}^\mu(\|\muhat - \mu\|_2 \ge s) \ge \frac{1}{2}$. Hence, we arrive at the desired bound~\eqref{EqnLB1}.

We now turn to part (ii). We derive the tighter lower bound~\eqref{EqnLB2} for the case $d = 1$ by evaluating $KL(\mprob_{\text{Bin}}^{\mu_1}, \mprob_{\text{Bin}}^{\mu_2})$ more directly. By Theorem 2.2 in Tsybakov~\cite{Tsy08}, we know that if we have a pair $\mu_1, \mu_2 \in \real^d$ such that $\|\mu_1 - \mu_2\|_2 \ge 2s$ and
\begin{equation}
\label{EqnKLUpper}
KL(\mprob_{\text{Bin}}^{\mu_1}, \mprob_{\text{Bin}}^{\mu_2}) \le \alpha < \infty,
\end{equation}
then
\begin{equation*}
\min_{\muhat} \max_\mu \mprob_{\text{Bin}}^\mu(\|\muhat - \mu\|_2 \ge s) \ge \max\left\{\frac{\exp(-\alpha)}{4}, \frac{1-\sqrt{\alpha/2}}{2}\right\}.
\end{equation*}
Again, since the KL divergence tensorizes, it suffices to compute the KL divergence between a single sample from the distributions $\mprob_{\text{Bin}}^{\mu_1}$ and $\mprob_{\text{Bin}}^{\mu_2}$, which we denote by $\mprob_1$ and $\mprob_2$, respectively.

We provide the details of the calculation for general $d$, with the assumption~\eqref{EqnSratio} replaced by the condition
\begin{equation}
\label{EqnSratioD}
\left(\frac{\sigma_1}{\sigma_2}\right)^d = O\left(\frac{1}{np^2}\right).
\end{equation}
By a straightforward calculation, we have
\begin{align*}
\log\left(\frac{d\mprob_1(x)}{d\mprob_2(x)}\right) & = \log\left(\frac{(1-p') \frac{1}{(\sqrt{2\pi} \sigma_1)^d} \exp\left(\frac{-\|x - \mu_1\|_2^2}{2\sigma_1^2}\right) + p' \frac{1}{(\sqrt{2\pi} \sigma_2)^d} \exp\left(\frac{-\|x - \mu_1\|_2^2}{2\sigma_2^2}\right)}{(1-p') \frac{1}{(\sqrt{2\pi} \sigma_1)^d} \exp\left(\frac{-\|x - \mu_2\|_2^2}{2\sigma_1^2}\right) + p' \frac{1}{(\sqrt{2\pi} \sigma_2)^d} \exp\left(\frac{-\|x - \mu_2\|_2^2}{2\sigma_2^2}\right)}\right) \\
& = \left(\frac{-\|x - \mu_1\|_2^2}{2\sigma_1^2} + \frac{\|x - \mu_2\|_2^2}{2\sigma_1^2}\right) + \log\left(\frac{1+y}{1+z}\right),
\end{align*}
where
\begin{align*}
y & := \frac{p'}{1-p'} \left(\frac{\sigma_1}{\sigma_2}\right)^d \exp\left(\frac{-\|x - \mu_1\|_2^2}{2\sigma_2^2} + \frac{\|x - \mu_1\|_2^2}{2\sigma_1^2}\right), \\
z & := \frac{p'}{1-p'} \left(\frac{\sigma_1}{\sigma_2}\right)^d \exp\left(\frac{-\|x - \mu_2\|_2^2}{2\sigma_2^2} + \frac{\|x - \mu_2\|_2^2}{2\sigma_1^2}\right).
\end{align*}
Hence,
\begin{align*}
KL(\mprob_1, \mprob_2) & = \E_{x \sim \mprob_1}\left[\frac{-\|x - \mu_1\|_2^2}{2\sigma_1^2} + \frac{\|x - \mu_2\|_2^2}{2\sigma_1^2}\right] + \E_{x \sim \mprob_1}\left[\log\left(\frac{1+y}{1+z}\right)\right] \\
& \le \frac{\|\mu_1 - \mu_2\|^2}{2\sigma_1^2} + \E_{x \sim \mprob_1} [y] - \E_{x \sim \mprob_1} [z] + \E_{x \sim \mprob_1} [z^2],
\end{align*}
using the fact that
\begin{equation*}
\log\left(\frac{1+y}{1+z}\right) = \log\left(1 + \frac{y-z}{1+z}\right) \le \frac{y-z}{1+z} \le y - z + z^2,
\end{equation*}
since $y, z > 0$. We now write
\begin{align*}
\E_{x \sim \mprob_1} [y] & = \frac{p'}{1-p'} \left(\frac{\sigma_1}{\sigma_2}\right)^d \Bigg((1-p') \int \exp\left(\frac{-\|x - \mu_1\|_2^2}{2\sigma_2^2} + \frac{\|x - \mu_1\|_2^2}{2\sigma_1^2}\right) \frac{1}{(\sqrt{2\pi} \sigma_1)^d} \exp\left(\frac{-\|x - \mu_1\|_2^2}{2\sigma_1^2}\right) dx \\
& \qquad + p' \int \exp\left(\frac{-\|x - \mu_1\|_2^2}{2\sigma_2^2} + \frac{\|x - \mu_1\|_2^2}{2\sigma_1^2}\right) \frac{1}{(\sqrt{2\pi} \sigma_2)^d} \exp\left(\frac{-\|x - \mu_1\|_2^2}{2\sigma_2^2}\right) dx \Bigg) \\
& := A_y + B_y,
\end{align*}
and
\begin{align*}
\E_{x \sim \mprob_1} [z] & = \frac{p'}{1-p'} \left(\frac{\sigma_1}{\sigma_2}\right)^d \Bigg((1-p') \int \exp\left(\frac{-\|x - \mu_2\|_2^2}{2\sigma_2^2} + \frac{\|x - \mu_2\|_2^2}{2\sigma_1^2}\right) \frac{1}{(\sqrt{2\pi} \sigma_1)^d} \exp\left(\frac{-\|x - \mu_1\|_2^2}{2\sigma_1^2}\right) dx \\
& \qquad + p' \int \exp\left(\frac{-\|x - \mu_2\|_2^2}{2\sigma_2^2} + \frac{\|x - \mu_2\|_2^2}{2\sigma_1^2}\right) \frac{1}{(\sqrt{2\pi} \sigma_2)^d} \exp\left(\frac{-\|x - \mu_1\|_2^2}{2\sigma_2^2}\right) dx \Bigg) \\
& := A_z + B_z.
\end{align*}
Now, we may calculate
\begin{equation*}
A_y = p' \left(\frac{1}{\sqrt{2\pi} \sigma_2}\right)^d \int \exp\left(\frac{-\|x - \mu_1\|_2^2}{2\sigma_2^2} \right) dx = p',
\end{equation*}
and
\begin{align*}
B_y & = \frac{(p')^2}{1-p'} \left(\frac{\sigma_1}{\sqrt{2\pi} \sigma_2^2}\right)^d \int \exp\left(\frac{-\|x - \mu_1\|_2^2}{\sigma_2^2} + \frac{\|x - \mu_1\|_2^2}{2\sigma_1^2}\right) dx \\
& = \frac{(p')^2}{1-p'} \left(\frac{\sigma_1}{\sqrt{2\pi} \sigma_2^2}\right)^d \left(\frac{\pi}{\frac{1}{\sigma_2^2} - \frac{1}{2\sigma_1^2}}\right)^{d/2} \le \frac{(p')^2}{1-p'} \left(\frac{\sigma_1}{\sigma_2}\right)^d,
\end{align*}
using the fact that $\frac{1}{2\sigma_1^2} \le \frac{1}{2\sigma_2^2}$.

For ease of calculation, we now set
\begin{align}
\label{EqnMus}
\mu_1^T & = (\mu, 0, \dots, 0), \notag \\
\mu_2^T & = (-\mu, 0, \dots, 0).
\end{align}
Using the formula
\begin{equation}
\label{EqnGaussInteg}
\int \exp\left(-x^T A x + b^T x + c\right) dx = \sqrt{\frac{\pi^d}{\det(A)}} \exp\left(\frac{1}{4} b^T A^{-1} b + c\right),
\end{equation}
we have
\begin{align*}
A_z & = p' \left(\frac{1}{\sqrt{2\pi} \sigma_2}\right)^d \int \exp\left(\frac{-\|x - \mu_2\|_2^2}{2\sigma_2^2} + \frac{\|x - \mu_2\|_2^2}{2\sigma_1^2} - \frac{\|x - \mu_1\|_2^2}{2\sigma_1^2}\right) dx \\
& = p' \exp\left(\frac{\sigma_2^2}{2} \left(\left\|\frac{\mu_2}{\sigma_2^2} - \frac{\mu_2}{\sigma_1^2} + \frac{\mu_1}{\sigma_1^2}\right\|_2^2 - \frac{\mu_2^T \mu_2}{2\sigma_2^2} + \frac{\mu_2^T \mu_2}{2\sigma_1^2} - \frac{\mu_1^T \mu_1}{2\sigma_1^2}\right)\right) \\
& = p' \exp\left(-2\mu^2\left(\frac{1}{\sigma_1^2} - \frac{\sigma_2^2}{\sigma_1^4}\right)\right).
\end{align*}
In particular, using the fact that $\exp(-x) \ge 1-x$ for $x \ge 0$, we have
\begin{equation*}
A_y - A_z = p' - A_z \le p' \cdot 2\mu^2\left(\frac{1}{\sigma_1^2} - \frac{\sigma_2^2}{\sigma_1^4}\right) \le \frac{2\mu^2}{\sigma_1^2}.
\end{equation*}

Similarly, we can compute
\begin{align*}
B_z & = \frac{(p')^2}{1-p'} \left(\frac{\sigma_1}{\sqrt{2\pi} \sigma_2^2}\right)^d \int \exp\left(\frac{-\|x - \mu_2\|_2^2}{2\sigma_2^2} + \frac{\|x - \mu_2\|_2^2}{2\sigma_1^2} - \frac{\|x - \mu_1\|_2^2}{2\sigma_2^2}\right) dx \\
& = \frac{(p')^2}{1-p'} \left(\frac{\sigma_1}{\sqrt{2} \sigma_2^2\sqrt{\frac{1}{\sigma_2^2} - \frac{1}{2\sigma_1^2}}}\right)^d \exp\left(\frac{1}{4\left(\frac{1}{\sigma_2^2} - \frac{1}{2\sigma_1^2}\right)} \left\|\frac{\mu_2}{\sigma_2^2} - \frac{\mu_2}{\sigma_1^2} + \frac{\mu_1}{\sigma_2^2}\right\|_2^2 - \frac{\mu_2^T \mu_2}{2\sigma_2^2} + \frac{\mu_2^T \mu_2}{2\sigma_1^2} - \frac{\mu_1^T \mu_1}{2\sigma_2^2}\right) \\
& = \frac{(p')^2}{1-p'} \left(\frac{\sigma_1}{\sqrt{2} \sigma_2^2 \sqrt{\frac{1}{\sigma_2^2} - \frac{1}{2\sigma_1^2}}}\right)^d \exp\left(\frac{\mu^2/\sigma_1^4}{4\left(\frac{1}{\sigma_2^2} - \frac{1}{2\sigma_1^2}\right)} - \frac{\mu^2}{\sigma_2^2} + \frac{\mu^2}{2\sigma_1^2}\right) \\
& \stackrel{(a)}{\le} \frac{(p')^2}{1-p'} \left(\frac{\sigma_1}{\sigma_2}\right)^d \exp\left(\frac{\mu^2\sigma_2^2}{2\sigma_1^4} - \frac{\mu^2}{\sigma_2^2} + \frac{\mu^2}{2\sigma_1^2}\right) \\
& \le \frac{(p')^2}{1-p'} \left(\frac{\sigma_1}{\sigma_2}\right)^d \exp\left(\frac{\mu^2}{2\sigma_2^2} - \frac{\mu^2}{\sigma_2^2} + \frac{\mu^2}{2\sigma_1^2}\right) \\
& \le \frac{(p')^2}{1-p'}\left(\frac{\sigma_1}{\sigma_2}\right)^d,
\end{align*}
where inequality (a) uses the fact that $\frac{1}{2\sigma_1^2} \le \frac{1}{2\sigma_2^2}$.

Combining the inequalities and using the assumption~\eqref{EqnSratioD}, we conclude that
\begin{equation*}
\E_{x \sim \mprob_1} [y] - \E_{x \sim \mprob_1} [z] = O\left(\frac{\mu^2}{\sigma_1^2}\right) + O\left(\frac{1}{n}\right).
\end{equation*}

Finally, we compute
\begin{align*}
\E_{x \sim \mprob_1} [z^2] & = \left(\frac{p'}{1-p'}\right)^2 \left(\frac{\sigma_1}{\sigma_2}\right)^{2d} \Bigg((1-p') \int \exp\left(\frac{-\|x - \mu_2\|_2^2}{\sigma_2^2} + \frac{\|x - \mu_2\|_2^2}{\sigma_1^2}\right) \\
& \qquad \qquad \cdot \frac{1}{(\sqrt{2\pi} \sigma_1)^d} \exp\left(\frac{-\|x - \mu_1\|_2^2}{2\sigma_1^2}\right) dx \\
& \qquad + p' \int \exp\left(\frac{-\|x - \mu_2\|_2^2}{\sigma_2^2} + \frac{\|x - \mu_2\|_2^2}{\sigma_1^2}\right) \frac{1}{(\sqrt{2\pi} \sigma_2)^d} \exp\left(\frac{-\|x - \mu_1\|_2^2}{2\sigma_2^2}\right) dx \Bigg) \\
& := A'_z + B'_z.
\end{align*}
Again using the designation~\eqref{EqnMus} and the formula~\eqref{EqnGaussInteg}, we have
\begin{align*}
A'_z & = \frac{(p')^2}{1-p'} \left(\frac{\sigma_1}{\sqrt{2\pi} \sigma_2^2}\right)^d \int \exp\left(\frac{-\|x - \mu_2\|_2^2}{\sigma_2^2} + \frac{\|x - \mu_2\|_2^2}{\sigma_1^2} - \frac{\|x - \mu_1\|_2^2}{2\sigma_1^2}\right) dx \\
& = \frac{(p')^2}{1-p'} \left(\frac{\sigma_1}{\sqrt{2} \sigma_2^2\sqrt{\frac{1}{\sigma_2^2} - \frac{1}{2\sigma_1^2}}}\right)^d \exp\left(\frac{1}{4 \left(\frac{1}{\sigma_2^2} - \frac{1}{2\sigma_1^2}\right)} \left\|\frac{2\mu_2}{\sigma_2^2} - \frac{2\mu_2}{\sigma_1^2} + \frac{\mu_1}{\sigma_1^2}\right\|_2^2 - \frac{\mu_2^T \mu_2}{\sigma_2^2} + \frac{\mu_2^T \mu_2}{\sigma_1^2} - \frac{\mu_1^T \mu_1}{2\sigma_1^2}\right) \\
& = \frac{(p')^2}{1-p'} \left(\frac{\sigma_1}{\sqrt{2} \sigma_2^2\sqrt{\frac{1}{\sigma_2^2} - \frac{1}{2\sigma_1^2}}}\right)^d \exp\left(\frac{(-2\mu/\sigma_2^2 + 3\mu/\sigma_1^2)^2}{4\left(\frac{1}{\sigma_2^2} - \frac{1}{2\sigma_1^2}\right)} - \frac{\mu^2}{\sigma_2^2} + \frac{\mu^2}{2\sigma_1^2}\right) \\
& \le \frac{(p')^2}{1-p'} \left(\frac{\sigma_1}{\sigma_2}\right)^d \exp\left(\frac{(-2\mu/\sigma_2^2 + 3\mu/\sigma_1^2)^2}{4\left(\frac{1}{\sigma_2^2} - \frac{1}{2\sigma_1^2}\right)} - \frac{\mu^2}{\sigma_2^2} + \frac{\mu^2}{2\sigma_1^2}\right), \\
\end{align*}
and
\begin{align*}
B'_z & = \frac{(p')^3}{1-p'} \left(\frac{\sigma_1^2}{\sqrt{2\pi}\sigma_2^3}\right)^d \int \exp\left(\frac{-\|x - \mu_2\|_2^2}{\sigma_2^2} + \frac{\|x - \mu_2\|_2^2}{\sigma_1^2} - \frac{\|x - \mu_1\|_2^2}{2\sigma_2^2}\right) dx \\
& = \frac{(p')^3}{1-p'} \left(\frac{\sigma_1^2}{\sqrt{2} \sigma_2^3 \sqrt{\frac{3}{2\sigma_2^2} - \frac{1}{\sigma_1^2}}}\right)^d \exp\left(\frac{1}{4\left(\frac{3}{2\sigma_2^2} - \frac{1}{\sigma_1^2}\right)} \left\|\frac{2\mu_2}{\sigma_2^2} - \frac{2\mu_2}{\sigma_1^2} + \frac{\mu_1}{\sigma_2^2}\right\|_2^2 - \frac{\mu_2^T \mu_2}{\sigma_2^2} + \frac{\mu_2^T \mu_2}{\sigma_1^2} - \frac{\mu_1^T \mu_1}{2\sigma_2^2}\right) \\
& = \frac{(p')^3}{1-p'} \left(\frac{\sigma_1^2}{\sqrt{2} \sigma_2^3 \sqrt{\frac{3}{2\sigma_2^2} - \frac{1}{\sigma_1^2}}}\right)^d \exp\left(\frac{(-\mu/\sigma_2^2 + 2\mu/\sigma_1^2)^2}{4\left(\frac{3}{2\sigma_2^2} - \frac{1}{\sigma_1^2}\right)} - \frac{3\mu^2}{2\sigma_2^2} + \frac{\mu^2}{\sigma_1^2}\right) \\
& \le \frac{(p')^3}{1-p'} \left(\frac{\sigma_1}{\sigma_2}\right)^{2d} \exp\left(\frac{(-\mu/\sigma_2^2 + 2\mu/\sigma_1^2)^2}{4\left(\frac{3}{2\sigma_2^2} - \frac{1}{\sigma_1^2}\right)} - \frac{3\mu^2}{2\sigma_2^2} + \frac{\mu^2}{\sigma_1^2}\right).
\end{align*}
Considering the exponential terms in the expressions for $A'_z$ and $B'_z$, note that for $A'_z$, we have
\begin{equation*}
\frac{(-2\mu/\sigma_2^2 + 3\mu/\sigma_1^2)^2}{4\left(\frac{1}{\sigma_2^2} - \frac{1}{2\sigma_1^2}\right)} - \frac{\mu^2}{\sigma_2^2} = \frac{\mu^2}{\sigma_2^2}\left(\frac{\left(2 - \frac{3\sigma_2^2}{\sigma_1^2}\right)^2}{4\left(1 - \frac{\sigma_2^2}{2\sigma_1^2}\right)} - 1\right) < 0,
\end{equation*}
assuming $\sigma_2 \le \sigma_1$,
whereas for $B'_z$, we have
\begin{equation*}
\frac{(-\mu/\sigma_2^2 + 2\mu/\sigma_1^2)^2}{4\left(\frac{3}{2\sigma_2^2} - \frac{1}{\sigma_1^2}\right)} - \frac{3\mu^2}{2\sigma_2^2} = \frac{\mu^2}{\sigma_2^2} \left(\frac{\left(1 - \frac{2\sigma_2^2}{\sigma_1^2}\right)^2}{4\left(\frac{3}{2} - \frac{\sigma_2^2}{\sigma_1^2}\right)} - \frac{3}{2}\right) < 0,
\end{equation*}
using the fact that $\sigma_2 \le \sigma_1$. Thus, using the assumption~\eqref{EqnSratioD}, we obtain
\begin{align*}
\E_{x \sim \mprob_1} [z^2] & = A'_z + B'_z = O\left(\frac{1}{n}\right) \exp\left(\frac{\mu^2}{2\sigma_1^2}\right) + O\left(\frac{1}{n^2 p}\right) \exp\left(\frac{\mu^2}{\sigma_1^2}\right) \\
& = O\left(\frac{1}{n}\right) \exp\left(\frac{\mu^2}{\sigma_1^2}\right).
\end{align*}
Finally, we take $\mu = \frac{\sigma_1}{\sqrt{n}}$ to obtain the desired bound~\eqref{EqnKLUpper}. This completes the proof.

\subsection{Proof of Theorem~\ref{ThmUpperBound}}
\label{AppThmUpperBound}

By a similar argument used to derive the bound in Theorem~\ref{ThmExpBound}, the following expected error bound may be derived from the high-probability bound in Theorem~\ref{thm:HybridScreening} for the hybrid estimator:
\begin{equation}
\label{EqnHybExp}
\E\|\muhat_{k_1, k_2}\|_2
\leq \min\left\{\sqrt{d} r_{2k_1, 1}, \sqrt{n}^{1/d} r_{k_2}\right\}.
\end{equation}
In what follows, we will bound these expressions to obtain the desired results.

As shown in the proof of Lemma~\ref{lemma:prop2}\ref{prop2:5}, a ball of radius $C\sigma_2 \sqrt{d} $ around the origin will contain at least $\frac{1}{2}$ of the mass of $np$ distributions.
Thus, if $np \ge 2k_2$, we will have $r_{k_2} \le C\sigma_2 \sqrt{d}$.
 
We now claim that $r_{2k_1, 1} \le \frac{C\sigma_1 \log n}{ \sqrt{n}} := r'$, which we will show by integrating the marginal densities on the interval $[-r',r']$. Note that $\nu_i \leq  \sigma_1$ for all $i$.
We consider two cases: if $\nu_i \geq r'$, then $q_i(r') \geq \frac{c}{\nu_i} \geq \frac{c}{\sigma_1}$, using inequality~\eqref{EqnRegularityMarginal}, so $\int_{[-r',r']}q_i(x)dx \geq \frac{2c r'}{\sigma_1} \geq \frac{2\log n}{\sqrt{n}}$ for large enough $C$. If $\nu_i < r'$, then $\int_{[-\nu_i,\nu_i]} q(x)dx \geq c' \geq \frac{2\log n}{\sqrt{n}}$, as well.
Thus,
\begin{align}
\label{EqnQbd}
 \sum_{i=1}^n \int_{[-r',r']}q_i(x)dx \geq \sum_{i=1}^n \frac{2\log n}{\sqrt{n}} \geq 2\sqrt{n} \log n = 2k_1.
 \end{align}
Combining the results with inequality~\eqref{EqnHybExp} proves inequality~\eqref{EqnUB1}.

We now consider the special cases:
\begin{itemize}
\item[(a)] In the case when $p = \Omega\left(\frac{\sqrt{n} \log n}{n}\right)$, we can use fact that at least $np = \Omega(\sqrt{n}\log n)$ points have marginal variance at most $\sigma_2$. Let $r' \coloneqq \frac{C\sigma_2 \log n}{ p\sqrt{n}}$. By similar reasoning as above, for at least $np$ distributions, we have $\int_{[-r',r']}q_i(x)dx \geq \frac{\log n}{p\sqrt{n}}$. Thus, we can replace inequality~\eqref{EqnQbd} by
\begin{align*}
\sum_{i=1}^n \int_{[-r',r']}q_i(x)dx \geq np \cdot \frac{2\log n}{p \sqrt{n}} \geq 2\sqrt{n}\log n,
\end{align*}
to conclude that $r_{2k_1, 1} = \O\left(\frac{\sigma_2 \log n}{p\sqrt{n}}\right)$. This leads to the stated bound.
\item[(b)]
In this case, we will obtain a better bound by showing that $\| \muhat_{S,k_2}\|_2 \leq  r_{2k_2}$, w.h.p., rather than the looser bound $\|\muhat_{S,k_2}\|_2 \le C' \sqrt{n}^{1/d} r_{k_2}$ used to derive
inequality~\eqref{EqnHybExp} (cf.\ Theorem~\ref{thm:HybridScreening}). Since $r_{2k_2} \le C \sigma_2 \sqrt{d}$, the tighter bound will then follow.

Let $r'\coloneqq C' \sqrt{d \log n } \sigma_2$. As argued in the proof of Theorem~\ref{ThmShorthD}, it suffices to show that $R(f_{r',  r_{2k} }) \leq \frac{k}{2n}$, where $k = k_2$.  We will deal with low-variance and high-variance points separately.

First, consider $i$ such that $\nu_i = \Omega( \sigma_1) = \Omega( \sigma_2 n^{\frac{1}{d}})   \geq C'' \sigma_2 n ^{\frac{1}{d}} $ for large $C''$, and let $v_d$ denote the volume of the ball of radius $1$. Then
\begin{align*}
\P\(X_i \in B(r',r_{2k})\) \leq \P\(X_i \in B(0, r_{2k})\) \leq f_i(0) v_d r_{2k}^d \leq \(\frac{c'}{C''\sigma_2 n^{1/d}}\)^d v_d \sigma_2^d C^d \sqrt{d}^d \leq \frac{1}{n},
\end{align*}
where we use condition~\eqref{EqnRegularityJoint} and the fact that $ \frac{v_d \sqrt{d}^d}{\tilde{C}^d} \leq 1 $ for a sufficiently large constant $\tilde{C}$.

Now consider $i$ such that $ \nu_i \leq \sigma_2$. By condition~\eqref{EqnRegularityJoint}, we have
\begin{align*}
\P\(X_i \in B(r',r_{2k})\)  \leq \exp(-c_1 \log n) \leq \frac{1}{n^{c_1}}.
\end{align*}
For large enough $C'$, we can ensure that $c_1 \geq 1$.
Altogether, we conclude that
\begin{align*}
R(f_{r',r_{2k}}) &= \frac{1}{n} \sum_{i=1}^n \P\(X_i \in B(r',r_{2k})\) \leq \frac{1}{n} < \frac{k_2}{2n},
\end{align*}
which concludes the proof.
\end{itemize}

\section{Proofs for alternative conditions}

In this appendix, we prove the statements of the results in Section~\ref{SecAltCond}.

\subsection{Proof of Theorem~\ref{ThmRelax}}
\label{AppThmRelax}

We first prove claim (i). Note that the result of Lemma~\ref{thm:modalIntervalMain} will still hold, since it only depends on the uniform concentration bound and optimality of the modal interval estimator. Thus, $R(f_{\muhat_{M,r}, r}) \ge \frac{R^*_r}{2}$, w.h.p.

For a fixed value of $r'$, define $\muhat' = \frac{\muhat_{M,r}}{\|\muhat_{M,r}\|_2} \cdot r'$ to be the rescaled version of $\muhat_{M,r}$. By condition (C1), we will have $\|\muhat_{M,r}\|_2 \le r'$ if we can show that $R(f_{\muhat', r}) \le R(f_{\muhat_{M,r}, r})$. Note that
\begin{equation*}
R(f_{\muhat', r}) \le g(r',r),
\end{equation*}
so if we choose $r'$ sufficiently large so that $g(r',r) < \frac{R^*_r}{2}$, the desired inequality will hold.

Turning to claim (ii), note that Lemma~\ref{lemma:shorthlength} continues to hold, since it only relies on the uniform concentration bound and a Chernoff bound. We thus conclude that $R(f_{\muhat_{S,k}, r_{2k}}) \ge \frac{k}{4n} = \frac{R^*_{2k}}{4}$, w.h.p. For a fixed value of $r'$, we define $\muhat' = \frac{\muhat_{M,r_{2k}}}{\|\muhat_{M,r_{2k}}\|_2} \cdot r'$. By condition (C1) (which we only need to assume holds for $r = r_{2k}$), if $R(f_{\muhat', r_{2k}}) \le R(f_{\muhat_{M,r_{2k}}, r_{2k}})$, then $\|\muhat_{M,r_{2k}}\|_2 \le r'$. Furthermore, $R(f_{\muhat', r_{2k}}) \le g(r',r_{2k})$, so we simply need to choose $r'$ such that $g(r', r_{2k}) < \frac{1}{4}$.

For the hybrid estimator, note that Lemma~\ref{LemCubOrigin} shows that the output is always within $\sqrt{d}r_{4\sqrt{n\log n},1}$ of the output. Furthermore, the output of shorth estimator is always with $r'$ of the origin by part (ii). If the shorth estimator lies outside the $S_{\sqrt{n\log n}}^\infty$, then its $\ell_2$ projection on $S_{\sqrt{n\log n}}^\infty$ will only decrease its distance from the origin because (1) the origin belongs to $S_{\sqrt{n\log n}}^\infty$; and (2) $S_{\sqrt{n\log n}}^\infty$ is convex.

\subsection{Proof of Proposition~\ref{LemRmode}}
\label{AppLemRmode}

We first show that for each $r > 0$, the functions $R_i(f_{x,r})$ are unimodal as functions of $x \in \real^d$. Let $q$ be the uniform distribution on the Euclidean ball of radius $r$. Then $p_i \star q$, being a convolution of two log-concave densities, is also log-concave. Log-concave densities by definition are proportional to $e^{-\phi(x)}$ for some convex function $\phi$, and therefore they are unimodal and monotonically decreasing along rays from the mode. Now note that if condition (C3) holds, then $R_i(f_{x,r})$ must also be symmetric around 0. Hence, if $R_i(f_{x,r})$ is unimodal, its unique mode must clearly occur at 0. This proves that conditions (C2) and (C3) together imply condition (C1).

For the second statement, it suffices to verify the inequality
\begin{equation}
\label{EqnRmode}
\sup_{\|x\|_2 = a} R_i(f_{x,r}) \le \frac{1}{\lfloor a/2r \rfloor}, \qquad \forall i.
\end{equation}
Indeed, we would then have
\begin{equation*}
g(a,r) = \sup_{\|x\|_2 = a} \frac{1}{n} \sum_{i=1}^n R_i(f_{x,r}) \le \frac{1}{n} \sum_{i=1}^n \sup_{\|x\|_2 = a} R_i(f_{x,r}) \le \frac{1}{\lfloor a/2r \rfloor}.
\end{equation*}

Thus, it remains to verify inequality~\eqref{EqnRmode}. Focusing on a particular $i$, consider $x \in \real^d$ such that $\|x\|_2 = a$. We know that $R_i(f_{x,r})$ is decreasing on the ray from $0$ to $x$. Furthermore, we can pack $\lfloor\frac{a}{2r}\rfloor$ balls of radius $r$ on the ray, including the balls $B(x_i^*,r)$ and $B(x,r)$ at the endpoints. The total mass of these balls is clearly upper-bounded by 1. Hence,
\begin{equation*}
\left\lfloor \frac{a}{2r} \right\rfloor \cdot R_i(f_{x,r}) \le 1,
\end{equation*}
implying the desired result.

\subsection{Proof of Proposition~\ref{PropElliptical}}
\label{AppPropElliptical}

Let $X$ have an elliptically symmetric density defined as $p_X(x) = f(x^T \Sigma^{-1} x)$ for a decreasing function $f: \real \to \real$. Consider a point $x_0 \in \real^d$ such that $\|x_0\|_2  = r_2$, and consider the ball $B(x_0, r_1) = \{ x \in \real^d ~:~ \|x-x_0\| \leq r_1 \}$.  For analysis purposes, we first transform the elliptically symmetric density to a spherically symmetric, decreasing density. This may be achieved by applying the linear transformation $\Sigma^{-1/2}: \real^d \to \real^d$. Define $Y := \Sigma^{-1/2} X$, let $\Sigma^{-1/2} x_0 = y_0$, and let $\hat B$ be the image of $B(x_0, r_1)$ under the transformation $\Sigma^{-1/2}$. Note that
\begin{equation*}
\hat B = \left\{y \in \real^d ~:~ (y-y_0)^T \Sigma (y-y_0) \leq r_1\right\},
\end{equation*}
and further note that $R(f_{x_0, r_1})$ is equal to the integral of $p_Y(\cdot)$ over $\hat B$; i.e., $\P(Y \in \hat B)$. It is easy to see that $\hat B \subseteq B\left(y_0, \frac{r_1}{\lambda_{\min}(\Sigma)}\right)$. Hence,
\begin{align*}
    R(f_{x_0, r_1}) = \P(Y \in \hat B)
    \leq \P(Y \in B\left(y_0, \frac{r_1}{\lambda_{\min}(\Sigma)}\right).
\end{align*}
We may now use the strategy from Lemma~\ref{lemma:prop2}, to obtain
\begin{align*}
    1 &\geq \P(Y \in B(0, \|y_0\|_2))\\
    &\geq P\left(B(0, \|y_0\|_2), \frac{r_1}{\lambda_{\min}(\Sigma)}\right) \cdot \P\left(Y \in B\left(y_0, \frac{r_1}{\lambda_{\min}(\Sigma)}\right)\right)\\
    &\geq P\left(B\left(0, \frac{r_2}{\lambda_{\max}(\Sigma)}\right), \frac{r_1}{\lambda_{\min}(\Sigma)}\right)\cdot R(f_{x_0, r_1}).
\end{align*}
Since this inequality holds for any $x_2$ with $\| x_2 \|_2 = r_2$, we conclude that
\begin{align*}
    g(r_2, r_1) &\leq \frac{1}{P\left(B\left(0, \frac{r_2}{\lambda_{\max}(\Sigma)}\right), \frac{r_1}{\lambda_{\min}(\Sigma)}\right)}\\
    &\leq C \left(\frac{r_1 \lambda_{\max}(\Sigma)}{r_2 \lambda_{\min}(\Sigma)}\right)^d.
\end{align*}

\subsection{Proof of Proposition~\ref{PropMixture}}
\label{AppPropMixture}

We index the distributions so that $\{R_i\}_{i=1}^s$ are radially symmetric. Note that
\begin{equation*}
g(a,r) = \sup_{\|x\|_2 = a} R(f_{x,r}) \le \frac{1}{n} \sum_{i=1}^n \sup_{\|x\|_2 = a} R_i(f_{x,r}).
\end{equation*}
Furthermore, for each $1 \le i \le s$, we have
\begin{equation*}
\sup_{\|x\|_2 = a} R_i(f_{x,r}) \le \left(\frac{r}{a}\right)^d R_i(f_{0,a}) \le \left(\frac{r}{a}\right)^d.
\end{equation*}
On the other hand, for $i > s$, we have
\begin{equation*}
\sup_{\|x\|_2 = a} R_i(f_{x,r}) \le \frac{r}{a}.
\end{equation*}
Hence,
\begin{equation*}
g(a,r) \le \frac{s}{n} \left(\frac{r}{a}\right)^d + \frac{n-s}{n} \left(\frac{r}{a}\right).
\end{equation*}

Now note that $R^*_{q_{(f(n))}} \ge \frac{f(n)}{2n}$. Thus,
\begin{align*}
g(r',r) & \le \frac{s}{n} \cdot \frac{1}{2^dn} + \frac{n-s}{n} \cdot \frac{1}{2n^{1/d}}
\le \frac{1}{n} + \frac{n-s}{n} \cdot \frac{1}{2n^{1/d}} < \frac{f(n)}{4n} \le \frac{R^*_r}{2},
\end{align*}
using the assumed lower bound on $s$.

\section{Proofs for regression}

In this appendix, we provide the proofs of the statements in Section~\ref{SecRegression}.

\subsection{Proof of Proposition~\ref{PropRegMax}}
\label{AppPropRegMax}

We write
\begin{align*}
\sum_{i=1}^n \E\left[1\left\{|y_i - x_i^T \beta| \le r\right\}\right] & = \sum_{i=1}^n \mprob\left(|y_i - x_i^T \beta| \le r\right) \\
& = \sum_{i=1}^n \mprob\left(|x_i^T (\betastar - \beta) + \epsilon_i| \le r\right).
\end{align*}
Note that conditioned on $x_i$, each summand is maximized uniquely when $x_i^T (\betastar - \beta) = 0$, since the distribution of $\epsilon_i$ is symmetric and unimodal. Since
\begin{equation}
\label{EqnCondExp}
\sum_{i=1}^n \E\left[1 \left\{|y_i - x_i^T \beta| \le r\right\}\right] = \E\left[\sum_{i=1}^n \E\left[1 \left\{|y_i - x_i^T \beta| \le r \right\} \mid \{x_i\}_{i=1}^n\right]\right],
\end{equation}
we see that the right-hand expression in equation~\eqref{EqnCondExp} is therefore maximized when $\beta = \betastar$. On the other hand, we can also argue that the maximizer is unique. Indeed, suppose $\beta \in \real^d$ were such that $\beta \neq \betastar$. The set $\cS := \left\{\{x_i\}_{i=1}^n \subseteq (\real^d)^n: x_i^T (\beta - \betahat) = 0 \quad \forall i\right\}$ has Lebesgue measure 0. We can write
\begin{align*}
\E\left[\sum_{i=1}^n \E\left[1 \left\{|y_i - x_i^T \beta| \le r \right\} \mid \{x_i\}_{i=1}^n\right]\right] & = \int_{\{x_i\} \in \cS} \E\left[1\left\{|y_i - x_i^T \beta| \le r\right\} \mid \{x_i\}_{i=1}^n\right] d\mprob(\{x_i\}) \\
& \qquad + \int_{\{x_i\} \notin \cS} \E\left[1\left\{|y_i - x_i^T \beta| \le r\right\} \mid \{x_i\}_{i=1}^n\right] d\mprob(\{x_i\}).
\end{align*}
Noting that
\begin{align*}
\E\left[1\left\{|y_i - x_i^T \beta| \le r\right\} \mid \{x_i\}_{i=1}^n\right] & = \E\left[1\left\{|y_i - x_i^T \betastar| \le r\right\} \mid \{x_i\}_{i=1}^n\right], \quad \forall \{x_i\} \in \cS, \\
\E\left[1\left\{|y_i - x_i^T \beta| \le r\right\} \mid \{x_i\}_{i=1}^n\right] & < \E\left[1\left\{|y_i - x_i^T \betastar| \le r\right\} \mid \{x_i\}_{i=1}^n\right], \quad \forall \{x_i\} \notin \cS,
\end{align*}
completes the proof.

\subsection{Proof of Theorem~\ref{ThmRegression}}
\label{AppThmRegression}

The proof follows the same approach used to prove estimation error bounds for the modal interval estimator throughout the paper (e.g., Theorem~\ref{cor:modal}). By Lemma~\ref{LemRegConc}, we know that $R_{\betahat} \ge \frac{R_{\betastar}}{2}$, w.h.p. We will be done if we can show that $R_\beta < \frac{R_{\betastar}}{2}$ for all $\beta$ satisfying
\begin{equation}
\label{EqnBetaRange}
\|\beta - \beta\|_2 > \frac{c'n \sigma_{(cd\log n)}}{\lambda_{\min}}.
\end{equation}

First note that
\begin{equation*}
R_{\betastar} = \frac{1}{n} \sum_{i=1}^n \mprob(|\epsilon_i| \le r).
\end{equation*}
Hence, as argued for mean estimation, we certainly have $r \le C'\sigma_{(Cd\log n)}$.

Also note that for any $\beta \in \real^d$, we have
\begin{equation*}
y_i - x_i^T \beta = \epsilon_i + x_i^T (\betastar - \beta) \sim N\left((\betastar - \beta)^T \mu_i',  (\betastar - \beta)^T \Sigma_i' (\betastar - \beta)\right).
\end{equation*}
Let $\cJ$ denote the set of indices of the smallest $d\log n$ of the $\sigma_i$'s. Note that
\begin{equation*}
R_{\betastar} \ge \frac{1}{n} \sum_{i \in \cJ} \mprob(|\epsilon_i| \le r) \ge 2r \cdot \frac{c}{n} \sum_{i=1}^{d\log n} \frac{1}{\sqrt{2\pi} \sigma_{(i)}},
\end{equation*}
since the Gaussian pdf decreases by a factor of $\approx 68\%$ within one standard deviation of 0.

Now suppose $\beta \in \real^d$ satisfies inequality~\eqref{EqnBetaRange}. We have
\begin{equation*}
R_\beta \le \frac{1}{n} \sum_{i=1}^n \mprob\left(|z_i| \le r\right),
\end{equation*}
where $z_i \sim N\left(0, \sigma_i^2 + (\betastar - \beta)^T \Sigma_i' (\betastar - \beta)\right)$.
For $i \notin \cJ$, we write
\begin{equation*}
\mprob(|z_i| \le r) \le 2r \cdot \frac{1}{\sqrt{2\pi} \sqrt{\sigma_i^2 + (\betastar - \beta)^T \Sigma_i' (\betastar - \beta)}} \le \frac{2r}{n \sigma_{(d\log n)} \sqrt{2\pi}},
\end{equation*}
since by the choice of $\beta$, we have
\begin{equation*}
(\betastar - \beta)^T \Sigma_i' (\betastar - \beta) \ge \lambda_{\min} \|\beta - \betastar\|_2^2 \ge n^2 \sigma^2_{(d\log n)}.
\end{equation*}
For $i \in \cJ$, we write
\begin{equation*}
\mprob(|z_i| \le r) \le 2r \cdot \frac{1}{\sqrt{2\pi} \sqrt{\sigma_i^2 + (\betastar - \beta)^T \Sigma_i' (\betastar - \beta)}} \le \frac{2r}{3\sigma_{i}^2 \sqrt{2\pi}},
\end{equation*}
since by the choice of $\beta$, we have
\begin{equation*}
(\betastar - \beta)^T \Sigma_i' (\betastar - \beta) \ge 2\sigma_{(d\log n)}^2 \ge 2 \sigma_i^2.
\end{equation*}
Thus, we conclude that
\begin{equation*}
R_\beta \le \frac{2r}{\sqrt{2\pi}} \cdot \frac{1}{n} \left(\sum_{i \in \cJ} \frac{1}{3\sigma_i^2} + \sum_{i \notin \cJ} \frac{1}{n\sigma_{(d\log n)}}\right) \le \frac{R_{\betastar}}{3} + \frac{c'}{n} < \frac{R_{\betastar}}{2},
\end{equation*}
as wanted. This concludes the proof.

\subsection{Proof of Theorem~\ref{ThmCompReg}}
\label{AppThmCompReg}

For $i \in [n]$, consider the sets 
$$U_i := \{\beta \subseteq \real^d : -r \leq x_i^T \beta \leq +r\}.$$
The set $U_i$ is sandwiched between the two hyperplanes $x_i^T\beta = y_i-r$ and $x_i^T\beta = y_i+r$. Denote these hyperplanes by $H_-(U_i)$ and $H_+(U_i)$, respectively. These $2n$ hyperplanes partition $\real^d$ into a finite number of (possibly unbounded) convex regions, which we denote by $\{R_1, \dots, R_M\}$. Define the function $f(\beta) := \sum_{i=1}^n \mathbbm{1}_{U_i}(\beta)$. Our goal is to find $\hat \beta = \argmax_{\beta \in \real^d} f(\beta)$,
where $\mathbbm{1}_{U_i}$ is the indicator function of $U_i$. It is easy to see that $f(\cdot)$ is constant when restricted to the interior of any fixed region $R_j$ for $j \in [M]$. Also, since $\mathbbm{1}_{U_i}$ is an upper-semicontinuous function for each $i \in [n]$, so is $f$. Thus, the value of $f(\cdot)$ at the vertices $R_j$ is at least as large as the value of $f$ in its interior. Thus, to find the maximum of $f(\cdot)$, we may only consider $\beta \in \real^d$ that correspond to vertices of $R_j$ for $j \in [M]$. All such vertices may be obtained by choosing any $d$ (mutually non-parallel) hyperplanes from among $\{H_-(U_1), \dots, H_-(U_n), H_+(U_1), \dots, H_+(U_M)\}$ and considering their point of intersection. The total number of such points is bounded above by ${2n \choose d}$, and our algorithm may simply list such points and evaluate $f$ at each point in the list.

\section{Auxiliary results}
\label{AppAux}

This appendix contains several technical results invoked throughout the paper.

We will employ the following multiplicative Chernoff bound, which is standard (cf.\ Vershynin~\cite{Ver18} or Boucheron et al.~\cite{BouEtAl16}):
\begin{lemma}
\label{LemChernoff}
Let $X_1, \dots, X_n$ be independent Bernoulli random variables with parameters $\{p_i\}$. Let $S_n = \sum_{i=1}^n X_i$ and $\mu = \E[S_n]$.
\begin{itemize}
\item[(i)] For any $\delta \in (0,1]$, we have
\begin{equation*}
\P\left(S_n  \ge (1 +  \delta) \mu\right) \le \exp\(- \frac{\mu \delta^2}{3}\).
\end{equation*}
and
\begin{equation*}
\P\left(S_n \le  (1 - \delta) \mu\right) \le  \exp\(- \frac{\mu \delta^2}{2}\).
\end{equation*}
\item[(ii)] For $\delta \geq 4 $, we have
\begin{equation*}
\P\left(S_n  \ge  \delta \mu\right) \le \exp\(- c \mu \delta \log \delta\).
\end{equation*}
\end{itemize}
\end{lemma}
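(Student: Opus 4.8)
The plan is to prove Lemma~\ref{LemChernoff} by the standard exponential-moment (Cram\'er--Chernoff) argument underlying the cited references. First I would record the moment generating function bound for a single Bernoulli variable: for any $\lambda \in \real$, using $1+u \le e^u$,
\begin{equation*}
\E\left[e^{\lambda X_i}\right] = 1 + p_i(e^\lambda - 1) \le \exp\left(p_i(e^\lambda - 1)\right),
\end{equation*}
so by independence $\E\left[e^{\lambda S_n}\right] \le \exp\left(\mu(e^\lambda-1)\right)$ with $\mu = \sum_i p_i$. For any $a>0$ and $\lambda>0$, Markov's inequality applied to $e^{\lambda S_n}$ gives $\mprob(S_n \ge a\mu) \le \exp\left(\mu(e^\lambda - 1 - \lambda a)\right)$, and optimizing over $\lambda$ (namely $\lambda = \ln a$) yields the master bound
\begin{equation*}
\mprob(S_n \ge a\mu) \le \exp\left(\mu\left(a - 1 - a\ln a\right)\right).
\end{equation*}
An entirely symmetric computation applied to $e^{-\lambda S_n}$ with $\lambda = -\ln(1-\delta)$ (valid for $\delta \in (0,1)$) produces the companion lower-tail estimate
\begin{equation*}
\mprob(S_n \le (1-\delta)\mu) \le \exp\left(\mu\left(-\delta - (1-\delta)\ln(1-\delta)\right)\right).
\end{equation*}

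Next I would convert these into the stated polynomial-exponent bounds using two elementary scalar inequalities. For the upper tail in part (i), set $a = 1+\delta$ in the master bound; it remains to check $\delta - (1+\delta)\ln(1+\delta) \le -\delta^2/3$ on $(0,1]$, equivalently $\phi(\delta) := (1+\delta)\ln(1+\delta) - \delta - \delta^2/3 \ge 0$. Here $\phi(0)=\phi'(0)=0$ and $\phi''(\delta) = \tfrac{1}{1+\delta} - \tfrac23 \ge 0$ on $[0,\tfrac12]$, so $\phi \ge 0$ there; on $[\tfrac12,1]$ the function $\phi$ is concave and nonnegative at both endpoints (the value at $1$ being $2\ln 2 - \tfrac43 > 0$), hence nonnegative throughout. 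For the lower tail, the companion inequality $-\delta - (1-\delta)\ln(1-\delta) \le -\delta^2/2$ on $[0,1)$ is even cleaner: $\psi(\delta) := (1-\delta)\ln(1-\delta) + \delta - \delta^2/2$ satisfies $\psi(0) = \psi'(0) = 0$ and $\psi''(\delta) = \tfrac{\delta}{1-\delta} \ge 0$, so $\psi$ is convex and therefore nonnegative. The boundary case $\delta = 1$ is immediate since $\mprob(S_n \le 0) = \prod_i(1-p_i) \le e^{-\mu} \le e^{-\mu/2}$.

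For part (ii) I would feed the large value $a = \delta \ge 4$ into the same master bound, obtaining $\mprob(S_n \ge \delta\mu) \le \exp\left(\mu(\delta - 1 - \delta\ln\delta)\right)$, and then bound $\delta - 1 - \delta\ln\delta \le \delta(1-\ln\delta) \le -\tfrac14 \delta\ln\delta$. The last inequality holds because $1 - \ln\delta \le -\tfrac14\ln\delta$ is equivalent to $\tfrac34\ln\delta \ge 1$, which is true once $\delta \ge 4 > e^{4/3}$. Taking $c = \tfrac14$ gives the claim.

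There is no genuine obstacle here, as the result is classical; the only items requiring attention are the two second-derivative verifications of the scalar inequalities $u - (1+u)\ln(1+u) \le -u^2/3$ and $-u - (1-u)\ln(1-u) \le -u^2/2$ on $(0,1]$, and the choice of absolute constant in part (ii). These could equally well simply be quoted from Vershynin~\cite{Ver18} or Boucheron et al.~\cite{BouEtAl16}, exactly as the lemma statement indicates.
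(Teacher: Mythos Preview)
Your argument is correct and is precisely the classical Cram\'er--Chernoff derivation. The paper itself does not actually prove this lemma: it simply states it as ``standard'' and cites Vershynin~\cite{Ver18} and Boucheron et al.~\cite{BouEtAl16}, so your writeup supplies exactly the details those references contain.
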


We will also use the following result from Boucheron et al.~\cite{BouEtAl16}:

\begin{lemma}(Theorem 12.9 from Boucheron et al.~\cite{BouEtAl16})
\label{BouEtal12.9}
Let $W_1, \hdots, W_n$ be independent  vector-valued random variables
and let $Z = \sup_{s\in \cT} \sum_{i=1}^n W_{i,s}$. Assume that for all $i \leq n$ and $s \in \cT$, we have
$\E W_{i,s} =
0$, and $|W_{i,s}| \leq 1$. Let
\begin{align*}
v & := 2\E Z+ \rho^2, \\
\rho^2 & := \sup_{t\in T} \sum_{i=1}^n \E W^2_{i,s}.
\end{align*}
Then $\Var(Z) \leq v$ and 
\begin{align*}
\P\{Z \ge \E Z + t\} \leq \exp\(- \frac{t}{4} \log\(1 + 2 \log\(1 + \frac{t}{v}\) \)\).
\end{align*}
\label{thm:Bou12.9}
\end{lemma}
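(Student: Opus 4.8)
The plan is to prove both assertions — $\Var(Z)\le v$ and the exponential upper tail — by the entropy (Herbst) method, following Chapter~12 of Boucheron et al.~\cite{BouEtAl16}; since the statement is quoted verbatim from that reference, I only outline the argument. Throughout, write $Z=\sup_{s\in\cT}\sum_{i=1}^n W_{i,s}$, and for each realization let $s^\star$ be an index attaining (or nearly attaining) the supremum. For each coordinate $i$ let $Z_i:=\sup_{s\in\cT}\sum_{j\ne i}W_{j,s}$ be the supremum obtained by deleting the $i$-th summand; note $Z_i$ does not depend on $W_i$. Because the $W_{i,s}$ are centered with $|W_{i,s}|\le1$, one has the deterministic inequalities $0\le Z-Z_i\le W_{i,s^\star}$, hence $(Z-Z_i)^2\le W_{i,s^\star}^2\le|W_{i,s^\star}|$, and $\sum_{i=1}^n(Z-Z_i)\le\sum_{i=1}^n W_{i,s^\star}=Z$. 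These are the only structural facts used.

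First I would establish the variance bound. Since each $Z_i$ is independent of $W_i$, the jackknife form of the Efron--Stein inequality gives $\Var(Z)\le\sum_{i=1}^n\E\bigl[(Z-Z_i)^2\bigr]\le\E\bigl[\sum_iW_{i,s^\star}^2\bigr]$. Writing $\sum_iW_{i,s^\star}^2=\sum_i\bigl(W_{i,s^\star}^2-\E W_{i,s^\star}^2\bigr)+\sum_i\E W_{i,s^\star}^2\le\sup_s\sum_i\bigl(W_{i,s}^2-\E W_{i,s}^2\bigr)+\rho^2$ and bounding the expected supremum by a symmetrization-plus-contraction argument (the map $x\mapsto x^2$ is Lipschitz on $[-1,1]$, so $\E\sup_s\sum_i(W_{i,s}^2-\E W_{i,s}^2)\le 2\E Z$), one obtains $\Var(Z)\le 2\E Z+\rho^2=v$.

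Second, for the upper tail I would invoke the modified logarithmic Sobolev inequality for functions of independent variables. With $G(\lambda)=\E\bigl[e^{\lambda(Z-\E Z)}\bigr]$ and $\psi(u)=e^u-u-1$, sub-additivity of entropy together with $0\le Z-Z_i$ yields $\lambda G'(\lambda)-G(\lambda)\log G(\lambda)\le\bigl(\sum_i\E[\psi(-\lambda(Z-Z_i))e^{\lambda Z}]/\E[e^{\lambda Z}]\bigr)G(\lambda)$; using $\psi(-u)\le u^2/2$ for $u\ge0$, $(Z-Z_i)^2\le W_{i,s^\star}^2$, $\sum_i(Z-Z_i)\le Z$, and the variance estimate, one bounds the bracketed quantity by $v\,\lambda^2\phi(\lambda)$ for an explicit increasing function $\phi$ (arising from comparing $\sum_iW_{i,s^\star}^2$ against $v$ uniformly in $\lambda$). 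Dividing by $\lambda^2 G(\lambda)$ and integrating from $0$ — Herbst's argument, using $G(\lambda)^{1/\lambda}\to e^{\,0}$ and $\tfrac{d}{d\lambda}\tfrac{\log G(\lambda)}{\lambda}$ controlled — produces an explicit bound on $\log G(\lambda)$ in terms of $v$ and $\lambda$. Finally $\mprob\{Z\ge\E Z+t\}\le\exp\bigl(\inf_{\lambda>0}(\log G(\lambda)-\lambda t)\bigr)$, and optimizing over $\lambda$ gives exactly $\exp\bigl(-\tfrac t4\log(1+2\log(1+t/v))\bigr)$; the peculiar nested logarithm appears because the increments are bounded by $1$ (a Bennett/sub-Poisson regime rather than sub-Gaussian), so for large $t$ the optimal $\lambda$ grows only like $\log\bigl(1+\log(1+t/v)\bigr)$.

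The main obstacle is purely bookkeeping: matching the constants in the modified log-Sobolev step and in the final $\lambda$-optimization so that the factor $\tfrac14$ and the precise nested logarithm emerge. Conceptually there is no difficulty, and for the purposes of this paper one may simply cite Theorem~12.9 of Boucheron et al.~\cite{BouEtAl16}.
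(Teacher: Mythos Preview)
The paper does not prove this lemma at all: it is stated in the auxiliary appendix purely as a citation of Theorem~12.9 in Boucheron et al., with no accompanying argument. Your outline therefore goes well beyond what the paper does, and it correctly identifies the standard entropy-method route used in that reference (Efron--Stein for the variance, modified log-Sobolev plus Herbst for the tail), so in spirit you are aligned with the source the paper cites.

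One technical slip in your sketch: the claim $0\le Z-Z_i$ is not true in general under the stated hypotheses, since $W_{i,s}$ may be negative and nothing forces $\sup_s\sum_j W_{j,s}\ge\sup_s\sum_{j\ne i}W_{j,s}$. In Boucheron et al.\ this is handled either by assuming the index class contains a ``zero'' element (so $Z\ge0$ and the relevant increments are controlled via $(Z-Z_i)_+$) or by working with the positive part throughout; in the paper's application the function class $\cH_r$ effectively contains the zero function, so the issue disappears there. This does not affect the overall strategy, but you should flag the extra hypothesis or use $(Z-Z_i)_+$ rather than assert $Z-Z_i\ge0$ deterministically.
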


We now state and prove a generalization of Theorem 13.7 from Boucheron et al.~\cite{BouEtAl16}:

\begin{theorem}
Let $\cA = \{A_t  : t  \in \cT\}$ be a  countable class of measurable subsets of $\cX$ with VC dimension $V$, such that $A_0 = \emptyset \in \cA$. Let $X_1 , \hdots, X_n $ be independent random variables taking values in $\cX$, with distributions $P_1, \hdots, P_n$, respectively. Assume that for some $\sigma > 0$, we have
\begin{align*}
\frac{1}{n} \sum_{i=1}^n P_i(A_t) \leq \sigma^2, \text{  for every } t \in \cT.
\end{align*}
Let $Z$ and $Z^{-}$ be defined as follows: 
\begin{align*}
Z & = \frac{1}{\sqrt{n}} \sup_{t \in \cT} \sum_{i=1}^n \(\1_{X_i \in A_t} - P_i(A_t)\), \quad \text{ and } \\
Z^{-} & = \frac{1}{\sqrt{n}} \sup_{t \in \cT} \sum_{i=1}^n \(P_i(A_t) - \1_{X_i \in A_t} \).
\end{align*}
If $\sigma \geq 24 \sqrt{ \frac{V}{5n} \log \(\frac{4e^2}{\sigma}\)}$, then
\begin{align*}
\max\(\E Z,\E Z^{-}\) \leq 72 \sigma \sqrt{V \log \frac{4e^2}{\sigma}}.
\end{align*}
\label{thm:vcConvExpTech}
\end{theorem}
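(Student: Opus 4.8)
The plan is to follow the proof of Theorem~13.7 in Boucheron et al.~\cite{BouEtAl16}, which combines symmetrization with a localized chaining bound on the conditional Rademacher average; the only place the non-identically-distributed structure appears is the variance hypothesis $\frac1n\sum_{i=1}^n P_i(A_t)\le\sigma^2$ (replacing $P(A)\le\sigma^2$), and this is harmless because symmetrization and the variance step are applied coordinatewise. Write $P_n=\frac1n\sum_{i=1}^n\delta_{X_i}$ for the empirical measure, so $P_n(A_t)=\frac1n\sum_i\1_{X_i\in A_t}$, and think of $\sqrt n\,Z$ and $\sqrt n\,Z^{-}$ as the two one-sided suprema of the empirical process indexed by $\{\1_{A_t}:t\in\cT\}$, a class whose $L_2(\overline P)$-radius is at most $\sigma$ under the hypothesis.

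First I would symmetrize. Introducing an independent ghost sample $X_i'\sim P_i$ and Rademacher signs $\varepsilon_i$, the standard argument (valid for independent, non-i.i.d.\ variables since each $\1_{X_i\in A_t}-\1_{X_i'\in A_t}$ is sign-symmetric) gives
\begin{equation*}
\max(\E Z,\E Z^{-})\;\le\;\frac{2}{\sqrt n}\,\E\Bigl[\E_\varepsilon\sup_{t\in\cT}\sum_{i=1}^n\varepsilon_i\1_{X_i\in A_t}\Bigr].
\end{equation*}
Next, conditionally on $X_1,\dots,X_n$, the process $t\mapsto\sum_i\varepsilon_i\1_{X_i\in A_t}$ is sub-Gaussian with variance proxy $\sum_i\1_{X_i\in A_t}=nP_n(A_t)$, and its index set — after identifying $A_t$ with $A_t\cap\{X_1,\dots,X_n\}$ — has dimension-free metric entropy: by Haussler's bound, $\log N(\delta,\cA,L_2(P_n))\lesssim V\log(1/\delta)$. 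A Dudley chaining bound truncated at the local radius $\widehat\sigma:=\sup_t\sqrt{P_n(A_t)}$ then yields
\begin{equation*}
\E_\varepsilon\sup_{t\in\cT}\frac1{\sqrt n}\sum_{i=1}^n\varepsilon_i\1_{X_i\in A_t}\;\le\;C\,\widehat\sigma\sqrt{V\log(1/\widehat\sigma)} ,
\end{equation*}
(equivalently one peels $\cA$ by dyadic values of $nP_n(A_t)$ and applies Massart's finite-class lemma together with Sauer--Shelah on each block, which is how Boucheron et al.\ organize it).

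The remaining work is to remove the randomness of $\widehat\sigma$ by a self-bounding step. Since $u\mapsto\sqrt{-u\log u}$ is concave, Jensen gives $\E[\widehat\sigma\sqrt{\log(1/\widehat\sigma)}]\le\sqrt{\E[\widehat\sigma^2]\,\log(1/\sqrt{\E[\widehat\sigma^2]})}$, while by definition of $Z$ and the variance hypothesis $\E[\widehat\sigma^2]=\E\sup_t P_n(A_t)\le\sigma^2+\E Z/\sqrt n$. Combining these produces a quadratic inequality $(\E Z)^2\lesssim V\sigma^2\log(1/\sigma)+\frac{V\log(1/\sigma)}{\sqrt n}\,\E Z$; solving it gives $\E Z\lesssim\sigma\sqrt{V\log(1/\sigma)}+\frac{V\log(1/\sigma)}{\sqrt n}$, and the standing assumption $\sigma\ge 24\sqrt{\frac{V}{5n}\log(4e^2/\sigma)}$ forces the second term to be dominated by the first, leaving $\E Z\le C''\sigma\sqrt{V\log(4e^2/\sigma)}$ (the passage from $\log(1/\sigma)$ to $\log(4e^2/\sigma)$ absorbs the $\log$-constant from Haussler's bound and the factor $\sqrt{\E\widehat\sigma^2}/\sigma\ge1$ from the Jensen step). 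The bound for $\E Z^{-}$ is identical, since symmetrization treats both one-sided suprema on the same footing.

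The main obstacle is the \emph{localization}: a crude Sauer--Shelah union bound over all $\le(en/V)^V$ traces would only give $\E Z\lesssim\sigma\sqrt{V\log n}$, and upgrading $\log n$ to $\log(1/\sigma)$ requires the chaining/peeling argument with the dimension-free covering estimate. The attendant difficulty is purely bookkeeping of absolute constants: the threshold $24\sqrt{\frac{V}{5n}\log(4e^2/\sigma)}$ and the target constant $72$ are tight enough that symmetrization (factor $2$), the conditional chaining constant, the Jensen inequality for $u\log(1/u)$, and the quadratic-formula step must each be carried out with explicit numerical constants rather than $O(\cdot)$ estimates.
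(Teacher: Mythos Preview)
Your proposal is correct and follows essentially the same route as the paper: symmetrization, a conditional Dudley chaining bound at the (random) empirical radius using the VC entropy estimate, and then a self-bounding step that relates $\E[\widehat\sigma^2]$ back to $\E Z$ and solves the resulting inequality. The only cosmetic difference is that the paper introduces $\delta_n^2=\max(\widehat\sigma^2,\sigma^2)$ so that $\delta_n\ge\sigma$ lets one replace the entropy terms $H(\delta_n 2^{-j-1})$ by the deterministic $H(\sigma 2^{-j-1})$ before taking expectations, whereas you apply Jensen to the whole concave map $v\mapsto\sqrt{v\log(1/v)}$; both lead to the same quadratic in $\E Z$.
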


\begin{proof}
The following proof is an adaptation of the proof of Theorem 13.7 in Boucheron et al.~\cite{BouEtAl16}. The generalization from identical to non-identical distributions is possible because (1) independence suffices for symmetrization inequality; and (2) after conditioning on $X_1, \hdots, X_n$, it is no longer relevant whether the distributions of the random variables are identical. We include the initial steps of the proof for completeness and direct the reader to Boucheron et al.~\cite{BouEtAl16} for more details.

By the symmetrization inequalities of Lemma 11.4 in Boucheron et al.~\cite{BouEtAl16}, we have
\begin{align}
\label{EqnSym}
&\E \frac{1}{\sqrt{n}} \sup_{t \in \cT} \sum_{i=1}^n\( \1_{X_i \in A_t} - P(A_t)\) \notag \\
 &\qquad\leq 2\E\left[\E\left[ \frac{1}{\sqrt{n}}\sup_{t \in \cT} \sum_{i=1}^n \epsilon_i \1_{X_i \in A_t} \bigg| X_1, \hdots, X_n\right] \right],
\end{align}
where the $\epsilon_i$'s are independent Rademacher variables. Define the random variable 
\begin{align*}
\delta^2_n = \max\(\sup_{t \in \cT} \frac{1}{n} \sum_{i=1}^n \1_{X_i \in A_t}, \; \sigma^2\).
\end{align*}
Clearly, $\delta_n^2 \leq \frac{Z}{\sqrt{n}} + \sigma^2$, so by Jensen's inequality,\footnote{Note that both $Z$ and $Z^-$ are non-negative since $\phi \in \cA$.}
\begin{equation*}
\E \delta_n \leq \sqrt{\E \left(\frac{Z}{\sqrt{n}}\right) + \sigma^2}.
\end{equation*}

Now let $Z_t = \frac{1}{\sqrt{n}} \sum_{i=1}^n \epsilon_i \1_{X_i \in A_t}$. Noting that the Rademacher averages are sub-Gaussian, conditioned on the $X_i$'s, we have
\begin{align*}
\log & \E\left[e^{\lambda(Z_t - Z_{t'})} \Big | X_1, \dots, X_n\right] \\
& \leq \frac{\lambda^2 \(\frac{1}{n}\sum_{i=1}^n (\1_{X_i \in A_t} - \1_{X_i \in A_{t'}}   )^2 \)}{2} \\
                &= \frac{\lambda^2 \(\frac{1}{n}\sum_{i=1}^n (\1_{X_i \in A_t} \neq \1_{X_i \in A_{t'}}   ) \)}{2}.
\end{align*}

Let $d(t,t') = \sqrt{\frac{1}{n}\sum_{i=1}^n (\1_{X_i \in A_t} \neq \1_{X_i \in A_{t'}}   )}$, and let $H(\delta,\cT)$ denote the universal $\delta$-metric entropy (with respect to  $d(\cdot,\cdot)$).
Since the zero function (corresponding to $\emptyset$) belongs to the function class, we have
\begin{align*}
\sup_{t\in \cT} d(t,0) &=
\sup_{t \in \cT} \sqrt{ \frac{1}{n} \sum_{i=1}^n \1_{X_i \in A_t}} \leq \delta_n.
\end{align*}
Therefore, we can apply the discrete version of Dudley's inequality (Lemma 13.1 in Boucheron et al.~\cite{BouEtAl16}) with $\delta_n$ as the maximum radius. Since $\delta_n \geq \sigma$, we can upper-bound the random quantity $H(a\delta_n)$ by the fixed quantity $H(a \sigma)$, for any $a>0$. This implies that
\begin{align*}
&\E\left[ \frac{1}{\sqrt{n}} \sup_{t \in \cT} \sum_{i=1}^n \epsilon_i \1_{X_i \in A_t} \bigg| X_1, \hdots, X_n\right] 
\\
&\leq 3 \sum_{j=0}^{\infty} \delta_n2^{-j} \sqrt{H(\delta_n2^{-j-1}, \cT)} \\
& \leq 3 \sum_{j=0}^{\infty} \delta_n2^{-j} \sqrt{H(\sigma2^{-j-1}, \cT)}.
\end{align*}
Taking the expectation with respect to $X_1, \hdots, X_n$ and combining with inequality~\eqref{EqnSym} we then obtain
\begin{align*}
\E Z &\leq 6 \E \delta_n \cdot \sum_{j=1}^{\infty} 2^{-j} \sqrt{H(\sigma2^{-j-1}, \cT)} 
\\
&\leq 6 \sqrt{\E \left(\frac{Z}{\sqrt{n}}\right) + \sigma^2} \(\sum_{j=1}^{\infty} 2^{-j} \sqrt{H(\sigma2^{-j-1}, \cT)}\).
\end{align*} 
From this step onward, the proof is identical to the proof of Theorem 13.7 in Boucheron et al.~\cite{BouEtAl16}. 
\end{proof}

\begin{theorem}(Theorem 8.3.23 in Vershynin~\cite{Ver18})
Let $\cF$ be  a  class of  Boolean  functions  on  a  probability  space $(\Omega, \Sigma , \mu)$ with  finite  VC  dimension $V \geq 1$. Let $X, X_1, X_2, \hdots, X_n$ be independent random points in $\Omega$ distributed according to the law $\mu$. Then
\begin{align*}
\E\left[ \sup_{f \in \cF} \left| \frac{1}{n} \sum_{i=1}^nf(Xi) - \E f(X) \right| \right] \leq C \sqrt{ \frac{V}{n}}.
\end{align*}
\label{ThmVCAbsDeviation}
\end{theorem}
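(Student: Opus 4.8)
This is the classical Vapnik--Chervonenkis uniform deviation bound with the sharp, log-free rate $\sqrt{V/n}$, quoted here as Theorem 8.3.23 of Vershynin~\cite{Ver18}. The quickest route is to deduce it from Theorem~\ref{thm:vcConvExpTech}, which we have already proved. Identify each Boolean function $f\in\cF$ with the set $A_f=\{\omega:f(\omega)=1\}$, so that $\{A_f:f\in\cF\}$ has VC dimension $V$ (adjoining the empty set, i.e.\ the constant function $0$, if it is not already present; this changes $V$ by at most one and hence changes the final constant by at most a factor $\sqrt2$). Apply Theorem~\ref{thm:vcConvExpTech} in the i.i.d.\ case $P_1=\dots=P_n=\mu$ with the trivial variance bound $\sigma^2=1$, which is valid since $\mu(A_f)\le1$. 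For $n\ge cV$ the hypothesis $\sigma=1\ge 24\sqrt{\tfrac{V}{5n}\log(4e^2)}$ holds, and, since the quantities $Z$ and $Z^-$ there equal $\sqrt n\,\sup_f(\widehat P_nf-\mu f)$ and $\sqrt n\,\sup_f(\mu f-\widehat P_nf)$ respectively with $\widehat P_nf=\tfrac1n\sum_i f(X_i)$, the conclusion reads
\begin{equation*}
\E\left[\sup_{f\in\cF}\left|\frac1n\sum_{i=1}^n f(X_i)-\E f(X)\right|\right]=\frac{1}{\sqrt n}\,\E\bigl[\max(Z,Z^-)\bigr]\le\frac{1}{\sqrt n}\bigl(\E Z+\E Z^-\bigr)\le\frac{144}{\sqrt n}\sqrt{V\log(4e^2)}\le C\sqrt{\frac{V}{n}}.
\end{equation*}
For $n<cV$ the right-hand side $C\sqrt{V/n}$ already exceeds $1$, so the inequality is vacuous because the left-hand side never exceeds $1$; choosing $C$ at least $\max\{144\sqrt{\log(4e^2)},\sqrt c\}$ handles both cases. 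This settles the statement.

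Alternatively, one may give a self-contained proof by rerunning the argument of Theorem~\ref{thm:vcConvExpTech} directly in the i.i.d.\ setting: first symmetrize (Lemma 11.4 of Boucheron et al.~\cite{BouEtAl16}) to pass to the conditional Rademacher average $\tfrac1n\,\E_\epsilon\sup_{f\in\cF}\bigl|\sum_i\epsilon_i f(X_i)\bigr|$; note that, conditionally on the sample, the process $f\mapsto \tfrac{1}{\sqrt n}\sum_i\epsilon_i f(X_i)$ is sub-Gaussian with respect to the empirical $L^2$ pseudometric $d_n$, whose diameter is at most $1$ because the functions are Boolean; and bound this average by the discrete Dudley entropy integral (Lemma 13.1 of Boucheron et al.~\cite{BouEtAl16}), $\tfrac{C}{\sqrt n}\int_0^1\sqrt{\log N(\cF,d_n,u)}\,du$. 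Inserting Haussler's VC covering bound $\log N(\cF,L^2(Q),u)\le CV\log(1/u)+C'V$, valid for every probability measure $Q$ and every $u\in(0,1)$, makes the integral $O(\sqrt V)$ since $\sqrt{\log(1/u)}$ is integrable at $0$, which gives the claimed bound after taking the expectation over the sample.

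The only non-routine ingredient, in either route, is the scale-sensitive covering-number estimate $\log N(\cF,L^2(Q),u)=O(V\log(1/u))$. The naive Sauer--Shelah count of realized sign patterns, $(en/V)^V$, only controls covers at scale $\gtrsim 1/\sqrt n$, and, fed into a one-step union bound, yields the inferior rate $\sqrt{(V\log n)/n}$; removing the spurious $\log n$ to reach the pure $\sqrt{V/n}$ is precisely what the Dudley--Haussler chaining machinery accomplishes, and is already packaged inside Theorem~\ref{thm:vcConvExpTech}. Since the statement is taken verbatim from~\cite{Ver18}, simply citing that reference is also fully legitimate.
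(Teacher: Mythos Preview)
Your proposal is correct, but there is nothing to compare it against: the paper does not prove Theorem~\ref{ThmVCAbsDeviation} at all. It is listed in Appendix~\ref{AppAux} purely as an auxiliary result quoted from Vershynin~\cite{Ver18}, with no accompanying argument. Your closing remark that ``simply citing that reference is also fully legitimate'' is exactly what the paper does.

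Your derivation from Theorem~\ref{thm:vcConvExpTech} with $\sigma=1$ is sound: the hypothesis $1\ge 24\sqrt{(V/5n)\log(4e^2)}$ holds once $n\ge cV$ for an absolute $c$, the bound $\E[\max(Z,Z^-)]\le \E Z+\E Z^-$ is valid because both quantities are nonnegative (since $\emptyset\in\cA$), and the trivial bound for $n<cV$ closes the remaining case. The alternative symmetrization-plus-Haussler sketch is also correct and is essentially how Vershynin proves it. So you have supplied strictly more than the paper does here; either route would be acceptable, and the first has the virtue of reusing machinery already established in the paper.
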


\end{document}